\newcommand{\refx}[1]{%
  \NoHyper\ref{P1-#1}\endNoHyper
}
\newcommand\cross{\bgroup\markoverwith{\textcolor{red}{\rule[0.5ex]{2pt}{0.6pt}}}\ULon}
\newcommand\crosseq{\bgroup\markoverwith{\textcolor{red}{\rule[0.5ex]{4pt}{0.6pt}}}\ULon}
\newcommand{\lto}{\longrightarrow}
\newcommand{\cyl}{\textnormal{cyl}}
\DeclareMathOperator{\nullspace}{null}
\DeclareMathOperator{\Oval}{oval}
\DeclareMathOperator{\soliton}{soliton}
\newcommand{\MCF}{\mathsf{MCF}}
\newcommand{\sV}{\mathscr{V}}
\newcommand{\Qu}{\mathsf{Q}}
\newcommand{\IR}{\mathbb{R}}
\newcommand{\lb}{\linebreak[1]}
\newcommand{\IB}{\mathbb{B}}
\newcommand{\IS}{\mathbb{S}}
\newcommand{\DD}{\mathcal{D}}
\newcommand{\PP}{\mathcal{P}}
\newcommand{\UU}{\mathcal{U}}
\newcommand{\VV}{\mathcal{V}}
\newcommand{\MM}{\mathcal{M}}
\newcommand{\bx}{\mathbf{x}}
\newcommand{\by}{\mathbf{y}}
\newcommand{\bY}{\mathbf{Y}}
\newcommand{\bO}{\mathbf{0}}
\newcommand{\bq}{\mathbf{q}}
\newcommand{\bp}{\mathbf{p}}
\newcommand{\bz}{\mathbf{z}}
\newcommand{\eps}{\varepsilon}
\newcommand{\la}{\lambda}
\newcommand{\ov}[1]{\overline{#1}}
\newcommand{\td}[1]{\widetilde{#1}}
\DeclareMathOperator*{\osc}{osc}
\DeclareMathOperator{\spt}{spt}
\DeclareMathOperator{\Jac}{Jac}
\DeclareMathOperator{\rot}{rot}
\DeclareMathOperator{\loc}{loc}
\DeclareMathOperator{\id}{id}
\DeclareMathOperator{\bowl}{bowl}
\DeclareMathOperator{\supp}{supp}
\DeclareMathOperator{\spann}{span}
\DeclareMathOperator{\proj}{proj}
\DeclareMathOperator{\reg}{reg}
\DeclareMathOperator{\sing}{sing}
\newcommand{\EMPTY}[1]{}
\newtheorem{Theorem}[equation]{Theorem}
\newtheorem{Lemma}[equation]{Lemma}
\newtheorem{Corollary}[equation]{Corollary}
\newtheorem{Proposition}[equation]{Proposition}
\newtheorem{Claim}[equation]{Claim}
\theoremstyle{definition}
\newtheorem{Definition}[equation]{Definition}
\theoremstyle{remark}
\newtheorem{Remark}[equation]{Remark}
\numberwithin{equation}{section}
\title[Classification of ancient cylindrical mean curvature flows]{Classification of ancient cylindrical mean curvature flows and the Mean Convex Neighborhood Conjecture}
\author{Richard H Bamler and Yi Lai}
\address{Department of Mathematics, UC Berkeley, CA 94720, USA}
\email{rbamler@berkeley.edu}
\address{Department of Mathematics, UC Irvine, CA 92697, USA}
\email{ylai25@uci.edu}
\thanks{R.B. was supported by NSF grant DMS-2204364.
Y.L. was supported by NSF grant DMS-2506832.
This material is based upon work conducted at the Simons Laufer Mathematical Sciences Research Institute in Berkeley, California, during the Fall semester of 2024.}
\date{\today}
\begin{document}

\begin{abstract}
We resolve the Mean Convex Neighborhood Conjecture for mean curvature flows in all dimensions and for all types of cylindrical singularities. 
Specifically, we show that if the tangent flow at a singular point is a multiplicity-one cylinder, then in a neighborhood of that point the flow is mean-convex, its time-slices arise as level sets of a continuous function, and all nearby tangent flows are cylindrical.
Moreover, we establish a canonical neighborhood theorem near such points, which characterizes the flow via local models.
We also obtain a more uniform version of the Mean Convex Neighborhood Conjecture, which only requires closeness to a cylinder at some initial time and yields a quantitative version of this structural description.

Our proof relies on a complete classification of ancient, asymptotically cylindrical flows. 
We prove that any such flow is non-collapsed, convex, rotationally symmetric, and belongs to one of three canonical families: ancient ovals, the bowl soliton, or the flying wing translating solitons. 
Central to our method is a refined asymptotic analysis and a novel \emph{leading mode condition,} together with a new ``induction over thresholds'' argument.
In addition, our approach provides a full parameterization of the space of asymptotically cylindrical flows and gives a new proof of the existence of flying wing solitons.

Our method is independent of prior work and, together with our prequel paper, this work is largely self-contained.
\end{abstract}

\maketitle
\tableofcontents

\section{Introduction}
\subsection{Overview}
A standard method for studying singularities in geometric PDEs---originating in foundational work of Almgren, Federer, Fleming and Simon~\cite{Almgren_1966, Federer_book, Federer_Fleming, Simon_1996}---is to analyze solutions via tangent cones.
The guiding philosophy is that tangent cones capture the leading-order geometry near a singular point.
However, the information they provide is often coarse.
For example, tangent cones may fail to reveal precise asymptotics, the local topology near the singular point or the full set of nearby tangent cones, particularly when the tangent cone itself is singular.
The source of this limitation is often characterized by the distinction between \emph{tangent cones} and more flexible \emph{blow-up models:} tangent cones arise from rescalings about a fixed center, whereas blow-up models allow both the center and scale to vary---therefore they capture all local models of the solution at all scales.
Consequently, knowledge of tangent cones alone often does not determine all nearby blow-up behaviors.

This phenomenon is particularly relevant in mean curvature flow.
Although tangent flows (the parabolic analogues of tangent cones) play an essential role in understanding singularities, they often do not determine the full geometry of the flow near a singular point, including how singularities form and resolve precisely.
The \emph{Mean Convex Neighborhood Conjecture} of Ilmanen \cite[Problem~4]{Ilmanen_problems} and White addresses this limitation.
It predicts that if a mean curvature flow has a cylindrical tangent flow at a singular point, then the flow must be locally mean convex in a neighborhood of the singularity, and consequently, all nearby tangent flows must be cylindrical. 
To place this in context, mean convex mean curvature flows have been extensively studied and are now well understood.
Seminal work of Huisken \cite{Huisken_1984} established the behavior of fully convex and 2-convex flows, while White \cite{White_00,White_03} developed a deep general theory for globally mean convex flows.
Further progress is due to Huisken-Sinestrari \cite{HS99b,Huisken_Sinestrari_99,Huisken_Sinestrari_09} Brendle \cite{brendle2015sharp}, Brendle-Huisken \cite{Brendle_Huisken_16}, Kleiner-Haslhofer \cite{Haslhofer_Kleiner_17,Haslhofer_Kleiner_17_surgery} and Haslhofer-Hershkovits \cite{HaslhoferHershkovits_2018_LevelSetGeneralAmbient}.
These works describe the geometry of flows with \emph{globally} positive mean curvature and show that singularities are modeled on shrinking cylinders and evolve through higher-dimensional neckpinches.

In this paper we prove the \emph{Mean Convex Neighborhood Conjecture} in full generality. 
Our results therefore show that the same geometric and structural properties from the globally mean-convex theory hold \emph{locally} near every cylindrical singularity.
We also give a complete description of the singularity formation and resolution via a canonical neighborhood theorem, which is even new in the globally mean convex case.

The key ingredient in our resolution of the \emph{Mean Convex Neighborhood Conjecture} is a complete classification of ancient, asymptotically cylindrical mean curvature flows.
We show that every such ancient flow is non-collapsed, convex and rotationally symmetric and falls into one of three families: ancient ovals, the bowl soliton, or flying wing translating solitons due to Hoffman-Ilmanen-Mart\'in-White \cite{HoffmanIlmanenMartinWhite2019}---each possibly times a Euclidean factor.
This completes and unifies a long line of work around the classification of ancient flows initiated by Angenent-Daskalopoulos-Sesum \cite{ADS_2019}, Brendle-Choi~\cite{Brendle_Choi_2019}, and subsequently advanced by many others \cite{ADS_2020,Brendle_Choi_higher_dim,zhu2022so,ChoiHaslhoferHershkovits2023,DuHaslhofer2021,Choi_Haslhofer_Hershkovits_2022,Choi_Haslhofer_Hershkovits_White_22,  CHH_2023,Du_Haslhofer_2023,ChoiHaslhofer2024,ChoiHaslhoferHershkovits2024,Du_Haslhofer_2024,ChoiDuZhu2025,Du_Zhu_2025,ChoiDaskalopoulosDuHaslhoferSesum2025,ChoiHaslhoferHershkovits2025, ADS_2025,  CHH_2025, Choi_Haslhofer_2025}.
A major milestone was the full classification of ancient flows asymptotic to $1$-cylinders of the form $\IR \times \IS^{n-1}$, which resolved the Mean Convex Neighborhood Conjecture in the $1$-cylindrical setting~\cite{Choi_Haslhofer_Hershkovits_2022,Choi_Haslhofer_Hershkovits_White_22}.
Flows asymptotic to cylinders with a larger than 1-dimensional Euclidean factor (a.k.a. ``bubble sheets'') are substantially more delicate.
A more recent advance in this direction was the classification of cylindrical mean curvature flows in dimension~4  \cite{ChoiDaskalopoulosDuHaslhoferSesum2025,ChoiHaslhofer2024,ChoiHaslhoferHershkovits2025} under a non-collapsing condition, which relies on a mean-convexity assumption.

Most of these prior results---though highly ingenious---relied on delicate structural properties of the mean curvature flow equation and were therefore limited to the non-collapsed, convex, or $1$-cylindrical cases.
Since these results require mean-convexity \emph{a priori,} they cannot be used to deduce \emph{Mean Convex Neighborhood Conjecture.}

Our approach is new and independent of previous approaches and together with the prequel~\cite{Bamler_Lai_PDE_ODI} the present paper is essentially self-contained.
Because our techniques rely only on relatively coarse structural features of the equation, we are able to recover mean-convexity \emph{a posteriori,} which is what ultimately allows us to prove the \emph{Mean Convex Neighborhood Conjecture.}
Moreover, our theory yields a full description and parameterization of the model flows appearing in our classification, which is entirely based on an asymptotic mode analysis.
Specifically, we uniquely identify ancient ovals via the higher-order asymptotic expansion of the quadratic mode as $\tau \to -\infty$.
Likewise, we characterize flying wings by their exponentially decaying deviation from an ancient oval times a Euclidean factor as $\tau \to -\infty$.

The classification rests on two main ideas.
First, the PDE–ODI principle developed in our prequel~\cite{Bamler_Lai_PDE_ODI} initiates the analysis by producing high-order asymptotic expansions for ancient flows, notably without requiring any mean-convexity assumption a priori. 
However, these expansions only provide \emph{polynomial} control of the flows. To access \emph{exponentially} decaying differences---essential for distinguishing two flows---we enhance these estimates based on a new Harnack-type estimate.
The second idea establishes this Harnack-type estimate via a new notion called \emph{leading mode condition.}
It examines the difference between two ancient flows in regions where both are nearly cylindrical and determines whether this difference is dominated by a finite set of unstable modes. 
We establish the leading mode condition via a novel ``induction over thresholds'' argument, which allows us to iteratively lower a threshold above which the linear mode condition holds.
This induction step combines a propagation mechanism through cylindrical regions with a new stability estimate for the bowl soliton times a Euclidean factor.

\medskip
For further related work see 
\cite{CM12_GenericMCF_I,Colding_Ilmanen_Minicozzi,CM16_SingularSetGenericSingularities,SX22_GenericCylindricalSingularities,Bamler_Kleiner_mult1,CCS23_GenericInitialDataII,Zhu23_MeanConvexSelfShrinkers,CCMS24_GenericInitialData,CCMS24_LowEntropy,CMS25_LowEntropyII,CCMS25_RevisitingGenericR3,SWX25_PassingNondegenerate,SWX25_RegularityCylindricalSingularSets,Zhu25_CylindricalSelfShrinkers}.

\subsection{Statement of main results I: Classification of asymptotically cylindrical flows} \label{subsec_main_results_I}
To state our main results, we recall the following convenient definition.

\begin{Definition}
\label{Def_asymp_cyl} 
An \textbf{asymptotically $(n,k)$-cylindrical mean curvature flow,} for some integers $0 \leq k < n$, is an $n$-di\-men\-sion\-al, unit-regular, integral Brakke flow in $\IR^{n+1} \times (-\infty, T)$, for some $T \leq \infty$, whose tangent flow at infinity  is the multiplicity one round shrinking cylinder $\MM_{\cyl,t}^{n,k} = \IR^k \times (\sqrt{-t}\IS^{n-k})$, here $\IS^{n-k}$ denotes that round sphere with radius $\sqrt{2(n-k)}$.
In other words, we require that the parabolic rescalings $\la \MM$ converge locally smoothly to $\MM_{\cyl}^{n,k}$ as $\la \to 0$.
\end{Definition}

For convenience, we fix the axis of the limiting cylinder to be  $\IR^k\times\bO^{n+1-k}$, as any flow can be rotated into this standard form.
We remark that the condition in Definition~\ref{Def_asymp_cyl} can be replaced with an a priori weaker assumption due to the stability of cylinders \cite{Bamler_Lai_PDE_ODI,Colding_Minicozzi_12,Colding_Ilmanen_Minicozzi}.
Moreover, by \cite{Colding_Minicozzi_codim1}, our discussion extends to higher-codimension flows with minor modifications, though this is not strictly necessary: the higher-codimension case reduces to codimension~1 due to \cite{Colding_Minicozzi_codim1}.

Our main result is the following:

\begin{Theorem} \label{Thm_classification_simple}
Let $\MM$ be an asymptotically $(n,k)$-cylindrical mean curvature flow.
Then $\MM$ is smooth (possibly up to its extinction time), non-collapsed, convex and invariant under all ambient rotations that fix an axis parallel to $\IR^k \times \mathbf 0^{n-k+1}$.
Moreover, $\MM$ belongs to one of the following canonical families:
\begin{enumerate}[label=(\Alph*)]
\item \label{Thm_class_A} Round shrinking $(n,k)$-cylinders.
\item \label{Thm_class_B} Ancient ovals due to \cite{White_03, HaslhoferHershkovits2016, DuHaslhofer2021}, possibly times a Euclidean factor.
\item \label{Thm_class_C} An $(n-k+1)$-dimensional bowl soliton times $\IR^{k-1}$.
\item \label{Thm_class_D} The flying wing translating solitons due to Hoffman-Ilmanen-Mart{\'\i}n-White \cite{HoffmanIlmanenMartinWhite2019}, possibly times a Euclidean factor. 
These only occur if $k \geq 2$.
\end{enumerate}
\end{Theorem}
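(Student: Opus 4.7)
The plan is to parameterize the space of asymptotically cylindrical flows by the leading asymptotics of their Huisken-rescaled profiles and then match each asymptotic class with one of the four canonical families. Picking Huisken's rescaling $\tau = -\log(-t)$, the flow $\MM$ becomes a family of hypersurfaces converging smoothly to the cylinder $\IR^k \times \IS^{n-k}$ as $\tau \to -\infty$. On the end I would write the hypersurface as a normal graph $u(\tau,\cdot)$ over the cylinder and expand $u$ in eigenfunctions of the Ornstein--Uhlenbeck-type linearized rescaled MCF operator $L$: finitely many \emph{unstable} modes (polynomials in the $\IR^k$ variables of degree $\le 2$ times a spherical harmonic of degree $\le 1$), a zero eigenspace corresponding to rotations and translations of the cylinder and to the quadratic spherical mode, and infinitely many \emph{stable} modes. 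The goal is to show that the $\tau \to -\infty$ behavior of $u$ is driven by a single dominant mode whose leading coefficient then determines $\MM$.

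First I would invoke the PDE--ODI principle of the prequel~\cite{Bamler_Lai_PDE_ODI}, which---crucially, without any a priori convexity or mean-convexity---produces a polynomial-in-$|\tau|$ asymptotic expansion of $u$ and identifies a leading mode: either a nontrivial neutral quadratic mode on the spherical factor, a translation-type unstable mode along the Euclidean directions, or a hybrid of the two. PDE--ODI control is, however, too weak to distinguish two flows with the same leading profile, because it only yields polynomial rather than exponential decay of the difference. To upgrade this, I would prove a Harnack-type estimate for differences $u_1 - u_2$ of asymptotically cylindrical flows, which allows one to pass from polynomial smallness to exponential smallness provided one knows the difference is dominated by a finite set of unstable modes.

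The core technical step is therefore to verify this \emph{leading mode condition} for $u_1-u_2$ whenever $u_1,u_2$ have matching leading coefficients. Following the outline of the introduction, I would implement this through the ``induction over thresholds'' argument: starting from the polynomial bound given by PDE--ODI, one iteratively lowers the scale threshold above which the dominance by unstable modes holds. Each inductive step propagates the mode bound across cylindrical regions by parabolic estimates for $L$, and passes through non-cylindrical transition regions using the new stability estimate for the $(n-k+1)$-bowl soliton times $\IR^{k-1}$. Iterating until the threshold reaches the scale of the leading mode yields exponential decay of $u_1-u_2$ and hence rigidity: an asymptotically $(n,k)$-cylindrical flow is uniquely determined by the coefficient of its dominant mode. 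I expect this induction---specifically, making it terminate uniformly and handling the bowl-cross-Euclidean stability at intermediate scales---to be the main obstacle of the proof.

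With rigidity in hand, the classification reduces to realizing each admissible leading mode by a member of one of the four canonical families. A vanishing leading mode forces $\MM = \MM^{n,k}_{\cyl}$, yielding case \ref{Thm_class_A}. A nontrivial neutral quadratic spherical mode matches the known higher-order expansion of an ancient oval (times $\IR^{k-1}$) from \cite{White_03, HaslhoferHershkovits2016, DuHaslhofer2021}, giving case \ref{Thm_class_B}. A pure translation-type leading mode matches an $(n-k+1)$-bowl times $\IR^{k-1}$, giving case \ref{Thm_class_C}. The hybrid asymptotics---an oval-type mode perturbed by an exponentially decaying translation mode in a genuinely higher-dimensional Euclidean factor---match the Hoffman--Ilmanen--Mart{\'\i}n--White flying wings \cite{HoffmanIlmanenMartinWhite2019}, giving case \ref{Thm_class_D}, and this case is only compatible with leading data when $k \ge 2$. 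Rotational symmetry comes for free a posteriori: any ambient rotation fixing the axis preserves the asymptotic data and therefore, by the rigidity statement, preserves $\MM$ itself; non-collapsing, smoothness up to the extinction time, and convexity are then inherited from the canonical models.
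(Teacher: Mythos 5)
Your overall strategy coincides with the paper's: PDE--ODI expansion from the prequel, a Harnack-type estimate for differences established via the leading mode condition and an induction over thresholds, exponential rigidity, and then identification with the canonical families via their asymptotic data. However, two steps in your final paragraph are not yet sound as written.

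First, the realization step is not free. Rigidity only says that a flow is determined (modulo space--time translation) by its asymptotic invariants; to conclude membership in one of the families \ref{Thm_class_A}--\ref{Thm_class_D} you must exhibit, for \emph{every} admissible pair of a quadratic mode $\Qu$ and an exponentially small correction $\mathbf b\in\nullspace(\Qu)$, a comparison flow with exactly those invariants. For the ovals this is the existence theorem of the prequel. For case \ref{Thm_class_D} it is not enough to cite Hoffman--Ilmanen--Mart\'{\i}n--White: one would have to show their family is rich enough to realize all admissible $(\Qu,\mathbf b)$ \emph{and} compute the invariants of their solitons, neither of which is available. The paper instead constructs the needed solitons itself (Proposition~\ref{l:FW_construction}) as limits of space--time-shifted ovals, precisely so that $\Qu$ and $\mathbf b$ of the limit are under control; some such construction is an unavoidable ingredient, not a bookkeeping afterthought.

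Second, your derivation of rotational symmetry by comparing $\MM$ with its rotation $S\MM$ is circular. The entire difference machinery (Propositions~\ref{Prop_diff_properties}, \ref{Prop_M0isM1} and Corollary~\ref{cor:unique_V}) is asymmetric: it requires one of the two flows to be convex and rotationally symmetric already, so it cannot be applied to the pair $(\MM, S\MM)$ when neither is known to be symmetric. The correct route, which your main argument in fact supports, is to compare $\MM$ against a known convex, rotationally symmetric model with matching invariants, conclude that $\MM$ is a translate of that model, and read off symmetry, convexity, non-collapsedness and smoothness from the model. You should also make explicit that ``uniquely determined by the dominant mode'' holds only after using the action of spatial translations and time-shifts on the dominant mode to normalize away the remaining unstable components (the analogue of Proposition~\ref{prop:actions}); without this normalization the rigidity statement as you phrase it is false.
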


In addition, our methods yields a canonical characterization of these models modulo spatial and time-translations.
This picture differs somewhat from the conventional picture as it is only based on local asymptotics of leading modes.
To explain this picture, we restrict our attention to asymptotically $(n,k)$-cylindrical mean curvature flows that are non-collapsed, convex and rotationally symmetric about the axis $\IR^{k} \times \bO^{n-k+1}$ and that have uniformly bounded second fundamental form on time-intervals of the form $(-\infty,T]$ for $T < 0$.
Among these flows, we define:
\begin{itemize}
\item $\MCF^{n,k}_{\Oval}$ to be the space of such flows that go extinct at time $0$, whose extinction locus (i.e., its singular set) contains the origin and that are invariant under reflections across some collection of $n+1$ pairwise orthogonal hyperplanes, which pass through the origin and may depend on the flow.
(This set contains the models \ref{Thm_class_A} and \ref{Thm_class_B} modulo translations.)
\item $\MCF^{n,k}_{\soliton}$ to be the space of flows that are translating solitons, so whose time-slices are $\MM_t = \MM_0 + t \mathbf v$ for some $\mathbf v \in \IR^k \times \bO^{n-k+1}$, whose time-$0$-slice $(\spt \MM)_0$ contains the origin and that invariant under reflections across some collection of $n$ pairwise orthogonal hyperplanes, which pass through the origin and may depend on the flow.
(This set contains the models \ref{Thm_class_C} and \ref{Thm_class_D} modulo translations.)
\end{itemize}
Note that flows in $\MCF^{n,k}_{\Oval}$ are smooth for all negative times and go extinct at time $0$ and the flows in $\MCF^{n,k}_{\soliton}$ are defined for all times and smooth everywhere.
We now set 
\[ \MCF^{n,k}_0 := \MCF^{n,k}_{\Oval} \cup \MCF^{n,k}_{\soliton}. \]
Using this notation, Theorem~\ref{Thm_classification_simple} can be restated as:

\begin{Theorem} \label{Thm_classification_precise}
If $\MM$ is an asymptotically $(n,k)$-cylindrical mean curvature flow, then there is a $\bp \in \IR^{n+1}$ such that $\MM' := \MM + (\bp, t_0)$ is the restriction of a flow from $\MCF^{n,k}_{0}$ to the time-interval on which $\MM'$ is defined.
\end{Theorem}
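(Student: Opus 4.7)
The plan is to use Theorem~\ref{Thm_classification_simple} as a black box and then, case by case, exhibit a spacetime translation $(\bp, t_0)$ that matches the resulting model flow with the normalizing conditions defining $\MCF^{n,k}_{0}$. The only content of Theorem~\ref{Thm_classification_precise} beyond Theorem~\ref{Thm_classification_simple} is the matching of extinction-time / soliton-slice and reflection-symmetry conditions, which is carried out by direct inspection of the four model families. In particular, Theorem~\ref{Thm_classification_simple} already supplies non-collapsedness, convexity, rotational symmetry about an axis parallel to $\IR^{k}\times\bO^{n-k+1}$, and smoothness up to extinction, and from these the uniform bound on the second fundamental form on intervals $(-\infty,T]$ (as built into $\MCF^{n,k}_{0}$) is immediate.

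For families~\ref{Thm_class_A} and~\ref{Thm_class_B}, $\MM$ extincts at a finite time $T$ with extinction locus an affine $k$-plane parallel to $\IR^{k}\times\bO^{n-k+1}$. I would set $t_{0}=-T$ and choose $\bp$ so that this $k$-plane is shifted through the origin. For~\ref{Thm_class_A} the resulting flow is the standard round shrinking cylinder, invariant under reflection across all $n+1$ coordinate hyperplanes. For~\ref{Thm_class_B} it is an ancient oval in $\IR^{n-k+1}$ times $\IR^{k}$; the base ancient ovals of \cite{White_03, HaslhoferHershkovits2016, DuHaslhofer2021} are rotationally symmetric about an axis through their extinction point and reflection-symmetric across the transverse hyperplane through that point. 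After a further orthogonal rotation aligning the oval axis with a coordinate direction of the $\IR^{n-k+1}$ factor, this yields $n-k+1$ orthogonal reflection hyperplanes through the origin, to which one adjoins $k$ more from the coordinate hyperplanes of the $\IR^{k}$ factor.

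For families~\ref{Thm_class_C} and~\ref{Thm_class_D}, $\MM$ is a translating soliton whose velocity $\mathbf{v}$ lies along the cylinder axis $\IR^{k}\times\bO^{n-k+1}$; I would choose $(\bp,t_{0})$ so that after translation the time-$0$-slice contains the origin. In case~\ref{Thm_class_C} the bowl factor is $O(n-k+1)$-rotationally symmetric about its translation axis, yielding $n-k+1$ orthogonal reflection hyperplanes through the origin, and the Euclidean factor $\IR^{k-1}$ contributes the remaining $k-1$, giving $n$ in total. Case~\ref{Thm_class_D} is handled analogously, invoking the equivariant construction of the flying wings in \cite{HoffmanIlmanenMartinWhite2019} together with the rotational symmetry already asserted by Theorem~\ref{Thm_classification_simple}.

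The principal technical point to verify is the availability of the full complement of pairwise orthogonal reflection hyperplanes through a common point, particularly for the flying wing, which carries strictly less symmetry than the bowl. This however reduces to the mirror-symmetric construction of \cite{HoffmanIlmanenMartinWhite2019} combined with the rotational symmetry provided by Theorem~\ref{Thm_classification_simple}, so no further analytic input is needed beyond the classification itself.
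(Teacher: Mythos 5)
Your proposal is circular in the context of this paper. You take Theorem~\ref{Thm_classification_simple} as a black box and deduce Theorem~\ref{Thm_classification_precise} from it by normalizing each of the four model families. But the paper's logical order is the reverse: the proof section opens with ``It suffices to prove Theorem~\ref{Thm_classification_precise} as it implies Theorem~\ref{Thm_classification_simple}.'' Theorem~\ref{Thm_classification_simple} has no independent proof anywhere in the paper or in the prior literature for general $k$ --- it \emph{is} the main new result, and the coarse statement (non-collapsedness, convexity, rotational symmetry, membership in one of the families \ref{Thm_class_A}--\ref{Thm_class_D}) is extracted from the precise statement, not the other way around. So your argument assumes exactly what needs to be proved.

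The actual content of Theorem~\ref{Thm_classification_precise} is not a normalization exercise but the identification of an \emph{arbitrary} asymptotically cylindrical flow $\MM$ (a priori not even smooth, convex or rotationally symmetric) with a translate of a model flow. The paper does this by: (i) choosing the reference oval $\MM^0 \in \MCF^{n,k}_{\Oval}$ with $\Qu(\MM^0) = \Qu(\MM)$ (surjectivity of $\Qu$ from the prequel, injectivity from Lemma~\ref{Lem_compare_flows_Qsv}); (ii) reading off the component $\mathbf b'$ of the exponentially decaying difference $V^+_{-\infty}(\MM^0,\MM)$ in $\sV_{\rot,\frac12,N}$; (iii) producing a candidate $\MM^1 \in \MCF^{n,k}_0$ with the same $\Qu$ and $\mathbf b(\MM^1) = \mathbf b'$ via the flying-wing construction of Proposition~\ref{l:FW_construction}; and (iv) concluding $\MM = \MM^1 + (\mathbf v, \Delta T)$ by Lemma~\ref{Lem_consequence_Vp_infty}, which rests on the entire chain Proposition~\ref{Prop_leading_mode_condition} $\to$ Proposition~\ref{Prop_diff_properties} $\to$ Proposition~\ref{Prop_M0isM1} $\to$ Proposition~\ref{Prop_Vpp_asymp} $\to$ Corollary~\ref{cor:unique_V}. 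None of this machinery can be bypassed by citing the families \ref{Thm_class_A}--\ref{Thm_class_D}, since establishing that $\MM$ lies in one of them is precisely the hard part. Even your acknowledged ``principal technical point'' (the $n$ orthogonal reflection hyperplanes for the flying wing) is obtained in the paper as a consequence of the uniqueness machinery and the construction in Proposition~\ref{l:FW_construction}, not by inspecting the Hoffman--Ilmanen--Mart\'{\i}n--White examples.
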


We will now describe the space $\MCF^{n,k}_{0}$ with the topology induced by Brakke convergence, which is equivalent to smooth convergence wherever the limit is smooth.
Consider the canonical map from \cite[Definition~\refx{Def_Qu}]{Bamler_Lai_PDE_ODI}:
\[ \Qu : \MCF^{n,k}_{0} \lto \IR^{k \times k}_{\geq 0} \]
We recall $\Qu(\MM)$ is roughly defined as follows:
We first express the rescaled flow $\td\MM_{\tau} := e^{\tau/2} \MM_{-e^{-\tau}}$ as the normal graph of a function $u_\tau$ over the round cylinder and then study the asymptotic behavior of $u_\tau$ projected to the space of quadratic Hermite polynomials as $\tau \to -\infty$.
The behavior of this projection can be modeled by the solution to a finite-dimensional ODE up to a term of the order $O(|\tau|^{-3})$ and this solution can be parameterized by a non-negative definite matrix $\Qu(\MM)$.
In \cite[Proposition~\refx{Prop_dom_qu_asymp}]{Bamler_Lai_PDE_ODI}, we showed that $\Qu(\MM)$ even determines the asymptotic order of any finite mode $u_\tau$ up to any polynomial term of the form $O(|\tau|^{-J})$.
In \cite[Theorem~\refx{thm:oval_existence}]{Bamler_Lai_PDE_ODI}
 we also showed that each admissible value of $\Qu$ can be realized by an ancient oval in $\MCF^{n,k}_{\Oval}$.
The next result describes the restriction of $\Qu$ to the subspace $\MCF^{n,k}_{\Oval}$.

\begin{Theorem}\label{Thm_classification_Q}
The following is true:
\begin{enumerate}[label=(\alph*)]
\item \label{Thm_classification_Q_a} The map $\Qu|_{\MCF^{n,k}_{\Oval}} : \MCF^{n,k}_{\Oval} \to \IR^{k \times k}_{\geq 0}$ is bijective and a homeomorphism.
\item \label{Thm_classification_Q_b} If $\MM \in \MCF^{n,k}_{\Oval}$, then $\MM$ is invariant under reflections perpendicular to all spectral directions of $\Qu(\MM)$.
\item \label{Thm_classification_Q_c} If $\MM \in \MCF^{n,k}_{\Oval}$, and $\Qu(\MM)$ has non-trivial $l$-dimensional nullspace, then $\MM$ splits as a flow $\MM' \in \MCF^{n-l,k-l}_{\Oval}$ times an $\IR^l$-factor in the nullspace direction.
\item \label{Thm_classification_Q_d} We have $\Qu(\MM) = 0$ if and only if $\MM$ is the round shrinking $(n,k)$-cylinder $\MM^{n,k}_{\cyl}$.
\end{enumerate}
\end{Theorem}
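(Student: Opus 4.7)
The principal task will be to establish part~\ref{Thm_classification_Q_a}, namely that $\Qu|_{\MCF^{n,k}_{\Oval}}$ is bijective; once this is secured, parts \ref{Thm_classification_Q_b}--\ref{Thm_classification_Q_d} should follow from short uniqueness-type arguments, and the homeomorphism property from a compactness argument. Surjectivity is already supplied by \cite[Theorem~\refx{thm:oval_existence}]{Bamler_Lai_PDE_ODI}, so the real content lies in injectivity, and this is where the main technical apparatus developed in the body of the paper must be deployed.

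For injectivity, I would assume $\MM_1, \MM_2 \in \MCF^{n,k}_{\Oval}$ with $\Qu(\MM_1) = \Qu(\MM_2)$ and aim to conclude $\MM_1 = \MM_2$. The plan is to write the rescaled flows as normal graphs of functions $u^{(i)}_\tau$ over the round cylinder on sufficiently large spatial regions for $\tau$ sufficiently negative, and to analyze the difference $w_\tau := u^{(1)}_\tau - u^{(2)}_\tau$. By \cite[Proposition~\refx{Prop_dom_qu_asymp}]{Bamler_Lai_PDE_ODI}, the hypothesis $\Qu(\MM_1) = \Qu(\MM_2)$ forces every finite-mode projection of $w_\tau$ to decay faster than any polynomial in $|\tau|$. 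The goal is then to upgrade this super-polynomial decay to the conclusion $w_\tau \equiv 0$. This is precisely where the Harnack-type estimate, the \emph{leading mode condition}, and the ``induction over thresholds'' argument of the present paper enter: each induction step lowers the threshold above which the leading-mode behavior of $w_\tau$ must be trivial, until the only admissible leading modes correspond to spatial translations, temporal translations and ambient rotations, all of which are killed by the pinning and reflection invariance built into the definition of $\MCF^{n,k}_{\Oval}$. The main obstacle is precisely this step, since it is what converts polynomial-order asymptotic matching into an exact identification of flows.

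Granting injectivity, part \ref{Thm_classification_Q_b} is essentially automatic: if $v$ is a spectral direction of $\Qu(\MM)$ and $R_v$ denotes reflection across $v^\perp$, then $R_v\MM \in \MCF^{n,k}_{\Oval}$ (the required collection of $n+1$ orthogonal reflection hyperplanes may itself be reflected through $v^\perp$) and $\Qu(R_v\MM) = \Qu(\MM)$ since a symmetric form is unchanged under reflection in one of its own spectral directions; injectivity then gives $R_v\MM = \MM$. For part \ref{Thm_classification_Q_c}, if $\nullspace \Qu(\MM)$ has dimension $l$, then by part \ref{Thm_classification_Q_b} one may rotate the configuration so that the nullspace is $\bO^{k-l} \times \IR^l$; the surjectivity half of part \ref{Thm_classification_Q_a} applied at the $(n-l,k-l)$-level produces an $\MM' \in \MCF^{n-l,k-l}_{\Oval}$ whose $\Qu$ matches the non-degenerate block of $\Qu(\MM)$, and the product $\MM' \times \IR^l$ lies in $\MCF^{n,k}_{\Oval}$ and shares the same $\Qu$ as $\MM$, so injectivity yields $\MM = \MM' \times \IR^l$. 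Part \ref{Thm_classification_Q_d} is then the case $l = k$ of part \ref{Thm_classification_Q_c}, using that $\MCF^{n-k,0}_{\Oval}$ consists only of the round shrinking sphere of dimension $n-k$.

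For the topology, continuity of $\Qu$ on $\MCF^{n,k}_{\Oval}$ should follow directly from its construction via the asymptotic expansion, together with the uniform curvature estimates and smooth convergence on compact sets available to flows in this class. Continuity of $\Qu^{-1}$ will then be obtained via a compactness argument: given $\Qu(\MM_n) \to Q$, the non-collapsing, convexity, rotational symmetry and pinning properties yield a subsequential smooth limit $\MM_\infty \in \MCF^{n,k}_{\Oval}$; continuity of $\Qu$ gives $\Qu(\MM_\infty) = Q$; and injectivity identifies $\MM_\infty = \Qu^{-1}(Q)$, which by uniqueness of subsequential limits forces convergence of the entire sequence.
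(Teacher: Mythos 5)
Your proposal follows essentially the same route as the paper: surjectivity from the prequel, injectivity by running the leading-mode/Harnack machinery on the difference of the graph functions (packaged in the paper as Lemma~\ref{Lem_consequence_Vp_infty}, Corollary~\ref{cor:unique_V} and Theorem~\ref{Thm_Vp_mode_precise}), with the reflection invariance killing the $\sV_{\rot,\frac12,N}$-component and the extinction-time normalization plus a convexity/splitting argument disposing of the residual space- and time-translations, and parts \ref{Thm_classification_Q_b}--\ref{Thm_classification_Q_d} and the homeomorphism property obtained by the same equivariance, product-construction and compactness arguments. The only imprecision is that the residual unstable mode $\sV_{\rot,\frac12,N}$ is not generated by any symmetry of the flow (it is the flying-wing direction rather than a translation, time-shift or rotation), but since you correctly identify the reflection invariance as the mechanism that eliminates it, this does not affect the validity of the plan.
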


Next, we characterize the entire space $\MCF^{n,k}_{0}$ as an extension of $\MCF^{n,k}_{\Oval}$.

\begin{Definition}
We define the map
\[ \mathbf b : \MCF^{n,k}_{0}  \lto \IR^k, \]
as follows.
If $\MM \in \MCF^{n,k}_{\Oval}$, then we set $\mathbf b(\MM) := \bO$.
If $\MM \in \MCF^{n,k}_{\soliton}$ is a translating soliton with $\mathbf v = \mathbf H(\bO,0)$, then we set $\mathbf b (\MM) := |\mathbf v|^{-2} \mathbf v$; note that $\mathbf v$ is a velocity vector of $\MM$.
\end{Definition}

The next theorem describes $\MCF^{n,k}_0$ via a homeomorphic map.

\begin{Theorem} \label{Thm_classification_Qb}
For all $\MM \in \MCF^{n,k}_0$ we have $\mathbf b(\MM) \in \nullspace \Qu(\MM)$, where ``$\nullspace$''  denotes the nullspace, and the map
\begin{equation} \label{eq_Qb_map}
  (\Qu, \mathbf b) : \MCF^{n,k}_0 \lto \{ (\Qu', \mathbf b') \in \IR^{k \times k}_{\geq 0} \times \IR^k \;\; : \;\; \mathbf b' \in \nullspace(\Qu') \} 
\end{equation}
is bijective and a homeomorphism, where for the latter space we take the subspace topology within $\IR^{k \times k} \times \IR^k$.
Moreover, the following is true:
\begin{enumerate}[label=(\alph*)]
\item \label{Thm_classification_Qb_a} We have $\mathbf b (\MM) = 0$ if and only if $\MM \in \MCF_{\Oval}^{n,k}$.
If $\Qu(\MM)$ is invertible, then $\MM \in \MCF_{\Oval}^{n,k}$.
\item \label{Thm_classification_Qb_b} We have $\Qu(\MM) = 0$ if and only if $\MM$ is the round shrinking cylinder $\MM_{\cyl}^{n,k}$ (if $\mathbf b (\MM) \neq \bO$) or homothetic to $\IR^{k-1} \times \MM^{k+1}_{bowl}$.
\item \label{Thm_classification_Qb_c} $\MM$ is invariant under translations in all directions of $\nullspace \Qu(\MM)$ that are perpendicular to $\mathbf b(\MM)$.
So $\MM$ splits of a Euclidean factor of dimension equal to the nullity of $\Qu(\MM)$ if $\mathbf b(\MM) = \bO$ or equal to the nullity of $\Qu(\MM)$ minus one if $\mathbf b(\MM) \neq \bO$.
\end{enumerate}
\end{Theorem}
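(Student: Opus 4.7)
The plan is to bootstrap from the oval parametrization Theorem~\ref{Thm_classification_Q} and the classification of canonical models Theorem~\ref{Thm_classification_simple}, treating the soliton component $\MCF^{n,k}_{\soliton}$ as a controlled extension of the oval component $\MCF^{n,k}_{\Oval}$.

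First I would check $\mathbf{b}(\MM) \in \nullspace \Qu(\MM)$ for every $\MM \in \MCF^{n,k}_0$. For $\MM \in \MCF^{n,k}_{\Oval}$ this is vacuous since $\mathbf{b}(\MM) = \bO$. For $\MM \in \MCF^{n,k}_{\soliton}$, Theorem~\ref{Thm_classification_simple} presents $\MM$ as either $\IR^{k-1} \times \MM^{n-k+1}_{\bowl}$, in which case $\Qu(\MM) = 0$ by direct asymptotic computation on the bowl combined with the splitting of $\Qu$ along the Euclidean factor, or as a flying wing times a Euclidean factor $\IR^l$, in which case $\Qu$ is block diagonal: it vanishes on $\IR^l$ by splitting; it vanishes on the translation direction $\mathbf{v}$ because the rescaled profile along the translation axis is dominated by the linear translator shift $-e^{-\tau/2}\mathbf{v}$, a pure eigenmode at eigenvalue $\tfrac12$ that leaves no leading quadratic contribution; and it may be nontrivial only on the complementary wing directions. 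In either case $\mathbf{v} \in \nullspace \Qu(\MM)$ and hence $\mathbf{b}(\MM) \in \nullspace \Qu(\MM)$.

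Next I would prove bijectivity of \eqref{eq_Qb_map}. For surjectivity, fix admissible $(\Qu', \mathbf{b}')$: if $\mathbf{b}' = \bO$, invoke Theorem~\ref{Thm_classification_Q}\ref{Thm_classification_Q_a}; if $\mathbf{b}' \neq \bO$ and $\Qu' = 0$, use $\IR^{k-1} \times \MM^{n-k+1}_{\bowl}$ with the appropriate orientation and speed; if $\mathbf{b}' \neq \bO$ and $\Qu' \neq 0$, split off the null directions of $\Qu'$ orthogonal to $\mathbf{b}'$ as a Euclidean factor and realize the non-degenerate complement by a flying wing with prescribed quadratic mode and translation velocity. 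Producing such a flying wing with arbitrary prescribed admissible $\Qu$ is the central technical step; as advertised in the abstract, I would do this directly through the PDE-ODI framework of \cite{Bamler_Lai_PDE_ODI} in the spirit of Theorem~\ref{Thm_classification_Q}\ref{Thm_classification_Q_a}, obtaining a flying wing via a shooting/continuity argument in the space of asymptotic data, rather than appealing to \cite{HoffmanIlmanenMartinWhite2019}. Injectivity then follows by cases: $\mathbf{b}$ separates the oval and soliton components, the oval case is Theorem~\ref{Thm_classification_Q}\ref{Thm_classification_Q_a}, and within the soliton component equal $\mathbf{b}$ forces equal translation velocities which, combined with equal $\Qu$ and the reflection and basepoint normalizations in the definition of $\MCF^{n,k}_{\soliton}$, forces equality of the canonical models via Theorem~\ref{Thm_classification_simple} applied to the non-split factor.

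Continuity of $(\Qu, \mathbf{b})$ follows from the continuous dependence of the quadratic mode asymptotic and translation velocity under smooth-on-compacts convergence, combined with the uniform structural control on elements of $\MCF^{n,k}_0$ provided by Theorem~\ref{Thm_classification_simple}; continuity of the inverse follows from a standard compactness argument combined with bijectivity. The refinements (a)--(c) then drop out: (a) reduces to $\mathbf{b}(\MM) = \bO \Leftrightarrow \MM \in \MCF^{n,k}_{\Oval}$ together with the observation that invertible $\Qu$ forces $\mathbf{b} = \bO$ by the nullspace constraint; (b) is the identification of the $\Qu = 0$ locus with the round cylinder (when $\mathbf{b} = \bO$) and with $\IR^{k-1} \times \MM^{n-k+1}_{\bowl}$ (when $\mathbf{b} \neq \bO$); and (c) is the invariance of the canonical models under translations in the appropriate subspaces of $\nullspace \Qu$. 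The principal obstacle is the surjectivity step for flying wings: realizing every admissible quadratic matrix and translation speed by a genuine flying wing requires a fine asymptotic matching considerably more delicate than its oval analogue, because the marginal quadratic mode must be coordinated with an exponentially growing translator shift along the null direction $\mathbf{b}$.
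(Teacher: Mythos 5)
Your high-level skeleton (treat $\MCF^{n,k}_{\soliton}$ as a controlled extension of the oval component and bootstrap from Theorem~\ref{Thm_classification_Q}) matches the paper, but two of your load-bearing steps have genuine gaps, and both gaps have the same source: you never invoke the exponential difference-at-infinity machinery ($V^{+}_{-\infty}$, Corollary~\ref{cor:unique_V}, Lemma~\ref{Lem_consequence_Vp_infty}, Theorem~\ref{Thm_Vp_mode_precise}), which is what actually carries the proof.

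First, injectivity on the soliton component. You claim that equal $\Qu$, equal velocity, and the normalizations in the definition of $\MCF^{n,k}_{\soliton}$ force equality ``via Theorem~\ref{Thm_classification_simple} applied to the non-split factor.'' Theorem~\ref{Thm_classification_simple} only places each flow in the family of flying wings (or bowls); it does not say that this family is faithfully parameterized by $(\Qu,\mathbf b)$ --- that is precisely the content of the theorem you are proving, so the argument is circular as stated. The paper instead picks the reference oval $\MM^0$ with the same $\Qu$, uses Theorem~\ref{Thm_Vp_mode_precise} to read off $\mathbf b$ from the $\sV_{\rot,\frac12,N}$-component of $V^{+}_{-\infty}(\MM^0,\cdot)$, and applies Lemma~\ref{Lem_consequence_Vp_infty} to conclude the two solitons agree up to a space-time translation; the translation is then killed by convexity plus the basepoint/reflection normalization. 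Without some version of this uniqueness-modulo-translation statement, equal invariants do not force equal flows.

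Second, surjectivity. You correctly identify the construction of a flying wing with prescribed admissible $(\Qu',\mathbf b')$ as the central obstacle, but you only gesture at a ``shooting/continuity argument in the space of asymptotic data'' without giving it. The paper's route (Proposition~\ref{l:FW_construction}) is different and concrete: approximate $\Qu'$ by positive definite matrices, take the corresponding ovals, follow the tip point on the degenerate axis, show that the tip mean curvature sweeps out all of $(0,\infty)$ as $t$ ranges over $(-\infty,0)$, and pass to a limit at the time where it equals the prescribed value. Crucially, the claim that the tip curvature tends to $0$ as $t\to-\infty$ is itself proved by contradiction using the uniqueness result Lemma~\ref{Lem_compare_flows_Qsv} --- so even the existence half of the argument cannot be decoupled from the comparison machinery. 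Your well-definedness argument ($\mathbf b\in\nullspace\Qu$) via the eigenmode heuristic is plausible and could likely be made rigorous, but the paper again obtains it as a byproduct of Theorem~\ref{Thm_Vp_mode_precise} (the relevant component of $V^{+}_{-\infty}$ lives in $\sV_{\rot,\frac12,N}$ by construction, and a two-velocities splitting argument pins down $\mathbf b$). In short: the proposal is a correct outline of the reduction, but the two steps you flag as hard are exactly the ones left unproved, and filling them requires the Section~4--5 uniqueness theory rather than the classification theorem alone.
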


A crucial aspect of our approach is that we recover $\mathbf b (\MM)$ as the coefficient of an \emph{exponentially} decaying difference between two flows that each converge to the cylinder, but only at a \emph{polynomial} rate as $\tau \to -\infty$.
More specifically, fix some asymptotically $(n,k)$-cylindrical mean curvature flow $\MM$ and choose the unique flow $\MM' \in \MCF^{n,k}_{\Oval}$ with $\Qu(\MM) = \Qu(\MM')$.
Consider the rescaled versions $\td\MM_{\tau} := e^{\tau/2} \MM_{-e^{-\tau}}$ and $\td\MM'_{\tau} := e^{\tau/2} \MM'_{-e^{-\tau}}$ of both flows.
As $\tau \to -\infty$ these rescaled flows can be expressed as graphs of functions $u_\tau$ and $u'_\tau$ over the round cylinder, defined on larger and larger domains.
We will then roughly show:

\begin{Theorem}[vague version of Theorem \ref{Thm_Vp_mode_precise}] \label{Thm_def_b_vague}
For $(\bx,\by) \in \IR^k \times \IS^{n-k}$ we have
\[ u_\tau(\bx,\by) - u'_\tau(\bx,\by) = \sum_{i=0}^k b_i e^{\tau/2} \bx_i + O(|\tau|^{-1} e^{\tau/2}). \]
Moreover, the coefficients $b_i$ are uniquely determined by $\MM$ and $\mathbf b (\MM) := \sum_{i=1}^k b_i \mathbf e_i$ must be contained in the nullspace of $\Qu(\MM)$. 
\end{Theorem}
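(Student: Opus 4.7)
The plan is to analyze the difference $w_\tau := u_\tau - u'_\tau$ via the spectral theory of the Ornstein--Uhlenbeck-type operator $\LL$ linearizing the rescaled graph equation at the cylinder. Recall that $\LL$ has positive eigenvalues $1$ (the constant mode), $1/2$ (the linear Hermite modes $\bx_1,\ldots,\bx_k$ along the axis, and the degree-$1$ spherical harmonics $\by_j$ on $\IS^{n-k}$), and $0$ (the quadratic Hermite polynomials, which carry $\Qu$). The strategy is to show that $w_\tau$ decays exponentially at the sharp rate $e^{\tau/2}$ and is dominated by exactly the $\bx_i$-modes: the quadratic contribution is cancelled by the hypothesis $\Qu(\MM)=\Qu(\MM')$; the constant eigenvalue-$1$ mode decays faster, like $e^\tau$, and is absorbed into the error; and the $\by_j$-modes are forced to vanish by the axis-preserving rotational symmetry.

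First I would derive the graph equation $\partial_\tau w_\tau = \LL w_\tau + \NN$, where $\NN$ collects quadratic-in-gradient nonlinearities in $u_\tau, u'_\tau$. Because Proposition~\refx{Prop_dom_qu_asymp} shows that $\Qu(\MM)$ determines every finite Hermite mode of $u_\tau$ to arbitrarily high polynomial order $O(|\tau|^{-J})$, the assumption $\Qu(\MM)=\Qu(\MM')$ forces $w_\tau$ to decay faster than any polynomial on every fixed cylindrical region. Next, I would apply the new Harnack-type estimate together with the induction-over-thresholds scheme of the paper to upgrade this super-polynomial decay to an exponential bound $|w_\tau| = O(e^{\lambda\tau})$ for some $\lambda > 0$. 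A Merle--Zaag-type dichotomy at each level, combined with the super-polynomial control, then shows that $w_\tau$ is asymptotic to a single positive-eigenvalue mode, $w_\tau = c(\tau)\psi + o(e^{\lambda\tau})$ with $\lambda \in \{1, 1/2\}$.

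To identify $\psi$ as a combination of the $\bx_i$ alone, I note that the constant eigenvalue-$1$ mode gives decay $e^\tau$, which is faster than $e^{\tau/2}$ and hence is absorbed into the error term. The $\by_j$-modes are excluded because both $\MM$ and $\MM'$ are invariant under ambient rotations fixing the axis (by Theorem~\ref{Thm_classification_simple} and the rotational symmetry built into $\MCF^{n,k}_{\Oval}$), so $u_\tau$ and $u'_\tau$ depend on $\by$ only through $\IS^{n-k}$-invariant quantities, killing every degree-$1$ spherical harmonic. The remaining $e^{\tau/2}$-contribution is therefore $\sum_{i=1}^k b_i e^{\tau/2}\bx_i$, with the $b_i$ uniquely determined by $L^2_{f_\cyl}$-orthogonality of Hermite modes. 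The sharper error $O(|\tau|^{-1}e^{\tau/2})$ follows by a Duhamel argument: the quadratic projection of $u_\tau$ carries a polynomial tail of order $|\tau|^{-1}$ coming from the neutral ODE of the PDE--ODI principle, and its coupling through $\NN$ with the leading $e^{\tau/2}$ term produces exactly an $|\tau|^{-1}e^{\tau/2}$ correction. Finally, $\mathbf{b}(\MM) \in \nullspace\Qu(\MM)$ follows by projecting the evolution of $w_\tau$ onto the neutral quadratic subspace and tracking the resonance: a nonzero $b_i$ along an eigendirection of $\Qu$ with positive eigenvalue would generate a quadratic forcing inconsistent with the matched value $\Qu(\MM) = \Qu(\MM')$.

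The main obstacle is the exponential upgrade above: the rescaled flows are defined only on growing cylindrical regions $|\bx| \lesssim \rho(\tau)$, not on all of $\IR^k \times \IS^{n-k}$, whereas the Hermite-mode decomposition is natural only on the full weighted $L^2_{f_\cyl}$-space. Controlling truncation errors and flux into the tip region, and running the induction-over-thresholds to lower the effective spectral threshold to $1/2$ without picking up polynomial contamination, is the technical heart of the argument. A plain Merle--Zaag argument would yield at best a suboptimal exponential rate with a possible $|\tau|^j e^{\tau/2}$ polynomial prefactor; it is exactly the leading mode condition developed in this paper that isolates the clean $e^{\tau/2}$ asymptotic.
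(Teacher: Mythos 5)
Your overall architecture matches the paper's: the difference is controlled by the Harnack-type estimate coming from the leading mode condition, its semi-stable projection satisfies an ODI with \emph{relative} error terms, integrating that ODI isolates a single dominant unstable mode, the spherical-harmonic modes are killed by rotational symmetry, and the constant mode decays like $e^{\tau}$ and is absorbed into the error. Up to that point your proposal is essentially Propositions~\ref{Prop_diff_V} and \ref{Prop_Vpp_asymp}.

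The genuine gap is in your justification that $\mathbf b(\MM)\in\nullspace\Qu(\MM)$. You propose to project the evolution of $w_\tau$ onto the neutral quadratic subspace $\sV_0$ and argue that a nonzero linear coefficient along an eigendirection of $\Qu$ with positive eigenvalue would ``generate a quadratic forcing inconsistent with $\Qu(\MM)=\Qu(\MM')$.'' This cannot work: the quadratic self-interaction of an $e^{\tau/2}$-mode forces $\sV_0$ only at order $e^{\tau}$, and the neutral ODE then produces an exponentially small contribution to the quadratic projection --- entirely compatible with the matching $\Qu(\MM)=\Qu(\MM')$, which only pins down the \emph{polynomial} asymptotics $O(|\tau|^{-J})$. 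No contradiction arises, so nonzero $N^\perp$-components are not excluded by this route (and indeed they are not excluded in the paper). The correct mechanism runs in the opposite direction: the cross term $2Q_2^+(\ov U^0(\tau),V^+)$ between the \emph{background's} quadratic mode $\ov U^0(\tau)\sim \frac1{\sqrt2}\tau^{-1}G_{\Qu(\MM)}$ and the \emph{difference's} linear mode lands back in $\sV_{\rot,\frac12}$ (Lemma~\ref{l:expansion_Q_2}) and shifts the effective eigenvalue on $\sV_{\rot,\frac12,N^\perp}$ from $\frac12$ to $\frac12+(-\tau)^{-1}$, so those components decay like $(-\tau)^{-1}e^{\tau/2}$ rather than $e^{\tau/2}$. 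They survive, but only inside the $O(|\tau|^{-1}e^{\tau/2})$ error term; the coefficients visible at the clean $e^{\tau/2}$ rate therefore lie in $N=\nullspace\Qu(\MM)$. This computation is also exactly what removes the $|\tau|^j e^{\tau/2}$ polynomial-prefactor ambiguity you flag at the end: without identifying $\PP_{\sV_{>0}}Q_2^+(\ov U^0,\cdot)$ explicitly, the Merle--Zaag dichotomy cannot distinguish the $N$- from the $N^\perp$-directions, which is the entire content of the nullspace assertion.
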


{We also note that the picture from Theorem~\ref{Thm_classification_Qb} could be extended by considering the larger space $\MCF^{n,k} \supset \MCF^{n,k}_0$ of \emph{all} asymptotically $(n,k)$-cylindrical mean curvature flows that are rotationally symmetric about the axis $\IR^k \times \bO^{n-k}$; these include space and time-translations of elements of $\MCF^{n,k}_0$.}
This extended picture, combined with our methods, yields new approach for constructing the solitons $\MCF^{n,k}_{\soliton}$, which were originally constructed by Hoffman-Ilmanen-Mart{\'\i}n-White, as limits of elements of $\MCF^{n,k}_{\Oval}$ shifted in time and space.

\begin{Theorem} \label{Thm_limit_is_flying_wing}
For every $\MM \in \MCF^{n,k}_{\soliton}$ there are sequences $\MM_i \in \MCF^{n,k}_{\Oval}$ and $(\bp_i, t_i) \in \IR^{n+1} \times \IR$ such that $\MM_i + (\bp_i, t_i) \to \MM$ in the local smooth sense.
\end{Theorem}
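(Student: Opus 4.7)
The plan is to combine Brakke compactness with the complete parametrization of $\MCF^{n,k}_0$ from Theorem~\ref{Thm_classification_Qb}. If $\MM \in \MCF^{n,k}_{\Oval}$, one simply takes $\MM_i := \MM$ and $(\bp_i, t_i) = (\bO, 0)$. So assume $\MM \in \MCF^{n,k}_{\soliton}$ and write $(Q_\infty, \bb_\infty) := (\Qu(\MM), \mathbf b(\MM))$. Setting $\mathbf v := \mathbf H_\MM(\bO, 0)$, one has $\mathbf v \neq \bO$ and $\bb_\infty = |\mathbf v|^{-2}\mathbf v \in \nullspace(Q_\infty)$ by Theorem~\ref{Thm_classification_Qb}. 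The goal is to realize the target $(Q_\infty, \bb_\infty)$ as a limit of pairs $(\Qu(\MM_i), 0)$ after appropriate space-time translations.

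The construction begins by invoking Theorem~\ref{Thm_classification_Q}\ref{Thm_classification_Q_a}: for every $\lambda > 0$ there is a unique ancient oval $\MM_\lambda \in \MCF^{n,k}_{\Oval}$ with $\Qu(\MM_\lambda) = Q_\infty + \lambda\, \bb_\infty \bb_\infty^T / |\bb_\infty|^2$. I set $\MM_i := \MM_{\lambda_i}$ for a sequence $\lambda_i \to +\infty$, so that $\MM_i$ becomes increasingly elongated in the $\bb_\infty$-direction while retaining the behavior prescribed by $Q_\infty$ on the orthogonal complement. Next I choose shifts $(\bp_i, t_i) \in (\IR^k \times \bO^{n-k+1}) \times \IR$---preserving the rotational symmetry of the flow---so that the translated flow $\MM_i' := \MM_i + (\bp_i, t_i)$ has $\bO$ in its time-$0$ slice and satisfies $\mathbf H_{\MM_i'}(\bO,0) = \mathbf v$. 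For $\lambda_i$ sufficiently large this is feasible because the convex rotationally symmetric oval $\MM_i$ has tips along the $\bb_\infty$-axis realizing every positive mean-curvature magnitude in the $\bb_\infty$-direction as $t$ varies.

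Each $\MM_i'$ is ancient, non-collapsed, convex, rotationally symmetric, with bounded curvature near $(\bO,0)$, so by standard Brakke compactness a subsequence converges locally smoothly to some limit $\MM_\infty$ inheriting all these properties. Using that each $\MM_i'$ is asymptotically $(n,k)$-cylindrical (a property preserved under the shifts) with common entropy equal to that of the cylinder, upper semi-continuity of the Gaussian density forces $\MM_\infty$ to also be asymptotically $(n,k)$-cylindrical. Theorem~\ref{Thm_classification_simple} then places $\MM_\infty$, up to a further spacetime translation, in $\MCF^{n,k}_0$; the normalization at $(\bO,0)$ places it directly in $\MCF^{n,k}_{\soliton}$ with $\mathbf b(\MM_\infty) = \bb_\infty$.

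The hard part will be showing $\Qu(\MM_\infty) = Q_\infty$, for then Theorem~\ref{Thm_classification_Qb} uniquely identifies $\MM_\infty = \MM$ and concludes the proof. By construction the projection of $\Qu(\MM_i)$ onto the hyperplane perpendicular to $\bb_\infty$ equals $Q_\infty$ for every $i$, and this should pass to the limiting quadratic mode of $\MM_\infty$. The divergent eigenvalue $\lambda_i$ along $\bb_\infty$ must be absorbed into the spatial drift $\bp_i$ and re-emerge as the exponentially decaying linear mode with coefficient $\bb_\infty$ predicted by Theorem~\ref{Thm_def_b_vague}. Rigorously justifying this transmutation---that a divergent quadratic mode in the $\bb_\infty$-direction is converted under a matching spatial shift into a finite linear mode while the orthogonal quadratic behavior is preserved---is the delicate heart of the argument: it requires a joint asymptotic analysis in the rescaling parameter $\tau \to -\infty$ and in the sequence $i \to \infty$, combining the mode expansions of the prequel with careful tracking of the shifts $\bp_i$. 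Once this identification is settled, $(\Qu, \mathbf b)(\MM_\infty) = (Q_\infty, \bb_\infty) = (\Qu, \mathbf b)(\MM)$, so Theorem~\ref{Thm_classification_Qb} yields $\MM_\infty = \MM$.
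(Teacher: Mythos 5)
Your overall strategy---approximate the soliton by ancient ovals from $\MCF^{n,k}_{\Oval}$, translate each one to a tip point normalized by the mean curvature, pass to a Brakke limit, and identify the limit via the $(\Qu,\mathbf b)$-classification---is exactly the strategy of the paper (Theorem~\ref{Thm_limit_is_flying_wing} is obtained as a byproduct of the proof of Proposition~\ref{l:FW_construction}). However, your choice of approximating ovals is backwards in a way that breaks the argument. You take $\Qu(\MM_i)=Q_\infty+\lambda_i\,\bb_\infty\bb_\infty^T/|\bb_\infty|^2$ with $\lambda_i\to+\infty$, asserting that a large eigenvalue makes the oval elongated in the $\bb_\infty$-direction. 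The opposite is true: $\Qu$ has the dimension of length and satisfies $\Qu(\la\MM)=\la\,\Qu(\MM)$, and $\Qu(\MM)=0$ exactly for the round shrinking cylinder (Theorem~\ref{Thm_classification_Q}\ref{Thm_classification_Q_d}), while a vanishing eigenvalue corresponds to splitting off a line (Theorem~\ref{Thm_classification_Q}\ref{Thm_classification_Q_c}). So elongation corresponds to a \emph{small} eigenvalue, and a divergent eigenvalue corresponds to the flow closing up in that direction (cf.\ Theorem~\ref{Thm_compactness_ancient}, where unbounded $\Vert\Qu(\MM_i)\Vert$ is precisely what allows $k$ to drop in the limit). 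Moreover, $\Qu$ is translation-invariant and continuous under Brakke convergence, so \emph{any} sequence with $\MM_i+(\bp_i,t_i)\to\MM$ necessarily has $\Qu(\MM_i)\to\Qu(\MM)=Q_\infty$; since $\bb_\infty\in\nullspace(Q_\infty)$, the eigenvalue in the $\bb_\infty$-direction must tend to $0^+$, never to $+\infty$. Concretely, in the test case $k=1$, $Q_\infty=0$, your sequence consists of parabolic blow-ups of the standard compact oval; translated to the tip where $|\mathbf H|=|\mathbf v|$, these converge to a round shrinking sphere, not to the bowl soliton. The paper instead takes positive-definite $\Qu'_i\to Q_\infty$ (relevant eigenvalue tending to $0^+$), so the ovals genuinely stretch out, the times $t_i$ at which the tip curvature equals $|\mathbf v|$ recede to $-\infty$ relative to the oval's intrinsic scale, and the tip region converges to the translator.

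There is also a second, admitted gap: you flag the identification $\Qu(\MM_\infty)=Q_\infty$ and the ``transmutation'' of a divergent quadratic mode into the linear mode $\bb_\infty$ as the unproven ``delicate heart'' of the argument. No such transmutation occurs or is needed. With the correct (decaying) choice of eigenvalues, $\Qu(\MM_\infty)=Q_\infty$ is immediate from the continuity of $\Qu$ under Brakke limits \cite[Proposition~\refx{Prop_Q_continuous}]{Bamler_Lai_PDE_ODI}; the linear mode $\bb_\infty$ in the limit is produced by the unbounded spatial translations $\bp_i$ to the tip together with the normalization $|\mathbf H|(\bO,0)=|\mathbf v|$, and the limit is then pinned down as $\MM$ by Lemma~\ref{Lem_compare_flows_Qsv}, Theorem~\ref{Thm_Vp_mode_precise} and the injectivity statement of Theorem~\ref{Thm_classification_Qb} (one must also rule out the split alternative $\IR\times\MM'$ using the nonvanishing mean curvature at the origin, and verify smoothness of the convergence at time $0$ via Lemma~\ref{Lem_convex_compactness}). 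As written, your construction converges to the wrong limit, and the step you defer is the one the paper's setup renders automatic.
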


For completeness, we also restate a compactness theorem the space of asymptotically cylindrical mean curvature flows; this has been known for non-collapsed flows, so due to our main result it holds for \emph{all} asymptotically cylindrical flows.
The key point is that we need to allow the dimensional constant $k$ to drop in the limit.

\begin{Theorem} \label{Thm_compactness_ancient}
Let $\MM_i$ be a sequence of asymptotically $(n,k)$-cylindrical mean curvature flows.
Then after passing to a subsequence, we have convergence in the Brakke sense (and hence in the local smooth sense away from the extinction time) $\MM_i \to \MM$, where the limit is one of the following flows:
\begin{itemize}
\item An asymptotically $(n,k')$-cylindrical mean curvature flow, for $k' \leq k$.
\item A constant, affine, multiplicity one plane.
\item An empty flow.
\end{itemize}
If $\Vert \Qu (\MM_i) \Vert$ is uniformly bounded, then we can take $k' = k$ in the first option.
\end{Theorem}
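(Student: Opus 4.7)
The plan is to reduce to the known compactness theory for non-collapsed convex ancient mean curvature flows by invoking Theorem~\ref{Thm_classification_simple}, and then to identify the resulting limit using entropy monotonicity together with the classification of non-collapsed self-shrinkers. By Theorem~\ref{Thm_classification_simple}, each $\MM_i$ is a smooth, non-collapsed, convex ancient mean curvature flow whose entropy equals that of the round $(n,k)$-cylinder. The standard compactness theorem for non-collapsed convex ancient Brakke flows (Haslhofer--Kleiner, White) then allows us, after passing to a subsequence, to obtain Brakke convergence $\MM_i \to \MM$ to a unit-regular integral Brakke flow, with local smooth convergence away from the singular set of the limit. Thus $\MM$ is either empty, a static multiplicity-one hyperplane, or a non-trivial, smooth, convex, non-collapsed ancient mean curvature flow.

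In the third case, I would analyze the tangent flow of $\MM$ at $-\infty$: it is a smooth, non-collapsed, convex codimension-one self-shrinker in $\IR^{n+1}$, so by the Colding--Minicozzi classification of mean-convex self-shrinkers it must be a round cylinder $\IR^{k'}\times \sqrt{-t}\IS^{n-k'}$ for some $0 \leq k' \leq n$, hence $\MM$ is asymptotically $(n,k')$-cylindrical. Since the entropy of the $(n,k'')$-cylinder equals the entropy of $\IS^{n-k''}$, which is strictly decreasing in $n-k''$, upper semicontinuity of Gaussian density under Brakke convergence (applied along the axis $\IR^{k'}\times \bO^{n-k'+1}$ and then sent to $-\infty$ in time) forces the entropy of $\MM$ to be at most that of $\MM_i$, which yields $k' \leq k$.

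For the refinement under the bound $\sup_i \|\Qu(\MM_i)\| < \infty$, I would argue by contradiction, supposing the limit is asymptotically $(n,k')$-cylindrical with $k' < k$. By Theorem~\ref{Thm_classification_Q}\ref{Thm_classification_Q_a}, pick the unique ancient ovals $\MM'_i \in \MCF^{n,k}_{\Oval}$ with $\Qu(\MM'_i) = \Qu(\MM_i)$. Since $\Qu|_{\MCF^{n,k}_{\Oval}}$ is a homeomorphism onto $\IR^{k\times k}_{\geq 0}$ and $\Qu(\MM'_i)$ is bounded, a subsequence $\MM'_i$ converges to some $\MM'_\infty \in \MCF^{n,k}_{\Oval}$, which is asymptotically $(n,k)$-cylindrical. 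Combining Theorem~\ref{Thm_classification_precise} (to fix the translations aligning each $\MM_i$ with an element of $\MCF^{n,k}_0$) with Theorem~\ref{Thm_def_b_vague} (so that $\MM_i$ and $\MM'_i$ agree in their leading quadratic asymptotics as $\tau \to -\infty$ up to an exponentially decaying error), the tangent cylinder of $\MM$ at $-\infty$ must inherit the $k$-dimensional axis of $\MM'_\infty$, contradicting $k' < k$.

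The main obstacle will be this last step: one must control the translations supplied by Theorem~\ref{Thm_classification_precise} in the Brakke limit and ensure that the exponential-versus-polynomial asymptotic comparison of Theorem~\ref{Thm_def_b_vague} survives the limiting procedure. Conceptually, a drop $k' < k$ corresponds to unbounded growth of the ovality in at least one cylindrical direction, which must then be detected by a diverging diagonal entry of $\Qu(\MM_i)$; the asymptotic mode analysis encoded in $\Qu$ is what makes this dichotomy rigorous.
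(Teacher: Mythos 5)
Your treatment of the first two assertions is correct but takes a different route from the paper: you first apply Theorem~\ref{Thm_classification_simple} to upgrade each $\MM_i$ to a smooth, convex, non-collapsed flow and then quote the external compactness/rigidity theory for such flows (Haslhofer--Kleiner, White, Colding--Minicozzi) plus lower semicontinuity of entropy to get a limit that is empty, a plane, or asymptotically $(n,k')$-cylindrical with $k'\leq k$. The paper instead proves the self-contained Lemma~\ref{Lem_blow_ups}, an induction over $k$ via minimal neck scales that only uses Theorem~\ref{Thm_classification_precise} and the uniqueness of the tangent flow at infinity; your route is legitimate (the paper itself remarks that the non-collapsed case ``has been known''), and your entropy comparison $\lambda(\IS^{n-k'})\leq\lambda(\IS^{n-k})\Rightarrow k'\leq k$ is the right mechanism for the inequality $k'\leq k$.

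The genuine gap is in the refinement under $\sup_i\Vert\Qu(\MM_i)\Vert<\infty$, and it is exactly the obstacle you flag at the end without closing. Your plan compares $\MM_i$ with the oval $\MM_i'$ having the same $\Qu$ and tries to transport the $k$-dimensional axis of $\MM'_\infty$ to the limit of the $\MM_i$. This fails for two reasons. First, the translations $(\bp_i,t_i)$ supplied by Theorem~\ref{Thm_classification_precise} need not be bounded, and if they diverge the Brakke limit of $\MM_i$ carries no information about $\lim_i(\MM_i+(\bp_i,t_i))$ or about $\MM'_\infty$. Second, and more fundamentally, the tangent flow at $-\infty$ does not commute with Brakke limits: the asymptotic expansions in Theorem~\ref{Thm_def_b_vague} (and in \cite[Proposition~\refx{Prop_dom_qu_asymp}]{Bamler_Lai_PDE_ODI}) hold for $\tau\leq\ov\tau(\MM_i)$ with constants depending on the individual flow, so the statement that ``$\MM_i$ and $\MM_i'$ agree in their leading quadratic asymptotics'' gives no uniform-in-$i$ control at any fixed scale and therefore does not pass to the limit. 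What is actually needed is the quantitative link between $\Vert\Qu(\MM_i)\Vert$ and the spatial scale above which $\MM_i$ is uniformly $(n,k)$-cylindrical; this is precisely the content of \cite[Proposition~\refx{Prop_Q_continuous}]{Bamler_Lai_PDE_ODI}, which the paper cites directly for this step (see also the last statement of Lemma~\ref{Lem_convex_compactness}). Without invoking that proposition (or reproving it), your contradiction argument does not close.
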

\medskip

\subsection{Statement of main results II: The Mean Convex Neighborhood Conjecture} \label{subsec_main_results_II}
Let us first state a basic version of our resolution of the Mean Convex Neighborhood Conjecture .

\begin{Theorem} \label{Thm_MCN_basic}
Suppose that $\MM$ is an $n$-dimensional, unit-regular, integral Brakke flow in $\IR^{n+1} \times I$ and suppose that the tangent flow at some point $(\bp_0, t_0)$ is a round multiplicity one $(n,k)$-cylinder.
Then there is a neighborhood $\UU \subset \IR^{n+1}$ of $\bp$ and a continuous function $u \in C^0(\UU)$ such that for $t$ close to $t_0$ we have $(\spt \MM)_t \cap \UU = u^{-1}(t)$.
Moreover, $u$ is smooth at all $\bp \in \UU$ for which $(\bp, u(\bp))$ is regular and the mean curvature vector satisfies
\[ \mathbf H (\bp, u(\bp)) = \frac{\nabla u(\bp)}{|\nabla u(\bp)|^2}, \] 
so it is positive with respect to the co-orientation induced by viewing $\MM^{\reg}_t$ as a level set of $u$.
Moreover, at all singular points near $(\bp_0, t_0)$ the tangent flows are multiplicity one round $(n,k')$-cylinders for $k' \leq k$.
\end{Theorem}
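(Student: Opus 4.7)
The plan is to reduce all assertions to Theorem~\ref{Thm_classification_simple} together with the compactness statement Theorem~\ref{Thm_compactness_ancient}, and then to assemble a level-set structure from the resulting mean-convexity. The first task is to identify tangent flows at nearby singular points. Using Huisken monotonicity, upper semi-continuity of the Gaussian density, and the cylindrical stability results recalled from the prequel \cite{Bamler_Lai_PDE_ODI}, every spacetime point near $(\bp_0, t_0)$ has Gaussian density at most that of the round $(n,k)$-cylinder, and every tangent flow at a nearby singular point must therefore be a multiplicity-one plane or a round multiplicity-one $(n,k')$-cylinder with $k' \leq k$. This already yields the final assertion of the theorem.

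Next I would show that \emph{every} blow-up limit of $\MM$ near $(\bp_0, t_0)$, not only tangent flows at singular points, belongs to a classified family. For any sequence $(\bp_i, t_i) \to (\bp_0, t_0)$ and any sequence of scales $\la_i \to \infty$, I would extract a Brakke limit $\MM^\infty$ of the parabolic rescalings of $\MM$ centered at $(\bp_i, t_i)$ with scales $\la_i$. Combining the previous step with a sliding blow-down/monotonicity argument, the tangent flow of $\MM^\infty$ at $-\infty$ is itself a multiplicity-one $(n,k')$-cylinder for some $k' \leq k$, a plane, or empty; in the nontrivial case $\MM^\infty$ is an asymptotically $(n,k')$-cylindrical flow. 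By Theorem~\ref{Thm_classification_simple}, $\MM^\infty$ is smooth up to extinction, non-collapsed, convex, rotationally symmetric and belongs to one of the four canonical families, each of which is strictly mean-convex at every regular point.

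Armed with this, I would deduce local mean-convexity by a blow-up contradiction. If mean-convexity failed in every neighborhood of $(\bp_0, t_0)$, I could find regular points $(\bp_i, t_i) \to (\bp_0, t_0)$ at which $\mathbf H$ vanishes or some principal curvature becomes non-positive, and rescale by the natural curvature scale. The resulting blow-up limit, supplied by the previous step, would be a convex asymptotically cylindrical ancient flow with a vanishing principal curvature at a regular point; the strong maximum principle would then force it to split off a Euclidean line, an option ruled out inside the classified families by a short induction on the cylindrical dimension $k$. This yields a parabolic neighborhood $\UU \subset \IR^{n+1}$ of $\bp_0$ on which time-slices $(\spt \MM)_t$ are smooth away from lower-dimensional cylindrical singular strata and move in one consistent direction of strictly positive mean curvature. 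Finally, I would set $u(\bp) := t$ where $t$ is the unique time with $\bp \in (\spt \MM)_t$; well-definedness follows from strict mean-convex monotonicity of time-slices on $\UU$, continuity from Brakke regularity together with the smooth convergence of time-slices supplied by Step~2, and smoothness at regular points from the local graph representation of $\MM$. Differentiating $u(\bx(t)) = t$ along an MCF trajectory with $\partial_t \bx = \mathbf H$ gives $\nabla u \cdot \mathbf H = 1$; since $\nabla u$ and $\mathbf H$ are both normal to $\MM^{\reg}_t$ and hence parallel, this yields $\mathbf H = \nabla u / |\nabla u|^2$.

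The main obstacle is the mean-convexity step: everything downstream depends on it, and it requires both that every rescaled blow-up (not merely tangent flows at singular points) is captured by the classification and that convexity of blow-up limits upgrades to strict positivity of $\mathbf H$ at regular points via the strong maximum principle together with an induction on the cylindrical dimension $k$. Once mean-convexity has been established, the construction of $u$, its continuity and smoothness properties, and the formula for $\mathbf H$ follow by standard techniques from the theory of mean-convex flows.
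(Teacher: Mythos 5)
Your proposal is correct and follows essentially the same route as the paper: the paper derives this theorem as a corollary of the more uniform Theorem~\ref{Thm_MCN}, whose proof is exactly your compactness/blow-up scheme combined with the classification Theorem~\ref{Thm_classification_precise} and the strict mean convexity of the classified models. Two small remarks. First, your Step~1 as stated does not stand on its own: an upper bound on the Gaussian density by $\Theta_{\IR^k\times\IS^{n-k}}$ does not by itself classify the tangent flow in general dimension; the identification of nearby tangent flows as cylinders is really a consequence of your Step~2 (the paper's Lemma~\ref{Lem_blow_ups}), so the logical order should be reversed. Second, the ``sliding blow-down'' you invoke in Step~2 hides the key structural point: as one descends in scale past a singular point of a classified limit (e.g.\ $\IR^{k''}$ times an ancient oval), the cylindrical dimension drops, and one must rerun the whole argument at the lower dimension --- this is the induction over $k$ that the paper carries out in Lemma~\ref{Lem_blow_ups}; you place an induction on $k$ only in the splitting argument for mean convexity, but it is needed already to classify the blow-ups. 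With that adjustment, your construction of $u$, its continuity, and the formula $\mathbf H = \nabla u/|\nabla u|^2$ match the paper's argument.
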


We remark that we do not need to require that $\MM$ is cyclic.

We also characterize all blow-up limits near $(\bp_0, t_0)$:

\begin{Theorem} \label{Thm_blow_ups}
Suppose that $\MM$ is an $n$-dimensional, unit-regular, integral Brakke flow in $\IR^{n+1} \times I$ and suppose that the tangent flow at some point $(\bp_0, t_0)$ is a round multiplicity one $(n,k)$-cylinder.
Let $(\bp_i, t_i) \to (\bp_0, t_0)$ and $\la_i \to \infty$ be sequences.
Then, after passing to a subsequence, we have convergence $\la_i (\MM- (\bp_i, t_i)) \to \MM_\infty$ in the Brakke sense, where $\MM_\infty$ is empty, an affine, multiplicity one plane, or isometric to some asymptotically $(n,k')$-cylindrical mean curvature flow for $k' \leq k$, which is non-collapsed, mean convex, rotationally symmetric and classified by the discussion in the previous subsection.
\end{Theorem}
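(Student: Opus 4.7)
The plan is to combine the structural description of Theorem~\ref{Thm_MCN_basic} with Brakke compactness and the classification Theorem~\ref{Thm_classification_simple}. First, by Huisken's monotonicity formula the rescaled flows $\MM^{(i)} := \la_i(\MM - (\bp_i, t_i))$ have uniformly bounded local mass ratios, so by Brakke compactness and Ilmanen's unit-regularity theorem, after passing to a subsequence one obtains convergence $\MM^{(i)} \to \MM_\infty$ to an integral, unit-regular Brakke flow on $\IR^{n+1} \times (-\infty, T_\infty)$ for some $T_\infty \in (-\infty, \infty]$. Writing $\Theta_{n,k}$ for the Gaussian density of the round $(n,k)$-cylinder, upper semicontinuity of Huisken density at $(\bp_0, t_0)$ yields
\[
  \Theta(\MM_\infty,\bq,s)\;\le\;\limsup_{i\to\infty}\Theta\bigl(\MM,\,\bp_i+\la_i^{-1}\bq,\,t_i+\la_i^{-2}s\bigr)\;\le\;\Theta_{n,k}
\]
for every $(\bq,s)$ in the domain of $\MM_\infty$.

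Next I would push mean convexity and non-collapsing through to the limit. By Theorem~\ref{Thm_MCN_basic}, $\MM$ is a mean convex level set flow on a spacetime neighborhood of $(\bp_0, t_0)$, represented as $u^{-1}(t)$ with $\mathbf H = \nabla u /|\nabla u|^2$ at regular points. The rescaled flows $\MM^{(i)}$ are level sets of $u_i(\by):= \la_i^{2}(u(\bp_i + \la_i^{-1}\by) - t_i)$ and remain mean convex. Since the mean convex theory applies uniformly on the neighborhood supplied by Theorem~\ref{Thm_MCN_basic}, the $\alpha$-non-collapsing of Andrews--White--Haslhofer--Kleiner is preserved in the limit, and $\MM_\infty$ is therefore an ancient, mean convex, $\alpha$-non-collapsed integral Brakke flow.

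To identify $\MM_\infty$, I would analyze any tangent flow $\MM_\infty^\infty$ at infinity. A subsequential limit of $\rho_j\MM_\infty$ with $\rho_j\to 0$ can, by a diagonal argument, be rewritten as $\mu_j(\MM - (\bp_{i_j}, t_{i_j}))\to \MM_\infty^\infty$ for suitable $i_j\to\infty$ with $\mu_j:=\rho_j\la_{i_j}\to 0$. Choosing $i_j$ so that $\mu_j|\bp_{i_j}-\bp_0|\to 0$ and $\mu_j^2|t_{i_j}-t_0|\to 0$, the limit $\MM_\infty^\infty$ is recognized as a tangent flow of $\MM$ at $(\bp_0, t_0)$, hence the round $(n,k)$-cylinder by Colding--Minicozzi uniqueness of cylindrical tangent flows---or, if the shifts escape the cylindrical scale, a flat multiplicity-one plane. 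In either case $\MM_\infty^\infty$ is a mean convex self-shrinker with density at most $\Theta_{n,k}$, so the Colding--Minicozzi classification of mean convex self-shrinkers forces $\MM_\infty^\infty$ to be $\IR^{k'}\times \IS^{n-k'}$, a multiplicity-one plane, or empty; monotonicity of the Gaussian entropy of round spheres in dimension converts the density bound into $k'\le k$. If $\MM_\infty^\infty$ is a plane or empty, Huisken monotonicity and unit-regularity force $\MM_\infty$ to be an affine multiplicity-one plane or empty. Otherwise $\MM_\infty$ is asymptotically $(n,k')$-cylindrical, and Theorem~\ref{Thm_classification_simple} supplies the classification together with non-collapsing, convexity, mean convexity, and rotational symmetry.

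The main technical obstacle is the preservation step: certifying that $\MM_\infty$ genuinely inherits mean convexity and non-collapsing through the blow-up, even where the limit transitions between a mean convex level set flow and a fully ancient Brakke flow. This is handled by the robust level set structure from Theorem~\ref{Thm_MCN_basic} and a neighborhood-based application of the globally mean convex theory of White and Haslhofer--Kleiner. A secondary subtlety is the diagonal extraction used to identify the tangent flow at infinity, which requires coordinating the competing scales $\rho_j$ and $\la_{i_j}$ with the shifting center in order to invoke uniqueness of cylindrical tangent flows.
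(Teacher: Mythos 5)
Your proposal takes a genuinely different route from the paper, but it has two gaps that I do not think can be closed as written. The paper proves this theorem as a direct consequence of Lemma~\ref{Lem_blow_ups}, which runs an induction over $k$: one selects the smallest scale $r'_i$ down to which the origin remains a center of an $(n,k,\delta)$-neck, shows the blow-up limit at that scale is asymptotically $(n,k)$-cylindrical via uniqueness of the tangent flow at infinity, applies Theorem~\ref{Thm_classification_precise} to identify it, and descends to strictly smaller $k''$ by induction when the limit is a compact-oval product. Crucially, non-collapsedness, convexity and rotational symmetry of the limit are \emph{outputs} of the classification theorem, never inputs.

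The first gap is the diagonal argument identifying the tangent flow at infinity of $\MM_\infty$ with a tangent flow of $\MM$ at $(\bp_0,t_0)$. You need $\mu_j|\bp_{i_j}-\bp_0|\to 0$ with $\mu_j=\rho_j\la_{i_j}$, but $\rho_j$ is dictated by the blow-down of $\MM_\infty$ and, for fixed $j$, increasing $i_j$ makes $\la_{i_j}|\bp_{i_j}-\bp_0|$ potentially diverge (e.g.\ if $|\bp_i-\bp_0|\sim\la_i^{-1/2}$), so the recentered scales need not collapse onto $(\bp_0,t_0)$. What you actually obtain is a blow-up limit at \emph{moving} centers and intermediate scales --- which is exactly the class of objects the theorem is trying to classify, so the identification is circular. (This is precisely the tangent-flow-versus-blow-up-model distinction the introduction emphasizes; also note $\mu_j\to 0$ as you wrote it would mean zooming \emph{out} of $\MM$, not toward a tangent flow.) The second gap is the preservation step you flag yourself: Theorem~\ref{Thm_MCN_basic} gives mean convexity of the local level-set description, but the Andrews--White--Haslhofer--Kleiner $\alpha$-non-collapsing results require a globally mean convex (or a priori non-collapsed) flow; a locally mean convex level-set flow near a single singular point does not come with a uniform $\alpha$, and without that your appeal to the classification of mean convex/non-collapsed shrinkers (which moreover requires smoothness of $\MM_\infty$, not yet known) does not go through. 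Both difficulties disappear in the paper's argument, where the neck-scale selection makes the limit asymptotically cylindrical directly and Theorem~\ref{Thm_classification_precise} then supplies all the structural conclusions.
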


Lastly, we obtain a more uniform version of the Mean Convex Neighborhood Conjecture, which only requires closeness to a cylinder at some initial time instead of cylindrical tangent flows and which characterizes a large forward parabolic neighborhood via local models.
To state this result, we first recall the notion of an $(n,k,\delta)$-neck:

\begin{Definition}
Let $\MM$ be a unit-regular, $n$-dimensional, integral Brakke flow in $\IR^{n+1} \times I$.
A point $\bp_0 \in \IR^{n+1}$ is called \textbf{center of an $(n,k,\delta)$-neck of $\MM$ at scale $r > 0$ and time $t_0$} if there is an orthogonal map $S \in O(n+1)$ such that $\MM' := r^{-1} (\MM - (\bp_0, t_0))$ is $\delta$-close to the round $(n,k)$-cylinder $M^{n,k}_{\cyl} = \IR^k \times \IS^{n-k}$ at time $0$.
This means that $\MM'$ has no singular points at time $0$ within the closed $\delta^{-1}$-ball $\ov B := \ov \IB^{n+1}_{\delta^{-1}} \subset \IR^{n+1}$ around the origin, that $(\spt \MM')_0 \cap \ov B$ and $M^{n,k}_{\cyl} \cap \ov B$ have Hausdorff distance $< \delta$ and that the regular part $\MM^{\prime, \reg}_0 \cap B$ can be expressed as the normal graph $\Gamma_{\cyl}(v)$ of a function $v$ over {a subset\footnote{times a normalization factor of $\sqrt{2(n-k)}$}} of $M^{n,k}_{\cyl}$  with $\Vert v \Vert_{C^{[\delta^{-1}]}} < \delta$.
\end{Definition}

We also need the following definition.

\begin{Definition}
Let $\MM$ be a unit-regular, $n$-dimensional, integral Brakke flow in $\IR^{n+1} \times I$ and let $\eps > 0$ and $k \in \{ 0, \ldots, n-1 \}$.
We say that a regular point $(\bp, t) \in \MM^{\reg}$ has a \textbf{strong $(\eps, k)$-canonical neighborhood} if its mean curvature vector at $(\bp, t)$ satisfies $\mathbf H(\bp, t) \neq 0$ and if the following is true for $r := |\mathbf H|^{-1} (\bp,t)$.
The rescaled flow $\MM' = r^{-1} (\MM - (\bp, t))$ is $\eps$-close to an ancient flow $\MM^*$, which is isometric (modulo a rotation) to an asymptotically $(n,k')$-cylindrical mean curvature flow for some $k' \in \{0, \ldots, k \}$, in the following sense:
\begin{itemize}
\item both $\MM'$ and $\MM^*$ are regular on $P:= \IB^{n+1}_{\eps^{-1}} \times ([-\eps^{-1}, 0] \cap r^{-2} (I-t))$,
\item $\spt \MM' \cap P$ and $\spt \MM^* \cap P$ have Hausdorff distance $< \eps$.
\item $\MM^{\reg} \cap \PP$ is the normal graph of a function $v$ over an open subset of $\MM^{*,\reg}$ with $|v| + \ldots + |\nabla^{[\eps^{-1}]} v | < \eps$.
\end{itemize}
\end{Definition}

Our result is now the following:

\begin{Theorem} \label{Thm_MCN}
For every $\eps > 0$ there is a constant $\delta (\eps) > 0$ with the following property.
Suppose that $\MM$ is a unit-regular, $n$-dimensional, integral Brakke flow in $\IR^{n+1} \times I$.
Let $(\bp_0, t_0) \in  \IR^{n+1} \times I$ and $r_0 > 0$ and assume that $\bp_0$ is a center of an $(n,k,\delta)$-neck of $\MM$ at time $t_0$ and scale $r_0$, for some $k \in \{ 0, \ldots, n-1 \}$.
Then the following is true:
\begin{enumerate}[label=(\alph*)]
\item \label{Thm_MCN_a} Let $\UU \subset B(\bp_0, \eps^{-1} r_0) \subset \IR^{n+1}$ be the closure of the component of $B(\bp_0, \eps^{-1} r_0) \setminus \MM^{\reg}_{t_0}$ containing $\bp_0$.
There is a continuous function $u : \UU \to [t_0, t_0 + \eps^{-1} r_0^2] \cap I$ such that for all $t \in [t_0, t_0 + \eps^{-1} r_0^2] \cap I$ we have
\[ (\spt \MM)_t \cap B(\bp_0, \eps^{-1} r_0) = \{ u = t \}. \]
Moreover, the measure representing $\MM$ at time $t$, restricted to $B(\bp_0, \eps^{-1} r_0)$ is equal to the $n$-dimensional Hausdorff measure restricted to $\{ u = t \}$.
\item \label{Thm_MCN_b} For any 
\[ (\bp,t) \in \spt \MM \cap \big( B(\bp_0, \eps^{-1} r_0) \times ([t_0 , t_0 + \eps^{-1} r_0^2] \cap I) \big) \]
the following is true:
\begin{itemize}
\item If $(\bp, t)$ is a singular point, then its tangent flow is a multiplicity one round shrinking cylinder isometric to $\MM^{n,k'}_{\cyl}$, for some $k' \in \{ 0, 1, \ldots, k \}$.
So its nearby blow-up models are characterized by Theorem~\ref{Thm_blow_ups}.
\item If $(\bp, t)$ is a regular point, then it has a strong $(\eps, k)$-canonical neighborhood.
\end{itemize}
\end{enumerate}
\end{Theorem}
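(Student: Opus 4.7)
The plan is to prove Theorem \ref{Thm_MCN} by a compactness-and-contradiction argument that converts the rigidity of the classification Theorems \ref{Thm_classification_simple}--\ref{Thm_classification_Qb}, together with the compactness Theorem \ref{Thm_compactness_ancient}, into the uniform structural conclusions. Suppose the theorem fails for some $\varepsilon > 0$: then there exist sequences $\delta_i \to 0$, unit-regular integral Brakke flows $\MM_i$, centers $(\bp_{0,i}, t_{0,i})$ of $(n,k,\delta_i)$-necks at scales $r_{0,i}$, and counterexample points $(\bp_i, t_i)$ in the forward parabolic neighborhood at which (a), (b)(i), or (b)(ii) fails. After parabolic rescaling I assume $r_{0,i} = 1$ and $(\bp_{0,i}, t_{0,i}) = (\bO, 0)$, so $\MM_i$ at time $0$ is $\delta_i$-close to the round $(n,k)$-cylinder on the ball of radius $\delta_i^{-1}$.

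The next step is a forward-propagation lemma: using Brakke compactness and the stability of the cylindrical singularity developed in the prequel \cite{Bamler_Lai_PDE_ODI}, the flows $\MM_i$ converge smoothly on any compact parabolic window $B(\bO, R) \times [0, T]$ with $T$ strictly smaller than the extinction time of the shrinking cylinder flow. Consequently, any failure must occur either on scales strictly smaller than the ambient cylindrical scale or near the cap of the shrinking cylinder. After passing to a subsequence, let $(\bp_i, t_i) \to (\bp_\infty, t_\infty)$ and choose a failure scale $r_i$: for (b)(ii) take $r_i = |\mathbf H|^{-1}(\bp_i, t_i)$; for (b)(i) the smallest scale at which the tangent flow at $(\bp_i, t_i)$ fails to be $(n,k')$-cylindrical; for (a) a scale witnessing the deviation from the level-set/Hausdorff identity. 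Form the rescaled flow $\MM'_i := r_i^{-1}(\MM_i - (\bp_i, t_i))$; forward closeness bounds $r_i \leq C$, and in the nontrivial case $r_i \to 0$, so $\MM'_i$ resolves the ambient cylindrical structure on larger and larger scales.

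Applying Theorem \ref{Thm_compactness_ancient} to $\MM'_i$, whose large-scale blow-down inherits a round $(n,k'')$-cylinder (with $k'' \leq k$) from the original neck, I pass to a subsequential limit $\MM'_\infty$ that is an asymptotically $(n,k')$-cylindrical ancient flow with $k' \leq k$---the drop coming from directions along the Euclidean factor that degenerate under the rescaling. By Theorem \ref{Thm_classification_simple} this limit is smooth on its regular part, non-collapsed, convex, rotationally symmetric, and one of the canonical models (A)--(D); in particular it has positive mean curvature on its smooth part, strong canonical neighborhoods at every regular point, and multiplicity-one cylindrical tangent flows at every singular point. Smooth Brakke convergence on the regular part, Hausdorff convergence of the singular sets, and mass convergence guaranteed by non-collapsing on the limit transfer each of these quantitative features back to $\MM'_i$ for large $i$, contradicting the failure at $(\bp_i, t_i)$. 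Part (a) then follows from (b): mean-convexity across all blow-up models produces a well-defined continuous arrival-time function $u$ on the relevant component with $(\spt \MM)_t \cap B(\bp_0, \varepsilon^{-1} r_0) = \{u = t\}$, and non-collapsing rules out multiplicity jumps, so that the measure representing $\MM$ at time $t$ coincides with $n$-dimensional Hausdorff measure on $\{u = t\}$.

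The main obstacle I expect is the identification step: ensuring that the blow-up limit $\MM'_\infty$ is genuinely asymptotically cylindrical of the correct dimension, rather than a trivial plane or a degenerate object. This requires carefully tracking the failure scale $r_i$ against the ambient cylindrical scale of $\MM_i$ and showing that the axis of the original neck descends to a preferred Euclidean factor of $\MM'_\infty$. Once this identification is rigorously in place, the rigidity packaged into Theorems \ref{Thm_classification_simple}--\ref{Thm_classification_Qb} does essentially all the remaining work, with the uniform quantitative consequences following from soft Brakke compactness.
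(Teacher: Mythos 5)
Your overall strategy---parabolic rescaling to normalize the neck, compactness-and-contradiction with $\delta_i\to 0$, convergence of $\MM_i$ to the round shrinking cylinder, blow-up at a failure scale $r_i\to 0$ near the extinction locus, and identification of the blow-up limit with a classified ancient model---is exactly the route the paper takes, and the derivation of (a) from (b) via an arrival-time function also matches. However, there is a genuine gap at the step you yourself flag as ``the main obstacle.'' You propose to apply Theorem~\ref{Thm_compactness_ancient} to the rescaled flows $\MM'_i=r_i^{-1}(\MM_i-(\bp_i,t_i))$, but that theorem only applies to sequences of \emph{ancient, asymptotically $(n,k)$-cylindrical} flows, whereas each $\MM'_i$ is merely a Brakke flow defined on a larger and larger parabolic neighborhood whose large-scale geometry is close to a cylinder at \emph{one} scale. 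Bridging this is precisely the content of the paper's Lemma~\ref{Lem_blow_ups}: one must choose the minimal radius $r'_i$ down to which the $(n,k,\delta)$-neck structure persists, show (via uniqueness of the tangent flow at infinity) that the limit at scale $r'_i$ is a genuine asymptotically $(n,k)$-cylindrical flow falling under Theorem~\ref{Thm_classification_precise}, and then---when that limit is an oval times $\IR^{k''}$ with $k''<k$, so that the tangent flow at its extinction point is a lower-dimensional cylinder---run an \emph{induction on $k$} to descend to the actual failure scale. Your proposal names the difficulty (``carefully tracking the failure scale $r_i$ against the ambient cylindrical scale'') but supplies no mechanism for it; in particular the possibility that the cylinder dimension drops in stages between $r_{0,i}$ and $r_i$ is what forces the inductive structure, and without it the identification of $\MM'_\infty$ as a classified model does not go through.

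Two smaller points. First, part (a) is not purely a corollary of (b): the paper's claim includes a third contradiction case (a point $\bp$ lying in $(\spt\MM)_t$ for two distinct times $t'<t$), which must be excluded by the same blow-up argument before the arrival-time function is even well defined; your sketch folds this into ``mean-convexity produces a well-defined arrival-time function'' without an argument. Second, the measure identity in (a) is justified in the paper by the Hausdorff-dimension bound $\dim\MM^{\sing}\le n-1$ (via \cite{Cheeger_Haslhofer_Naber_13}), not merely by non-collapsing ruling out multiplicity jumps; your stated justification would need to be made precise.
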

\bigskip

Lastly, we point out a direct consequence of Theorem~\ref{Thm_MCN_basic} combined with \cite[Theorem~3.1]{Hershkovits_White_nonfattening} in the work of Hershkovits-White.
Recall that for a closed, smoothly embedded hypersurface $M_0 \subset \IR^{n+1}$ the \emph{discrepancy time} $T_{\textnormal{disc}} (M_0)$ is defined as the maximum of all times $T' > 0$ such that the level set flow, the innermost flow and outermost flows coincide on $[0,T')$.
Note that $T_{\textnormal{disc}} (M_0) \leq T_{\textnormal{fat}} (M_0)$, where the latter denotes the \emph{fattening time;} these two times are equal if $n=2$, see \cite[Theorem~1.8]{Bamler_Kleiner_mult1} and the same is conjectured to be true in higher dimensions.
Furthermore, by the foundational work of Ilmanen~\cite{Ilmanen_ell_reg}, there is a \emph{unique} matched, unit-regular, integral Brakke flow starting from $M_0$ on the time-interval $[0,T_{\textnormal{fat}} (M_0))$.
We now obtain the following corollary:

\begin{Corollary}\label{Cor_HW_consequence}
Let $M_0 \subset \mathbb{R}^{n+1}$ be a closed, smoothly embedded hypersurface and suppose that $T_{\textnormal{disc}} (M_0) < \infty$.
Let $\MM$ be the unique unit-regular, integral Brakke flow starting from $M_0$ on the time-interval $[0,T_{\textnormal{disc}} (M_0))$.
Then there is a singular point $(\bp, T_{\textnormal{disc}} (M_0))  \in \IR^{n+1} \times \IR_{\geq 0}$ at which no tangent flow is a multiplicity one cylinder.
\end{Corollary}

In other words, at the moment at which the innermost and outermost flows diverge, the flow must develop a singularity whose tangent flow is either of higher multiplicity or different from a cylinder.
\bigskip

\subsection{Structure of the paper an overview of the proof} \label{subsec_structure}
To classify ancient, asymptotically cylindrical flows, we must develop a robust method of comparing two given flows $\MM^0$ and $\MM^1$.

When both flows have dominant linear mode, this comparison was carried out in our prequel \cite{Bamler_Lai_PDE_ODI}, using the PDE-ODI principle, which showed that the flows must be homothetic to the bowl soliton times a Euclidean factor or the round shrinking cylinder.
It is helpful to recall our method: we considered the rescaled flows $\td\MM^i_\tau = e^{\tau/2} \MM^i_{-e^{-\tau}}$ and studied their convergence to the round cylinder as $\tau \to -\infty$.
In the case of dominant linear mode, this convergence is exponential and the asymptotics are governed by a finite set of leading exponential modes.
If $\MM^0$ and $\MM^1$ have the same leading modes, then our methods in \cite{Bamler_Lai_PDE_ODI} established asymptotic closeness of both flows to arbitrary exponential order in $\tau$, which can be converted to a suitable spatial decay.
This decay was fast enough to show $\MM^0 = \MM^1$ via a comparison principle.

In the case of dominant \emph{quadratic} mode, our work \cite{Bamler_Lai_PDE_ODI} introduced an asymptotic invariant $\Qu(\MM^i) \in \IR^{k \times k}_{\geq 0}$ and proved that this quantity determines the convergence to the cylinder up to arbitrary polynomial order.
More precisely, if $\Qu(\MM^0) = \Qu(\MM^1)$, then the rescaled flows $\td\MM^0$ and $\td\MM^1$ must agree to the order $O(|\tau|^{-J})$ as $\tau \to \infty$ on larger and larger neighborhoods of the origin, for any $J$.
This decay is, however, much weaker than exponential decay, and Theorem~\ref{Thm_def_b_vague} shows that exponentially small terms are, in fact, often essential for distinguishing two flows.

Unfortunately, exponentially decaying differences---arising when the individual flows converge only at a polynomial rate---lie beyond the reach of our methods from~\cite{Bamler_Lai_PDE_ODI}.
Roughly speaking this is because the PDE–ODI principle developed in our prior work rests on a pseudolocality estimate that operates at a fixed threshold, and this reliance is responsible for an error term in the resulting ODI.
While we were able to force this error term to be an arbitrary power of the leading mode, this is insufficient here: the leading mode of each flow itself is only polynomial, whereas the difference between $\td\MM^0$ and $\td\MM^1$ decays exponentially.
Thus, to control the exponentially small difference between $\td\MM^0$ and $\td\MM^1$, we must find an alternative to pseudolocality that also remains effective at small thresholds.
Specifically, we need a local \emph{Harnack-type} estimate that bounds the growth of such differences.

This is precisely what the \emph{leading mode condition} introduced in Section~\ref{sec_leading_mode} accomplishes.
It characterizes the difference between $\MM^0$ and $\MM^1$ in regions where the flows are close, so where $\MM^1$ can be written as a normal graph of a function $u$ over $\MM^0$.
Specifically, it asserts roughly that:
\begin{itemize}
\item On cylindrical regions of $\MM^0$, the function $u$ is locally well approximated by an element $U\in\sV_{\ge0}$ (the semi-stable subspace of the linearized operator at the cylinder from \cite{Bamler_Lai_PDE_ODI}), and the error of this approximation is small \emph{relative to $\|U\|$} and  modulo an absolute error term that decays exponentially in time at a rapid rate.
\item On regions of $\MM^0$ modeled on a bowl-soliton times a Euclidean factor, $u$ is controlled by its values on the adjacent cylindrical region multiplied by a spatial exponential factor, again modulo an error term that decays rapidly exponentially in time.
\item On regions not modeled on either geometry (such as the ``cap region'' of a flying wing soliton), we do not impose any bounds on $u$.
\end{itemize}
The leading mode condition depends on several auxiliary constants governing these approximations, which must be chosen carefully during our construction.

We will show that the leading mode condition holds for any two ancient cylindrical flows\footnote{For technical reasons, we assume $\MM^0$ is convex and rotationally symmetric, though these assumptions are not essential.}.
Philosophically, this follows from a stability property: the condition is designed so that---for appropriately chosen constants---a stronger version can be deduced from a weaker one.

However, the stability alone does not allow us to establish the leading mode condition from scratch, because ancient flows have no initial time at which the condition is known to hold.
To overcome this, we introduce a novel ``induction over thresholds'' argument.
We modify the leading mode condition so that the properties listed above are required only in the region where $u \gtrsim c$ for some fixed $c \ge 0$.
When $c=0$, this reduces to the original leading mode condition.
Our induction step consists of progressively lowering the threshold for $u$ at which the leading mode condition must hold.
We assume that the leading mode condition holds wherever $u\gtrsim c$ for some  $c > 0$, and we prove---using the stability property from before---that it also holds wherever $u\gtrsim \frac12 c$.
Iterating this process ultimately establishes the full leading mode condition, without an additional threshold condition, in the limit.

To initiate the induction, i.e., to show that the leading mode condition holds whenever $u \gtrsim c$ for some macroscopic $c>0$, we appeal to our prior work \cite{Bamler_Lai_PDE_ODI}.
There, it was shown that $\MM^0$ and $\MM^1$ can, in most regions, be approximated by a cylinder or a bowl times an Euclidean factor.
This approximation, however, comes with an \emph{absolute} error term and is therefore too coarse to imply the desired bounds from the list above, which characterize $u$ \emph{relative} to its nearby values.
But on regions where $u \gtrsim c$, such an absolute error can be converted into a relative one, which is precisely what is needed to begin the induction.

To achieve our induction step, we show a strengthened version of the stability property mentioned above---one that remains effective even when the leading mode condition is known only above a threshold and it is strong enough to allow that threshold to be lowered repeatedly.
It is established by propagating each of the conditions from the list above using the other conditions via a combination of maximum principle arguments, asymptotic estimates, and limiting procedures.

\medskip

Once established, the leading mode condition yields a Harnack-type inequality for the difference between the rescaled flows $\td{\MM}^0$ and $\td{\MM}^1$, which enables a localized parabolic analysis reminiscent of our PDE-ODI principle from \cite{Bamler_Lai_PDE_ODI} or the approach of Angenent-Daskalopoulos-Sesum~\cite{ADS_2019} in the rotationally symmetric case.
In Section~\ref{sec_diff}, we show that this difference is governed by a single unstable mode, which decays exponentially as $\tau \to -\infty$.
After eliminating several modes by translating $\MM^1$ in space and time, the only remaining mode is the one identified in Theorem~\ref{Thm_def_b_vague}.
If this mode vanishes, the flows agree up to a term that decays rapidly exponentially, which can be converted to suitable global quantitative control. 
This allows us to conclude $\MM^0 = \MM^1$ by a comparison principle (which is carried out in Section~\ref{sec_uniqueness}).
This yields the classification theorem stated in Subsection~\ref{subsec_main_results_I}.

\medskip

The resolution of the Mean Convex Neighborhood Conjecture stated in Subection~\ref{subsec_main_results_II} follows directly from the classification of ancient asymptotically cylindrical flows.
The argument parallels the $1$-cylindrical case in \cite{Choi_Haslhofer_Hershkovits_White_22}, except that we must allow for a hierarchical structure of singularity models: as we rescale the flow near a singular point $(\bp_0,t_0)$, regions may exhibit $(n,k)$-cylindrical structure with $k$ decreasing along the flow.
At every scale experiencing such a drop, the flow is close to an asymptotically cylindrical model, which has strictly positive mean curvature by our classification.
Hence the mean curvature cannot change sign.

\medskip

Finally, we summarize the structure of our paper.
In Section~\ref{sec_Preliminaries} we discuss preliminaries that are particularly relevant for this paper.
In Section~\ref{sec_leading_mode} we establish the leading mode condition.
While this condition is slightly involved, it is confined to this section and does not appear in the main result used in the next section.
In Section~\ref{sec_uniqueness} we carry out the comparison principle in the same section, which establishes equality of both flows under a strong asymptotic decay condition.
In Section~\ref{sec_diff} we study the evolution of the difference between two ancient rescaled flows using a localized analysis and characterize its asymptotic behavior as $\tau \to -\infty$ via the dominant mode.
Section~\ref{sec_proofs_I} contains the classification of asymptotically cylindrical flows (see Subsection~\ref{subsec_main_results_I}), and Section~\ref{sec_proofs_II} proves our results on the Mean Convex Neighborhood Conjecture (see Subsection~\ref{subsec_main_results_II}).

\subsection{Acknowledgements}
We thank Or Hershkovits for pointing out that Corollary~\ref{Cor_HW_consequence} is a consequence of combining our work with \cite{Hershkovits_White_nonfattening}.
\bigskip

\section{Preliminaries} \label{sec_Preliminaries}
We refer to the preliminaries section in \cite{Bamler_Lai_PDE_ODI} for a discussion of terminologies and basic facts.
We recall the following notions that are particularly important to this paper.
Let $\MM$ be an $n$-dimensional unit-regular, integral Brakke flow in $\IR^{n+1} \times I$.

We say that $\MM$ is \textbf{convex} if there is a $T \leq \infty$ (its \textbf{extinction time}) such that $(\spt \MM)_t = \emptyset$ for $t > T$, $\emptyset \neq (\spt \MM)_{T} = \MM^{\sing}_T$ if $T < \infty$ and if $t < T$, then $(\spt \MM)_t = \MM^{\reg}_t$ is the boundary of a closed, convex subset $C_t = \IR^{n+1}$ with non-empty interior.
The time-slice $\MM^{\reg}_t$ must have non-negative definite second fundamental form if the co-orientation is chosen accordingly.

We also recall that a submanifold $M \subset \IR^{n+1}$ or $\MM$ is called \textbf{$(n,k)$-rotationally symmetric,} if it is invariant under orthogonal maps in $O(n-k+1)$ applied to the second factor of $\IR^{n+1} = \IR^k \times \IR^{n-k+1}$.
The subspace $\IR^k \cong \IR^k \times \bO^{n-k+1}$ is called the \textbf{axis of rotation.}
When $\MM$ is asymptotically $(n,k)$-cylindrical or when the context is clear, we will often omit the prefix ``$(n,k)$''.

If $M \subset \IR^{n+1}$ is $(n,k)$-rotationally symmetric and convex, then it can be expressed in the form
\[ M = \bigcup_{\bq \in \IR^{k}} M(\bq), \]
where $M(\bq)$ is either empty or of the following form for some $\rho(\bq) \geq 0$
\[ M(\bq) = \bq \times (\rho(\bq) \IS^{n-k}) . \]
Here $\rho(\bq)$ is called the \textbf{radius of $M$ at $\bq$;} note that due to our definition of $\IS^{n-k}$, the radius of the sphere $M(\bq)$ is in fact $\sqrt{2(n-k)} \, \rho(\bq)$.
If $M(\bq) \neq \emptyset$, then we say that $\bq$ is \textbf{covered by $M$.}
If a smooth mean curvature flow $\MM$ is rotationally symmetric, then we denote the radius function of $\MM_t$ by $\rho(\cdot, t)$.
\medskip

In this paper, we will consider the \textbf{linearized mean curvature flow equation} on an evolving background.
We recall the relevant equations.
Suppose that $\MM^0$ and $\MM^1$ are smooth mean curvature flows in $\IR^{n+1} \times I$ and suppose that we can express $\MM^1$ as a normal graph of a function $u : \DD \to \IR$ over a subset $\DD \subset \MM^{0,\reg}$.
More precisely, fix a smooth unit normal vector field $(\nu_{\bp, t})_{(\bp,t) \in \MM^0}$.
Then we assume that 
\[ (\spt \MM^1) = \MM^{1,\reg} = \big\{ (\bp + u(\bp,t) \nu_{\bp,t},t) \;\; : \;\; (\bp,t) \in \DD \big\}. \]
It is well known that the mean curvature flow equation on $\MM^{1,\reg}$ can be expressed in terms of an equation on $u$ of the form
\[ \partial_{\mathbf t} u = \triangle u + |A_{\MM^0}|^2 u + Q(u, \nabla u, \nabla^2 u, A_{\MM^0}), \]
where $\partial_{\mathbf t}$ is the normal time-derivative and $\triangle$ is the intrinsic Laplacian of $\MM^0$ and $A_{\MM^0}$ is the second fundamental form.
The non-linear term vanish to second order in $u, \nabla u, \nabla^2 u$.
So the linearization of this equation is
\begin{equation} \label{eq_lin_MCF}
 \partial_{\mathbf t} u = \triangle u + |A_{\MM^0}|^2 u. 
\end{equation}
If $\MM^0$ is the round shrinking cylinder $\MM_{\cyl}^{n,k}$, with time-slices $\IR^k \times (\sqrt{-t} \IS^{n-k})$ then (since by definition the radius of $\IS^{n-k}$ is $\sqrt{2(n-k)}$) we have $|A_{\MM^0}| = \frac12 |t|^{-1}$, so
\[ \partial_{\mathbf t} u = \triangle u + \tfrac1{2|t|} u. \]
So if we pass to the rescaled flow $\td\MM_{\tau}^{\reg} = e^{\tau/2} \MM^{\reg}_{-e^{-\tau}}$, which is just the constant cylinder $\MM_{\cyl}^{n,k}$, then the function $\td u(\bp,\tau) := e^{\tau/2} u(e^{-\tau/2} \bp, -e^{-\tau})$ satisfies the familiar rescaled linearized equation from \cite{Bamler_Lai_PDE_ODI}
\[ \partial_\tau \td u = \triangle_f \td u + \tfrac12 \td u. \]

\bigskip

\section{The leading mode condition} \label{sec_leading_mode}

\subsection{Overview and main result}
This section contains the key estimate of the paper.
We will consider two asymptotically cylindrical mean curvature flows, $\MM^0$ and $\MM^1$.
In regions where the flows are close, one can describe $\MM^1$ as a perturbation of $\MM^0$ by means of a perturbation function $u$, which may be thought of informally as the ``difference'' between the flows.
Roughly speaking, we will show that this perturbation function can, in neck regions and to high accuracy, be approximated by weakly unstable modes from $\sV_{\geq 0}$.
Among other things, this implies that the separation between the flows can grow at most exponentially in space and time.

The underlying idea is that, whenever the perturbation function is sufficiently small, it behaves approximately like a solution to the linearized mean curvature flow equation.
Using this picture, we will decompose $u$ into a sum of weakly unstable modes from $\sV_{\geq 0}$ and stable modes from $\sV_{<0}$.
Since the stable modes decay more rapidly, one expects that the weakly unstable modes eventually dominate.
For ancient flows, this domination should in fact hold at all times.

Of course, this picture is far too simplistic.
In reality, the perturbation function is only defined on the open subset of $\MM^0$ where the two flows are close, which complicates our analysis.
Moreover, for general non-linear PDEs, a property of this type may often be highly desirable, but typically false.
Indeed, if $u_0$ and $u_1$ are solutions on an open domain (say an open subset of $\IR^n$), then their difference $u := u_1 - u_0$ can rarely be described by the unstable modes of the linearization of this PDE, particularly when $|u| \ll |u_0|, |u_1|$.
Even Harnack-type estimates as basic as $|\nabla u| \leq C |u|$ generally fail.
In our situation, such estimates also lie beyond the scope of the PDE-ODI principle from \cite{Bamler_Lai_PDE_ODI}, because the resulting evolution inequality for $u$ necessarily involves error terms depending on $u_0$ and $u_1$, which dominate precisely in regions where $|u| \ll |u_0|, |u_1|$.

The situation is more favorable for ancient cylindrical mean curvature flows.
Although a direct PDE analysis is unavailable in regions where $\MM^0$ and $\MM^1$ are not close, one can often describe the geometry of the flows there.
This in turn yields useful PDE-type estimates on the perturbation function near the boundary of its domain, which allow us to establish a carefully chosen \emph{leading mode condition} via an ``induction over thresholds'' argument.
Ultimately, we will prove that any two ancient cylindrical flows $\MM^0$ and $\MM^1$ (with $\MM^0$ assumed to be convex and rotationally symmetric for technical reasons) satisfy this leading mode condition.

The precise definition of the leading mode condition is given in Definition~\ref{Def_lead_mode} and it is established in Proposition~\ref{Prop_leading_mode_condition}.
This is the central result of this section and key estimate of this paper.
It provides a detailed description of the possible local differences between two ancient cylindrical flows.
Surprisingly, however, much of the content of Proposition~\ref{Prop_leading_mode_condition} can already be captured by the following proposition, which is more concise and self-contained.
Although this proposition looks deceptively simple, it is essentially equally strong as the leading mode condition and it will be more convenient for the remainder of this paper.

\medskip
The following proposition establishes an exponential growth bound on the difference between $\MM^0$ and $\MM^1$ in sufficiently cylindrical regions.
Moreover, if this difference decays rapidly in time, then it states that both flows are extremely close in regions of sufficiently large scale.
Later, we will use this closeness to show that in this case, the two flows must in fact coincide.

\begin{Proposition} \label{Prop_diff_properties}
Let $\MM^0, \MM^1$ be two asymptotically $(n,k)$-cylindrical mean curvature flows in $\IR^{n+1} \times (-\infty,T)$, where we assume $\MM^0$ to be convex and $(n,k)$-rotationally symmetric and assume that for some constant $C^* > 0$
\[ \Vert \Qu(\MM^1)\Vert \leq C^* \Vert \Qu(\MM^0)\Vert.\]
For each $i = 0,1$ consider the corresponding rescaled flow $\td\MM^i$, so $\td\MM^{i,\reg}_\tau = e^{\tau/2} \MM^{i,\reg}_{-e^{-\tau}}$ (see \cite[Subsection~\refx{subsec_resc_mod_mcf}]{Bamler_Lai_PDE_ODI} for more details).
Let $u_i \in C^\infty(\DD_i)$ for $\DD_i \subset M_{\cyl} \times (-\infty, -\log(-T))$ be the functions representing these flows as graphs over the standard round cylinder as in the statement of \cite[Proposition~\refx{Prop_PDE_ODI_MCF}]{Bamler_Lai_PDE_ODI}.
Recall that this means that for all $\tau < -\log(-T)$ and $i = 0,1$ there is a maximal radius $R'_{i,\tau} \in [0,\infty]$ such that we can write
\[ \td\MM^{i,\reg}_\tau \cap \IB^{n+1}_{R'_{i,\tau}} = \Gamma_{\cyl}(u_{i,\tau}) \]
for some $u_{i,\tau} \in C^\infty(\DD_{i,\tau})$.
Set $v := u_{1} - u_0$ on $\DD := \DD_0 \cap \DD_1$.
Then the following is true:
\begin{enumerate}[label=(\alph*)]
\item \label{Prop_diff_properties_a} There is a  constant $C(n, C^*) > 0$ with the following property.
Suppose that for some $\tau < -\log(-T)$ and $R > 1$ we have
 $|\nabla^m u_{0,\tau}|, |\nabla^m u_{1,\tau}| \leq C^{-1}$ on $\IB^k_{R} \times \IS^{n-k} \subset \DD_\tau$ for $m = 0, \ldots, 100$.
Then we have the following bound on $\IB^k_{R-C} \times \IS^{n-k}$, where $r$ denotes the radial distance function on $\IR^k$,
\begin{equation} \label{eq_Harnack_inprop}
 |v_\tau|, |\nabla v_\tau|, \ldots, |\nabla^{10} v_\tau| \leq C e^r \big( \Vert v_\tau \Vert_{L^2(\IB^k_{10} \times \IS^{n-k})} +  \Vert \Qu(\MM^0) \Vert^{10} e^{5\tau} \big). 
\end{equation}

\item \label{Prop_diff_properties_b} For large enough $A > 0$ there are constants $C(A,C^*,n), \eps'(A,C^*,n) > 0$ with the following property.
Suppose that
\[ \liminf_{\tau \to -\infty} e^{-2\tau} \Vert v \Vert_{L^2(\IB^k_{10} \times \IS^{n-k})} < \infty. \]
Then for every $r \geq C \Vert \Qu (\MM^0) \Vert$, any time $t < T$ and any point $\bq \in \IR^{k} \cong \IR^k \times \bO^{n-k+1}$ on the axis of rotation of $\MM^0$ the following is true.
Suppose that $r^{-1} (\MM^{0,\reg} - (\bq, t))$ is $\eps'$-close, at time $0$, to a submanifold $M \subset \IR^{n+1}$ which is equal to either $M_{\cyl}$ or to $S (\IR^{k-1} \times M_{\bowl})$ for some rotation $S \in O(k)$ of $\IR^k \times \bO^{n-k+1}$.
Then 
\[ (\spt \MM^i)_t \cap B(\bq, Ar) \subset \MM^{i,\reg}_t \] and the Hausdorff distance between both subsets satisfies the bound
\[ d_H \big( (\spt \MM^0)_t \cap B(\bq, Ar), (\spt \MM^1)_t \cap B(\bq, Ar) \big) \leq C\Big( \frac{\Vert \Qu(\MM^0) \Vert}{r} \Big)^{10} r. \]
\end{enumerate}
\end{Proposition}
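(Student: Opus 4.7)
The plan is to derive both parts directly from the leading mode condition (Proposition~\ref{Prop_leading_mode_condition}), which is the main technical result of this section. The hypotheses of that proposition---$\MM^0$ convex and rotationally symmetric, both flows asymptotically $(n,k)$-cylindrical, and $\|\Qu(\MM^1)\|$ comparable to $\|\Qu(\MM^0)\|$---place us within its scope. Throughout, I will work with the graphical perturbation $u$ describing $\MM^1$ as a normal graph over $\MM^0$; under the smallness assumptions imposed in each part, $u$ and $v = u_1 - u_0$ agree up to higher-order corrections that do not affect the bounds.

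For part~\ref{Prop_diff_properties_a}, the $C^{100}$-smallness of $u_0,u_1$ on $\IB^k_R \times \IS^{n-k}$ forces $\MM^0$ to be cylindrical there. The leading mode condition then asserts that $u$ is locally well approximated by a single element $U \in \sV_{\geq 0}$, with multiplicative error of order $o(\|U\|)$ plus an absolute error of order $\|\Qu(\MM^0)\|^{10} e^{5\tau}$. Since $\sV_{\geq 0}$ is finite-dimensional and spanned by Hermite polynomials of degree at most $2$ on the Euclidean factor times low-degree spherical harmonics, any $U \in \sV_{\geq 0}$ satisfies $\|U\|_{L^\infty(\IB^k_R \times \IS^{n-k})} \lesssim (1+R)^2 \|U\|_{L^2(\IB^k_{10}\times \IS^{n-k})} \le C e^R \|U\|_{L^2}$. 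Bounding $\|U\|_{L^2}$ above by $\|v\|_{L^2(\IB^k_{10}\times\IS^{n-k})}$ plus the absolute error term then yields the $C^0$ version of \eqref{eq_Harnack_inprop}. Derivative bounds follow by standard parabolic Schauder theory applied to the linearization \eqref{eq_lin_MCF}, whose non-linear remainder is controlled by the $C^{100}$-smallness of $u_0, u_1$.

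For part~\ref{Prop_diff_properties_b}, the liminf hypothesis produces a sequence $\tau_k \to -\infty$ along which $\|v_{\tau_k}\|_{L^2(\IB^k_{10}\times\IS^{n-k})} \le C_0 e^{2\tau_k}$. Applying the leading mode condition on the central cylindrical region at times $\tau_k$, the best-fit $U(\tau_k) \in \sV_{\geq 0}$ must satisfy a comparable $e^{2\tau_k}$ bound. Since $U(\tau)$ evolves under the linearized equation by eigenmodes of $L = \triangle_f + \tfrac{1}{2}$ with eigenvalues lying in $[0,1]$ on $\sV_{\geq 0}$---all strictly less than $2$---decay at rate $e^{2\tau}$ along an unbounded sequence is incompatible with any nonzero combination. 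Hence $U \equiv 0$, and the leading mode condition collapses to $|v_\tau| \le C \|\Qu(\MM^0)\|^{10} e^{5\tau}$ on the cylindrical region. The geometric conclusion follows by parabolically rescaling about $(\bq,t)$ by the factor $r$, with effective rescaled time $\tau_{\mathrm{eff}} \sim -\log r^2$ so that $e^{5\tau_{\mathrm{eff}}} \sim r^{-10}$: in the cylinder case I apply the collapsed estimate directly, and in the bowl case I invoke the bowl-region clause of the leading mode condition, which bounds $v$ there by its values on the adjacent cylindrical region times a spatial exponential factor that is absolute on the ball of fixed rescaled radius $A$. Unwinding the rescaling produces the Hausdorff distance estimate of order $r(\|\Qu(\MM^0)\|/r)^{10}$, and the inclusion $(\spt\MM^i)_t \cap B(\bq, Ar) \subset \MM^{i,\reg}_t$ follows from the $\eps'$-closeness of $\MM^0$ to the model together with the smallness of $v$.

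The main obstacle will be matching the precise quantitative content of the leading mode condition---its thresholds, constants, and the exact form of its cylindrical and bowl-region clauses---to the scalings needed in each part. In particular, for part~\ref{Prop_diff_properties_b} one must verify that the spatial exponential factor from the bowl-region clause remains uniformly bounded on the ball of radius $Ar$ under the chosen rescaling, and that the various prefactors can all be absorbed into a single $(\|\Qu(\MM^0)\|/r)^{10}$. A secondary subtlety is keeping $u$ and $v$ interchangeable throughout, which requires checking that the higher-order corrections coming from the non-linearity of the MCF equation do not inflate the Harnack constant or alter the absolute error prefactor.
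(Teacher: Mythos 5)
The main issue is in part~\ref{Prop_diff_properties_a}. You treat the leading mode approximation as a \emph{single} element $U \in \sV_{\geq 0}$ that approximates the difference on all of $\IB^k_R \times \IS^{n-k}$, and then use the polynomial growth of elements of $\sV_{\geq 0}$ to convert $\Vert U \Vert_{L^2(\IB^k_{10}\times\IS^{n-k})}$ into an $L^\infty$ bound at distance $R$. Property~\ref{Def_lead_mode_1} of Definition~\ref{Def_lead_mode} gives no such global statement: at each point $\bq$ it approximates the cylindrical model $\td u_{\bq,\tau}$ by a \emph{point-dependent} $U_{\bq,\tau}$, and only on a ball of radius $\delta^{-1}$ in the rescaled coordinates; these local approximants drift as $\bq$ varies. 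The mechanism that actually propagates the bound from the origin out to radius $R$ is the spatial derivative bound \eqref{eq_dbounds_norm} on the weighted norm $\Vert u \Vert_{\bq,\tau;\delta'}$, whose integration along segments (via a minimal-counterexample argument, after relating this norm to $L^2$- and $C^{10}$-norms as in Claim~\ref{Cl_more_norms}) is precisely what produces the factor $e^{r}$ in \eqref{eq_Harnack_inprop}. As written, your argument would even yield quadratic polynomial growth in $R$, which is more than the leading mode condition can deliver; the derivation is therefore not valid even though the stated conclusion is weaker.

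Part~\ref{Prop_diff_properties_b} follows the paper's route in outline, but two steps are glossed over. First, $U(\tau)$ does not evolve by exact eigenmodes of $L$, so the assertion ``decay at rate $e^{2\tau}$ is incompatible with any nonzero combination, hence $U\equiv 0$'' is not the right conclusion; the leading mode condition only provides the differential inequality on $\Vert u\Vert_{\bq,t;\delta'}$ \emph{above} the threshold $C_1(\Vert\Qu(\MM^0)\Vert/r)^{10}\alpha$, and what one actually proves (by an ODE comparison backward in time against the sequence supplied by the liminf hypothesis, as in Claim~\ref{Cl_u_bound_everywhere}) is that $\Vert u\Vert_{\bq,t;\delta'}$ is everywhere dominated by this threshold --- not that the leading mode vanishes. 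Second, the interchange of $v = u_1-u_0$ with the graph function $u$ of the flow pair is a genuine step rather than a ``higher-order correction'': the two are graphs over different base surfaces, and the conversion in both directions (an $L^2$ bound one way, a $C^{10}$ bound the other) is what Lemma~\ref{Lem_tech_u_v} is for. With these repairs, your plan for part~\ref{Prop_diff_properties_b}, including the rescaling $e^{5\tau_{\mathrm{eff}}}\sim r^{-10}$ and the use of the cap clause (Property~\ref{Def_lead_mode_2}) on bowl regions, matches the paper's argument.
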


We stress that Proposition~\ref{Prop_diff_properties} only summarizes the results of this section in a form that is convenient for subsequent sections.
Its conclusions are by no means optimal.
For example, the exponential growth rate in Assertion~\ref{Prop_diff_properties_a} could be improved to $e^{\delta r}$ and the term $\Vert \Qu (\MM^0) \Vert^{10} e^{5\tau}$ could be replaced by any term of the form $\Vert \Qu (\MM^0) \Vert^{2E} e^{E\tau}$ for $E \geq 1$.
None of these refinements, however, will be needed in the next sections.

\medskip
This section is organized as follows.
In Subsection~\ref{subsec_terminology_lead_m}, we introduce important terminology that will be used throughout the section.
In Subsection~\ref{subsec_lead_mode}, we define the leading mode condition and state the key result, which establishes this condition for any pair of flows.
The proof of the leading mode condition is carried out in Subsections~\ref{subsec:Preparatory}--\ref{subsec_leading_mode_condition_proof}; the structure of these sections will be summarized at the end of Subsection~\ref{subsec_lead_mode}.
Finally, Subsection~\ref{subsec_lead_mode_last} explains how the leading mode condition can be converted into Proposition~\ref{Prop_diff_properties}.

\bigskip

\subsection{Terminology} \label{subsec_terminology_lead_m}
In the following we will fix dimensions $1 \leq k < n$ and omit dependence on these constants for the remainder of this section.
We will often write $\IR^k$ instead of $\IR^k \times \bO^{n-k+1}$.

We first introduce the following terminology, which will be a slightly more convenient variant of the condition of being a center of an $\eps$-neck.

\begin{Definition}[$\eps$-cylindricality] \label{Def_eps_cyl}
Let $\MM$ be an $n$-dimensional ancient asymptotically cylindrical mean curvature flow on $\IR^{n+1} \times (-\infty,T)$.
We say that $\MM$ is \textbf{$\eps$-cylindrical} at a point $(\bp, t) \in \IR^{n+1} \times (-\infty,T)$ if the Gaussian area at time $t$ satisfies the bound
\[\sup_{\Delta T > 0} \Theta^{\MM}_{(\bp, t+\Delta T)} ( \Delta T) \geq \Theta_{\IR^{k} \times \IS^{n-k}} - \eps, \]
where $\Theta_{\IR^{k} \times \IS^{n-k}} $ is the entropy of the cylinder. \end{Definition}

The definition  of $\eps$-cylindricality is chosen to ensure that the characterizations in the following lemma hold.

\begin{Lemma} \label{Lem_eps_cyl}
There is an $\eps_{\cyl} > 0$ and a universal continuous function $\Psi_{\cyl} : [0,1) \to [0,1)$ with $\psi(0) = 0$ such that the following is true: 
\begin{enumerate}[label=(\alph*)]
\item \label{Lem_eps_cyl_a} Definition~\ref{Def_eps_cyl} is invariant under (parabolic) rescaling.
\item \label{Lem_eps_cyl_b} If $\MM$ is $\eps_1$-cylindrical at $(\bp,t)$, then it is also $\eps_2$-cylindrical at $(\bp,t)$ for all $\eps_2 \geq \eps_1$.
\item \label{Lem_eps_cyl_c} Consider two times $t_1 < t_2$
and assume that $\MM$ is $\eps_2$-cylindrical at $(\bp, t_2)$ for some $\eps_2 \in ( 0, \eps_{\cyl})$.
Then it is $\eps_1$-cylindrical at $(\bp, t_1)$ for some $\eps_1 < \eps_2$.
\item \label{Lem_eps_cyl_d} If $\MM$ is $\eps$-cylindrical at $(\bp,t)$, then $ \MM - (\bp,t)$ is $\Psi_{\cyl}(\eps)$-close to $M_{\cyl}$ at time $0$ and at some scale $r >0$.
\item \label{Lem_eps_cyl_e} Vice versa, if $ \MM - (\bp,t)$ is $\eps$-close to $M_{\cyl}$ at time $0$ and at some scale $r >0$, then $\MM$ is $\Psi_{\cyl}(\eps)$-cylindrical at $(\bp,t)$.
\end{enumerate}
\end{Lemma}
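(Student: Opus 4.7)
The plan is to address the five assertions separately, with Huisken's monotonicity formula as the main tool and the classification of shrinkers with entropy close to $\Theta_{\IR^{k} \times \IS^{n-k}}$ as a secondary ingredient. I will observe first that
\[ \Theta^{\MM}_{(\bp, t + \Delta T)}(\Delta T) = F_{\bp, \Delta T}(\MM_t) := \int \frac{1}{(4\pi \Delta T)^{n/2}} e^{-|x-\bp|^2/(4\Delta T)} \, d\mu_t(x), \]
so the quantity in Definition~\ref{Def_eps_cyl} depends only on the spatial point $\bp$, the scale $\Delta T$, and the slice $\MM_t$, not on the ``future time'' $t + \Delta T$ separately. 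Assertion~\ref{Lem_eps_cyl_a} follows immediately, since this Gaussian integral is invariant under the joint parabolic rescaling $(x, t, \bp, \Delta T) \mapsto (\lambda x, \lambda^2 t, \lambda \bp, \lambda^2 \Delta T)$, and Assertion~\ref{Lem_eps_cyl_b} is immediate from the definition.

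For Assertion~\ref{Lem_eps_cyl_c}, I will show that
\[ \Theta^*_{\bp}(t) := \sup_{\Delta T > 0} \Theta^{\MM}_{(\bp, t+\Delta T)}(\Delta T) = \sup_{s > 0} F_{\bp, s}(\MM_t) \]
is non-increasing in $t$. The key input is Huisken's monotonicity formula: for any fixed spacetime center $(\bp, t^*)$, the map $\tau \mapsto F_{\bp, t^* - \tau}(\MM_\tau)$ is non-increasing on $\tau < t^*$. Applied with $t_1 < t_2 < t^*$ this gives $F_{\bp, t^*-t_1}(\MM_{t_1}) \geq F_{\bp, t^*-t_2}(\MM_{t_2})$; taking the supremum over $t^* > t_2$ and using that $\Theta^*_{\bp}(t_1)$ dominates this supremum (its range of scales being strictly larger) yields $\Theta^*_{\bp}(t_1) \geq \Theta^*_{\bp}(t_2)$, equivalently $\eps_1 \leq \eps_2$. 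To upgrade to strict inequality I will invoke the rigidity case together with a compactness argument: since $\eps_2 > 0$ the flow $\MM$ is not the round shrinking cylinder, and by the classification of entropy-$\Theta_{\IR^{k}\times\IS^{n-k}}$ shrinkers it cannot be self-similar around any spacetime center, so $\tau \mapsto F_{\bp, t^*-\tau}(\MM_\tau)$ is strictly decreasing and the sup-comparison strengthens to $\Theta^*_{\bp}(t_1) > \Theta^*_{\bp}(t_2)$.

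For Assertions~\ref{Lem_eps_cyl_d} and \ref{Lem_eps_cyl_e}, I will establish the equivalence between $\eps$-cylindricality and closeness to $M_{\cyl}$ via standard Brakke-flow compactness. For (d), if $\MM$ is $\eps$-cylindrical at $(\bp, t)$ for $\eps < \eps_{\cyl}$ small, there is a scale $s_* > 0$ with $F_{\bp, s_*}(\MM_t) \geq \Theta_{\IR^{k}\times\IS^{n-k}} - \eps$, and a contradiction/compactness argument forces the rescaled slice at scale $r := \sqrt{s_*}$ to converge (as $\eps \to 0$) to a time-slice of a shrinker of maximal entropy, which by the uniqueness of the round cylinder among such shrinkers must be $M_{\cyl}$; this yields the universal modulus $\Psi_{\cyl}$. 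Conversely, for (e), direct computation of the Gaussian integral $F_{\bp, cr^2}(\MM_t)$ when $r^{-1}(\MM - (\bp, t))$ is $\eps$-close to $M_{\cyl}$ at time $0$ produces a lower bound $\Theta_{\IR^{k}\times\IS^{n-k}} - \Psi_{\cyl}(\eps)$, completing the equivalence.

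The main obstacle will be the strict inequality in (c): non-strict monotonicity follows cleanly from Huisken, but ruling out equality requires showing that the supremum cannot drift between centers in a way that exactly compensates for the monotonic decrease. The cleanest resolution is a contradiction argument combined with (d), (e): if $\eps_1 = \eps_2$, a limit of near-optimal center-scale pairs at the two times would produce a self-similar shrinker of maximal entropy, forcing $\MM$ to coincide with the round cylinder in contradiction with $\eps_2 > 0$.
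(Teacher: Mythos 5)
Your overall strategy matches the paper's: (a) and (b) are immediate, the non-strict part of (c) follows from Huisken monotonicity exactly as you describe, and (d), (e) are handled by a compactness argument and a direct computation, respectively. The genuine problem is the black box you invoke twice, namely ``the classification of shrinkers with entropy (close to) $\Theta_{\IR^{k}\times\IS^{n-k}}$.'' No such general classification is available in arbitrary dimension, so the steps ``it cannot be self-similar around any spacetime center'' in (c) and ``uniqueness of the round cylinder among such shrinkers'' in (d) do not go through as stated. What rescues the argument --- and what the paper actually uses --- is that the shrinker produced by the rigidity case of Huisken's monotonicity is not an abstract low-entropy shrinker: in (c) it is the flow $\MM$ itself restricted to $[t_1,t_2]$, hence asymptotically $(n,k)$-cylindrical, and a self-shrinking asymptotically cylindrical flow is forced to be the round cylinder (the paper deduces this from closeness to a neck, via (d), together with the neck stability theorem of the prequel). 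You need to replace the classification input by an observation of this kind.

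Two further points. First, your initial route to strictness in (c) (each Gaussian area with fixed center strictly decreases, hence the suprema are strictly ordered) fails without uniformity, as you yourself note; your compactness fix requires knowing that near-optimal scales $\Delta T$ stay in a compact subset of $(0,\infty)$. The paper supplies exactly this: $\Theta^{\MM}_{(\bp,t+\Delta T)}(\Delta T)\to 0$ both as $\Delta T\to 0$ (by (d), the center $\bp$ lies on the axis at definite distance from the support relative to the neck scale) and as $\Delta T\to\infty$ (the rescalings converge to the round shrinking cylinder, which is extinct at time $0$), so the supremum is attained at a finite positive scale and the rigidity argument runs with a single fixed center. Second, the compactness needed in (d) is not generic Brakke compactness: the class of asymptotically $(n,k)$-cylindrical flows is not closed under limits, and the paper normalizes the contradiction sequence by $\min\{1,\Vert \Qu(\MM^i)\Vert^{-1}\}$ before applying the compactness theorem of the prequel, then uses $\Theta\to\Theta_{\IR^{k}\times\IS^{n-k}}$ to force the limit to be $\MM_{\cyl}$ and hence $\Qu(\MM^i)\to 0$, which removes the normalization. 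Your sketch should identify which compactness statement produces a nondegenerate asymptotically cylindrical limit.
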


\begin{proof}
Assertions~\ref{Lem_eps_cyl_a} and \ref{Lem_eps_cyl_b} are clear and Assertion~\ref{Lem_eps_cyl_e} follows from a basic limit argument.

For Assertions~\ref{Lem_eps_cyl_d} fix some $\eps' > 0$.
It suffices to show that if $\MM$ is $\eps$-cylindrical at $(\bO,-1)$ for $\eps \leq \ov\eps(\eps')$, then $\MM$ must be $\eps'$-close to $M_{\cyl}$ at time $-1$.
Suppose that is not the case and choose a sequence of ancient asymptotically cylindrical flows $\MM^i$ such that $\Theta^{\MM^i}_{(\bO, -1+ r_i^2)}(r_i^2) \to \Theta_{\IR^k \times \IS^{n-k}}$ for some $r_i > 0$, but such that no $\MM^i$ is $\eps'$-close to $M_{\cyl}$ at time $-1$.
By parabolic rescaling, we may assume that $r_i = 1$.
Let $\la_i := \min \{ 1, \Vert \Qu(\MM^i) \Vert^{-1} \}$ and note that $\Vert \Qu(\la_i \MM^i) \Vert \leq 1$.
So by \cite[Proposition~\refx{Prop_Q_continuous}]{Bamler_Lai_PDE_ODI} we can pass to a subsequence such that we have convergence $\la_i \MM^i \to \MM^\infty$ in the Brakke sense to an asymptotically cylindrical flow.
The condition $\Theta^{\la_i \MM^i}_{(\bO, 0)}(\la_i^2) = \Theta^{\MM^i}_{(\bO, 0)}(1) \to \Theta_{\IR^k \times \IS^{n-k}}$ implies that $\MM^\infty = \MM_{\cyl}$, so $\la_i \Qu(\MM^i) = \Qu(\la_i \MM^i) \to \MM(\MM^\infty) = 0$.
But this implies that $\la_i=1$ for large $i$ and thus $\MM^i \to \MM_{\cyl}$, in contradiction to our assumptions.

Assertion~\ref{Lem_eps_cyl_c}, for $\eps_1 \leq \eps_2$, follows using the monotonicity of the Gaussian area.
Now suppose by contradiction that we cannot choose $\eps_1 < \eps_2$ and suppose for simplicity that $(\bp,t_2) = (\bO,0)$.
If $\eps_2$ sufficiently small, then by Assertion~\ref{Lem_eps_cyl_d}, we have $\Theta^{\MM}_{(\bO, r^2)}(r^2) \to 0$ as $r \to 0$.
Moreover, if $r \to \infty$, then $r^{-1} \MM \to \MM_{\cyl}$ in the Brakke sense, so we also have $\Theta^{\MM}_{(\bO, r^2)}(r^2) = \Theta^{r^{-1} \MM}_{(\bO, 1)}(1)\to 0$.
So the supremum in Definition~\ref{Def_eps_cyl} is attained for some $r > 0$.
It follows that $t \mapsto \Theta^{\MM}_{(\bO, r^2)} (r^2 - t)$ is constant on $[t_1, 0]$, so $\MM$ restricted to this time-interval must be a shrinker.
If $\eps_2$ is sufficiently small, then using Assertion~\ref{Lem_eps_cyl_d}, we obtain from \cite[Theorem~\refx{Thm_stability_necks}]{Bamler_Lai_PDE_ODI} that $\MM_{t_1}$ is a rescaling of $M_{\cyl}$, which implies that $\MM$ is even $0$-cylindrical at $(\bO,t_1)$.
\end{proof}

Let now $\MM$ be a convex and rotationally symmetric, asymptotically cylindrical mean curvature flows $\MM$.
We will frequently consider smooth functions $u : \DD \to \IR$, for some open domain $\DD \subset \MM^{\reg}$, whose values we regard as having the dimension of length. 
To facilitate our analysis, we will often pull back $u$ to a function defined on the standard round cylinder $M_{\cyl}$, which we call its cylindrical model.

\begin{Definition}[Cylindrical model] \label{Def_cyl_model}
Let $M \subset \IR^{n+1}$ be a convex and $(n,k)$-rotationally symmetric submanifold and $u : D \to \IR$ a  function over an open domain $D \subset M$.
Consider point $\bq \in \IR^{k} \cong \IR^k \times \bO^{n-k+1}$ that is covered by $M$ and assume that $r := \rho(\bq) > 0$.
Then the \textbf{cylindrical model} of $u$ at $\bq$ is the function $\td u_{\bq} : \td D_{\bq} \to \IR$, for $\td D_{\bq} \subset M_{\cyl}$, such that the following is true for any $(\bx,\by) \in M_\cyl$. 
We have $(\bx,\by) \in \td D_{\bq}$ if and only if $( \bq + r \bx, \rho(\bq + r \bx) \by) \in D$ and
\[ \td u_{\bq} (  \bx,\by ) := r^{-1} u \big( \bq + r \bx, \rho(\bq + r \bx) \by \big). \]
If $M = \MM_t^{\reg}$ is a time-slice of the regular part of a mean curvature flow and $u : \DD \to \IR$, $\DD \subset \MM^{\reg}$, then the cylindrical model of $u$ at $(\bq, t)$ is defined via the restriction $u|_{\DD_t}$ to the time-slice $\MM_t^{\reg}$ and it is expressed as $u_{\bq,t} : \td\DD_{\bq,t} \to \IR$.
\end{Definition}

In this section, $u : D \to \IR$ will describe a (linear or non-linear) perturbation of a cylindrical mean curvature flow. 
A key step in our proof is to show that $u$ is locally modeled by elements of $\sV_{\geq 0}$ near cylindrical regions.
In order to make this statement more precise, we use the following definition.

\begin{Definition}[Leading mode approximation] \label{Def_lead_mode_approx}
Suppose that $M \subset \IR^{n+1}$ is rotationally symmetric and $u : D \to \IR$ a smooth function over an open domain $D \subset M$.
Consider its cylindrical model $\td u_{\bq} : \td D_{\bq} \to \IR^l$ at some point $\bq \in \IR^k$.
If $\ov\IB^k_1 \times \IS^{n-k}  \subset \td D_{\bq}$, then the \textbf{leading mode approximation} of $u$ at $\bq$ is the unique element $U_{\bq} \in \sV_{\geq 0}$ that minimizes the norm 
\[ \big\Vert \td u_{\bq} - U_{\bq} \big\Vert_{L^2(\IB^k_1 \times \IS^{n-k} )}. \]
If $\ov\IB^k_1 \times \IS^{n-k}  \not\subset \td D_{\bq}$, then we say that the leading mode approximation does not exist at $\bq$.
Similarly, we define $U_{\bq,t}$ if $M = \MM^{\reg}_t$ is a time-slice of the regular part of a mean curvature flow and $u : \DD \to \IR$ for $\DD \subset \MM^{\reg}$.
\end{Definition}

Note that $U_{\bq,t}$ arises from a linear projection map in the space $L^2(\IB^k_1 \times \IS^{n-k})$, so it is uniquely defined since the restriction map $\sV_{\geq 0} \to L^2(\IB^k_1 \times \IS^{n-k} )$ is injective.
It also follows that $U_{\bq,t}$ depends linearly on $u$, it is smooth in the parameter $(\bq,t)$ and the set of such points $(\bq,t)$ for which it is defined is open.
Moreover, $U_{\bq,t}$ depends only on the values of $u$ restricted to $\MM_t^{\reg} \cap B(\bq, C \rho(\bq))$ for some dimensional constant $C$.
Recall also that $U_{\bq}$ is invariant under the rescaling $M \rightsquigarrow \la M$ and $u \rightsquigarrow (\bx \mapsto \la u(\la^{-1} \bx))$ and a similar property holds in the case in which $u$ is a function the regular part of a mean curvature flow.

Lastly, we define a useful norm, which will often use to bound $u : M \to \IR$ locally.

\begin{Definition} \label{Def_Vertu}
Suppose that $M$ and $u$ are as in Definition~\ref{Def_lead_mode_approx} and suppose that the leading mode approximation $U_\bq \in \sV_{\geq 0}$ exists for some $\bq \in \IR^k$.
For $\delta' > 0$ we set
\begin{equation} \label{eq_q_delta_norm_def}
 \Vert u \Vert_{\bq; \delta'} := \big\Vert \PP_{\sV_0}  U_\bq \big\Vert_{L^2_f} + \delta' \big\Vert \PP_{\sV_{\frac12}}  U_\bq \big\Vert_{L^2_f} 
 +  \delta^{\prime 2} \big\Vert \PP_{\sV_{1}}  U_\bq \big\Vert_{L^2_f} .
 + \delta^{\prime 3} \big\Vert \td u_{\bq} \big\Vert_{L^\infty( \IB_1^{k} \times \IS^{n-k} )}. 
\end{equation}
If the leading mode approximation does not exist at $\bq$, then we set
\[ \Vert u \Vert_{\bq; \delta'} := \infty. \]
Similarly, we define $\Vert u \Vert_{\bq,t; \delta'}$ if $M = \MM^{\reg}_t$ is a time-slice of the regular part of a mean curvature flow.
\end{Definition}

The reason for the choice of \eqref{eq_q_delta_norm_def} will become clear later in Lemma~\ref{Lem_Vgeq0_Prop_1}.
Roughly speaking, the norm $\Vert u \Vert_{\bq, \delta'}$ is chosen in such a way that it is almost constant in $\bq$ and almost monotone in time whenever $u$ describes a nearby mean curvature flow. 
We also recall that $\Vert u \Vert_{\bq,t;\delta'}$ is again invariant under the rescalings $\MM \rightsquigarrow \la \MM$ and $u \rightsquigarrow (\bx \mapsto \la u(\la^{-1} \bx, \la^{-2}t))$.
\medskip

\subsection{Flow pairs and the leading mode condition} \label{subsec_lead_mode}
The proofs of the main result of this section relies on an analysis of the difference of two cylindrical mean curvature flows $\MM^0, \MM^0$ in regions where they are close and a key observation is that this difference can be well approximated locally by its leading mode.
Consequently, such differences can only grow or decay at bounded exponential rates in space and time.
This property, is made precise by the \emph{leading mode condition} below.
To formalize our statements, we will introduce the following notion.

\begin{Definition}[Flow pair] \label{Def_flow_pair}
Consider two cylindrical mean curvature flows $\MM^0$ and $\MM^1$ defined over the same time-interval $I = (-\infty, T)$ or $(-\infty,T]$ and assume that $\MM^0$ is convex and $(n,k)$-rotationally symmetric (with axis $\IR^k \cong \IR^k \times \bO^{n-k+1}$).
Then we call $(\MM^0, \MM^1)$ a \textbf{flow pair over the time-interval $I$.}
The scale function $\rho : \IR^k \times I \to \IR_+$ will always refer to $\MM^0$.

Given a flow pair $(\MM^0, \MM^1)$, we define its \textbf{graph function} $u \in C^\infty(\DD)$, for $\DD \subset \MM^{0,\reg}$, so that $\MM^{1,\reg}$ is locally the normal graph of $u$ over $\MM^{0,\reg}$.
To be precise, for any $(\bp,t) \in \MM^{0,\reg}$ let $\nu_{\bp,t}$ be the outward unit normal vector of $\MM^{0,\reg}_t$ at $\bp$ (oriented so that the mean curvature is of the form $-H_{\bp,t}\nu_{\bp,t}$ for $H_{\bp,t} > 0$) and let $r_{\bp,t} > 0$ be the normal injectivity radius at $\bp$.
Consider the intersection of $(-0.1 r_{\bp,t}, 0.1 r_{\bp,t}) \nu_{\bp,t}$ with $(\spt \MM^1)_t$.
Then $(\bp,t) \in \DD$ if and only if this intersection consists of a single point that also belongs to the regular part of $\MM^1$.
In this case we express this intersection as $\bp + u(\bp,t) \nu_{\bp,t} $.
\end{Definition}

We emphasize that only the base flow $\MM^0$ is assumed to be smooth, convex and rotationally symmetric; no such assumption is made for $\MM^1$. 
These assumptions on $\MM^0$ are not essential, our arguments could be carried out without them, but they greatly simplify the exposition. 
Moreover, we will eventually establish that all asymptotically cylindrical flows are, in fact, convex and rotationally symmetric (possibly with respect to a different axis), so these assumptions ultimately make no difference.

We can now define the leading mode condition.

\begin{Definition}[Leading mode condition] \label{Def_lead_mode}
Let $\delta,\lb \delta',\lb C_0,\lb \eps,\lb \alpha > 0$ and  $D,\beta,C_1 \geq 0$.
We say that a flow pair $(\MM^0, \MM^1)$ with graph function $u$ over a time-interval $I$ satisfies the \textbf{$(\delta,\lb \delta',\lb C_0,\lb \eps,\lb D,\lb \alpha, \lb\beta,\lb C_1)$-leading mode condition} if the following is true for all $(\bq, t) \in \IR^{k} \times I$ at which $\MM^0$ is $\eps$-cylindrical.
Suppose that for $r := \rho(\bq, t)$
\begin{equation} \label{eq_lead_mode_conditiona}
  \Vert u  \Vert_{\bq,t;\delta'} < a  \qquad \text{for some} \quad a \in \bigg( \max \bigg\{ \beta, C_1 \Big( \frac{\Vert \Qu(\MM^0) \Vert}{r} \Big)^{10}  \bigg\} \alpha, \alpha \bigg). 
\end{equation}
Then the following two properties hold :

\begin{enumerate}[label=(\arabic*)]
\item  \label{Def_lead_mode_1} 
The cylindrical model $\td u_{\bq,t}$ at $(\bq,t)$ is defined on $\IB^{k}_{\delta^{-1}} \times \IS^{n-k}$ and it is close to its leading mode approximation $U_{\bq, t}$ in the following sense 
\begin{equation} \label{eq_close_to_leading_mode}
 \big\Vert \td u_{\bq, t} -  U_{\bq, t} \big\Vert_{C^{[\delta^{-1}]}(\IB^{k}_{\delta^{-1}} \times \IS^{n-k})} \leq  \delta a.  
\end{equation}
Moreover, we have the bounds
\begin{equation} \label{eq_dbounds_norm}
 r \big| \partial_{\bq} \Vert u \Vert_{\bq,t;\delta'} \big| \leq \delta a, \qquad  -\delta a \leq  r^2  \partial_{t} \Vert u \Vert_{\bq,t;\delta'}  \leq (1+\delta) a 
\end{equation}
and for all $(\bq', t') \in \IR^k \times I$ with $|\bq' - \bq| < r$ and $|t' -t| < r^2$ we have $\Vert u \Vert_{\bq', t'; \delta'} \leq C_0 a < \infty$.
\item  \label{Def_lead_mode_2} For any $\bq' \in \IR^k$ with $|\bq' - \bq| < Dr$ we have $\MM_{t}^{0,\reg}(\bq') \subset \DD_t$ (see Definition \ref{Def_flow_pair}) and
\[  \sup_{\MM^{0,\reg}_{t} (\bq')} |u|(\cdot, t) \leq C_0 \exp \bigg(  \frac{|\bq'-\bq|}{r} \bigg) a r. \]
\end{enumerate}
\end{Definition}
\medskip

Notice that the leading mode condition is invariant under translations (in space and time) and parabolic rescaling, since $\Vert \Qu(\MM^0) \Vert$ has the dimension of length \cite[Proposition~\refx{Prop_Q_basic_properties}]{Bamler_Lai_PDE_ODI}.
Our main result will be the following.

\begin{Proposition} \label{Prop_leading_mode_condition}
If $\delta \leq \ov\delta$, $\delta' \leq \ov\delta'(\delta)$, $C_0 \geq \underline C_0 (\delta')$, $\eps \leq \ov\eps(  C_0)$, $D \geq \underline D ( \eps)$, $\alpha \leq \ov\alpha( D)$, $C^* > 0$ and $C_1 \geq \underline{C}_1 (\alpha,  C^*)$, then any flow pair $(\MM^0, \MM^1)$ satisfying
\begin{equation} \label{eq_scale_l_scale_rot}
 \Vert \Qu (\MM^1) \Vert \leq C^* \Vert \Qu (\MM^0) \Vert
\end{equation}
satisfies the $(\delta,\lb \delta',\lb C_0,\lb \eps,\lb D,\lb \alpha, \lb 0,\lb C_1)$-leading mode condition (note that here $\beta=0$).
\end{Proposition}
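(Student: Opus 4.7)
My plan is to establish the leading mode condition by an \emph{induction over thresholds}, progressively lowering the parameter $\beta$ from a large starting value down to arbitrarily small positive values. The conclusion for $\beta=0$ then follows automatically: any point $(\bq,t)$ with $a=\Vert u\Vert_{\bq,t;\delta'}$ satisfying $a>C_1(\Vert\Qu(\MM^0)\Vert/r)^{10}\alpha$ lies in the admissible regime of Definition~\ref{Def_lead_mode} for every sufficiently small $\beta>0$, so the conclusions at $(\bq,t)$ transfer from the $\beta>0$ case. Throughout the induction the constants $\delta,\delta',C_0,\eps,D,\alpha,C_1$ are fixed (chosen in the prescribed order), and only $\beta$ is decreased, typically in dyadic steps $\beta_j=2^{-j}\beta_0$.

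For the \textbf{base case} I would take $\beta_0$ sufficiently close to $1$ so that the hypothesis $\beta_0\alpha<a<\alpha$ forces $a$ to be macroscopic. In this regime the PDE--ODI principle and the asymptotic expansions of the prequel~\cite{Bamler_Lai_PDE_ODI} already provide an \emph{absolute}-error description of $\MM^1$ near $(\bq,t)$ by either a cylinder or a bowl-times-Euclidean model (this is where the hypothesis \eqref{eq_scale_l_scale_rot} enters, to ensure $\MM^1$ does not degenerate at a faster scale than $\MM^0$). Because $a$ is bounded below by a definite multiple of $\alpha$, this absolute error converts to the relative estimates \eqref{eq_close_to_leading_mode} and \eqref{eq_dbounds_norm} and to the exponential growth bound in item~\ref{Def_lead_mode_2} after adjusting constants.

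The \textbf{inductive step} is the main content. Assume the $(\delta,\delta',C_0,\eps,D,\alpha,\beta,C_1)$-leading mode condition and fix a point $(\bq,t)$ with $a$ in the new annular range $(\tfrac{\beta}{2}\alpha,\beta\alpha)$; the range $a\ge\beta\alpha$ is already covered. Near $(\bq,t)$ the graph function $u$ solves the nonlinear MCF-graph equation whose linearization at the cylinder has the familiar spectral decomposition $\sV_{\ge 0}\oplus\sV_{<0}$; the weights in \eqref{eq_q_delta_norm_def} have been tuned so that $\Vert\cdot\Vert_{\bq,t;\delta'}$ is almost monotone in $t$ and almost constant in $\bq$ under this linearization. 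For item~\ref{Def_lead_mode_1} I would treat $u$ as a small perturbation, carry out an $L^2$-energy projection onto $\sV_{\ge 0}$, and use boundary data supplied by the inductive hypothesis at adjacent points where $\Vert u\Vert\ge\beta\alpha$ to suppress stable-mode contributions; combined with parabolic Schauder estimates this gives \eqref{eq_close_to_leading_mode} and the derivative bounds \eqref{eq_dbounds_norm}. For item~\ref{Def_lead_mode_2} I would propagate the exponential growth bound across an adjacent bowl-times-Euclidean region by constructing explicit exponential barriers on this model geometry and anchoring them via the inductive hypothesis at the cylindrical interface. The smallness of $\delta$ (via the choice of $\bar\delta'$) guarantees that each inductive step loses no constant, which is essential for uniformity in $j$.

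The \textbf{main obstacle} is item~\ref{Def_lead_mode_2} on bowl-times-Euclidean regions. On a cylinder the spectral picture is classical and standard energy/maximum-principle arguments suffice. On a bowl-times-Euclidean background the linearized operator is genuinely non-self-adjoint in the translation direction, and the required relative Harnack estimate $|u|\lesssim e^{|\bq'-\bq|/r}ar$ must be proved uniformly in the scale, with all error terms absorbed into the $C_1(\Vert\Qu(\MM^0)\Vert/r)^{10}$ term of \eqref{eq_lead_mode_conditiona}. I expect to establish this via a contradiction-compactness argument: a sequence of counterexamples, suitably rescaled and translated, would subconverge to a nontrivial bounded ancient solution of the linearized equation on a bowl-times-Euclidean background whose leading modes are suppressed, contradicting a spectral-gap or Liouville-type property for this explicit model. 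The hierarchical choice of constants $\delta\le\bar\delta,\ \delta'\le\bar\delta'(\delta),\ C_0\ge\underline{C}_0(\delta'),\ldots$ is forced by this argument: each constant must dominate the errors produced in the stability step that uses it, and the prescribed ordering closes the induction uniformly in $\beta$.
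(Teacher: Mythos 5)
Your overall strategy --- induction over thresholds, a base case obtained by converting absolute errors into relative ones when $a$ is bounded below, and an inductive halving step proved by passing to limits that solve the linearized equation on a cylinder or on $\IR^{k-1}\times\MM_{\bowl}$, with explicit exponential barriers and the maximum principle handling the cap region --- is essentially the paper's proof (Lemmas~\ref{Lem_start_induction} and~\ref{Lem_induction_step}, built on Lemmas~\ref{Lem_Vgeq0_Prop_1}, \ref{Lem_cyl_u}, \ref{Lem_v_on_bowl}, \ref{Lem_cap_control} and~\ref{Lem_leading_mode_bowl}). Two points in your plan need repair, though.

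First, the quantifiers in the base case. The halving step is only available for $\beta \leq \ov\beta$, where $\ov\beta$ is a small constant determined by $\delta,\delta',C_0,\eps,D$; so starting the induction at $\beta_0$ ``close to $1$'' leaves the range $(\ov\beta,1)$ uncovered and the induction cannot get going. You must establish the base case at $\beta_0=\ov\beta$ (or smaller), and the mechanism that makes this possible for an arbitrarily small fixed $\beta_0>0$ is \emph{not} merely that $a>\beta_0\alpha$ is macroscopic: it is that $C_1$ is chosen last, depending on $\beta_0$, so that the constraint $C_1(\Vert\Qu(\MM^0)\Vert/r)^{10}\alpha<a\leq\alpha$ in \eqref{eq_lead_mode_conditiona} forces $\Vert\Qu(\MM^0)\Vert\to 0$ (and, via \eqref{eq_scale_l_scale_rot}, $\Vert\Qu(\MM^1)\Vert\to 0$) along any sequence of counterexamples. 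This degenerates the \emph{entire flow pair} to a pair of model flows (cylinders or bowls times Euclidean factors), a finite-parameter family for which the condition can be verified directly; a local absolute-error approximation of $\MM^1$ near $(\bq,t)$ from the prequel's asymptotic expansions is not sufficient, because the condition must hold at every $\eps$-cylindrical point, not only in the asymptotic regime $\tau\to-\infty$.

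Second, on the bowl-times-Euclidean background you hedge between an explicit barrier and a ``spectral-gap or Liouville-type property'' for the linearized operator. The barrier is the right (and essentially the only elementary) tool here: one takes a rotationally invariant supersolution of the form $e^{-\la x_1}H$ away from the tip, corrected near the tip and multiplied by $\prod\cosh(\delta x_i/r_0)$ in the Euclidean directions, and anchors the comparison at the cylindrical interface via the inductive hypothesis. No Liouville theorem for the (non-self-adjoint, non-compact) linearized operator on the bowl is needed, and establishing one with the required quantitative exponential weight would be considerably harder than the comparison argument.
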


We will prove Proposition~\ref{Prop_leading_mode_condition} via an induction argument on the parameter $\beta$.
Specifically, we will first establish the leading mode condition for \emph{some} $\beta > 0$, as long as the other parameters are chosen suitably:

\begin{Lemma}[Start of the induction] \label{Lem_start_induction}
If $\delta \leq \ov\delta$, $\delta' \leq \ov\delta'(\delta)$, $C_0 \geq \underline C_0 (\delta,\delta')$, $\eps \leq \ov\eps(\delta, \delta')$, $D \geq 0$, $\alpha \leq \ov\alpha(\delta, \delta', D)$, $\beta , C^* > 0$ and $C_1 \geq \underline{C}_1 (\delta, \delta', C_0, \eps, D, \alpha, \beta, C^*)$, then any flow pair $(\MM^{0}, \MM^1)$ satisfying \eqref{eq_scale_l_scale_rot} satisfies the $(\delta,\lb \delta',\lb C_0,\lb \eps,\lb D,\lb \alpha, \lb \beta,\lb C_1)$-leading mode condition.
\end{Lemma}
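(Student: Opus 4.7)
The plan is to derive Lemma~\ref{Lem_start_induction} directly from the asymptotic expansions produced by the PDE--ODI principle of the prequel \cite{Bamler_Lai_PDE_ODI}, exploiting the fact that the hypothesis $a > \beta\alpha$ enforces a \emph{macroscopic} lower bound on $\Vert u\Vert_{\bq,t;\delta'}$. The expansions of \cite{Bamler_Lai_PDE_ODI} approximate each individual flow by elements of $\sV_{\geq 0}$ up to an \emph{absolute} error of the order $(\Vert\Qu(\MM^i)\Vert/r)^{10}$; such absolute control is generally too weak for the relative bounds required by the leading mode condition, but the macroscopic lower bound $a \geq \beta\alpha$, together with the additional assumption $a > C_1(\Vert\Qu(\MM^0)\Vert/r)^{10}\alpha$, allows us to convert this absolute error into a relative one bounded by $\delta a$, provided $C_1$ is taken sufficiently large. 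This is precisely what will be used to start the induction.

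\textbf{Setup and individual expansions.} Fix $(\bq,t)$ satisfying the hypothesis and set $r := \rho(\bq,t)$. After the parabolic rescaling centered at $(\bq,t)$ of scale $r$, Lemma~\ref{Lem_eps_cyl}\ref{Lem_eps_cyl_d} combined with the convexity and $(n,k)$-rotational symmetry of $\MM^0$ lets one represent $\MM^0$ on a neighborhood of diameter $\sim \Psi_{\cyl}(\eps)^{-1}$ as a small normal graph $u_0$ over the round cylinder $M_{\cyl}$; for $\eps \leq \ov\eps(\delta,\delta')$ this neighborhood contains $\IB_{\delta^{-1}}^k \times \IS^{n-k}$. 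The hypothesis $\Vert u\Vert_{\bq,t;\delta'} < a < \alpha$ together with the $\delta^{\prime 3}$-weighted $L^\infty$ term in \eqref{eq_q_delta_norm_def} forces $\MM^1$ to lie uniformly near $\MM^0$, hence near $M_{\cyl}$, so that $\MM^1$ is also representable as a graph $u_1$ over $M_{\cyl}$ on the same neighborhood provided $\alpha \leq \ov\alpha(\delta,\delta')$. I would then apply \cite[Propositions~\refx{Prop_PDE_ODI_MCF},~\refx{Prop_dom_qu_asymp}]{Bamler_Lai_PDE_ODI} separately to each $\MM^i$, obtaining leading approximations $U_i \in \sV_{\geq 0}$ satisfying
\[
\big\Vert u_i - U_i\big\Vert_{C^{[\delta^{-1}]}(\IB_{\delta^{-1}}^k \times \IS^{n-k})} \leq C(\delta,\delta',C^*)\,\big(\Vert\Qu(\MM^0)\Vert\big)^{10}
\]
in rescaled units, where the assumption $\Vert\Qu(\MM^1)\Vert \leq C^* \Vert\Qu(\MM^0)\Vert$ absorbs the $\MM^1$-contribution into this single bound.

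\textbf{Verification of property \ref{Def_lead_mode_1}.} The cylindrical model $\td u_{\bq,t}$ of the graph function $u$ of the flow pair differs from $u_1 - u_0$ by a nonlinear remainder quadratic in $\max\{|u_0|,|u_1|\} \lesssim \alpha$, which is absorbable into the above error term for $\alpha$ small. Setting $U_{\bq,t} := U_1 - U_0 \in \sV_{\geq 0}$, which by construction equals the leading mode approximation of $u$ at $(\bq,t)$, the expansion gives $\Vert \td u_{\bq,t} - U_{\bq,t}\Vert_{C^{[\delta^{-1}]}} \leq C(\delta,\delta',C^*)(\Vert\Qu(\MM^0)\Vert/r)^{10}$ in unrescaled units. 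By the hypothesis $a > C_1(\Vert\Qu(\MM^0)\Vert/r)^{10}\alpha$ this is at most $C(\delta,\delta',C^*)/(C_1\alpha)\cdot a$, which is $\leq \delta a$ once $C_1 \geq C(\delta,\delta',C^*)/(\delta\alpha)$, establishing \eqref{eq_close_to_leading_mode}. The derivative bounds \eqref{eq_dbounds_norm} and the companion estimate $\Vert u\Vert_{\bq',t';\delta'} \leq C_0 a$ at nearby $(\bq',t')$ will follow from the same expansion applied at each such point: $U_{\bq',t'}$ varies smoothly in $(\bq',t')$, its $\bq'$-variation is controlled by translation covariance of elements of $\sV_{\geq 0}$, and its $t'$-variation is governed by the rescaled linearized evolution $\partial_\tau U = \triangle_f U + \tfrac12 U$, whose largest eigenvalue (attained on $\sV_0$) produces the coefficient $1+O(\delta)$ in front of $a$.

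\textbf{Verification of property \ref{Def_lead_mode_2}; main obstacle.} To extend to the neighborhood $\MM^{0,\reg}_t(\bq')$ with $|\bq'-\bq| < Dr$, I would use that $\eps$-cylindricality at $(\bq,t)$ propagates to $\eps'$-cylindricality (with $\eps' = \eps'(\eps,D,\alpha)$ tending to $0$ with $\eps$) along the entire axis segment from $\bq$ to $\bq'$, by Huisken monotonicity and Lemma~\ref{Lem_eps_cyl}\ref{Lem_eps_cyl_c}, once $\alpha$ is small depending on $D$; the graph description of $\MM^1$ thus persists along the segment, giving $\MM^{0,\reg}_t(\bq') \subset \DD_t$. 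Elements of $\sV_{\geq 0}$ are Hermite polynomials of degree at most two in the $\IR^k$ variables, so the leading mode grows at worst polynomially in $|\bq'-\bq|/r$, which is dominated by the envelope $C_0 e^{|\bq'-\bq|/r}\, a r$. The main technical obstacle I expect is bookkeeping: matching the asserted quantifier order $\ov\delta, \ov\delta'(\delta), \underline C_0(\delta,\delta'), \ov\eps(\delta,\delta'), \ov\alpha(\delta,\delta',D), \underline C_1(\ldots)$ with the cascading dependencies of the constants from \cite{Bamler_Lai_PDE_ODI}; and, in particular, achieving the sharp $(1+\delta)$ coefficient in the time-monotonicity estimate \eqref{eq_dbounds_norm}, which requires that the spectral weighting $1,\delta',\delta^{\prime 2},\delta^{\prime 3}$ in \eqref{eq_q_delta_norm_def} be tuned so that $\Vert\cdot\Vert_{\bq,t;\delta'}$ is almost monotone under the rescaled linearized flow---this will presumably be encoded in the forthcoming Lemma~\ref{Lem_Vgeq0_Prop_1}.
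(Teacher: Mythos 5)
Your guiding idea---that the macroscopic threshold $a \geq \beta\alpha$ is what permits converting an \emph{absolute} approximation error into the \emph{relative} error $\delta a$ demanded by the leading mode condition---is the right one, and it matches the motivation given in the introduction for why the induction can be started at positive $\beta$. The execution via the individual expansions of \cite{Bamler_Lai_PDE_ODI}, however, has three genuine gaps. First, the claimed estimate $\Vert u_i - U_i\Vert_{C^{[\delta^{-1}]}} \leq C\,(\Vert\Qu(\MM^0)\Vert/r)^{10}$ is not what the prequel provides: \cite[Proposition~\refx{Prop_dom_qu_asymp}]{Bamler_Lai_PDE_ODI} approximates the rescaled flow by finite sums of modes only up to errors \emph{polynomial in $|\tau|^{-1}$}, hence polylogarithmic in $r/\Vert\Qu(\MM^0)\Vert$---vastly larger than $(\Vert\Qu(\MM^0)\Vert/r)^{10}$; for an ancient oval the genuine deviation of $u_{0,\tau}$ from $\sV_{\geq 0}$ is of order $|\tau|^{-2}$, coming from stable modes. (This particular defect is repairable here because $\underline C_1$ may depend on $\beta$, and $a>C_1(\Vert\Qu(\MM^0)\Vert/r)^{10}\alpha$ together with $a<\alpha$ forces $r/\Vert\Qu(\MM^0)\Vert\geq C_1^{1/10}$, hence $|\tau|$ large; but the bound as you state it is false.) Second, and more seriously, the prequel's expansions are centered at the flow's own parabolic center and valid only on balls of radius $\sim\sqrt{-t}\,R(\tau)$ about the axis origin, whereas the leading mode condition must be verified at an \emph{arbitrary} $\eps$-cylindrical point $(\bq,t)$---for instance far down the neck of a translating soliton, where $\rho(\bq,t)$ bears no relation to $\sqrt{-t}$ and the point lies entirely outside the domain of the expansion. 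Your argument gives nothing there; this is exactly the pseudolocality limitation the paper's method is designed to circumvent. Third, Property~\ref{Def_lead_mode_2} requires bounding $|u|$ on all of $\MM^{0,\reg}_t(\bq')$ for $|\bq'-\bq|<Dr$, which may include the non-cylindrical cap of a bowl-like region (the hypothesis only requires $D\geq 0$, so $D$ may be large); polynomial growth of Hermite polynomials controls $u$ only where the graph representation over $M_{\cyl}$ persists, not on the cap.

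The paper instead argues by contradiction and compactness: letting $C_{1,j}\to\infty$ forces $\Vert\Qu(\MM^0_j)\Vert,\Vert\Qu(\MM^1_j)\Vert\to 0$, so after normalization both flows converge to exact round cylinders or to $\IR^{k-1}\times\MM_{\bowl}$ modulo rigid motions and rescalings; the hypothesis $\beta>0$ keeps $a_j\geq\beta\alpha$ bounded away from zero so that the limiting configuration is nondegenerate; and the leading mode condition for such limit pairs is verified in Lemma~\ref{Lem_leading_mode_bowl} via an explicit finite-dimensional parameterization $M(\bz,b,\by)$ of the model family, which in particular handles arbitrary neck points and the cap region (where the limiting graph function is the normal projection of an affine vector field). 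To salvage a direct argument you would need re-centered, local versions of the asymptotic expansions valid at every $\eps$-cylindrical point, together with a separate barrier argument on the cap; the compactness route is considerably shorter.
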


Then we will show that the leading mode condition remains preserved if we successively reduce $\beta$ by a factor of $2$---again under suitable conditions on the parameters.
Thus we can let $\beta \to 0$, which proves Proposition~\ref{Prop_leading_mode_condition}.

\begin{Lemma}[Induction step] \label{Lem_induction_step}
If $\delta \leq \ov\delta$, $\delta' \leq \ov\delta'(\delta)$, $C_0 \geq \underline C_0 (\delta')$, $\eps \leq \ov\eps( \delta, \delta', C_0)$, $D \geq \underline D ( \delta, \delta', C_0,\eps)$, $\alpha \leq \ov\alpha(\delta,\delta',C_0, \eps, D)$, $\beta \leq \ov\beta(\delta,\delta', C_0, \eps, D)$, $C_1 \geq \underline C_1(\delta,\delta',C_0,\eps, D)$, then the following is true.
Suppose that a flow pair $(\MM^{0}, \MM^1)$ satisfies the $(\delta,\lb \delta',\lb C_0,\lb \eps,\lb D,\lb \alpha, \lb \beta,\lb C_1)$-leading mode condition.
Then it also satisfies the $(\delta,\lb \delta',\lb C_0,\lb \eps,\lb D,\lb \alpha, \lb \frac12 \beta,\lb C_1)$-leading mode condition.
\end{Lemma}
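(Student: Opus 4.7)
I will argue by contradiction and compactness, exploiting the stability of the $\sV_{\geq 0}$-leading mode structure under the rescaled linearized MCF equation on the round cylinder. Suppose the $(\delta,\delta',C_0,\eps,D,\alpha,\frac12\beta,C_1)$-LMC fails. Then there exist a flow pair $(\MM^0, \MM^1)$ satisfying the stronger $(\delta,\delta',C_0,\eps,D,\alpha,\beta,C_1)$-LMC, a point $(\bq_*, t_*)$ at which $\MM^0$ is $\eps$-cylindrical, and a threshold
\[ a_* \in \Big(\max\Big\{\tfrac12\beta,\, C_1(\Vert\Qu(\MM^0)\Vert / r_*)^{10}\Big\}\alpha,\, \beta\alpha\Big], \qquad r_* := \rho(\bq_*, t_*), \]
lying in the new part of the range such that $\Vert u\Vert_{\bq_*, t_*; \delta'} < a_*$ but at least one of the conclusions \ref{Def_lead_mode_1}, \ref{Def_lead_mode_2} of Definition~\ref{Def_lead_mode} fails at $(\bq_*, t_*)$. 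After translating and parabolically rescaling so that $(\bq_*, t_*) = (\bO, 0)$ and $r_* = 1$, I normalize $\hat u := u / a_*$, which then has unit $\Vert\cdot\Vert_{\bO, 0; \delta'}$-norm.

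\textbf{Limit and spectral rigidity.} By Lemma~\ref{Lem_eps_cyl}\ref{Lem_eps_cyl_c}--\ref{Lem_eps_cyl_d}, the $\eps$-cylindricality at $(\bO, 0)$ forces $\MM^0$ to be $\Psi_{\cyl}(\eps)$-close to $M_{\cyl}$ on a large backward parabolic neighborhood, so $\hat u$ satisfies a controlled perturbation of the rescaled linearized equation $\partial_\tau \hat u = \triangle_f \hat u + \frac12\hat u$ from Section~\ref{sec_Preliminaries}. Taking a sequence of counterexamples with $\alpha \to 0$ (allowed by choosing $\ov\alpha$ arbitrarily small), invoking parabolic regularity together with the uniform $C^{[\delta^{-1}]}$ bounds supplied by the $\beta$-LMC on the subregion where $\Vert u\Vert_{\bq,t;\delta'} \geq \beta\alpha$ (a fixed threshold $\leq 2$ in rescaled units), a subsequence of normalized graph functions $\hat u_i$ converges to a limit $\hat u_\infty$ that solves the linearized equation on the standard cylinder and has unit norm at the origin. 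On the subregion where the induction hypothesis applies, \ref{Def_lead_mode_1} passes to the limit to force $\hat u_\infty$ to equal its $\sV_{\geq 0}$-projection exactly. Using that $\sV_{<0}$-modes decay strictly forward in $\tau$ while $\sV_{\geq 0}$ is invariant under the linearized flow, this equality propagates forward parabolically to the full limit neighborhood, yielding $\hat u_\infty \in \sV_{\geq 0}$ globally.

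\textbf{Contradiction.} Once $\hat u_\infty$ is known to lie in the finite-dimensional space $\sV_{\geq 0}$, each of the leading mode conclusions becomes an elementary statement about that space: \eqref{eq_close_to_leading_mode} holds with error $0$; the bounds \eqref{eq_dbounds_norm} reduce to the smoothness and near-monotonicity of $\Vert\cdot\Vert_{\bq, t;\delta'}$ on $\sV_{\geq 0}$ (cf.\ the forthcoming Lemma~\ref{Lem_Vgeq0_Prop_1}); and \ref{Def_lead_mode_2} follows from the polynomial-in-$\bq$ growth of elements of $\sV_{\geq 0}$ combined with a standard Harnack estimate, which is strictly dominated by the exponential bound $C_0 \exp(|\bq'-\bq|/r) a r$. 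Each of the three possible failures therefore yields a strict inequality in the limit, contradicting the failure at $(\bO, 0)$ after unrescaling.

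\textbf{Main obstacle.} The delicate point is the ``free boundary'' nature of the set on which the induction hypothesis genuinely gives information: $\Vert u\Vert_{\bq,t;\delta'}$ may drop below $\beta\alpha$ on an irregular subset of the relevant parabolic neighborhood, so the spectral rigidity is only directly available on a portion of the limit domain and must be extended forward through the heat flow. Controlling this propagation quantitatively while simultaneously absorbing the quadratic nonlinear error (forcing $\alpha \leq \ov\alpha$), the polynomial base-flow deviation $C_1(\Vert\Qu(\MM^0)\Vert/r)^{10}$ (forcing $C_1 \geq \underline C_1$ in terms of $C^*$ and $\alpha$), and the spatial exponential growth up to distance $Dr$ (forcing $D \geq \underline D$) is what dictates the precise parameter hierarchy asserted in the lemma, with $\delta'$ and $C_0$ ultimately tuned to make the norm \eqref{eq_q_delta_norm_def} compatible with the spectral decomposition into $\sV_0 \oplus \sV_{1/2} \oplus \sV_1 \oplus \sV_{<0}$.
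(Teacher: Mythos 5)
Your overall scheme (contradiction plus compactness, rescale to the failure point, pass to a limit solving the linearized equation, use rigidity of $\sV_{\geq 0}$ and the induction hypothesis on the set where the norm exceeds the old threshold) matches the paper's treatment of Property~\ref{Def_lead_mode_1}, which is exactly Lemma~\ref{Lem_improve_Prop1} combined with Lemmas~\ref{Lem_cyl_u} and \ref{Lem_Vgeq0_Prop_1}. There, one may indeed send $\eps_j \to 0$ and $D_j \to \infty$, so the limiting base flow is a round shrinking cylinder and everything reduces to the spectral decomposition.

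However, there is a genuine gap in your treatment of Property~\ref{Def_lead_mode_2}. That property demands a bound on $|u|$ at all $\bq'$ with $|\bq'-\bq| < Dr$, and for the induction to close one must keep $\eps$ \emph{fixed} and $D \geq \underline D(\eps)$ \emph{large}; the ball $B(\bq, Dr)$ then typically reaches into regions where $\MM^0$ is \emph{not} $\eps$-cylindrical. In the compactness argument the base flows have $\Vert \Qu(\MM^0_j)\Vert \to 0$, so the limit is either a round cylinder \emph{or} a translate/rescaling of $\IR^{k-1}\times\MM_{\bowl}$, and in the latter case the point $\bq'$ may lie in the cap region, where $u_\infty$ is not modeled by $\sV_{\geq 0}$ at all and ``polynomial growth of elements of $\sV_{\geq 0}$ plus a standard Harnack estimate'' is unavailable. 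The paper handles this case (Lemmas~\ref{Lem_improve_Prop2} and \ref{Lem_cap_control}) with a separate maximum-principle argument: it constructs an explicit rotationally invariant super-solution $v_\la$ of the linearized equation on the bowl soliton with the asymptotics $v_\la = e^{-\la x_1}H$ (Lemma~\ref{Lem_v_on_bowl}), multiplies by $\cosh$-factors in the $\IR^{k-1}$-directions, and compares $|u|$ against this barrier on the cap region, using the induction hypothesis only to control $u$ on the parabolic boundary of the cap. This barrier construction is the ``new stability estimate for the bowl soliton times a Euclidean factor'' that the introduction flags as an essential ingredient of the induction step, and it is absent from your argument; without it, Property~\ref{Def_lead_mode_2} cannot be propagated to the lower threshold. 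A secondary, smaller issue: your forward-propagation of the identity $\hat u_\infty = \PP_{\sV_{\geq 0}}\hat u_\infty$ should be replaced by the backward-in-time argument of Lemma~\ref{Lem_cyl_u}, which uses ancientness, the spectral gap $-\frac{1}{n-k}$, and the integrated bound $\Vert u\Vert_{\bO,t;\delta'} \leq C'(-t)^{\delta}+\beta$ with $\delta < \frac{1}{n-k}$ to kill the stable modes.
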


The remainder of this section is organized as follows.
We first discuss some preparatory results in Subsection~\ref{subsec:Preparatory}.
These will almost directly imply the induction step, Lemma~\ref{Lem_induction_step}, which is proved in Subsection~\ref{subsec:induction_step}.
Next, we prove the start of the induction in Subsections~\ref{subsec_start_induction}, which is based on a discussion of the bowl soliton case in Subsection~\ref{subsec_leading_mode_bowl}.
Then we prove Proposition~\ref{Prop_leading_mode_condition} in Subsection~\ref{subsec_leading_mode_condition_proof}.
Finally, we prove the main result of this section, Proposition~\ref{Prop_diff_properties}, in Subsection~\ref{subsec_lead_mode_last}.

\bigskip

\subsection{The linearized leading mode condition}\label{subsec:Preparatory}

In this subsection we establish the main ingredients that go in the proofs of Lemmas~\ref{Lem_start_induction} and \ref{Lem_induction_step}.
These concern the limiting case in which $u : \MM^0 \to \IR$ is a solution to the \emph{linearized} mean curvature flow equation, defined over the \emph{entire} regular part $\MM^{0,\reg}$ of a convex and rotationally symmetric mean curvature flow $\MM^0$.
As we will only consider one flow in this subsection, we will drop the superscript ``$0$'' and write $\MM^0 = \MM$ in this subsection.
Moreover, since $\MM$ is assumed to be smooth, we will write $\MM = \spt \MM = \MM^{\reg}$.
We will consider the linearized leading mode condition in two cases:
\begin{itemize}
\item $\MM = \MM_{\cyl}$. In this case we will show that $u(\cdot, t)$ is in fact contained in the space $\sV_{\geq 0}$.
This allows us to verify the bounds from Property~\ref{Def_lead_mode_1} from Definition~\ref{Def_lead_mode} directly.
\item $\MM = \IR^{k-1} \times \MM_{\bowl}$.
In this case we will bound $u$ near the cap region of $\MM$ in terms of its values on the neck-like region via an exponential weight.
This will imply a bound similar to that of Property~\ref{Def_lead_mode_2} from Definition~\ref{Def_lead_mode}.
\end{itemize}
Note that in both cases we have $\Qu(\MM) = 0$.
\medskip

We will start with the following lemma, which will be used to establish Property~\ref{Def_lead_mode_1} in the proof of Lemma~\ref{Lem_induction_step}.

\begin{Lemma} \label{Lem_Vgeq0_Prop_1}
If $\delta' \leq \ov\delta' (\delta)$ and $C_0 \geq \underline{C}_0 (\delta')$, then the following is true.
Suppose that $\MM$ is the round shrinking cylinder $\MM_{\cyl}$ restricted to some time-interval $I = (-\infty,T]$, for $T<0$, and $u : \MM \to \IR$ is a solution to the linearized mean curvature flow equation \eqref{eq_lin_MCF} with the property that $\bp \mapsto u((-t)^{1/2} \bp, t)$ is contained in $\sV_{\geq 0}$ for all $t \in I$.
Then Property~\ref{Def_lead_mode_1} from Definition~\ref{Def_lead_mode} holds for all $(\bq,t) \in \IR^k \times I$ with $a = \Vert u \Vert_{\bq, t; \delta'}$ and with $\delta$ replaced by $\frac12\delta$ and $C_0$ replaced by $\frac12 C_0$.
\end{Lemma}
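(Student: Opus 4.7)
The plan is to work in globally rescaled coordinates $\td u(\bp,\tau) := e^{\tau/2} u(e^{-\tau/2}\bp, -e^{-\tau})$, where the hypothesis becomes $\td u(\cdot,\tau) \in \sV_{\geq 0}$ for every $\tau \in (-\infty, -\log(-T)]$. Setting $\td\bq := r^{-1}\bq = e^{\tau/2}\bq$, the cylindrical model is exactly the translate
\[
F(\td\bq,\tau)(\bp) := \td u(\td\bq + \bp,\tau)
\]
in the $\IR^k$-direction. Since $\sV_{\geq 0}$ consists of polynomials of degree $\leq 2$ in the $\bx$-variable together with the degree-$\leq 1$ spherical harmonics in $\by$ (the eigenmodes of the linearized operator $L = \triangle_f + 1$ on the cylinder, with eigenvalues $0, \tfrac12, 1$), it is invariant under translation in $\IR^k$. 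Consequently $F(\td\bq,\tau) \in \sV_{\geq 0}$, the leading mode approximation $U_{\bq,t}$ coincides with $\td u_{\bq,t}$ itself, and \eqref{eq_close_to_leading_mode} holds trivially with zero on the left-hand side.

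Next I would decompose $\td u(\bp,\tau) = a_0 P_2(\bp) + b_0 e^{\tau/2} P_1(\bp) + c_0 e^{\tau}$ along the three eigenspaces and substitute $\td\bq = e^{\tau/2}\bq$ to obtain the explicit expansion
\[
F(\bq,\tau)(\bp) \;=\; a_0 P_2(\bp) + \psi(\bq)\, e^{\tau/2} P_1(\bp) + \phi(\bq)\, e^{\tau},
\]
with $\phi$ at most quadratic in $\bq$ and $\psi = \nabla_\bq \phi$. Each weighted $L^2_f$-term in $N(\bq,\tau) := \|u\|_{\bq,t;\delta'}$ therefore factorizes as $e^{\la\tau}$ times a polynomial in $\bq$, so the operator $r^2 \partial_t = \partial_\tau|_\bq$ multiplies each of these terms precisely by its eigenvalue $\la \in \{0, \tfrac12, 1\}$. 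The $L^\infty$-term contributes at most $\delta^{\prime 3}\|\partial_\tau F\|_{L^\infty(\IB^k_1 \times \IS^{n-k})}$; since $\partial_\tau F = \phi \, e^\tau + \tfrac12 \psi \, e^{\tau/2} P_1$ has no $\sV_0$-component, the norm equivalence on the finite-dimensional space $\sV_{\geq 0}$ gives an estimate of order $C\delta'\, N$. A parallel calculation using $\partial_{\bq_i}\phi = \psi_i$ and $|\partial_{\bq_i}\psi| \leq 2|a_0|$ handles the spatial derivative. Altogether one gets
\[
-C\delta'\, N \;\leq\; r^2 \partial_t N \;\leq\; N + C\delta'\, N, \qquad r\,|\partial_\bq N| \;\leq\; C\delta'\, N,
\]
for a dimensional constant $C$. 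Choosing $\delta' \leq \ov\delta'(\delta)$ so that $C\delta' \leq \delta/2$ establishes the derivative bounds \eqref{eq_dbounds_norm} with $\delta$ replaced by $\delta/2$.

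The main structural point, and the reason for the specific weights $1, \delta', \delta^{\prime 2}, \delta^{\prime 3}$ appearing in Definition~\ref{Def_Vertu}, is that $\IR^k$-translation mixes the eigenspaces triangularly ($\sV_0 \to \sV_0\oplus \sV_{1/2}\oplus \sV_1$, $\sV_{1/2}\to \sV_{1/2}\oplus \sV_1$, $\sV_1\to \sV_1$), and the geometric weights are arranged precisely so that each mixing of a higher mode into a lower one enters $N$ with an extra factor of $\delta'$; this is where the bookkeeping becomes delicate and is the main technical obstacle. The remaining oscillation bound $N(\bq',t') \leq \tfrac12 C_0\, N(\bq,t)$ on the parabolic neighborhood then follows by integrating the logarithmic forms of the derivative bounds above along a path from $(\bq,t)$ to $(\bq',t')$ within the ball, which produces only a bounded multiplicative increase; taking $\underline C_0(\delta')$ sufficiently large (of order $e^{O(1)}$) absorbs the resulting constant and completes the verification of Property~\ref{Def_lead_mode_1} with the strengthened constants.
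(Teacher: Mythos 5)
Your proposal is correct and follows essentially the same route as the paper's proof: both observe that the cylindrical model of a $\sV_{\geq 0}$-solution is again its own leading mode approximation (making \eqref{eq_close_to_leading_mode} trivial), and both derive \eqref{eq_dbounds_norm} from the triangular mixing of the eigenspaces $\sV_0,\sV_{\frac12},\sV_1$ under $\IR^k$-translation together with the diagonal action of the time evolution $U^{(\tau)}=\PP_{\sV_0}U+e^{\tau/2}\PP_{\sV_{\frac12}}U+e^{\tau}\PP_{\sV_1}U$, using the $\delta'$-weights in Definition~\ref{Def_Vertu} to absorb each lower-to-higher mixing term and treating the $L^\infty$-term separately. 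The only cosmetic difference is that your explicit expansion omits the oscillatory modes in $\sV_{\frac12,\Jac}$, which is harmless since those depend only on the spherical variable and are unaffected by axial translation.
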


\begin{proof}
It is clear that $\td u_{\bq, t} = U_{\bq, t}$, so all cylindrical models agree with the leading mode approximation, which shows \eqref{eq_close_to_leading_mode}.
To see the remaining bounds, we may assume  without loss of generality, after parabolic rescaling, that $(\bq,t) = (\bO,-1)$, so $r = \rho(\bq, t) = 1$.

Fix $\mathbf e \in \IR^{k}$, $|\mathbf e| = 1$, and set $U^{(s)} := U_{s \mathbf e,-1}$ and $U:= U^{(0)}$.
It is not hard to see that $U^{(s)} (\bp) = U(\bp - s \mathbf e)$.
Therefore
\begin{align*}
 \PP_{\sV_0} U^{(s)} &= \PP_{\sV_0} U, \\ 
\PP_{\sV_{\frac12}} U^{(s)} &=  \PP_{\sV_{\frac12}} U + s F_1 (\PP_{\sV_0} U), \\
\PP_{\sV_{1}} U^{(s)} &=  \PP_{\sV_{1}} U + s F_2 (\PP_{\sV_{\frac12}} U) + s^2 F_3 (\PP_{\sV_{0}} U),  
\end{align*}
for suitable linear maps $F_1 : \sV_0 \to \sV_{\frac12}$, $F_2 : \sV_{\frac12} \to \sV_1$ and $F_3 : \sV_{0} \to \sV_1$.
Hence, writing $\Vert \cdot \Vert = \Vert \cdot \Vert_{L^2_f}$ we have for some generic dimensional constant $C > 0$
\[ \tfrac{d}{ds} |_{s=0} \Vert \PP_{\sV_0} U^{(s)} \Vert = 0, \qquad
\big| \tfrac{d}{ds} |_{s=0} \Vert \PP_{\sV_{\frac12}} U^{(s)} \Vert \big| \leq C \Vert \PP_{\sV_0} U \Vert, \qquad
\big| \tfrac{d}{ds} |_{s=0} \Vert \PP_{\sV_1} U^{(s)} \Vert \big| \leq C \Vert \PP_{\sV_{\frac12}} U \Vert. \]
Moreover, we have
\begin{equation} \label{eq_ddtLinfty}
 \big| \tfrac{d}{ds} |_{s=0} \Vert  U^{(s)} \Vert_{L^\infty( \IB_1^{k} \times \IS^{n-k} )} \big| \leq C \Vert  U \Vert 
\end{equation}
Putting this together implies that
\[ \big| \tfrac{d}{ds} |_{s=0} \Vert  U^{(s)} \Vert_{\bO,0; \delta'} \big| \leq C \delta' \Vert  U \Vert_{\bO, \delta'}. \]
So the spatial derivative bound in \eqref{eq_dbounds_norm} holds if $C \delta' \leq \delta$.

To prove the bound on the time-derivative, observe that $U^{(\tau)} := U_{\bO, e^{-\tau}}$ satisfies the rescaled linearized mean curvature flow equation $\partial_\tau U^{(\tau)} = L U^{(\tau)}$, which implies that for $U = U^{(0)}$
\[    U^{(\tau)}=\PP_{\sV_0}U+e^{\tau/2} \PP_{\sV_{\frac12}}U + e^{\tau}\PP_{\sV_1}U,  \]
so
\[ \tfrac{d}{d\tau} |_{\tau=0} \Vert \PP_{\sV_0} U^{(\tau)} \Vert = 0, \qquad
\big| \tfrac{d}{d\tau} |_{\tau=0} \Vert \PP_{\sV_{\frac12}} U^{(\tau)} \Vert \big| = \tfrac12 \Vert \PP_{\sV_{\frac12}} U \Vert, \qquad
\big| \tfrac{d}{d\tau} |_{\tau=0} \Vert \PP_{\sV_1} U^{(\tau)} \Vert \big| =  \Vert \PP_{\sV_{1}} U \Vert. \]
It follows that
\[ 0 \leq \tfrac{d}{d\tau}|_{\tau=0} \big(  \Vert \PP_{\sV_0} U^{(\tau)} \Vert + \delta' \Vert \PP_{\sV_{\frac12}} U^{(\tau)} \Vert + \delta^{\prime 2} \Vert \PP_{\sV_1} U^{(\tau)} \Vert \big) \leq   \Vert \PP_{\sV_0} U \Vert + \delta' \Vert \PP_{\sV_{\frac12}} U \Vert + \delta^{\prime 2} \Vert \PP_{\sV_1} U \Vert , \]
and
\[  \big| \tfrac{d}{d\tau} |_{\tau=0} \Vert  U^{(\tau)} \Vert_{L^\infty( \IB_1^{k} \times \IS^{n-k} )} \big| \leq C \Vert  U \Vert \le C\delta'^{-2}\Vert  U \Vert_{\bO, \delta'}.
 \]
So the bound on the time-derivative in \eqref{eq_dbounds_norm} holds again if $C\delta' \leq \delta$.
The last statement of Property~\ref{Def_lead_mode_1} holds for $C_0 \geq \underline C_0(\delta')$.
\end{proof}

The next lemma shows that the assumption from Lemma~\ref{Lem_Vgeq0_Prop_1} automatically holds if we assume the linearized leading mode condition.

\begin{Lemma} \label{Lem_cyl_u}
If $\delta < \frac1{n-k}$, $\delta' \leq \ov\delta'(\delta)$ and $C_0 \geq \underline{C}_0(\delta')$ and $ C_0, \beta, C_1 > 0$, then the following is true.
Suppose that $\MM$ is the round shrinking cylinder $\MM_{\cyl}$ restricted to some time-interval $I = (-\infty,T]$, for $T<0$, and $u : \MM \to \IR$ is a solution to the linearized mean curvature flow equation \eqref{eq_lin_MCF}.

Assume that for any $t \in I$ both properties from Definition~\ref{Def_lead_mode} hold at $(\bq,t)=(\bO, t)$ for the constants $\delta, \delta',C_0$ and $D = \infty$ and $r = \rho(\bO, t)$ whenever
\[ \Vert u \Vert_{\bO,t;\delta'} < a, \qquad \text{and} \qquad a \in (\beta , \infty). \] 
Then $\bp \mapsto u((-t)^{1/2} \bp, t)$ is contained in $\sV_{\geq 0}$ for all $t \in I$.
\end{Lemma}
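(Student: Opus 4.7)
The approach is a proof by contradiction using spectral analysis on the rescaled cylinder. Introduce $\td u(\bp,\tau):=e^{\tau/2}u(e^{-\tau/2}\bp,-e^{-\tau})$ on $M_{\cyl}\times(-\infty,\tau_0]$ (where $\tau_0=-\log(-T)$), which solves the linear rescaled equation $\partial_\tau\td u=L\td u$ with $L=\triangle_f+\tfrac12$. At each time decompose $L^2_f$-orthogonally as $\td u=\td U+\td W$ with $\td U\in\sV_{\geq 0}$ and $\td W\in\sV_{<0}$; by linearity each piece solves the equation separately. The claim reduces to showing $\td W\equiv 0$, which I prove by assuming the contrary.

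Since $L|_{\sV_{<0}}$ has a spectral gap ($|\lambda|\geq 1/2$), $\Vert\td W(\cdot,\tau)\Vert_{L^2_f}\to\infty$ exponentially as $\tau\to-\infty$, while $\Vert\td U(\cdot,\tau)\Vert_{L^2_f}$ stays bounded backwards. Letting $\lambda_0<0$ be the most negative eigenvalue appearing in $\td W$'s spectral expansion, standard spectral theory produces a nonzero limit $e^{-\lambda_0\tau}\td W(\cdot,\tau)\to\phi_*$ in the $\lambda_0$-eigenspace of $\sV_{<0}$ (convergence in $L^2_f$, upgraded to $C^k$ on compact sets by elliptic regularity for eigenfunctions), so $\td u(\cdot,\tau)\sim e^{\lambda_0\tau}\phi_*$. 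Because $\td U$ already lies in $\sV_{\geq 0}$, the leading mode approximation splits as $U_{\bO,t}=\td U+P(\td W|_{\IB^{k}_1\times\IS^{n-k}})$, where $P$ is the best-$L^2(\IB^{k}_1\times\IS^{n-k})$-approximation projection onto $\sV_{\geq 0}$, and the error satisfies $\td u-U_{\bO,t}=\td W-P(\td W|_{\IB^{k}_1\times\IS^{n-k}})\sim e^{\lambda_0\tau}(\phi_*-P\phi_*)$. Since $\Vert u\Vert_{\bO,t;\delta'}$ also grows like $e^{\lambda_0\tau}$, it eventually exceeds $\beta$, so Property~\ref{Def_lead_mode_1} applies and, dividing by $e^{\lambda_0\tau}$ and passing to the limit, yields
\[ E:=\Vert\phi_*-P\phi_*\Vert_{C^{[\delta^{-1}]}(\IB^{k}_{\delta^{-1}}\times\IS^{n-k})}\;\leq\;\delta\cdot A, \]
where $A$ is the quantity obtained by substituting $P\phi_*$ for $U_\bq$ and $\phi_*$ for $\td u_\bq$ in \eqref{eq_q_delta_norm_def}.

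\textbf{Main obstacle and resolution.} The main difficulty is that $\sV_{<0}$ is infinite-dimensional, so one must verify $E>\delta A$ for \emph{every} nonzero $\phi_*\in\sV_{<0}$, not merely one. I handle this by case analysis on the Hermite degree $j$ and spherical harmonic degree $l$ of the eigencomponents of $\phi_*$, using that $\phi_*\in\sV_{<0}$ excludes $(j,l)\in\{(0,0),(0,1),(1,0)\}$. If some component has Hermite degree $j\geq 2$, the polynomial growth of $\phi_*-P\phi_*$ on the large ball forces $E\gtrsim\delta^{-j}$, whereas $A$ is uniformly controlled by $\Vert\phi_*\Vert_{L^\infty(\IB^k_1\times\IS^{n-k})}$; hence $E/A\gg\delta$ once $\delta\leq\ov\delta$. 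If $\phi_*$ is purely spherical with $l\geq 2$ (so $j=0$), then orthogonality of spherical harmonics forces $P\phi_*=0$, so $A=\delta'^3\Vert\phi_*\Vert_{L^\infty}$ while $E$ is bounded below by a positive constant, giving $E/A\gtrsim\delta'^{-3}$ and the inequality fails once $\delta'\leq\ov{\delta'}(\delta)$; the mixed case ($j=1$, $l\geq 1$) is analogous with $E\gtrsim\delta^{-1}$ and $A\lesssim\delta'^3$. Higher-degree modes only strengthen these bounds, so $E>\delta A$ holds uniformly over nonzero $\phi_*\in\sV_{<0}$ for $\delta,\delta'$ small enough, contradicting the displayed inequality and forcing $\td W\equiv 0$.
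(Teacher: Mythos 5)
Your proposal departs substantially from the paper's argument, and the route you choose has two genuine gaps.

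First, the extraction of the limiting eigenprofile $\phi_*$ is unjustified, and this is where the real content of the lemma lives. The hypotheses only give you \emph{local} information at $\bq = \bO$; before you can even write $\td u = \td U + \td W$ with $\td W = \PP_{\sV_{<0}}\td u$, you must know that $\td u(\cdot,\tau) \in L^2_f$, which requires controlling $u$ at spatial infinity --- this is exactly what Property~\ref{Def_lead_mode_2} (with $D=\infty$) and the spatial derivative bound in \eqref{eq_dbounds_norm} provide, and you never invoke either. Moreover, even granting $\td W(\tau)\in L^2_f$ for all $\tau$, an ancient solution with values in $\sV_{<0}$ need not have a ``most negative eigenvalue appearing'': the spectrum of $L$ on $\sV_{<0}$ is unbounded below, infinitely many modes may carry nonzero (rapidly decaying) coefficients, and such a solution grows super-exponentially backward in time with no limit profile $e^{-\lambda_0\tau}\td W(\tau)\to\phi_*$ for any $\lambda_0$. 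The paper's proof closes precisely this loophole: integrating the time-derivative bound in \eqref{eq_dbounds_norm} gives $\Vert u\Vert_{\bO,t;\delta'}\le C'(-t)^{\delta}+\beta$, Property~\ref{Def_lead_mode_2} upgrades this to a pointwise spatial bound, hence $\Vert\td u(\cdot,\tau)\Vert_{L^2_f}\le C''(e^{-\delta\tau}+\beta)$, and only then does the spectral gap $-\tfrac{1}{n-k}$ force $\td u^{<0}\equiv 0$ because $\delta<\tfrac{1}{n-k}$. Tellingly, your argument never uses the hypothesis $\delta<\tfrac1{n-k}$, which is the crux of the lemma.

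Second, even if one had a single-eigenfunction profile $\phi_*$, the uniform inequality $E>\delta A$ over all nonzero $\phi_*\in\sV_{<0}$ is not established by your case analysis. The heuristic ``polynomial growth forces $E\gtrsim\delta^{-j}$'' fails for large Hermite degree: by the Plancherel--Rotach asymptotics, a degree-$j$ Hermite polynomial normalized by its sup on $\IB^k_1$ does \emph{not} grow like $R^{j}$ on $\IB^k_R$ for fixed $R$ as $j\to\infty$ (it behaves like $e^{cR^2}$ uniformly in $j$), so your lower bound on $E$ degenerates exactly where you need uniformity in the mode. Relatedly, restrictions of $\sV_{<0}$ to the fixed ball $\IB^k_1\times\IS^{n-k}$ are $L^2$-dense (a M\"untz-type fact), so $\Vert\phi-P\phi\Vert_{L^2(\IB^k_1\times\IS^{n-k})}/\Vert\phi\Vert_{L^2(\IB^k_1\times\IS^{n-k})}$ is \emph{not} bounded below on $\sV_{<0}$; any quantitative comparison of $E$ with $A$ must therefore exploit the larger domain and higher derivatives in a way that is uniform in the eigenvalue, which your sketch does not do. I recommend abandoning the contradiction-via-limit-profile scheme and instead deriving the a priori growth bound from \eqref{eq_dbounds_norm} and Property~\ref{Def_lead_mode_2} first, as in the paper.
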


Note that it is a subtle but crucial point that we do not assume the properties of Definition~\ref{Def_lead_mode} to hold at points $\bq \neq \bO$. 
This omission matters, because in the proof of Lemma~\ref{Lem_improve_Prop1} the round shrinking cylinder arises from a limit procedure, where $\eps$ in the leading mode condition tends to zero.
While the limiting cylindrical flow is obviously cylindrical at every $(\bq,t) \in \IR^k \times I$, for the approximating sequence the $\eps$-cylindricality condition is only guaranteed to hold at the origin.

\begin{proof}
Recall that we have $\rho(\cdot, t) = (-t)^{1/2}$ for $\MM_{\cyl}$.
The bounds in \eqref{eq_dbounds_norm} imply that
\[ 
 - \delta (-t)^{-1} \Vert u \Vert_{\bO,t;\delta'} \leq \partial_{t} \Vert u \Vert_{\bO,t;\delta'} \leq (1 + \delta) (-t)^{-1} \Vert u \Vert_{\bO,t;\delta'}  \qquad \text{if} \quad \Vert u \Vert_{\bO,t;\delta'} > \beta. \]
Integrating this bound backwards in time implies that for some constant $C' > 0$, which may depend on $u$, 
\[  \Vert u  \Vert_{ \bO, t; \delta'} 
\leq C'(-t)^{\delta}+\beta. \]
Combining this with Property~\ref{Def_lead_mode_2} from Definition~\ref{Def_lead_mode} yields that for any $(\bq',t) \in \IR^k \in I$ (recall that $D=\infty$)
\begin{equation} \label{eq_expboundu}
 (-t)^{-1/2}  \sup_{\MM_t(\bq')} |u| (\cdot, t) \leq  \big( C' (-t)^{\delta}+ \beta\big) \cdot C_0 \exp \bigg( \frac{|\bq'|}{\sqrt{-t}} \bigg), 
\end{equation}
which implies the following bound for the weighted $L^2_f$-norm on the rescaled linearized mean curvature flow $\td u (\bp, \tau) := e^{\tau/2} u (e^{\tau/2} \bp, -e^{-\tau})$, for some uniform $C'' > 0$
\[ \Vert \td u(\cdot, \tau) \Vert_{L^2_f} \leq C'' ( e^{-\delta \tau}  +\beta). \]
Consider the splitting $L^2_f (M_{\cyl}) = \sV_{\geq 0} \oplus \sV_{<0}$ and write $\td u = \td u^{\geq 0} + \td u^{< 0}$.
Then, since the largest negative eigenvalue of $L$ is $-\frac1{n-k}$ (see \cite[Lemma~\refx{Lem_mode_dec}]{Bamler_Lai_PDE_ODI}), we obtain that for any $\tau_1 < \tau_2 < - \log(-T)$
\[ \Vert \td u^{< 0} (\cdot, \tau_2) \Vert_{L^2_f} 
\leq e^{-\frac1{n-k} (\tau_1- \tau_2)} \Vert \td u^{< 0} (\cdot, \tau_1) \Vert_{L^2_f} 
\leq e^{-\frac1{n-k} (\tau_1- \tau_2)} \cdot C'' ( e^{-\delta \tau_2} + \beta) . \]
As we've chosen $\delta < \frac1{n-k}$, the right-hand side goes to $0$ as $\tau_2 \to -\infty$.
Therefore, $\td u^{<0} \equiv 0$ and $\td u (\cdot, \tau) \in \sV_{\geq 0}$ for all $\tau < - \log(-T)$, which finishes the proof.
\end{proof}
\medskip

Next, we develop the key tool for establishing Property~\ref{Def_lead_mode_2} in the proof of Lemma~\ref{Lem_induction_step}.
For this purpose, we study solutions to the linearized mean curvature flow on $\IR^{k-1} \times \MM_{\bowl}$.
We begin with the following technical lemma.

\begin{Lemma} \label{Lem_v_on_bowl}
Consider the $(n-k+1)$-dimensional bowl soliton $\MM_{\bowl}$ with $\MM_{\bowl,t} = t \mathbf e_1 + M_{\bowl}$.
There is a $\gamma > 0$ and a compact subset $K \subset M_{\bowl}$ such that for any sufficiently small $\la > 0$ there is a smooth, positive super-solution $v_\la : \MM_{\bowl} \to \IR_+$ to the linearized mean curvature flow equation, i.e., 
\begin{equation} \label{eq_v_supersol}
 \partial_{\mathbf t} v_\la \geq \triangle v_\la +  |A|^2 v_\la, 
\end{equation}
that is rotationally invariant (so it only depends on the $x_1$-coordinate and time) and that satisfies   identity
\begin{equation} \label{eq_v_alltimes}
v_{\la}(\bp, t) = e^{- \gamma \la t} v_{\la}( \bp - t \mathbf e_1, 0).
\end{equation}
Moreover, if $H > 0$ denotes the scalar mean curvature function on $\MM_{\bowl}$, then we have 
\begin{equation} \label{eq_vla_exp_asymp}
 v_\la(\cdot, 0) = e^{- \la x_1} H \qquad \text{on} \quad M_{\bowl} \setminus  K 
\end{equation}
and $\sup_K \frac{v_\la}{H} (\cdot, 0) \leq 2 \sup_{\partial K} \frac{v_\la}{H} (\cdot, 0)$.
\end{Lemma}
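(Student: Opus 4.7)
The approach is to reduce the PDE inequality to an elliptic problem on $M_{\bowl}$ via the comoving-frame ansatz $v_\la(\bp,t) := e^{-\gamma\la t}\,w_\la(\bp - t\mathbf e_1)$, where $w_\la$ is smooth, positive, and rotationally symmetric on $M_{\bowl}$. The identity \eqref{eq_v_alltimes} and the required rotational invariance are then immediate. Using the natural soliton parameterization $\phi_t(\bq)=t\mathbf e_1+\bq$, so that $\partial_t = \partial_{\mathbf t} + \mathbf e_1^{\tan}\cdot\nabla$ as operators on functions on $\MM_{\bowl}$, the super-solution inequality \eqref{eq_v_supersol} becomes the time-independent elliptic inequality
\[
L_{\bowl}\,w_\la + \gamma\la\,w_\la \;\leq\; 0 \qquad \text{on } M_{\bowl},
\]
where $L_{\bowl} := \Delta + \mathbf e_1^{\tan}\cdot\nabla + |A|^2$ is the Jacobi/stability operator of the translator. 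The crucial identity is $L_{\bowl}H = 0$, reflecting that $H$ generates the $\mathbf e_1$-translation symmetry of the soliton.

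Outside $K$ I would take $w_\la := e^{-\la x_1}H$. Combining $\Delta x_1 = H^2$, $|\mathbf e_1^{\tan}|^2 = 1-H^2$, and $\nabla H = H'(x_1)\mathbf e_1^{\tan}$ (rotational symmetry) with $L_{\bowl}H=0$, a direct computation yields
\[
L_{\bowl}\bigl(e^{-\la x_1}H\bigr) + \gamma\la\, e^{-\la x_1}H \;=\; \la e^{-\la x_1}\Bigl[(\gamma-1)H + |\mathbf e_1^{\tan}|^2\bigl(\la H - 2H'(x_1)\bigr)\Bigr].
\]
Since $H'<0$ on the convex bowl, this expression balances the negative $(\gamma-1)H$ against the positive $-2H'(x_1)|\mathbf e_1^{\tan}|^2$. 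Crucially, the product $|H'(x_1)||\mathbf e_1^{\tan}|^2/H$ vanishes in both asymptotic regimes: at the tip $|\mathbf e_1^{\tan}|^2\to 0$ as the surface becomes perpendicular to $\mathbf e_1$, and at infinity $H\sim c/\sqrt{x_1}$ so that $|H'|/H\sim 1/(2x_1)\to 0$ along the asymptotic paraboloid profile while $|\mathbf e_1^{\tan}|^2\to 1$. Hence this ratio is bounded on $M_{\bowl}$ and tends to zero at both ends, so by fixing $\gamma \in (0,\tfrac12)$ and then choosing $R>0$ large enough that $\sup_{M_{\bowl}\setminus K} 2|H'||\mathbf e_1^{\tan}|^2/H < \tfrac12(1-\gamma)$ for $K := \{x_1 \leq R\}$, the super-solution inequality holds on $M_{\bowl}\setminus K$ for all $\la$ sufficiently small.

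On $K$, I would modify the ansatz by setting $w_\la := e^{-\la R}(H + \la\phi)$, where $\phi \in C^2(K)$ solves the Dirichlet problem
\[
(L_{\bowl} + \gamma\la)\phi = -\gamma H, \qquad \phi|_{\partial K} = 0.
\]
A direct computation then gives $L_{\bowl}w_\la + \gamma\la w_\la = \la e^{-\la R}\bigl[(L_{\bowl}+\gamma\la)\phi + \gamma H\bigr] = 0$, so the super-solution property holds on $K$ trivially; continuous matching on $\partial K$ is automatic since $\phi|_{\partial K}=0$, yielding $w_\la|_{\partial K} = e^{-\la R}H = e^{-\la x_1}H|_{\partial K}$; and for $\la$ sufficiently small, $w_\la/H = e^{-\la R}(1 + \la\phi/H) \leq 2 e^{-\la R}$, giving the required sup-ratio bound. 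The main technical obstacle I expect is the Dirichlet solvability and the positivity of $\phi$: the key point is that $H>0$ is a positive eigenfunction of $L_{\bowl}$ with eigenvalue $0$ on the full $M_{\bowl}$, so by strict monotonicity of principal eigenvalues under domain inclusion, the principal Dirichlet eigenvalue of $-L_{\bowl}$ on any compact $K \subsetneq M_{\bowl}$ is strictly positive. This makes $L_{\bowl} + \gamma\la$ invertible with order-preserving (positive Green's function) inverse for $\la$ small, ensuring existence and $\phi>0$. A secondary technical step is verifying the pointwise decay rates used in paragraph two, which follow from the known asymptotic expansion of the bowl profile $\rho(x_1)$ together with the ODE $H''|\mathbf e_1^{\tan}|^2 + H' + |A|^2 H = 0$ derived from $L_{\bowl}H=0$.
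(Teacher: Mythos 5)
Your treatment of the exterior region coincides with the paper's argument: the paper also reduces the parabolic inequality to the static one $\triangle v + \nabla_{\mathbf e_1^{\Vert}} v + |A|^2 v \leq -\gamma\la v$, takes $v = F(x_1)H$ with $F(s) = e^{-\la s}$ outside a compact set, and uses $L_{\bowl}H=0$, $\triangle x_1 + |\mathbf e_1^{\Vert}|^2 = 1$ and the decay of $x^{1/2}|\nabla H|$ to absorb the troublesome cross term $-2\la H' |\mathbf e_1^{\Vert}|^2$; your computation there is correct. On the compact core the constructions diverge: the paper keeps the ansatz $F(x_1)H$ and modifies $F$ explicitly to $e^{-\la X} + c\la e^{-\la X}(1-(s/X)^A)$, whereas you solve a Dirichlet problem for a corrector $\phi$ and use positivity of the principal Dirichlet eigenvalue. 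The interior device itself is fine.

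The gap is at the interface $\partial K$. Continuity of the glued function is not sufficient for it to be a supersolution: the function is only piecewise $C^1$, its distributional Laplacian carries a singular measure on $\partial K$ with density equal to the jump of the normal derivative, and one obtains a (weak) supersolution only if $\partial_\nu w^{\mathrm{in}} \geq \partial_\nu w^{\mathrm{out}}$ along $\partial K$, where $\nu$ is the outward conormal of $K$ (a ``concave kink''). In your construction this amounts to $\la\,\partial_\nu\phi \geq -\la\,|\mathbf e_1^{\Vert}|\,H$ on $\partial K$, i.e. $|\partial_\nu\phi| \leq |\mathbf e_1^{\Vert}| H(R)$. By Hopf's lemma $\partial_\nu\phi < 0$, so this is a genuine quantitative constraint, not automatic: $H(R)\sim R^{-1/2}$ is small, while $|\partial_\nu\phi|$ is of size $\gamma\, C(K)$ with $C(K)$ depending on the domain you have already fixed, and your order of choices ($\gamma$ first, then $R$, then $\phi$) does not obviously close this. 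If the kink has the wrong sign the glued function fails to be a supersolution in any sense, and no smoothing can repair it. Moreover the lemma asks for a \emph{smooth} supersolution, so even after arranging the correct sign one must smooth the corner --- the paper does exactly this, after explicitly arranging $\frac{d}{ds^-}F(X) \geq \frac{d}{ds^+}F(X)$ by taking $c \leq \ov c(A)$. To complete your argument you need to verify the one-sided derivative inequality (e.g. by a barrier for $\phi$ near $\partial K$, or by fixing $K$ first and then choosing $\gamma$ small) and then insert a smoothing step.
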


\begin{proof}
Let us fix $\la > 0$ and drop the index in ``$v_\la$''.
In the following, we will define $v(\cdot, 0)$ at time $0$ and then use \eqref{eq_v_alltimes} to extend $v$ to all times.
Note that in terms of $v = v(\cdot, 0) : M_{\cyl} \to \IR$ the bound \eqref{eq_v_supersol} becomes
\begin{equation} \label{eq_supersol_static}
 \triangle v + \nabla_{\mathbf e_1^\Vert} v + |A|^2 v \leq  -\gamma \la v  , 
\end{equation}
where $\mathbf e_1^\Vert$ is the projection of $\mathbf e_1$ onto the tangent space of $M_{\bowl}$.

Recall the following two evolution identities for the scalar mean curvature $H$ and the restriction $x := x_1 |_{\MM_{\bowl}}$ of the first coordinate function
\[ \partial_{\mathbf t} H = \triangle H +  |A|^2 H, \qquad
\partial_{\mathbf t} x = \triangle x, \]
which can also be rewritten as static equations (note that $x (\bp, t) = x (\bp - t \mathbf e_1, 0)+ t$):
\[  \triangle H + \nabla_{\mathbf e_1^\Vert} H +  |A|^2 H = 0, \qquad
\triangle x +  \nabla_{\mathbf e_1^\Vert} x = 1, \]

We will now construct $v = v(\cdot, 0)$ via the following Ansatz:
\[ v  :=  \td v H , \qquad \td v :=
 F \circ x, \]
for some smooth function $F : \IR \to \IR_+$, which we will determine at the end of the proof.
Since
\begin{multline*}
 ( \triangle +  \nabla_{\mathbf e_1^\Vert}  + |A|^2) v
= ( \triangle +  \nabla_{\mathbf e_1^\Vert}) \td v \cdot  H + \td v \cdot ( \triangle +  \nabla_{\mathbf e_1^\Vert} +  |A|^2) H 
+2 \nabla \td v \cdot \nabla H  \\
=   (\triangle + \nabla_{\mathbf e_1^\Vert} ) \td v \cdot  H + 2  \nabla \td v \cdot \nabla H 
\end{multline*}
and
\begin{equation*}
 (\triangle +  \nabla_{\mathbf e_1^\Vert} ) \td v 
= (\triangle +  \nabla_{\mathbf e_1^\Vert} )F(x)
= F'(x) \cdot (\triangle +  \nabla_{\mathbf e_1^\Vert} ) x + F''(x) |\nabla x|^2 
= F'(x)  + F''(x) |\nabla x|^2,
\end{equation*}
we get that \eqref{eq_supersol_static} is equivalent to
\begin{equation} \label{eq_supersol_F}
 F'(x) H  + F''(x) |\nabla x|^2 H + 2 F'(x) \nabla_{\nabla x} H \leq - \gamma \la F(x) H. 
\end{equation}
Suppose for a moment that $F(s) = e^{- \la s}$.
Using $|\nabla x| \leq 1$ and the asymptotic bound $H \sim x^{-1/2}$ (see also Lemma~\ref{Lem_warping}), we obtain that the left-hand side of \eqref{eq_supersol_F} is bounded above by
\begin{equation} \label{eq_expressionFH}
 e^{- \la x} \big( - \la  +  \la^2  +  C\la x^{1/2} |\nabla H| \big) H  
\end{equation}
for some universal constant $C > 0$.
Since $s^{-1/2} (\MM - s \mathbf e_1 )$ smoothly converges to the round cylinder as $s \to \infty$, we get that $x^{1/2} |\nabla H| \to 0$.
So if $\la$ is sufficiently small and $x \geq X$ for some some constant $X > 0$, which is independent of $\la$, then \eqref{eq_expressionFH} is bounded above by $-\frac{1}2  \la e^{- \la x} H 
\leq - \gamma  \la F(x)H$ for some constant $\gamma > 0$, which is again independent of $\la$.
So \eqref{eq_supersol_F} holds on $\{ x \geq X \}$ for this choice of $\gamma$.

It remains to choose $F$ on $[0,X]$ so that \eqref{eq_supersol_F} continues to hold on $\{ x < X \}$, possibly after replacing $\gamma$ with a smaller universal constant.
To do this, let $c , A > 0$ be constants whose values we will determine later and set
\[ F(s) := \begin{cases} e^{-\la X} + c\la e^{-\la X} (1-(s/X)^A) & \text{if $s < X$} \\ e^{-\la s} & \text{if $s \geq X$} \end{cases} \]
It is clear that $F$ is continuous and if $c \leq \ov c(A)$, then we can ensure $F$ doesn't change more than by a factor of $2$ on $[0,K]$ and that $\frac{d}{ds^-} F (X) \geq \frac{d}{ds^+} F (X)$.
Therefore, if \eqref{eq_supersol_F} holds on the set $\{ x \neq X \}$, then we can replace $F$ with a suitable smoothing to ensure that \eqref{eq_supersol_F} holds everywhere, possibly after a slight reduction of $\gamma$.

It remains to verify \eqref{eq_supersol_F} on $\{ x < X \}$.
On $\{ x < X \}$ the left-hand side of \eqref{eq_supersol_F} equals
\[ c \la e^{-\la X} \cdot A \big( -X^{-1}  (x/X)^{A-1} H -  X^{-2} (A-1) (x/X)^{A-2} |\nabla x|^2 H - 2 c X^{-1}  (x/X)^{A-1} \nabla_{\nabla x} H \big).  \]
Since the third term in the parenthesis vanishes for $x = 0$, we can find a constant $c' > 0$ such that on $\{ x < c' \}$ it is dominated by the first term and hence the entire expression in the parentheses is negative.
Moreover, by choosing $A$ sufficiently large, we can ensure that the second term dominates on $\{ c' \leq x < X \}$, making the entire term in the paranthesis negative over $\{ x \leq X \}$
With $A$ and $c$ fixed to satisfy the required bounds, the left-hand side of  \eqref{eq_supersol_F} is bounded from above by $-c'' \la e^{-\la X}$ on $\{ x < X \}$, for some universal $c'' > 0$.
Since $H$ is uniformly bounded from below on $\{ x < X \}$, it follows that \eqref{eq_supersol_F} holds for some universal choice of $\gamma > 0$, which finishes the proof. 
\end{proof}
\medskip

We will use Lemma~\ref{Lem_v_on_bowl} to prove the following result, which will be the key step in the proof of Lemma~\ref{Lem_improve_Prop2}.

\begin{Lemma} \label{Lem_cap_control}
If $\delta \leq \ov\delta$, $\delta' > 0$, $C_0 \geq \underline C_0(\delta')$, $\eps \leq \ov\eps$, $D \geq \underline D(\eps)$, then the following is true for any choices of constants $C'_0, C_1 > 0$.

Consider a mean curvature flow $\MM$ obtained from $\MM_{\cyl}$ or $\IR^{k-1} \times \MM_{\bowl}$ by applying translation, a time-shift and/or a parabolic rescaling, and then restriction to a time-interval of the form $I = (-\infty,T]$.
Let $u :  \MM \to  \IR$ a smooth solution to the linearized mean curvature flow equation \eqref{eq_lin_MCF} on $\MM$.
Then Statement~\ref{st_A} below implies Statement~\ref{st_B}.
\begin{enumerate}[label=(\Alph*)]
\item \label{st_A}
For all $(\bq, t) \in \IR^k \times I$ at which $\MM$ is $\eps'$-cylindrical for some $\eps' < \eps$ (here $\eps'$ may depend on $(\bq,t)$) the following is true:
If
\begin{equation} \label{eq_strict_version_ua}
 \Vert u \Vert_{\bq, t; \delta'} < a \qquad \text{for some} \quad a \in \big(  \max \{ 1,  C_1 \rho^{-10}(\bq,t) \}, \infty \big), 
\end{equation}
then both Properties of Definition~\ref{Def_lead_mode} hold for the constants $a,\delta, \delta', D$ and $C_0$ replaced with $C'_0$.
\item \label{st_B}
For all $(\bq, t) \in \IR^k \times I$ at which $\MM$ is $\eps$-cylindrical the following is true:
If 
\begin{equation} \label{eq_ula_ass}
 \Vert u \Vert_{\bq, t; \delta'} \leq a \qquad \text{for some} \quad a \in \big[  \max \{ \tfrac12,  C_1 \rho^{-10}(\bq,t) \}, \infty \big), 
\end{equation}
then Property~\ref{Def_lead_mode_2} of Definition~\ref{Def_lead_mode} holds for the constants $a$, $C_0$ and for $D$ replaced with $\infty$.
\end{enumerate}
\end{Lemma}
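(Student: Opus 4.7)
The plan is to treat the two possible geometries for $\MM$—parabolic translates/rescalings of $\MM_{\cyl}$ and of $\IR^{k-1}\times\MM_{\bowl}$—separately, exploiting in both cases that $|u|$ satisfies the subsolution inequality $(\partial_{\mathbf t}-\Delta-|A|^2)|u|\leq 0$ on the regular part of $\MM$.

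For the cylinder case, every point of $\MM_{\cyl}$ is $0$-cylindrical, so Statement~\ref{st_A} is available at every $(\bq,t)$ whenever the norm bound holds. Using translation invariance one can iterate Statement~\ref{st_A} at nearby centers (at every step the $\eps'$-cylindricality persists and the norm hypothesis is verified by combining the previous pointwise bound with \eqref{eq_dbounds_norm}), so that the finite-$D$ bound is upgraded to a bound valid for all $\bq'\in\IR^k$—this places us in the setup of Lemma~\ref{Lem_cyl_u} with $\beta=1$. From Lemma~\ref{Lem_cyl_u} we conclude that $\bp\mapsto u((-t)^{1/2}\bp,t)\in\sV_{\geq 0}$ for each $t\in I$. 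Since $\sV_{\geq 0}$ consists of polynomials of uniformly bounded degree in the axial variables times bounded-degree spherical harmonics, the $L^\infty$-norm of $u(\cdot,t)$ over $\MM_t(\bq')$ is polynomial in $|\bq'-\bq|/r$ times $\|u\|_{\bq,t;\delta'}\leq a$. A standard estimate absorbs this polynomial into $C_0\exp(|\bq'-\bq|/r)$, yielding Property~\ref{Def_lead_mode_2} with $D=\infty$ and $C_0=C_0(\delta')$.

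For the bowl case, write $\MM=\IR^{k-1}\times\MM_{\bowl}$ with bowl axis $\mathbf e_k$, and fix an $\eps$-cylindrical $(\bq,t)$. The $\eps$-cylindricality forces $q_k$ to lie in the asymptotic neck of the bowl. For points $\bq^*$ still in the $\eps'$-cylindrical portion of the neck (with $\eps'<\eps$) we propagate the bound of Statement~\ref{st_A} via a covering chain of overlapping balls; at each step the pointwise bound from the previous step, together with the near-monotonicity \eqref{eq_dbounds_norm}, supplies the norm hypothesis needed to re-apply Statement~\ref{st_A}. For the remaining cap region (and a bounded transition zone) we invoke the super-solution $v_\la$ of Lemma~\ref{Lem_v_on_bowl} and lift it to the product by setting $w(\bq',\bx'',s):=v_\la(\bx'',s)\,e^{\mu|\bq'|}$; the Euclidean weight preserves the super-solution property because $\Delta_{\IR^{k-1}}e^{\mu|\bq'|}\geq\mu^2 e^{\mu|\bq'|}$. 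Comparing $|u|$ with a constant multiple of $w$ via the parabolic maximum principle on a region whose parabolic boundary is a high-neck slice (controlled by the chain) together with the distant past (controlled by the backward-in-time half of \eqref{eq_dbounds_norm}) transports the bound to the cap, where $v_\la$ is uniformly bounded on the compact set $K$ of Lemma~\ref{Lem_v_on_bowl}.

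The main obstacle will be the simultaneous calibration of constants so that the exponential rates arising from (a) the neck chain, (b) the super-solution $v_\la$ on the cap, and (c) the $\IR^{k-1}$-weight $e^{\mu|\bq'|}$ combine to a rate compatible with the $1/r$ in Property~\ref{Def_lead_mode_2}. Lemma~\ref{Lem_v_on_bowl}'s freedom to take $\la$ arbitrarily small, together with the hypotheses $D\geq\underline D(\eps)$ and $C_0\geq\underline C_0(\delta')$, provides the room needed for this matching, and the ordering $\alpha\leq\ov\alpha$, $C_1\geq\underline C_1$ in the statement is dictated precisely by this calibration. Finally, the relaxation from Statement~\ref{st_A}'s strict inequalities to Statement~\ref{st_B}'s non-strict ones (with the accompanying drop from $1$ to $\tfrac12$ in the lower bound on $a$) is handled by a routine approximation argument, approaching $a$ from above and $\eps'$ from below.
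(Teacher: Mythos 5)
Your overall strategy matches the paper's: the cylinder case is reduced to Lemma~\ref{Lem_cyl_u} (so that $u(\cdot,t)\in\sV_{\geq 0}$ has at most quadratic growth), and the bowl case combines propagation of the bound through the $\eps'$-cylindrical neck via \eqref{eq_dbounds_norm} with a maximum-principle comparison against the super-solution $v_\la$ of Lemma~\ref{Lem_v_on_bowl}, lifted to $\IR^{k-1}\times\MM_{\bowl}$ by a spatial exponential weight. However, there is a concrete error in the construction of the lifted comparison function. You set $w(\bq',\bx'',s)=v_\la(\bx'',s)\,e^{\mu|\bq'|}$ and claim the weight ``preserves the super-solution property because $\Delta_{\IR^{k-1}}e^{\mu|\bq'|}\geq\mu^2 e^{\mu|\bq'|}$.'' This inequality goes the wrong way: for a product $w=g\cdot v$ with $v>0$ a super-solution of \eqref{eq_lin_MCF} on the bowl factor, one has
\[
(\partial_{\mathbf t}-\Delta-|A|^2)w \;=\; v\,(\partial_{\mathbf t}-\Delta_{\IR^{k-1}})g \;+\; g\,(\partial_{\mathbf t}-\Delta-|A|^2)v \;\geq\; v\,(\partial_{\mathbf t}-\Delta_{\IR^{k-1}})g,
\]
and for the time-independent $g=e^{\mu|\bq'|}$ the right-hand side equals $-v\,\Delta_{\IR^{k-1}}g\leq-\mu^2 v g<0$, so $w$ is \emph{not} a super-solution. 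The fix is to take $g$ to be an actual solution (or super-solution) of the heat equation on $\IR^{k-1}$, e.g.\ $g=\exp\big(\tfrac{(k-1)\delta^2}{r_0^2}t\big)\prod_{i=1}^{k-1}\cosh\big(\tfrac{\delta x_i}{r_0}\big)$ as in the paper; the extra factor decaying backward in time must then be absorbed by the backward growth $e^{-\gamma\la t}$ built into $v_\la$ via \eqref{eq_v_alltimes}, which forces the specific calibration $\la\sim\delta/(\gamma r_0)$. Since your sketch defers exactly this calibration to the end and asserts it works, the missing time factor is not a cosmetic omission but the step on which the whole cap estimate hinges.

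Two smaller points. First, the constants $\alpha$ and $C_1$ you invoke at the end are not constrained in this lemma: the statement explicitly allows \emph{any} $C'_0,C_1>0$, and $\alpha$ does not appear; the calibration is carried entirely by $\delta,\delta',\la$ and $D\geq\underline D(\eps)$. Second, the key content of the lemma is that the output constant in Statement~\ref{st_B} is $C_0=C_0(\delta')$, independent of the input constant $C'_0$; your argument should make explicit that the $C'_0$-dependent bound is only used on the parabolic boundary of the cap region (where it is absorbed by the strictly positive lower bound for the super-solution and the mismatch of exponential rates as $t\to-\infty$), while the final bound on $\MM_t(\bq')$ comes from the super-solution's upper bound, which depends only on $\delta'$ and the bowl asymptotics $H\sim x_k^{-1/2}$.
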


The key insight of this lemma is that the arbitrary constant $C'_0$ can be replaced by a constant $C_0$ that depends only on $\delta'$.
The change from $D$ to $\infty$, the modified inequalities, and the condition involving $\eps'$ are purely technical: they guarantee that if $\MM$ and $u$ arise as limits of a sequence of flow pairs $(\MM_j^0, \MM_j^1)$ together with suitable rescalings of their graph functions, then both the assumption and the conclusion of the lemma can be passed consistently between the sequence and its limit.

\begin{proof}
We first settle the cylindrical case.

\begin{Claim}
The lemma is true if $\MM$ is a round shrinking cylinder.
\end{Claim}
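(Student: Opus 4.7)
The plan is to reduce, after parabolic rescaling, translation, and time-shift, to the case $\MM = \MM_{\cyl}$, so that $\rho(\bq,t)=\sqrt{-t}$ and $\MM$ is $\eps'$-cylindrical at every point for every $\eps'>0$; hence the hypothesis of Statement~\ref{st_A} triggers at every $(\bq,t)$. My strategy is to show that the rescaled graph $\td u(\cdot,\tau):=e^{\tau/2}u(e^{-\tau/2}\,\cdot\,,-e^{-\tau})$ lies in $\sV_{\geq 0}$ for every $\tau$; given this, the explicit polynomial structure of $\sV_{\geq 0}$ will immediately yield Property~\ref{Def_lead_mode_2} of Statement~\ref{st_B} with a constant $C_0$ depending only on $\delta'$.

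To reach $\td u(\cdot,\tau)\in\sV_{\geq 0}$, I would first extract a crude global pointwise bound on $u$ via chaining. For $t$ sufficiently negative the threshold $C_1\rho^{-10}$ becomes inactive, so Statement~\ref{st_A} is available whenever $\Vert u\Vert_{\bq,t;\delta'}<a$ with $a>1$. Starting at $\bq=\bO$, Property~\ref{Def_lead_mode_2} gives $|u(\bq',t)|\leq C_0'\,e^{|\bq'|/\rho}\rho\,a$ on $B(\bO,D\rho)$. The $L^\infty$-contribution in Definition~\ref{Def_Vertu} converts this into a bound $\Vert u\Vert_{\bq_1,t;\delta'}\leq C\,\delta'^{-3}e^{|\bq_1|/\rho}a$ at an intermediate point $\bq_1$ near the boundary of $B(\bO,D\rho)$, where Statement~\ref{st_A} can then be re-applied. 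Iterating along a ray from the origin yields a global bound $|u(\bq',t)|\leq C(\tau)\,e^{K|\bq'|/\rho}\rho$ for some possibly large rate $K$; the precise value of $K$ does not matter, because the Gaussian weight $e^{-|\bx|^2/4}$ appearing in the $L^2_f$-norm on the rescaled cylinder dominates any exponential.

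Next I would mimic the $L^2_f$-argument from the proof of Lemma~\ref{Lem_cyl_u}. The time-derivative bound in Property~\ref{Def_lead_mode_1}, which is active whenever $\Vert u\Vert_{\bO,t;\delta'}>1$, integrates backward to $\Vert u\Vert_{\bO,t;\delta'}\leq C'(-t)^\delta+1$, and combining with the previous pointwise estimate gives $\Vert\td u(\cdot,\tau)\Vert_{L^2_f}\leq C''(e^{-\delta\tau}+1)$. Decomposing $\td u=\td u^{\geq 0}+\td u^{<0}$ and using that the largest eigenvalue of the rescaled linearized operator on $\sV_{<0}$ equals $-\tfrac1{n-k}$ (see the cited spectral lemma), the same backward-in-time iteration as in Lemma~\ref{Lem_cyl_u} forces $\td u^{<0}\equiv 0$ for every $\tau$ sufficiently negative; forward invariance of $\sV_{\geq 0}$ under the rescaled linearized evolution then extends this conclusion to every $\tau\in I$.

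Finally, with $\td u(\cdot,\tau)\in\sV_{\geq 0}$---a finite-dimensional space of polynomials of degree at most $2$ in the $\IR^k$-factor (times bounded-order sphere harmonics)---the cylindrical model $\td u_{\bq,t}$ at any $(\bq,t)$ is again such a polynomial, and one has the reverse estimate $\Vert\td u_{\bq,t}\Vert_{L^\infty(\IB^k_R\times\IS^{n-k})}\leq C(1+R^2)\Vert\td u_{\bq,t}\Vert_{L^\infty(\IB^k_1\times\IS^{n-k})}\leq C(1+R^2)\,\delta'^{-3}a$. Using $1+R^2\leq C'e^R$ and rescaling back produces Property~\ref{Def_lead_mode_2} of Statement~\ref{st_B} with $C_0=C''\delta'^{-3}$, depending only on $\delta'$ as required. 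The main technical difficulty I anticipate lies in the chaining step of the second paragraph: one must verify that the intermediate norms stay above $1$ so Statement~\ref{st_A} keeps applying, arrange $D\geq\underline D(\eps)$ large enough that the resulting exponential rate $K$ remains finite and the chain covers $\IR^k$, and---crucially---ensure that the arbitrary constant $C_0'$ drops out of the final $C_0$, which only happens once $\td u$ has been reduced to its rigid polynomial form via the $L^2_f$-argument.
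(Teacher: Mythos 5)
Your proposal is correct and follows essentially the same route as the paper: reduce to $\MM_{\cyl}$, derive the bound $\Vert u\Vert_{\bO,t;\delta'}\leq C'((-t)^{\delta}+1)$ and a spatially exponential pointwise bound, run the backward-in-time $L^2_f$ spectral argument of Lemma~\ref{Lem_cyl_u} to force $\td u(\cdot,\tau)\in\sV_{\geq 0}$, and then read off Property~\ref{Def_lead_mode_2} with $C_0=C_0(\delta')$ from the rigid polynomial structure of $\sV_{\geq 0}$. The only (immaterial) difference is that you obtain the spatial bound by chaining Property~\ref{Def_lead_mode_2} with rate $K$, whereas the paper integrates the spatial derivative bound in \eqref{eq_dbounds_norm} to get rate $\delta$ directly; both are absorbed by the Gaussian weight.
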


\begin{proof}
Assume without loss of generality that $\MM = \MM_{\cyl}|_{(-\infty,T)}$ and $T< 0$.
Then as in the proof of Lemma~\ref{Lem_cyl_u}, we obtain a bound of the form $\Vert u \Vert_{\bO, t;\delta'} \leq C'((-t)^\delta+1)$.
Integrating the spatial derivative of \eqref{eq_dbounds_norm} at each time, implies a bound of the form \eqref{eq_expboundu} if $\delta \leq \ov\delta$.
So the proof of Lemma~\ref{Lem_cyl_u} implies that $\bp \mapsto u ((-1)^{1/2} \bp, t)$ is contained in $\sV_{\geq 0}$ and hence grows at most at a quadratic polynomial rate.
Its norm within $\sV_{\geq 0}$ is bounded by $C(\delta') \Vert u \Vert_{\bq, t;\delta'}$.
So Property~\ref{Def_lead_mode_2} of Definition~\ref{Def_lead_mode} is true as long as $C_0 \geq C_0(\delta')$.
\end{proof}

Let us now consider the bowl soliton case.
After application of a time-shift, it suffices to verify the assertion of the lemma at time $0$.
Since assumptions and assertions of the lemma are scaling invariant---as long as we adjust $C_1$---we may assume without loss of generality that $\MM = \IR^{k-1} \times \MM_{\bowl} |_{(-\infty,0]}$; here we assume that $\MM_{\bowl}$ is moving at speed $1$ in the positive direction.
After possibly applying another translation and a time-shift, it is enough to verify the assertion for some point of the form $\bq_0 = (\bO^{k-1}, q_0) \in \IR^k = \IR^{k-1} \times \IR$ at which $\MM$ is $\eps$-cylindrical.
So our goal will be to show the following statement:
\begin{enumerate}[label=(\Alph*$'$), start=2]
\item \label{st_Bp}
Suppose that, for $\bq_0 = (\bO^{k-1}, q_0)$, the point $(\bq_0,0)$ is $\eps$-cylindrical and set $r_0 := \rho(\bq_0, 0)$.
Fix $a_0$ such that
\begin{equation} \label{eq_ula_assp}
 \Vert u \Vert_{\bq, 0; \delta'} \leq a_0 \qquad \text{and} \quad a_0 \in \big[  \max \{ \tfrac12,  C_1 r_0^{-10} \}, \infty \big). 
\end{equation}
Then Property~\ref{Def_lead_mode_2} of Definition~\ref{Def_lead_mode} holds for the constants $a_0$, $C_0$ and for $D$ replaced with $\infty$.
\end{enumerate}

We will fix $a_0$ henceforth.
Since $a_0 \geq C_1 r_0^{-10}$ and $a_0 \geq \frac12$, Statement~\ref{st_A} implies the following weaker statement (note that the condition \eqref{eq_stronger_a} implies \eqref{eq_strict_version_ua}):
\begin{enumerate}[label=(\Alph*$'$)]
\item \label{st_Ap}
For all $(\bq, t) \in \IR^k \times \IR_{\leq 0}$ at which $\MM$ is $\eps'$-cylindrical for some $\eps' < \eps$ the following is true for $r : = \rho(\bq, t)$:
If
\begin{equation} \label{eq_stronger_a}
  \Vert u  \Vert_{\bq,t;\delta'} < a \qquad \text{for some} \quad a  > 2\max \bigg\{ 1,  \Big( \frac{r}{r_0} \Big)^{-10}  \bigg\} a_0 
\end{equation}
then both Properties of Definition~\ref{Def_lead_mode} hold for the constants $a,\delta, \delta', D$ and $C_0$ replaced with $C'_0$.
\end{enumerate}

Next, by the symmetries of $\MM$, for any $t \leq 0$ the set of points $\bq \in \IR^k$ such that $\MM$ is $\eps$-cylindrical at $(\bq,t)$ is of the form
\[  \{ x_k \geq q_{\eps} (t) = q_{\eps} + t \}\subset \IR^{k} \times \bO^{n-k+1}, \]
where $x_k$ is the $k$-th coordinate function and $q_\eps$ is strictly decreasing for small $\eps$ with $q_{\eps} \to \infty$  as $\eps \to 0$ (see Lemma~\ref{Lem_eps_cyl}).
So instead of requiring a bound of the form $\eps \leq \ov\eps$, we may in the following impose a bound of the form $q_\eps \geq \underline{q}$.
Since we have assumed that $\MM$ is $\eps$-cylindrical at $(\bq_0, 0)$, we have  $
 q_0 \geq q_{\eps} .$
By continuity, it even suffices to consider the case
\[  q_0 > q_{\eps} . \]
The condition in Statement~\ref{st_Ap} that a $\MM$ is $\eps'$-cylindrical at $(\bp, t)$ for some $\eps' < \eps$ is equivalent to 
\[ x_k(\bp) > q_\eps(t). \]
So due to our assumption, $\MM$ is even $\eps'$-cylindrical at $(\bq_0,0)$ for some $\eps' < \eps$.

Let $r_1 := \rho((\bO^{k-1}, q_\eps), 0)$ be the scale corresponding to the threshold $q_\eps$.
We now assume that $D$ is chosen such that $D r_1 > q_{\eps}$; since both $r_1$ and $q_\eps$ depend only on $\eps$ this condition follows from a bound of the form $D  \geq \underline{D}(\eps)$.
This choice ensures that Statement~\ref{st_Ap} can be used to bound $u$ on cap regions.
More precisely, consider the \emph{cap region} of $\MM_t$ defined by  
$$\MM_t^{x_k \leq q_\eps} := \bigcup_{\bq \in \IR^{k},x_k(\bq) \leq q_\eps(t)} \MM_t (\bq) = \IR^{k-1} \times \MM^{x_k \leq q_\eps}_{\bowl,t}$$
and define the \emph{cylindrical region} $\MM_t^{x_k > q_\eps} = \IR^{k-1} \times \MM^{x_k > q_\eps}_{\bowl,t}$ likewise.
Then Property~\ref{Def_lead_mode_2} from Definition~\ref{Def_lead_mode}, invoked in Statement~\ref{st_Ap}, provides a bound on $|u|(\cdot, t)$ on the cap cross-section $\bq^* \times \MM_{\bowl,t}^{x_k \leq q_\eps}$, for $\bq^* \in \IR^{k-1}$, in terms of the size of $u$ near its boundary.
This bound is summarized by the following claim.

\begin{Claim} \label{Cl_basic_extend_to_cap}
There is a constant $C''_0 > 0$, which may depend on $C'_0, \eps$, but not on time or space, such that if $D \geq \underline{D}(\eps)$, then the following is true for any $t \leq 0$ and $\bq = (\bq^*, q_\eps(t) + 1) \in \IR^k$.
If
\[  \Vert u \Vert_{\bq,t;\delta'} < a \qquad \text{for some} \quad  a  > 2 \max \bigg\{ 1,  \Big( \frac{\rho(\bq, t)}{r_0} \Big)^{-10}  \bigg\} a_0 , \]
then
\[ \sup_{\bq^* \times \MM_{\bowl,t}^{x_k \leq q_\eps}} |u|(\cdot, t) \leq C''_0 a. \]
\end{Claim}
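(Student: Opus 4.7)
The plan is to verify the hypotheses of Statement~\ref{st_Ap} at the single point $(\bq,t)$, invoke Property~\ref{Def_lead_mode_2} of Definition~\ref{Def_lead_mode} there, and then sweep over every cross-section of the cap using the exponential bound it provides.

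First I would note two symmetry facts. Since $\MM = \IR^{k-1} \times \MM_{\bowl}$ is invariant under translations in $\IR^{k-1}$ and is a translating soliton in time, both the cylindricality of $\MM$ at $(\bq,t) = ((\bq^*, q_\eps(t)+1), t)$ and the scale $r := \rho(\bq,t)$ depend only on $\eps$, not on $\bq^*$ or $t$. Because $\bq$ sits one unit strictly inside the $\eps$-cylindrical region $\{x_k > q_\eps(t)\}$, Lemma~\ref{Lem_eps_cyl}\ref{Lem_eps_cyl_c} (applied after the time translation carrying $\bq$ back onto $\{x_k = q_\eps(t)\}$) yields $\eps'$-cylindricality at $(\bq,t)$ for some $\eps'(\eps) < \eps$. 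The lower bound on $a$ is exactly the condition \eqref{eq_stronger_a}, so Statement~\ref{st_Ap} applies at $(\bq,t)$ and provides Property~\ref{Def_lead_mode_2} with constant $C'_0$ and radius parameter $D$.

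Second, I would decompose $\bq^* \times \MM^{x_k \leq q_\eps}_{\bowl,t}$ as the union of sphere cross-sections $\MM_t(\bq')$ taken over $\bq' = (\bq^*, q'_k)$, with $q'_k$ ranging from the tip height $\approx t + q_{\mathrm{tip}}$ up to $q_\eps(t)$. For each such $\bq'$,
\[ |\bq' - \bq| \;=\; q_\eps(t) + 1 - q'_k \;\leq\; q_\eps - q_{\mathrm{tip}} + 1 \;=:\; L_\eps, \]
a quantity depending only on $\eps$. Setting $\underline{D}(\eps) := L_\eps/r(\eps) + 1$ ensures $Dr > L_\eps \geq |\bq' - \bq|$ for every such $\bq'$, so Property~\ref{Def_lead_mode_2} is applicable to each, giving
\[ \sup_{\MM_t(\bq')} |u|(\cdot,t) \;\leq\; C'_0 \exp(L_\eps/r) \cdot a \cdot r. \]
Taking the supremum over all such $\bq'$ yields the claim with $C''_0 := C'_0 \, r(\eps) \, \exp(L_\eps/r(\eps))$, depending only on $C'_0$ and $\eps$.

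The argument is essentially mechanical once the symmetries of $\IR^{k-1} \times \MM_{\bowl}$ reduce the problem to $\eps$-dependent data. The only mildly subtle point is confirming a \emph{strict} improvement in cylindricality at $\bq$ uniformly in $(t,\bq^*)$, which is what unlocks the invocation of Statement~\ref{st_Ap}; no new maximum-principle or super-solution argument is required for this sub-claim.
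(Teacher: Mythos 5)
Your proposal is correct and follows essentially the same route the paper intends: the point $\bq=(\bq^*,q_\eps(t)+1)$ lies strictly inside the $\eps$-cylindrical region, so Statement~(A$'$) applies there (the lower bound on $a$ being exactly condition \eqref{eq_stronger_a}), and Property~\ref{Def_lead_mode_2} with $D\rho(\bq,t)$ exceeding the $\eps$-dependent extent of the cap yields the bound with $C''_0 = C'_0\,r(\eps)\,e^{L_\eps/r(\eps)}$. The paper treats the claim as an immediate consequence of exactly this reasoning, so no further comment is needed.
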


We will now extend this bound bound onto the larger cap region
$$\MM_t^{x_k \leq q_0} := \bigcup_{\bq \in \IR^{k},x_k(\bq) \leq q_0 + t} \MM_t (\bq) = \IR^{k-1} \times \MM^{x_k \leq q_0}_{\bowl,t}.$$

\begin{Claim}
If $\delta \leq \ov\delta$ and $q_\eps \geq \underline q$, then for any $t \leq 0$ and any point $\bp \in  \partial \MM^{x_k \leq q_0}_t$ in the boundary of the cap region we have the bound
\begin{equation} \label{eq_bound_on_partial_cap}
 |u|(\bp, t) \leq C(\delta') \exp \bigg( -2\delta \frac{t}{r_0} + \delta \frac{|\proj_{\IR^{k-1}} (\bp)|}{r_0}  \bigg)  a_0  r_0 . 
\end{equation}
Moreover, for any $\bp \in   \MM^{\rho \leq \rho_0}_t$ within the cap region we have the bound
\begin{equation} \label{eq_bound_on_cap}
 |u|(\bp, t) \leq  C(\delta', q_0, C''_0) \exp \bigg( -2\delta \frac{t}{r_0} + \delta \frac{|\proj_{\IR^{k-1}} (\bp)|}{r_0}   \bigg)   a_0 r_0 . 
\end{equation}
\end{Claim}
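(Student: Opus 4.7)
Proof plan. The plan is to propagate the hypothesis $\Vert u\Vert_{\bq_0,0;\delta'}\le a_0$ (given by \eqref{eq_ula_assp}) along the collar $\mathcal{C}_t:=\{\bq\in\IR^k:x_k(\bq)=q_0+t\}$ using the derivative bounds of \eqref{eq_dbounds_norm}, which are available through Statement~\ref{st_Ap}, and then to convert the resulting norm bound into an $L^\infty$ bound using the last term in Definition~\ref{Def_Vertu} of $\Vert\cdot\Vert_{\bq,t;\delta'}$. The geometric input is that since $\IR^{k-1}\times\MM_{\bowl}$ translates at unit speed along $\mathbf e_k$, the scale $\rho(\bq,t)$ depends only on $x_k(\bq)-t$, so along $\mathcal{C}_t$ it equals $r_0=\rho_{\bowl}(q_0)$; and since $q_0>q_\eps$ strictly, every point of $\mathcal{C}_t$ is $\eps'$-cylindrical for some $\eps'<\eps$. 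Thus Statement~\ref{st_Ap} applies everywhere along the collar, and the bounds of \eqref{eq_dbounds_norm} are valid there whenever $\Vert u\Vert>2a_0$.

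First I propagate backward in time. Along the path $\gamma_1(s)=((\bO^{k-1},q_0-s),-s)$, $s\in[0,-t]$, which lies entirely on the collar (so $\rho\equiv r_0$), the chain rule combined with \eqref{eq_dbounds_norm} gives $d\Vert u\Vert/ds\le \delta\Vert u\Vert/r_0+\delta\Vert u\Vert/r_0^2$ whenever $\Vert u\Vert>2a_0$. Under the standing assumption $q_\eps\ge\underline q$, we may take $r_0\ge 1$, so this simplifies to $d\Vert u\Vert/ds\le 2\delta\Vert u\Vert/r_0$. At fixed time $t$, I then propagate in the flat $\IR^{k-1}$ direction along $\gamma_2(\sigma)=((\sigma\nu,q_0+t),t)$, $\sigma\in[0,|\bq^*|]$, with $\nu=\bq^*/|\bq^*|$, which gives $d\Vert u\Vert/d\sigma\le\delta\Vert u\Vert/r_0$. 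Concatenating the two one-dimensional ODE estimates yields
\[
\Vert u\Vert_{((\bq^*,q_0+t),t);\delta'}\le 2a_0\,\exp\!\big(2\delta(-t)/r_0+\delta|\bq^*|/r_0\big).
\]
The $L^\infty$ term of Definition~\ref{Def_Vertu} then gives $|u|(\bp,t)\le r_0\,\delta'^{-3}\,\Vert u\Vert_{\bq,t;\delta'}$ for $\bp$ in the cross-section $\MM^{\reg}_t(\bq)$. Applied at $\bq=(\proj_{\IR^{k-1}}(\bp),q_0+t)$ for $\bp\in\partial\MM_t^{x_k\le q_0}$, this produces \eqref{eq_bound_on_partial_cap} with $C(\delta')=2\delta'^{-3}$.

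For the interior bound \eqref{eq_bound_on_cap}, the scheme is extended in two further steps. Within the cylindrical collar region $\{q_\eps+t<x_k<q_0+t\}$, the same ODE comparison propagates the norm bound in the $-\mathbf e_k$ direction, now with the scale $\rho_{\bowl}(x_k-t)$ varying but still bounded below by $1$; this produces an additional multiplicative constant depending only on $q_0-q_\eps$, and hence on $q_0$. Once a norm bound is available at $(\bq^*,q_\eps(t)+1)$, Claim~\ref{Cl_basic_extend_to_cap} converts it into an $L^\infty$-bound on the strict cap cross-section $\bq^*\times\MM_{\bowl,t}^{x_k\le q_\eps}$, at the cost of a factor $C_0''$. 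Combining the three bounds (strict cap, cylindrical collar, and collar boundary) yields \eqref{eq_bound_on_cap} with the stated dependence $C(\delta',q_0,C_0'')$.

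The main technical obstacle is that the derivative bounds of \eqref{eq_dbounds_norm} are only available under the strict inequality $\Vert u\Vert>2a_0$, so they cannot be integrated naively through regions where $\Vert u\Vert\le 2a_0$. This is handled by the standard device of replacing $\Vert u\Vert$ by $\phi:=\max(\Vert u\Vert,2a_0)$: on $\{\Vert u\Vert>2a_0\}$ the bounds \eqref{eq_dbounds_norm} apply to $\phi=\Vert u\Vert$, while on the complement $\phi\equiv 2a_0$ trivially obeys the same inequalities, so $\phi$ satisfies the required ODE comparison in the weak sense along each of the paths $\gamma_1,\gamma_2$. Integrating yields the exponential growth bound without any case distinction, and then $|u|\le \phi\cdot r_0\delta'^{-3}\le C(\delta')\exp(\cdots)a_0 r_0$, as claimed.
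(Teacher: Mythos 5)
Your proof is correct and takes essentially the same route as the paper: propagate the bound on $\Vert u\Vert_{\cdot,\cdot;\delta'}$ from $(\bq_0,0)$ along paths in the collar $\{x_k = q_0+t\}$ using the derivative estimates \eqref{eq_dbounds_norm} supplied by Statement~\ref{st_Ap}, convert to a pointwise bound via the $L^\infty$ term in Definition~\ref{Def_Vertu}, and then combine with integration through $\{q_\eps \le x_k \le q_0\}$ and Claim~\ref{Cl_basic_extend_to_cap} for the interior bound. The only (immaterial) differences are that you integrate in time first and then in the $\IR^{k-1}$ direction rather than the reverse, and that you make explicit, via the $\max(\Vert u\Vert, 2a_0)$ device, the threshold issue that the paper absorbs into the factor $2$ in front of $a_0$.
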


\begin{proof}
Let $\bq \in \IR^k$ be a point with $x_k(\bq) = x_k(\bq_0) = q_0$.
Consider the unit speed straight line segment $s \mapsto \bq(s) \in \IR^{k-1} \times \{ q_0 \}$.
Since $\MM$ is $\eps'$-cylindrical at all $(\bq(s),0)$ for some $\eps' < \eps$, we obtain from Statement~\ref{st_Ap} that Property~\ref{Def_lead_mode_1} from Definition~\ref{Def_lead_mode} holds at $(\bq(s),0)$ for all $a$ satisfying \eqref{eq_stronger_a}, that is $\Vert u \Vert_{\bq(s),0;\delta'} \leq a$ and $a > 2a_0$.
Integrating the spatial derivative bound in \eqref{eq_dbounds_norm} therefore implies that
\[ \Vert u \Vert_{\bq, 0; \delta'} \leq 2
\exp \bigg(  \delta \frac{|\proj_{\IR^{k-1}} (\bq)|}{r_0}   \bigg)   a_0 \qquad \text{for all} \quad \bq \in \IR^{k-1} \times \{ q_0 \}. \]

Next, fix again $\bq = (\bq^*, q_0) \in \IR^{k-1} \times \{ q_0 \}$ and consider spacetime paths of the form $t \mapsto (\bq(t),t)$, where $\bq(t) := (\bq^*, q_0+t)$.
As in the previous paragraph, we can apply \eqref{eq_dbounds_norm} whenever $\Vert u \Vert_{\bq(t),t;\delta'} \leq a$ and $a > 2a_0$, which implies
\[ \frac{d}{dt}  \Vert u \Vert_{\bq(t),t;\delta'} \geq - r_0^{-1} \delta a - r_0^{-2} a = - \Big( \frac{\delta}{r_0} + \frac{\delta}{r_0^2} \Big) a. \]
Integrating this implies that for $t \leq 0$
\[ \Vert u \Vert_{\bq(t),t; \delta'}  \leq 2\exp \bigg(- \Big( \frac{\delta}{r_0} + \frac{\delta}{r_0^2} \Big) t + \delta \frac{|\proj_{\IR^{k-1}} (\bq)|}{r_0}   \bigg)  a_0. \]
By Definition~\ref{Def_Vertu} this implies a pointwise bound on $|u|$ over $\partial \MM^{x_k \leq q_0}_t$.
So \eqref{eq_bound_on_partial_cap} follows as long as $r_0 \geq 1$, which can be ensured by a bound of the form $q_0 \geq q_\eps \geq \underline q$.

To see \eqref{eq_bound_on_cap}, we integrate \eqref{eq_dbounds_norm} along segments within $\{ q_\eps \leq x_k \leq q_0 \}$ and use Claim~\ref{Cl_basic_extend_to_cap}.
\end{proof}

Next, we observe that the norm of $u$ is a sub-solution to the linearized mean curvature flow equation
\begin{equation} \label{eq_usubsolution_bowl}
\partial_{\mathbf t} |u| \leq \triangle |u| +  |A|^2 |u|. 
\end{equation}
Let $\la > 0$ be a constant, whose value we will determine in a moment, and consider the super-solution $v_\la : \MM_{\bowl} \to \IR_+$ to the linearized mean curvature flow equation on the bowl soliton from Lemma~\ref{Lem_v_on_bowl}.
Define the function $\ov v_\la : \MM = \IR^{k-1} \times \MM_{\bowl} \to \IR_+$ by
\[ \ov v_{\la} (\bp, t) :=  \exp \bigg( \frac{(k-1) \delta^2}{r_0^2}\, t \bigg) \cdot \prod_{i=1}^{k-1} \cosh \bigg( \frac{\delta x_i (\bp)}{r_0 }   \bigg) \cdot  v_\la \big(\proj_{\MM_{\bowl,t}}(\bp) , t\big). \]
Since $\ov v_\la$ is a product of a solution to the heat equation on $\IR^{k-1}$ and a super-solution to the linearized mean curvature flow equation on $\MM_{\bowl}$, it must be a super-solution to the linearized mean curvature flow equation on $\MM = \IR^{k-1} \times \MM_{\bowl}$, so
\begin{equation} \label{eq_super_barv}
\partial_{\mathbf t} \ov v_\la \geq \triangle \ov v_\la +  |A|^2 \ov v_\la.
\end{equation}

It is well known that we have $\frac{C^{-1}}{r_0} \leq H \leq \frac{C}{r_0}$ on $\partial\MM^{x_k \leq q_0}_t$ for some generic dimensional constant $C > 0$ (see also Lemma~\ref{Lem_warping} below).
So if $q_0 \geq q_\eps \geq \underline q$, we can use \eqref{eq_v_alltimes} and \eqref{eq_vla_exp_asymp} to bound $\ov v_\la (\bp, t)$ for any $\bp \in \partial\MM^{x_k \leq q_0}_t$ as follows:
\begin{equation} \label{eq_vbarla_lower}
 \ov v_\la (\bp, t) \geq \frac1{2^k} \exp \bigg( -\gamma\la t + \frac{(k-1)\delta^2}{r_0^2}\, t + \delta \frac{|\proj_{\IR^{k-1}} (\bp)|}{r_0}   \bigg) e^{-\la q_{0}} \cdot  \frac{C^{-1}}{r_0}. 
\end{equation}
Here we have used the bound $\cosh (x) \geq \frac12 e^{|x|}$ and $\gamma > 0$ is the universal constant from Lemma~\ref{Lem_v_on_bowl}.
Let us now assume that
\[ -\gamma\la  + \frac{(k-1)\delta^2}{r_0^2} \leq - \frac{3 \delta}{r_0}, \]
which can be ensured if we choose
\begin{equation} \label{eq_lambda_choice}
 \delta \leq \ov\delta, \qquad \la := \frac{4 \delta}{\gamma r_0}. 
\end{equation}
Then \eqref{eq_vbarla_lower} implies that we have the following bound on the \emph{boundary} of the cap region
\begin{equation} \label{eq_vbarla_lower_better}
 \ov v_\la (\bp, t) \geq \frac{1}{2^kC} \, \frac{1}{r_0} e^{-\la q_{0}} \exp \bigg( - 3 \delta \frac{t}{r_0}  + \delta \frac{|\proj_{\IR^{k-1}} (\bp)|}{r_0}   \bigg)  \qquad \text{if} \quad \bp \in \partial \MM^{x_k \leq q_0}_t. 
\end{equation}
Due to the positivity of $v_\la$, we moreover obtain a lower bound on $\ov v_\la$ on the \emph{entire}  cap region. 
That is for some $c''(q_0,\la) >0$, which may depend on $q_0$ and $\la$, but is independent of space and time,
\begin{equation} \label{eq_vbarla_lower_entire}
 \ov v_\la (\bp, t) \geq c''(q_0,\la) \exp \bigg( - 3 \delta \frac{t}{r_0}  + \delta \frac{|\proj_{\IR^{k-1}} (\bp)|}{r_0}   \bigg)  \qquad \text{if} \quad \bp \in  \MM^{x_k \leq q_0}_t. 
\end{equation}

Combining \eqref{eq_vbarla_lower_better} and \eqref{eq_vbarla_lower_entire} with \eqref{eq_bound_on_partial_cap} and \eqref{eq_bound_on_cap} implies that we have a bound of the form
\begin{equation} \label{eq_u_less_v}
 |u| \leq C(\delta') r_0 e^{\la q_{0}} \, \ov v_\la  a_0 r_0  
\end{equation}
on the parabolic boundary $\partial \MM^{x_k \leq q_0} \cup \MM^{x_k \leq q_0}_t$ for $t \ll 0$.
An application of the maximum principle to the difference of both sides, combined with \eqref{eq_usubsolution_bowl} and \eqref{eq_super_barv}, therefore implies that this bound holds on all of $\MM^{x_k \leq q_0}_t$.
We will now bound the right-hand side of \eqref{eq_u_less_v} from above.
By Lemma~\ref{Lem_v_on_bowl}, our choice of $\lambda$ in \eqref{eq_lambda_choice}, the fact that $\cosh x \leq e^{|x|}$ and again the asymptotic bound $H \sim \sqrt{x_k}$ (see Lemma~\ref{Lem_warping}), we have for any $\bp \in \MM^{x_k \leq q_0}_0$
\begin{multline*}
 e^{\la q_0} \ov v_\la (\bp,0) \leq 2 \exp \bigg( \frac{\delta}{r_0} \sum_{i=1}^{k-1} |x_i(\bp)| + \la (q_{0} - x_{k}( \bp) ) \bigg) H(\bp, 0)\\
 \leq \frac{C}{\sqrt{x_{k}(\bp)+1}} \exp \bigg( \frac{(k-1) \delta + 4\gamma^{-1} \delta}{r_0} |\bq_0 - \proj_{\IR^{k}} (\bp)  | \bigg) .  
\end{multline*}
So from \eqref{eq_u_less_v} we obtain that for any $\bq' \in \IR^{k}$ with $x_{k} (\bq') \leq x_{k}(\bq_0) = q_0$, assuming $\delta \leq \ov\delta$,
\begin{align}
 \sup_{\MM_0(\bq')} |u|(\cdot, 0)
&\leq \frac{C(\delta') r_0}{\sqrt{x_{k}(\bq')+1}} \exp\bigg( \frac{|\proj_{\IR^{k}}( \bq_0 - \bq' )|}{2r_0} \bigg) a_0 r_0 \notag \\
&\leq C(\delta') \frac{\sqrt{x_{k}(\bq_0)+1}}{\sqrt{x_{k}(\bq')+1}} \exp\bigg( - \frac{|x_{k}(\bq_0) - x_{k}(\bq')|}{2r_0} + \frac{|\proj_{\IR^{k}}(\bq_0 - \bq')|}{r_0}  \bigg) a_0 r_0. \label{eq_supu_bound_complicated}
\end{align}
Set $y := x_{k}(\bq')+1$ and $z :=  x_{k}(\bq_0)+1$, so $1 \leq y \leq z$ and use again the fact that $r_0 \leq C \sqrt{z}$.
Then
\[ \sqrt{\frac{z}{y}} \exp \Big({-\frac{z-y}{C \sqrt{z}}}\Big) 
\leq \sqrt{\frac{z}{y}} \exp \Big({-\frac{\sqrt{z}}{C}}\Big) 
\leq \sqrt{\frac{z}{y}} \exp \Big({-  \frac1C \sqrt{\frac{z}{y}} }\Big), \]
which is uniformly bounded from above.
Combining this with \eqref{eq_supu_bound_complicated}, implies the desired bound from Property~\ref{Def_lead_mode_2} of Definition~\ref{Def_lead_mode} for $C_0 \geq \underline C_0(\delta')$, whenever $\bq' \in \IR^{n-k}$ with $x_{k} (\bq') \leq x_{k}(\bq_0)$.

Lastly, consider the case $x_{k} (\bq') \geq x_{k}(\bq_0) = q_0$.
Fix $\bq'$ and let $s \mapsto \bq(s)$ be the unit speed line segment between $\bq_0$ and $\bq'$.
Then $x_k (\bq(s)) > q_0$, so $\MM$ is $\eps'$-cylindrical at $(\bq(s),0)$ and we can apply Statement~\ref{st_Ap} for $r = \rho(\bq(s),0) \geq \rho(\bq_0,0) = r_0$.
So \eqref{eq_dbounds_norm} from Property~\ref{Def_lead_mode_1} of Definition~\ref{Def_lead_mode} implies
\[ \frac{d}{ds} \Vert u \Vert_{\bq(s),0;\delta'} \leq \delta r_0^{-1} \Vert u \Vert_{\bq(s),0;\delta'} \qquad \text{if} \quad \Vert u \Vert_{\bq(s),0;\delta'} \geq 2 a_0. \]
Integrating this bound implies that for $\delta \leq \frac12$
\[ \Vert u \Vert_{\bq',0;\delta'} \leq 2 \exp\Big(\frac{|\bq'-\bq_0|}{2r_0}\Big) a_0. \]
So by Definition~\ref{Def_Vertu}
\begin{equation*}
 \sup_{\MM_{0}(\bq')} |u|(\cdot,0) \leq C(\delta')  \exp\Big(\frac{|\bq'-\bq_0|}{2r_0}\Big) \frac{\rho(\bq',0)}{r_0}   a_0 r_0
 \end{equation*}
 If $\rho(\bq', 0) \leq 10 \rho(\bq_0,0)$, then this shows again Property~\ref{Def_lead_mode_2} of Definition~\ref{Def_lead_mode} for $C_0 \geq \underline C_0(\delta')$.
 If $\rho(\bq', 0) > 10 \rho(\bq_0,0)$, then we have $|\bq' - \bq_0| \geq \rho(\bq',0)$, so
 \[ \sup_{\MM_{0}(\bq')} |u|(\cdot,0) \leq C(\delta')  \exp\Big(\frac{|\bq'-\bq_0|}{2r_0} + \frac{\rho(\bq',0)}{2r_0} \Big)   a_0 r_0
 \leq C(\delta')  \exp\Big(\frac{|\bq'-\bq_0|}{r_0}  \Big)   a_0 r_0, \]
 which also shows Property~\ref{Def_lead_mode_2} of Definition~\ref{Def_lead_mode}  for $C_0 \geq \underline C_0(\delta')$.
So Property~\ref{Def_lead_mode_2} of Definition~\ref{Def_lead_mode} holds as long as $q_0 \geq q_\eps \geq \underline q$ and $C_0 \geq \underline C_0(\delta')$.
This finishes the proof.
\end{proof}
\medskip

\subsection{Proof of the induction step, Lemma~\ref{Lem_induction_step}}\label{subsec:induction_step}
We will carry out the induction step in the following two lemmas.
In the first lemma we show that, under appropriate choice of the constants, we can generalize Property~\ref{Def_lead_mode_1} to the case where $\beta$ is replaced with $\frac12 \beta$.

\begin{Lemma} \label{Lem_improve_Prop1}
If $\delta \leq \ov\delta$, $\delta' \leq \ov\delta' (\delta)$, $C_0 \geq \underline C_0(\delta')$, 
$\eps \leq \ov\eps (\delta, \delta', C_0)$,
 $D \geq \underline D(\delta, \delta', C_0)$,  $\alpha \leq \ov\alpha(\delta,\delta', C_0)$ and $\beta \leq \ov\beta(\delta,\delta', C_0)$ and $C_1 > 0$, then the following is true.
Suppose that a flow pair $(\MM^{0}, \MM^1)$ satisfies the $(\delta,\lb \delta',\lb C_0,\lb \eps,\lb D,\lb \alpha, \lb\beta,\lb C_1)$-leading mode condition.
Then Property~\ref{Def_lead_mode_1} in Definition~\ref{Def_lead_mode} even holds under the weaker condition that $\MM^0$ is $\eps$-cylindrical at $(\bq, t)$ and
\begin{equation} \label{eq_relaxed_cond_Prop_1}
  \Vert u  \Vert_{\bq,t;\delta'} < a \qquad \text{for some} \quad a \in \bigg( \max \bigg\{ \frac12 \beta, C_1 \Big( \frac{\Vert \Qu(\MM^{0} \Vert}{r} \Big)^{10}  \bigg\} \alpha, \alpha \bigg) 
\end{equation}
\end{Lemma}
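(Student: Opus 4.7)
The plan is to argue by contradiction via a blowup and linearization, reducing the claim to the linear analysis in Lemmas~\ref{Lem_cyl_u} and~\ref{Lem_Vgeq0_Prop_1}. If the conclusion fails, then for every $j \geq 1$ we can choose a flow pair $(\MM^0_j,\MM^1_j)$ with graph function $u_j$ satisfying the $(\delta,\delta',C_0,\eps_j,D_j,\alpha_j,\beta_j,C_1)$-leading mode condition for parameters $\eps_j \to 0$, $D_j \to \infty$, $\alpha_j \to 0$, $\beta_j \to 0$, together with a point $(\bq_j,t_j)$ at which $\MM^0_j$ is $\eps_j$-cylindrical and a value $a_j \in (\tfrac12\beta_j\alpha_j, \beta_j\alpha_j]$ with $\Vert u_j\Vert_{\bq_j,t_j;\delta'} < a_j$, such that Property~\ref{Def_lead_mode_1} fails at $(\bq_j,t_j)$ with constants $\delta, C_0$ and parameter $a_j$. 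After translation and parabolic rescaling, normalize to $(\bq_j,t_j) = (\bO,0)$, $\rho_{\MM^0_j}(\bO,0) = 1$, and set $\tilde u_j := u_j/a_j$, so $\Vert\tilde u_j\Vert_{\bO,0;\delta'} < 1$.

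The first substantive step is to extract a smooth limit. Since $a_j > C_1\Vert\Qu(\MM^0_j)\Vert^{10}\alpha_j$ and $a_j \leq \beta_j\alpha_j$, we have $\Vert\Qu(\MM^0_j)\Vert \leq (\beta_j/C_1)^{1/10} \to 0$; combined with $\eps_j$-cylindricality at the rescaled origin and the convexity and rotational symmetry of $\MM^0_j$, a subsequence of $\MM^0_j$ converges locally smoothly in spacetime to the round shrinking cylinder $\MM_{\cyl}$. Property~\ref{Def_lead_mode_2} applied to $u_j$ with any $a \in (\beta_j\alpha_j, \alpha_j)$ (admissible since $\Vert u_j\Vert_{\bO,0;\delta'} < a_j \leq \beta_j\alpha_j$) and rescaled by $a_j^{-1}$ yields $|\tilde u_j|(\cdot,0) \leq 2C_0 e^{|\bq'|}$ on $\{|\bq'|<D_j\}$, and bounds at nearby times follow from Property~\ref{Def_lead_mode_1} for $u_j$ combined with Property~\ref{Def_lead_mode_2}. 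Writing the graphical MCF equation as $\partial_{\mathbf t}u_j = \triangle u_j + |A_{\MM^0_j}|^2 u_j + Q(u_j,\nabla u_j,\nabla^2 u_j)$ with $Q$ quadratic in its first three arguments, the rescaled equation for $\tilde u_j$ becomes $\partial_{\mathbf t}\tilde u_j = \triangle\tilde u_j + |A_{\MM^0_j}|^2\tilde u_j + a_j^{-1}Q(a_j\tilde u_j,\ldots) = \triangle\tilde u_j + |A_{\MM^0_j}|^2\tilde u_j + O(a_j)$. Parabolic regularity then gives $\tilde u_j \to u_\infty$ in $C^\infty_{\loc}$ on an exhausting family of spacetime neighborhoods of $(\bO,0)$, where $u_\infty$ is a smooth solution to the linearized MCF equation \eqref{eq_lin_MCF} on $\MM_{\cyl}$.

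The final step is to pass the leading mode condition to the limit and invoke the lemmas from Subsection~\ref{subsec:Preparatory}. For any $t$ in the lifetime of the rescaled $\MM_{\cyl}$ and any $a > c_\infty := \limsup_j \beta_j\alpha_j/a_j \in [1,2]$, the $u_j$-leading mode condition applies at $(\bO,t)$ with parameter $a\,a_j$ for $j$ large (since $a\,a_j \in (\beta_j\alpha_j, \alpha_j)$ eventually); $C^\infty_{\loc}$-convergence then transfers both properties of Definition~\ref{Def_lead_mode} to $u_\infty$ at each $(\bO,t)$ for all $a > c_\infty$. Because $\delta < 1/(n-k)$, Lemma~\ref{Lem_cyl_u} applied with $\beta := c_\infty$ gives $\bp \mapsto u_\infty((-t)^{1/2}\bp, t) \in \sV_{\geq 0}$, and Lemma~\ref{Lem_Vgeq0_Prop_1} upgrades this to Property~\ref{Def_lead_mode_1} for $u_\infty$ at $(\bO,0)$ with halved constants $\tfrac12\delta$, $\tfrac12C_0$, applied with $a = \Vert u_\infty\Vert_{\bO,0;\delta'} \leq 1$. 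The failure of Property~\ref{Def_lead_mode_1} for $\tilde u_j$ at $(\bO,0)$ with the original constants $\delta, C_0$ passes to $u_\infty$ and contradicts these strictly sharper bounds.

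I expect the main obstacle to lie in the clean passage of the leading mode condition to the limit---in particular verifying that $\Vert\cdot\Vert_{\bq,t;\delta'}$, its spatial and temporal derivatives, and its projections onto $\sV_0,\sV_{\frac12},\sV_1$ depend continuously on $u$ under $C^\infty_{\loc}$-convergence (routine but needing care because of the $L^\infty$-term in~\eqref{eq_q_delta_norm_def})---and in carefully identifying the threshold $c_\infty \leq 2$ so that the hypothesis of Lemma~\ref{Lem_cyl_u} is met. Conceptually, the parameter bounds $\alpha \leq \ov\alpha$ and $\beta \leq \ov\beta$ are what force $a_j \to 0$ and make the linearization valid, whereas $\eps_j \to 0$ and $D_j \to \infty$ are needed to ensure that the limit problem is posed on the entire cylinder rather than on a bounded cylindrical piece.
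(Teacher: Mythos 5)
Your proposal is correct and follows essentially the same route as the paper's proof: a contradiction/compactness argument in which the failure of the strengthened Property~\ref{Def_lead_mode_1} forces $a_j\in(\tfrac12\beta_j\alpha_j,\beta_j\alpha_j]$, the base flows converge to a round shrinking cylinder, the rescaled graph functions $u_j/a_j$ converge (using Properties~\ref{Def_lead_mode_1} and \ref{Def_lead_mode_2} iterated backwards in time plus parabolic regularity) to a solution of the linearized equation, and Lemmas~\ref{Lem_cyl_u} and \ref{Lem_Vgeq0_Prop_1} then yield Property~\ref{Def_lead_mode_1} with halved constants, a contradiction. The only cosmetic differences are that the paper deduces convergence of $\MM^0_j$ directly from $\eps_j$-cylindricality and Lemma~\ref{Lem_eps_cyl} rather than from $\Qu(\MM^0_j)\to 0$, and applies Lemma~\ref{Lem_cyl_u} with threshold $2$ rather than your $c_\infty\le 2$.
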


\begin{proof}
Suppose that $\delta \leq \ov\delta$ and $\delta' \leq \ov\delta'(\delta)$ and $C_0 \geq \underline C_0(\delta')$ are chosen so that Lemmas~\ref{Lem_Vgeq0_Prop_1} and \ref{Lem_cyl_u} can be applied.
Fix $\delta, \delta', C_0$, suppose that the lemma was false and choose sequences $\eps_j, \alpha_j, \beta_j \to 0$, $D_j \to \infty$ and $C_{1,j} > 0$.
Consider a sequence of counterexamples $(\MM_{j}^0, \MM_j^1)$ that satisfy the $(\delta,\lb \delta',\lb C_0,\lb \eps_j,\lb D_j,\lb \alpha_j, \lb\beta_j,\lb C_{1,j})$-leading mode condition, but for which Property~\ref{Def_lead_mode_1} fails for some point $(\bq_j, t_j)$ and for some number $a_j$ that satisfies the relaxed condition \eqref{eq_relaxed_cond_Prop_1}.
Let $u_j : \DD_j \to \IR$ be the corresponding graph functions.
After parabolic rescaling and application of a time-shift, we may assume without loss of generality that $\bq_j$ is the origin, $t_j =0$ and $r_j = \rho^{\MM_{j}^0}(\bO, 0) = 1$.
Then $\MM_j^0$ is $\eps_j$-cylindrical at $(\bO,0)$ and since $\eps_j \to 0$ we have smooth convergence $\MM_j^0 |_{(-\infty,0]} \to \MM_\infty$ to a round shrinking cylinder (see Lemma~\ref{Lem_eps_cyl}).

Since the $(\delta,\lb \delta',\lb C_0,\lb \eps_j,\lb D_j,\lb \alpha_j, \lb\beta_j,\lb C_{1,j})$-leading mode condition holds, but $(\mathbf0, 0)$ and $a_j$ violate Property~\ref{Def_lead_mode_1}, the factor $\frac12$ in \eqref{eq_relaxed_cond_Prop_1} in front of $\beta_j$ is essential, so we must have
\begin{equation} \label{eq_ai_bounds}
a_j\leq\beta_j \alpha_j \qquad \text{and} \qquad a_j \in \Big(\max\Big\{\frac12 \beta_j,C_{1,j} \Vert \Qu (\MM_{j}^0)\Vert^{10} \Big\}\alpha_j,  \alpha_j\Big).
\end{equation}
Let us now consider an arbitrary time $t \leq 0$.
By the backwards preservation of cylindricality (see Lemma~\ref{Lem_eps_cyl}) the flow $\MM_j^0$ is also $\eps_j$-cylindrical at $(\bO,t)$ for all $j$.
Moreover, by the convexity of $\MM^0_j$ and $\rho^{\MM_{j}^0}(\bO, 0) = 1$ we have $\rho^{\MM_j^0}(\bO,t) \geq 1$.
It follows, using \eqref{eq_ai_bounds}, that Properties~\ref{Def_lead_mode_1} and \ref{Def_lead_mode_2} hold at $(\mathbf0,t)$ for large $j$ and for any
\begin{align}\label{eq:condition_a}
a\in (2a_j,\alpha_j)\subset \Big(\max\Big\{\beta_j,C_{1,j} \Big(\frac{\Vert \Qu (\MM^{0}_j) \Vert}{\rho^{\MM^{0}_j}(\bO,t)}\Big)^{10} \Big\}\alpha_j, \alpha_j\Big).
\end{align}
Note that by \eqref{eq_ai_bounds} the endpoints of the interval $(2a_j, \alpha_j)$ satisfy
\[ \frac{\alpha_j}{2a_j} \geq \frac{\alpha_j}{2\beta_j\alpha_j} = \frac1{2\beta_j} \lto \infty \]
and recall that by assumption $\Vert u_j \Vert_{\bO,0;\delta'} < a_j$.
So by iterating the last bound in Property~\ref{Def_lead_mode_1}, we obtain a bound of the following form for any $T > 0$ and large $j$ (depending on $T$):
\[ \sup_{t \in [-T,0]} \Vert u_j \Vert_{\bO,t;\delta'} \leq C(T,C_0) a_j. \]
Using Property~\ref{Def_lead_mode_2} and the fact that $D_j \to \infty$, this bound can be extended to a pointwise bound of the following form for any $R > 0$ and for large $j$ (depending on $T$ and $R$)
\[ \sup_{t \in [-T,0]} \sup_{\MM^0_{j,t} \cap \IB^{n+1}_{R}} |u_j|(\cdot, t) \leq C(T,R, C_0) a_j. \]
By standard parabolic theory, this bound implies similar bounds on all higher derivatives of $u$.

We can hence pass to a subsequence such that $\frac{u_j}{a_j}$ locally smoothly converges to a solution $u_\infty : \MM_{\infty} \to \IR$ of the linearized mean curvature flow equation, which satisfies $\Vert u_\infty \Vert_{\bO,0;\delta'} \le 1$. 
By our previous discussion, for all $t \leq 0$ and large enough $j$, Properties~\ref{Def_lead_mode_1} and \ref{Def_lead_mode_2} hold at $(\bO,t)$  for $\MM^0_j$ and $\frac{u_j}{a_j}$ whenever $a \in (2, \frac{\alpha_j}{a_j})$.
Since $\frac{\alpha_j}{a_j} \to \infty$, we obtain that the limit $u_\infty$ satisfies the assumptions of Lemma~\ref{Lem_cyl_u}.
Due to Lemmas~\ref{Lem_Vgeq0_Prop_1} and \ref{Lem_cyl_u} (we need to apply the second lemma for $\beta =2$), we obtain that Property~\ref{Def_lead_mode_1} even holds with $\delta, C_0$ replaced with $\frac12\delta, \frac12 C_0$.
This contradicts our assumption that Property~\ref{Def_lead_mode_1} fails at $(\bO,0)$ and for $a_j$ if $j$ is large enough.
\end{proof}
\medskip

Next, we show that, under different assumptions on the constants, we can generalize Property~\ref{Def_lead_mode_2} to the case in which $\beta$ is replaced with $\frac12 \beta$.

\begin{Lemma} \label{Lem_improve_Prop2}
If $\delta \leq \ov\delta$, $\delta' > 0$, $C_0 \geq \underline C_0(\delta')$, $\eps \leq \ov\eps$, $D \geq \underline D(\eps)$, $\alpha \leq \ov\alpha(\delta,\delta', C_0, \eps, D)$, $\beta \leq \ov\beta(\delta,\delta', C_0, \eps, D)$ and $C_1 \geq \underline C_1(\delta,\delta', C_0, \eps, D)$ then the following is true.
Suppose that a flow pair $(\MM^0, \MM^1)$ satisfies the $(\delta,\lb \delta',\lb C_0,\lb \eps,\lb D,\lb \alpha, \lb\beta,\lb C_1)$-leading mode condition.
Then Property~\ref{Def_lead_mode_2} in Definition~\ref{Def_lead_mode} even holds under the weaker condition that $\MM^{0}$ is $\eps$-cylindrical at $(\bq, t)$ and (notice the factor $\frac12$ in front of $\beta$):
\begin{equation} \label{eq_relaxed_cond_Prop_2}
  \Vert u\Vert_{\bq,t;\delta'} < a \qquad \text{for some} \quad a \in \bigg( \max \bigg\{ \frac12 \beta, C_1 \Big( \frac{\Vert \Qu(\MM^0)\Vert}{r} \Big)^{10}  \bigg\} \alpha, \alpha \bigg) 
\end{equation}
\end{Lemma}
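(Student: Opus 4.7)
The plan is to mirror the contradiction argument used for Lemma~\ref{Lem_improve_Prop1}, but to use Lemma~\ref{Lem_cap_control} in place of Lemmas~\ref{Lem_Vgeq0_Prop_1}--\ref{Lem_cyl_u} as the tool for analyzing the limiting model. Suppose the assertion fails for some fixed admissible choice of $\delta$, $\delta'$, $C_0$, $\eps$, $D$. Then one can produce sequences $\alpha_j, \beta_j \to 0$ and $C_{1,j} \to \infty$ and, for each $j$, a flow pair $(\MM^0_j, \MM^1_j)$ satisfying the $(\delta, \delta', C_0, \eps, D, \alpha_j, \beta_j, C_{1,j})$-leading mode condition but for which Property~\ref{Def_lead_mode_2} fails under the relaxed hypothesis \eqref{eq_relaxed_cond_Prop_2} at some point $(\bq_j, t_j)$ and some number $a_j$ in the relaxed range. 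After translation and parabolic rescaling we may assume $(\bq_j, t_j) = (\bO, 0)$ and $r_j := \rho^{\MM^0_j}(\bO, 0) = 1$; combined with $a_j \leq \alpha_j$, the relaxed lower bound $a_j > C_{1,j} \Vert \Qu(\MM^0_j)\Vert^{10} \alpha_j$ then forces $\Vert \Qu(\MM^0_j)\Vert \to 0$.

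I will next extract a smooth limit. Since $\Vert u_j \Vert_{\bO, 0; \delta'} < a_j \leq \beta_j \alpha_j$, any $a$ slightly above $\beta_j \alpha_j$ lies in the \emph{non-relaxed} admissible range of the original leading mode condition and continues to exceed $\Vert u_j \Vert_{\bO, 0; \delta'}$. Applying Properties~\ref{Def_lead_mode_1}--\ref{Def_lead_mode_2} of the (non-relaxed) leading mode condition with this $a$, and then dividing by $a_j \geq \tfrac12 \beta_j \alpha_j$, yields uniform bounds on $u_j/a_j$ and all its derivatives in growing neighborhoods of the origin, with a harmless loss of at most a factor of two. Together with the compactness of the sequence $\MM^0_j$---which are convex, rotationally symmetric, asymptotically cylindrical, with scale normalized to $1$ at the origin and $\Vert \Qu\Vert \to 0$---and the prequel's classification of the dominant-linear-mode case (compare Subsection~\ref{subsec_structure}), a subsequence converges smoothly to $\MM^0_j \to \MM_\infty$ and $u_j/a_j \to u_\infty$, where $\MM_\infty$ is either the round shrinking cylinder or (modulo rotation and time-shift) $\IR^{k-1} \times \MM_{\bowl}$ and $u_\infty$ solves the linearized MCF equation \eqref{eq_lin_MCF} on $\MM_\infty$. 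The failure of Property~\ref{Def_lead_mode_2} persists: there is a point $\bq'_\infty \in \IR^k$ with $|\bq'_\infty| \leq D$ at which the desired exponential bound on $u_\infty$ is violated by the factor $C_0$.

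Finally, I would apply Lemma~\ref{Lem_cap_control}. Passing the original leading mode condition through the limit at any $\eps'$-cylindrical point of $\MM_\infty$ with $\eps' < \eps$ (which is $\eps''$-cylindrical for $\MM^0_j$ with $\eps'' < \eps$ for all large $j$) yields that Statement~\ref{st_A} of Lemma~\ref{Lem_cap_control} is satisfied for $u_\infty$ with some $C'_0$ and with the threshold $a > 2$ in place of $a > 1$; the factor $2$ is exactly the loss coming from $a_j \geq \tfrac12 \beta_j \alpha_j$, and the $C_1$-term is inactive because $\Vert \Qu(\MM_\infty)\Vert = 0$. A harmless rescaling $u_\infty \mapsto \tfrac12 u_\infty$ then puts us in the exact hypothesis of Lemma~\ref{Lem_cap_control}, whose conclusion Statement~\ref{st_B} provides the exponential decay bound of Property~\ref{Def_lead_mode_2} for $u_\infty$ at $(\bO,0)$ with $D$ replaced by $\infty$ and with constant $\underline C_0(\delta')$. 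Since the hypothesis demands $C_0 \geq \underline C_0(\delta')$, this contradicts the persistent failure at $\bq'_\infty$. The main obstacle is the bookkeeping of constants: one must match the factor $\tfrac12$ in \eqref{eq_relaxed_cond_Prop_2} with the factor $\tfrac12$ built into Statement~\ref{st_B} of Lemma~\ref{Lem_cap_control}, and verify that the hypotheses on $\alpha$, $\beta$, $C_1$ are violated precisely by the sequences $\alpha_j, \beta_j \to 0$, $C_{1,j} \to \infty$. This is by design, so the contradiction should close cleanly; the only nontrivial input beyond the scheme of Lemma~\ref{Lem_improve_Prop1} is the use of Lemma~\ref{Lem_cap_control}, which already handles the bowl-times-Euclidean case.
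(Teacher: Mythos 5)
Your architecture is the paper's: the same contradiction setup with $\alpha_j,\beta_j\to 0$ and $C_{1,j}\to\infty$, the same normalization forcing $\Vert\Qu(\MM^0_j)\Vert\to 0$, a smooth subsequential limit to a (rescaled, shifted) cylinder or $\IR^{k-1}\times\MM_{\bowl}$, and Lemma~\ref{Lem_cap_control} as the closing tool. The one step that does not survive scrutiny as written is the claim that ``the $C_1$-term is inactive because $\Vert\Qu(\MM_\infty)\Vert=0$.'' The threshold in the leading mode condition for the normalized functions $u_j/a_j$ is $\max\{\beta_j\alpha_j,\, C_{1,j}\Vert\Qu(\MM^0_j)\Vert^{10}\rho^{-10}\alpha_j\}/a_j$, and after dividing by $a_j\in(\tfrac12\beta_j\alpha_j,\beta_j\alpha_j]$ the second entry is an indeterminate product of the form $\infty\cdot 0$: from the relaxed condition one only knows $C_{1,j}\Vert\Qu(\MM^0_j)\Vert^{10}\alpha_j/a_j<1$, so along a subsequence it converges to some $C_{1,\infty}\in[0,1]$ that need not vanish. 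Moreover, on the limiting bowl the $\eps$-cylindrical scale $\rho(\bq,t)$ is \emph{not} bounded below by $1$ after the normalization $\rho(\bO,0)=1$, so $C_{1,\infty}\rho^{-10}$ can genuinely dominate the constant threshold near the tip region; Statement~\ref{st_A} therefore cannot be verified for $u_\infty$ with $C_1=0$. The repair is exactly the device the paper uses: pass to a subsequence along which $C_{1,j}\Vert\Qu(\MM^0_j)\Vert^{10}/\beta_j\to C_{1,\infty}\in[0,1]$, feed this $C_{1,\infty}$ into Lemma~\ref{Lem_cap_control} as its parameter $C_1$ (the lemma is stated for an arbitrary $C_1>0$ precisely for this purpose), and then check that the hypothesis of Statement~\ref{st_B} at $(\bO,0)$, namely $a\ge\max\{\tfrac12,\,C_1\rho^{-10}(\bO,0)\}$, is met because $C_{1,\infty}\le 1$ and $\rho(\bO,0)=1$. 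With that correction—and keeping track, as you do, of the bounded factor lost in normalizing by $a_j$ rather than $\beta_j\alpha_j$—your argument closes the same way as the paper's.
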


\begin{proof}
We argue similarly as in the proof of Lemma~\ref{Lem_improve_Prop1}.
Choose and fix the constants $\delta \leq \ov\delta$, $\delta'> 0$, $C_0 \geq \underline C_0(\delta')$, $\eps \leq \ov\eps$ and $D \geq \underline{D}(\eps)$ to be strictly smaller/larger than required for Lemma~\ref{Lem_cap_control}.
We also impose additional bounds of the form $\eps \leq \ov\eps$ and $D \geq \underline{D}(\eps)$, which we will explain later.
Suppose that the lemma was false for these choices of constants and pick sequences $\alpha_j, \beta_j \to 0$ and $C_{1,j} \to \infty$.
Then we can find a sequence of counterexamples $(\MM_{j}^0, \MM^0_j)$ that satisfy the $(\delta,\lb \delta',\lb C_0,\lb \eps,\lb D,\lb \alpha_j, \lb\beta_j,\lb C_{1,j})$-leading mode condition, but for which Property~\ref{Def_lead_mode_2} fails at some $(\bq_j, t_j)$ at which $\MM_j^0$ is $\eps$-cylindrical and for an $a_j$, which satisfies the relaxed condition \eqref{eq_relaxed_cond_Prop_2}.

After parabolic rescaling and application of a time-shift, we may again assume without loss of generality that $(\bq_j,t_j) = (\bO, 0)$ is the origin and $\rho^{\MM_{j}^0}(\bO, 0) = 1$.
As in the proof of the last lemma, the factor $\frac12$ in \eqref{eq_relaxed_cond_Prop_2} in front of $\beta_j$ is again essential, so we must have $a_j \leq \beta_j \alpha_j$, which implies
\[ a_j \in (\tfrac12 \beta_j \alpha_j, \beta_j \alpha_j] \qquad \text{and} \qquad
C_{1,j} \Vert \Qu (\MM_{j}^0)\Vert^{10} \alpha_j < a_j \leq \beta_j \alpha_j. \]
The second inequality implies that for a subsequence
\[\frac{C_{1,j} \Vert \Qu(\MM_{j}^0)\Vert^{10}}{\beta_j} \to C_{1,\infty} \in [0,1] \qquad \text{and} \qquad \Vert \Qu(\MM_{j}^0) \Vert \to 0. \]
So by \cite[Proposition~\refx{Prop_Q_continuous}]{Bamler_Lai_PDE_ODI}, after passing to a subsequence, the flows $\MM_{j}^0$ smoothly converge to a limit $\MM_{\infty}$, which must be a time-shift and/or parabolic rescaling of $\MM_{\cyl}$ or $\IR^{k-1} \times \MM_{\bowl}$ with $\rho^{\MM_\infty}(\bO,0)=1$ or an affine plane or empty.
The last two cases can be excluded if we assume $\eps \leq \ov\eps$.
After passing to a subsequence, we may assume moreover that 
$$\frac{a_j}{\beta_j \alpha_j} \to a_\infty \in \big[\max\big\{\tfrac12,C_{1,\infty}\big\},1\big].$$

Consider now the graph functions $u_j$ of $(\MM_{j}^0, \MM^1_j)$.
Their rescalings $u^*_j := \frac{u_j}{\beta_j \alpha_j}$ satisfy Properties~\ref{Def_lead_mode_1} and \ref{Def_lead_mode_2} at $(\bq, t)$ whenever $\MM^0_j$ is $\eps$-cylindrical at $(\bq,t)$ and
\[ \Vert u^*_j \Vert_{\bq, t;\delta'} < a \qquad \text{for some} \quad a \in \bigg( \max \bigg\{ 1, \frac{C_{1,j}\Vert (\MM^0_j)\Vert^{10}}{\beta_j} (\rho^{\MM^0_j}(\bq,t ))^{-10} \bigg\}, \frac1{\beta_j} \bigg). \]
Since 
\[ \Vert u^*_j \Vert_{\bO,0;\delta'} = \frac{1}{\beta_j\alpha_j} \Vert u_j \Vert_{\bO,0;\delta'} < \frac{a_j}{\beta_j\alpha_j} \leq 1, \] 
we can iterate the last statement in Property~\ref{Def_lead_mode_1} to obtain local uniform bounds on $u^*_j$ over larger and larger bounded subsets of the $\eps$-cylindrical part of $\MM^0_j$.
If $\MM^0_j$ converges to a round shrinking cylinder, then this suffices to extract a subsequence such that we have local smooth convergence $u^*_j \to u_\infty$ to a solution of the linearized mean curvature flow equation.
If $\MM^0_j$ converges to translation and/or parabolic rescaling of $\IR^{k-1} \times \MM_{\bowl}$, then we can use Property~\ref{Def_lead_mode_2} to derive local uniform bounds on $u^*_j$ over the non-cylindrical, ``cap-like'', part as long as we choose $D \geq \underline{D}(\eps)$.
So we can also assume that we have local smooth subsequential convergence $u^*_j \to u_\infty$ in this case.
Taking the leading mode condition of $(\MM^0_j,\MM^1_j)$ to the limit, we obtain that whenever $\MM_{\infty}$ is $ \eps'$-cylindrical, for $\eps' < \eps$, at some $(\bq, t)$ and if
\[  \Vert u_\infty \Vert_{\bq,t;\delta'} < a \qquad \text{for some} \quad a \in \big( \max \big\{ 1, C_{1,\infty} \rho^{-10} (\bq,t) \big\} , \infty \big),  \]
then Properties~\ref{Def_lead_mode_1}, \ref{Def_lead_mode_2} in Definition~\ref{Def_lead_mode} hold for $u_\infty$ and our choice of constants $\delta,\delta', C_0$ and $D$.
Moreover,
\[ \Vert u_\infty  \Vert_{\bO,0;\delta'} \leq a_\infty \in \big[\max\big\{\tfrac12,C_{1,\infty}\big\},1\big]. \]
We are now in a position to apply Lemma~\ref{Lem_cap_control},
which implies that Property~\ref{Def_lead_mode_2} holds for $\MM_\infty$ and $u_\infty$ at $(\bO,0)$ for a strictly smaller constant $C_0$ and for $D=\infty$.
So Property~\ref{Def_lead_mode_2} must hold for $(\MM^0_j,\MM^1_j)$ at $(\bO, 0)$ for large $j$, in contradiction to our assumptions.
\end{proof}
\medskip

\begin{proof}[Proof of Lemma~\ref{Lem_induction_step}.]
This follows by combining Lemmas~\ref{Lem_improve_Prop1} and \ref{Lem_improve_Prop2}.
\end{proof}
\bigskip

\subsection{The leading mode condition in the case of bowls or cylinders}\label{subsec_leading_mode_bowl}
This subsection serves as a preparation for the proof of Lemma~\ref{Lem_start_induction}---the start of the induction for the leading mode condition.
Our goal will be to prove the following lemma, which establishes the leading mode condition in the case in which each of the flows $\MM^{0}$ and $\MM^1$ are cylinders or $\IR^{k-1} \times \MM_{\bowl}$ modulo translations and rescalings.

\begin{Lemma} \label{Lem_leading_mode_bowl}
If $\delta > 0$, $\delta' \leq \ov\delta'(\delta)$, $C_0 \geq \underline C_0 (\delta, \delta')$, $\eps \leq \ov\eps (\delta, \delta')$, $D \geq 0$, $\alpha \leq \ov\alpha(\delta, \delta',D)$, then the following is true.
Suppose that $(\MM^0, \MM^1)$ is a flow pair over a time-interval $I$, where each of the flows $\MM^0$ and $\MM^1$ is equal to the result of applying a combination of time-shift, translation, rescaling and rotation to either $\IR^{k-1} \times \MM_{\bowl}$ or $\MM_{\cyl}$, restricted to some interval $(-\infty,T)$; here the translation and rotation that produces $\MM^0$ is assumed to preserve the axis of rotation.
Then $(\MM^0, \MM^1)$ satisfies the $(\delta,\lb \delta',\lb C_0,\lb \eps,\lb D,\lb \alpha, \lb 0,\lb 0)$-leading mode condition.
\end{Lemma}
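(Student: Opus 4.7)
The plan is to argue by a compactness-and-contradiction scheme, reducing the lemma to the linearized analysis already established in Lemmas~\ref{Lem_Vgeq0_Prop_1}, \ref{Lem_cyl_u}, and \ref{Lem_cap_control}. With $\beta = C_1 = 0$, the hypothesis \eqref{eq_lead_mode_conditiona} reduces to $\Vert u \Vert_{\bq,t;\delta'} < a$ for some $a \in (0,\alpha)$, and both Properties must hold at every such point. Suppose the statement fails for some admissible $\delta, \delta', C_0, \eps, D$; then I can produce sequences $\alpha_j \to 0$ with counterexample flow pairs $(\MM^0_j, \MM^1_j)$, graph functions $u_j$, points $(\bq_j, t_j)$ at which $\MM^0_j$ is $\eps$-cylindrical, and values $a_j \in (0,\alpha_j)$ with $\Vert u_j \Vert_{\bq_j,t_j;\delta'} < a_j$ such that one of Properties~\ref{Def_lead_mode_1} or \ref{Def_lead_mode_2} fails at $(\bq_j,t_j)$. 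After a time-shift, translation, and parabolic rescaling---all of which preserve the class of flows in the hypothesis---I may arrange $(\bq_j, t_j) = (\bO, 0)$ and $\rho^{\MM^0_j}(\bO, 0) = 1$.

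Next I would extract limits. Since $\MM^0_j$ is $\eps$-cylindrical at the origin at unit scale and belongs to the finite-dimensional moduli space of flows allowed in the hypothesis, after passing to a subsequence $\MM^0_j \to \MM^0_\infty$ locally smoothly; the limit is again a model flow in this moduli space (a cylinder or a translated/rescaled bowl times $\IR^{k-1}$), with the axis of rotation preserved. The bound $\Vert u_j \Vert_{\bO,0;\delta'} < \alpha_j \to 0$ forces the cylindrical model of $u_j$ at the origin to tend to $0$ in $L^\infty(\IB^k_1 \times \IS^{n-k})$, and since $\MM^1_j$ itself is constrained to the model-flow moduli space, rigidity yields $\MM^1_j \to \MM^0_\infty$ as well. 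Now set $v_j := u_j/a_j$, so $\Vert v_j \Vert_{\bO,0;\delta'} < 1$. Parabolic regularity applied to the quasilinear equation for $v_j$ (whose nonlinear terms are of order $a_j \to 0$) together with the convergence of the base flows yields uniform local bounds on $v_j$ on arbitrary bounded subsets of $\MM^{0,\reg}_\infty$; after passing to a further subsequence, $v_j \to u_\infty$ locally smoothly, where $u_\infty$ solves the linearized mean curvature flow equation~\eqref{eq_lin_MCF} on $\MM^0_\infty$ with $\Vert u_\infty \Vert_{\bO,0;\delta'} \leq 1$.

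To close the argument, I would check that $u_\infty$ satisfies Statement~\ref{st_A} of Lemma~\ref{Lem_cap_control} (with $C_1 = 0$). Indeed, at any $\eps'$-cylindrical $(\bq, t)$ with $\eps' < \eps$, rescaling the $(\delta,\delta',C_0,\eps,D,\alpha_j,0,0)$-leading mode condition of $(\MM^0_j, \MM^1_j)$ by $a_j^{-1}$ and using $\alpha_j/a_j \to \infty$ makes the condition pass to the limit, giving both Properties for $u_\infty$ whenever $\Vert u_\infty \Vert_{\bq, t; \delta'} < a$ for any $a \in (1, \infty)$. Statement~\ref{st_B} of Lemma~\ref{Lem_cap_control} then delivers Property~\ref{Def_lead_mode_2} for $u_\infty$ at $(\bO,0)$ with $D$ replaced by $\infty$ and $C_0$ strictly better than the assumed value; for Property~\ref{Def_lead_mode_1}, when $\MM^0_\infty$ is a cylinder I invoke Lemmas~\ref{Lem_cyl_u} and \ref{Lem_Vgeq0_Prop_1} to conclude that $u_\infty$ lies in $\sV_{\geq 0}$ on every time slice and therefore satisfies the sharper $(\tfrac12\delta, \tfrac12 C_0)$-version, while when $\MM^0_\infty$ is a bowl times $\IR^{k-1}$ the conclusion follows from a direct finite-dimensional analysis of Jacobi fields on this family (translations, rotations, and rescalings, all of which restrict to $\sV_{\geq 0}$-modes modulo errors that are exponentially small deep in the neck). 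Pulling the resulting strict improvements back via the local smooth convergence $v_j \to u_\infty$ then contradicts the defining property of the counterexamples.

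The main obstacle I anticipate is justifying the local smooth convergence of $v_j$ on regions extending beyond the ball in which the hypothesis gives direct control---especially on and across the cap of the bowl, where Property~\ref{Def_lead_mode_1} of the leading mode condition does not directly apply. This requires an iterative application of Property~\ref{Def_lead_mode_1} to propagate uniform control of $\Vert v_j\Vert_{\bq, t; \delta'}$ along the cylindrical part of $\MM^0_j$ at all times, and then Property~\ref{Def_lead_mode_2}, combined with moduli-space rigidity of $\MM^1_j \to \MM^0_\infty$, to extend smoothly into the cap region at scale $D$. Carefully tracking the interplay between the $a_j$-rescaling, the smallness of $\alpha$ relative to $D$, and the leading-mode approximation error is the technical heart of the argument.
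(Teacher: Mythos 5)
Your overall compactness-and-contradiction framework is reasonable, but the core of your argument is circular. To pass to the limit $u_\infty$ and verify Statement~\ref{st_A} of Lemma~\ref{Lem_cap_control}, you invoke ``the $(\delta,\delta',C_0,\eps,D,\alpha_j,0,0)$-leading mode condition of $(\MM^0_j,\MM^1_j)$'' --- but that condition is precisely the conclusion of Lemma~\ref{Lem_leading_mode_bowl}, which the counterexample pairs are assumed to violate. Unlike the induction step (Lemmas~\ref{Lem_improve_Prop1} and \ref{Lem_improve_Prop2}), where the flow pairs satisfy the condition with parameter $\beta$ and one improves to $\tfrac12\beta$, here there is no a priori version of the condition available: with $\beta=0$ there is no dichotomy forcing $a_j\le \beta_j\alpha_j$, so your claim that $\alpha_j/a_j\to\infty$ is also unjustified. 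Consequently you have no mechanism to obtain uniform bounds on $v_j=u_j/a_j$ over large regions (in particular across the cap), no way to control its domain of definition, and no way to feed Lemma~\ref{Lem_cap_control}. This is exactly why the paper does \emph{not} route this lemma through Lemma~\ref{Lem_cap_control}.

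The missing idea is that for these model flows the graph function is an \emph{explicit} function of finitely many parameters, and one must exploit this directly. The paper parameterizes the pairs by $M(\bz,b,\by)$ via the bowl profile $F$ (Lemma~\ref{Lem_warping}, Claim~\ref{Cl_Mzby}) and proves a two-sided bound (Claim~\ref{Cl_u_in_terms_zby}) comparing $\sup|u|$ with the parameter distance $a'_j=|\bz^1-\bz^0|+|b^1-b^0|+|\by^1|$. This supplies both the uniform control on $u_j$ over large regions that you lack, and --- crucially --- the \emph{lower} bound needed to rule out the degenerate case $a_j/a'_j\to 0$, in which your normalization $u_j/a_j$ either blows up or loses all information; normalizing by $a_j$ instead of the intrinsic parameter distance misses this entirely. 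For Property~\ref{Def_lead_mode_2} in the bowl case, the paper then identifies the limit $u_\infty$ as the normal component of an affine (Killing plus dilational) vector field and verifies the exponential cap bound by direct computation with $F(x)\sim\sqrt{x}$; your appeal to ``a direct finite-dimensional analysis of Jacobi fields'' points in the right direction but is not carried out, and is the part of the proof that actually requires work.
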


Note that the space of flow pairs $(\MM^0, \MM^1)$ to which the lemma applies can be described by finitely many parameters.
Thus, in principle, the leading mode condition could be verified by computing the infinitesimal variations in each parameter.
For technical reasons, however, we avoid direct calculations and instead proceed via a limit argument.
In order to make the family of variations more concrete, consider the $(n-k+1)$-dimensional bowl soliton $M_{\bowl}$, with tip at the origin $\bO$, and let $F : [0, \infty) \to [0, \infty)$ be its profile function, that is
\[ M_{\bowl} = \{ (x, F(x) \by) \;\; : \;\; x \geq 0, \; \by \in \IS^{n-k} \} \subset \IR^{k+1} = \IR \times \IR^{n-k+1}. \]
We need the following lemma.

\begin{Lemma} \label{Lem_warping}
We have the following bounds for $x \geq 1$
\[ c \sqrt{x} \leq F(x) \leq C \sqrt{x}, \qquad C^{-1} x^{-1/2} \leq F'(x) \leq C x^{-1/2}, \qquad |2xF'(x) - F(x) | \leq C \]
and $\lim_{x \to \infty} x^{-1/2} F(x) = 1$.
\end{Lemma}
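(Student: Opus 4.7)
The plan is to derive all four statements from the asymptotic expansion of the profile of the $(n-k+1)$-dimensional bowl soliton at infinity. Since the sphere $\IS^{n-k}$ in this paper carries radius $\sqrt{2(n-k)}$, the Euclidean radius of $M_{\bowl}$ at height $x$ is $R(x):=\sqrt{2(n-k)}\,F(x)$; all four assertions thus become statements about $R(x)$, or equivalently about its inverse $u=R^{-1}$, which is the graph height of the bowl over the horizontal disk.

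First I would derive the translator ODE. For the rotationally symmetric convex graph $x=u(R)$ with tip at the origin, a short mean-curvature computation gives the classical bowl ODE
\[
\frac{u''(R)}{1+u'(R)^2} + \frac{(n-k)\,u'(R)}{R} = 1,
\]
whose unique convex entire solution is classical. An asymptotic analysis -- substituting the ansatz $u(R)=R^2/(2(n-k))+w(R)$ with $w', w''\to 0$ and linearizing the ODE as $R\to\infty$ -- reduces to $(n-k)w''(R)/R + w'(R) = -1/R$, whose leading balance forces $w'(R)=-1/R+o(1/R)$, hence
\[
u(R) = \tfrac{R^2}{2(n-k)} - \log R + O(1), \qquad u'(R) = \tfrac{R}{n-k} - \tfrac{1}{R} + o\!\left(\tfrac{1}{R}\right).
\]
This expansion is classical and also appears in Clutterbuck--Schn\"urer--Schulze and in the subsequent literature on bowl solitons; it can alternatively be established by a direct barrier argument comparing $u$ with $R^2/(2(n-k)) \pm C\log R$.

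Inverting these expansions and dividing by $\sqrt{2(n-k)}$ gives, on $\{x\geq 1\}$,
\[
F(x) = \sqrt{x} + O\!\left(\tfrac{\log x}{\sqrt{x}}\right), \qquad F'(x) = \tfrac{1}{2\sqrt{x}} + O\!\left(\tfrac{\log x}{x^{3/2}}\right).
\]
The two-sided bounds $c\sqrt{x}\leq F(x)\leq C\sqrt{x}$ and $C^{-1}x^{-1/2}\leq F'(x)\leq Cx^{-1/2}$, and the limit $\lim_{x\to\infty}x^{-1/2}F(x)=1$, are then immediate; any discrepancy on a compact initial interval $[1,X_0]$ is absorbed into $C$ using smoothness and strict positivity of $F$ and $F'$ on $(0,\infty)$. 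For the last bound,
\[
2xF'(x) - F(x) = 2x\Bigl(\tfrac{1}{2\sqrt{x}} + O\bigl(\tfrac{\log x}{x^{3/2}}\bigr)\Bigr) - \Bigl(\sqrt{x} + O\bigl(\tfrac{\log x}{\sqrt{x}}\bigr)\Bigr) = O\!\left(\tfrac{\log x}{\sqrt{x}}\right),
\]
which is uniformly bounded on $\{x\geq 1\}$.

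The only substantive input is the logarithmic correction $-\log R$ in the expansion of $u$; everything else is routine bookkeeping between the Euclidean picture and the paper's radius convention. Since this correction is classical and also follows directly from the ansatz above together with a barrier argument applied to the translator ODE, no real obstacle is anticipated.
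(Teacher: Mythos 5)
Your proposal is correct, but it takes a genuinely different route from the paper. The paper deliberately avoids the translator ODE: it passes to the rescaled flow $\td\MM_\tau = e^{\tau/2}(M_{\bowl}+e^{-\tau}\mathbf e_1)$, writes it as a graph $u_\tau$ over the standard cylinder, observes that $1+u_\tau(\bO)=e^{\tau/2}F(e^{-\tau})$ and $\partial_{x_1}u_\tau(\bO)=F'(e^{-\tau})$, and then reads off all four bounds from the asymptotic mode expansion of the bowl established in the prequel (Proposition~\refx{Prop_ab_exist}, with the explicit constant $\ov b_1=\tfrac1{\sqrt2}$ from Remark~\refx{Rmk_bowl_constant} supplying the cancellation needed for $|2xF'(x)-F(x)|\le C$). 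Your argument instead invokes the classical expansion $u(R)=\tfrac{R^2}{2(n-k)}-\log R+O(1)$ of the rotationally symmetric translator (Altschuler--Wu, Clutterbuck--Schn\"urer--Schulze) and inverts it; the linearized balance you write down and the resulting expansions $F(x)=\sqrt x+O(\log x/\sqrt x)$, $F'(x)=\tfrac1{2\sqrt x}+O(\log x/x^{3/2})$ are correct and in fact slightly sharper than what the lemma asserts. The trade-off is that your route imports an external classical result (or requires carrying out the barrier argument you sketch, including justifying $w',w''\to0$ a priori before the linearization is legitimate), whereas the paper's route is chosen to keep the work self-contained within its own asymptotic framework, which is one of the stated goals of the project. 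Either way the lemma follows; just be aware that the cancellation in $2xF'(x)-F(x)$ is the only place where a genuinely second-order piece of the expansion is needed, and your version does supply it.
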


\begin{proof}
Consider the rescaled flow whose time-slices are $\td\MM_\tau = e^{\tau/2} (M_{\bowl} + e^{-\tau} \mathbf e_1)$.
If we express these time-slices locally as $\Gamma_{\cyl}(u_\tau)$ for $u_\tau (\bx,\by) = u_\tau (\bx)$, then
\[ 1 + u_\tau(\bO) = e^{\tau/2} F(e^{-\tau}), \qquad  \frac{\partial u_\tau}{\partial x_1} (\bO) =  F'(e^{-\tau}). \]
The first bound and the limit statement immediately follow due to the asymptotic characterization \cite[Proposition~\refx{Prop_ab_exist}]{Bamler_Lai_PDE_ODI}.
The second and third bounds are equivalent to
\[ C^{-1} e^{\tau/2} \leq \frac{\partial u_\tau}{\partial x_1} (\bO) \leq C e^{\tau/2}, \qquad \bigg| 2e^{-\tau/2} \   \frac{\partial u_\tau}{\partial x_1} (\bO) - e^{-\tau/2}(1 + u_\tau(\bO)) \bigg| \leq C. \]
The first bound again follows easily from \cite[Proposition~\refx{Prop_ab_exist}]{Bamler_Lai_PDE_ODI}.
Using the fact that $\ov b_1 = \frac1{\sqrt{2}}$ from \cite[Remark~\refx{Rmk_bowl_constant}]{Bamler_Lai_PDE_ODI}, the left-hand side of the second bound has the following asymptotics for $\tau \to -\infty$
\[ \bigg| 2 e^{-\tau/2} \Big(  \frac1{\sqrt{2}} \cdot \frac1{\sqrt{2}} e^{\tau/2} + O( e^{\tau}) \Big) -  e^{-\tau/2} \big(1 + O(e^{\tau/2}) \big) \bigg| \leq C. \]
This concludes the proof of the lemma.
\end{proof}
\medskip

\begin{proof}[Proof of Lemma~\ref{Lem_leading_mode_bowl}.]
Let us first describe the parameterization of the space of possible flow pairs $(\MM^0, \MM^1)$.
Consider a vector $\bz = r \mathbf e \in \IR^k$, where $|\mathbf e| = 1$ and $r \geq 0$, a number $b \in \IR$ and vector $\by \in \IR^{n-k+1}$.
If $r > 0$, then we define $M(\bz,b, \by) \subset \IR^{n+1}$ to be the submanifold obtained from the rescaled and translated submanifold $$r (\IR^{k-1} \times M_{\bowl}) - r^{-1} b \, \mathbf e_k + \by$$ by a rotation that maps $\mathbf e_k$ to $\mathbf e$ and fixes all points in the $\bO^k \times \IR^{n-k+1}$-factor.
If $r = 0$, then we set $M(\bO, b,\by) := \sqrt{b} M_{\cyl} + \by$ if $b > 0$ and $M(\bO,b,\by) = \emptyset$ if $b \leq 0$.
Roughly speaking, if $\bz, \by \approx \bO$, then $M(\bz, b, \by)$ is a perturbation of a cylinder of scale $\approx \sqrt{b}$ whose linear mode is given by $\bz$ and other modes by $b^{-1/2} \by$.
We remark that if $|\mathbf e| = 1$ and $r > 0$, then
\begin{equation} \label{eq_explicit_F_formula}
 M(r \mathbf e, b, \by) =   \big\{ \big(\bq, r F( r^{-1} (\bq \cdot \mathbf e) + r^{-2} b) \by' + \by \big) \;\; : \;\; \bq \cdot \mathbf e \geq  - r^{-1} b, \; \bq \in \IR^k, \; \by' \in \IS^{n-k}  \big\}. 
\end{equation}

The next claim summarizes basic facts about these submanifolds.

\begin{Claim} \label{Cl_Mzby}
The following is true:
\begin{enumerate}[label=(\alph*)]
\item \label{Cl_Mzby_a} If $\mathbf v \in \IR^k$ is some vector and $\la > 0$, then
\begin{align*} 
M(\bz, b, \by) - \mathbf v &= M(\bz, b+ \bz \cdot \mathbf v, \by) \\
 \la \big( M(\bz,b, \by) \big) &= M(\la \bz,  \la^{2} b, \la \by) 
\end{align*}
\item \label{Cl_Mzby_b} For $i = 0,1$ there are $\bz^i \in \IR^k$, $b^i \in \IR$ and $\by^i \in \IR^{n-k+1}$ with $\by^0 = \bO$ such that 
\[ \MM_{t}^i = M(\bz^i, b^i-t, \by^i) \qquad \text{for all} \quad t \in \IR. \]
\item \label{Cl_Mzby_c} If $\MM_{t}^i$ is $\eps$-cylindrical at $(\bO,0)$, for $\eps \leq \ov\eps$, then $b^i > 0$ and $|\bz^i|, |\by^i| \leq \Psi(\eps) \sqrt{b^i}$ for some universal function $\Psi$ with $\Psi(\eps) \to 0$ as $\eps \to 0$.
Likewise, if $b^i > 0$ and $|\bz^i|, |\by^i| \leq \eps \sqrt{b^i}$, then $\MM^i_{t}$ is $\Psi(\eps)$-cylindrical at $(\bO,0)$ and $\MM_{t}^i$ is $\eps$-close to $\sqrt{b^i} M_{\cyl}$, for some similar function $\Psi$.
\end{enumerate}
\end{Claim}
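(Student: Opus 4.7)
The plan is to derive all three assertions directly from the explicit formula \eqref{eq_explicit_F_formula}, its cylindrical degenerate case at $r=0$, and the asymptotics of the bowl profile recorded in Lemma~\ref{Lem_warping}. The two identities in (a) are algebraic. For the translation, substituting $\bq \mapsto \bq + \mathbf v$ in \eqref{eq_explicit_F_formula} turns the defining condition $\bq \cdot \mathbf e \geq -r^{-1} b$ into $\bq \cdot \mathbf e \geq -r^{-1}(b + \bz \cdot \mathbf v)$, which by inspection is exactly the formula for $M(\bz, b + \bz \cdot \mathbf v, \by)$. For the rescaling, applying $\la$ to \eqref{eq_explicit_F_formula} and collecting powers shows that the factors $\la$, $\la^2$, $\la$ can be absorbed into the parameters $(\bz, b, \by) \mapsto (\la \bz, \la^2 b, \la \by)$. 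When $r = 0$, both identities reduce to elementary statements about translations and rescalings of $\sqrt{b}\, M_{\cyl} + \by$.

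For (b), I would enumerate the two possible origins of each $\MM^i$ allowed by the hypothesis of Lemma~\ref{Lem_leading_mode_bowl}. If $\MM^i$ comes from a cylinder, then after rescaling and time-shift its time-slices take the form $\sqrt{b^i - t}\, M_{\cyl} + \by^i = M(\bO, b^i - t, \by^i)$; the requirement that the transformations producing $\MM^0$ preserve the axis $\IR^k$ forces $\by^0 = \bO$. If $\MM^i$ comes from $\IR^{k-1} \times \MM_{\bowl}$, then the bowl moves at unit speed along its axis of symmetry which, after the allowed rotation and rescaling, is a line in the axial subspace $\IR^k$. Choosing $\bz^i$ to encode this direction together with the inverse scale, one absorbs the translation along the axis and the linear time evolution into the parameter $b^i - t$ using part (a). The axis-preservation constraint again enforces $\by^0 = \bO$, since any translation perpendicular to the axis would displace it.

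For (c) I would argue via continuity together with Lemma~\ref{Lem_eps_cyl}. For the backward direction, if $b^i > 0$ and $|\bz^i|, |\by^i| \leq \eps \sqrt{b^i}$, rescaling by $1/\sqrt{b^i}$ reduces to the case $b = 1$ with $|\bz|, |\by| \leq \eps$. Using \eqref{eq_explicit_F_formula} together with the asymptotics $F(x) = \sqrt{x} + O(1)$ from Lemma~\ref{Lem_warping}, evaluated at the relevant height $r^{-2} \geq \eps^{-2}$, one checks that $M(\bz, 1, \by)$ lies within Hausdorff distance $\Psi(\eps)$ of $M_{\cyl}$ on bounded sets for a suitable universal modulus $\Psi$ with $\Psi(\eps) \to 0$; then Lemma~\ref{Lem_eps_cyl}\ref{Lem_eps_cyl_e} upgrades this to $\Psi(\eps)$-cylindricality. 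For the forward direction, Lemma~\ref{Lem_eps_cyl}\ref{Lem_eps_cyl_d} provides that $\MM^i - (\bO,0)$ is $\Psi_{\cyl}(\eps)$-close to $M_{\cyl}$ at time $0$ and some scale $r>0$. In particular the origin must lie on $\MM^i_0$, which in the cylinder case rules out $b^i \leq 0$ and in the bowl case forces the tip to sit on the correct side, so $b^i > 0$ in both cases. A contradiction-and-compactness argument on the normalized parameters $\bz^i/\sqrt{b^i}$ and $\by^i/\sqrt{b^i}$ then shows that these ratios must be bounded by a modulus tending to zero with $\eps$, since any escape to infinity along a sequence would produce a subsequential limit that is not close to any cylinder.

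The main obstacle will be packaging the quantitative bounds in (c) uniformly across the cylinder and bowl sub-cases. Since the parameter family is finite-dimensional and the assignment $(\bz, b, \by) \mapsto M(\bz,b,\by)$ is continuous modulo the natural rescaling established in (a), I would avoid explicit constants altogether and run a clean limit-and-contradiction argument through Lemma~\ref{Lem_eps_cyl}, which yields both moduli $\Psi$ at once.
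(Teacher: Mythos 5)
Your proposal is correct and follows essentially the same route as the paper: part~\ref{Cl_Mzby_a} by direct computation with \eqref{eq_explicit_F_formula}, part~\ref{Cl_Mzby_b} by splitting into the cylinder and unit-speed bowl cases with axis-preservation giving $\by^0 = \bO$, and part~\ref{Cl_Mzby_c} by normalizing $b^i = 1$ and running a compactness/contradiction argument through Lemma~\ref{Lem_eps_cyl} together with the local Hausdorff convergence $M(\bz_j,b_j,\by_j) \to M_{\cyl}$ supplied by Lemma~\ref{Lem_warping}. The only quibble is your remark that $\eps$-cylindricality at $(\bO,0)$ forces the origin to lie on $\MM^i_0$ (it does not, since the model cylinder has positive radius), but this is immaterial because your limit argument is what actually rules out $b^i \leq 0$.
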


\begin{proof}
Assertion~\ref{Cl_Mzby_a} can be verified directly, for example using \eqref{eq_explicit_F_formula}.
Due to Assertion~\ref{Cl_Mzby_a}, we can reduce 
Assertion~\ref{Cl_Mzby_b} to the cases in which $\MM^i$ is a round cylinder, which can be checked explicitly or when $\MM^i$ is a rotation and translation of $\IR^{k-1} \times \MM_{\bowl}$, which follows from the fact that $\MM_{\bowl}$ moves at speed 1.
Observe that $\by^0 = \bO$, as $\MM^0$ is assumed to be rotationally symmetric.
Assertion~\ref{Cl_Mzby_c} follows using Lemma~\ref{Lem_eps_cyl} after normalizing $b_i = 1$.
Note also that due to \eqref{eq_explicit_F_formula} and the last statement in Lemma~\ref{Lem_warping}, we have local Hausdorff convergence $M(\bz_j, b_j ,\by_j) \to M(\bO,1, \bO) = M_{\cyl}$ whenever $(\bz_j, b_j ,\by_j) \to (\bO,1, \bO)$.
\end{proof}

Next we bound the graph function $u$ of $(\MM^0, \MM^1)$ in terms of the parameters $\bz^i, b^i, \by^i$.

\begin{Claim} \label{Cl_u_in_terms_zby}
There is a constant $C^* > 0$ such that for all $(\bq,t) \in \IR^k \times \IR$
we have the following bound 
\begin{multline*}
 C^{*-1} \bigg(  \frac{|b^1  -b^0 + (\bz^1 - \bz^0) \cdot \bq |}{\sqrt{b^0-t}} + |\by^1| \bigg) - C^*| \bz^1 - \bz^0 | \leq  \sup_{\MM_{t}^0 (\bq)} |u|(\cdot, t) \\
 \leq C^* \bigg(  | \bz^1 - \bz^0 | +  \frac{|b^1  -b^0 + (\bz^1 - \bz^0) \cdot \bq |}{\sqrt{b^0-t}} + |\by^1| \bigg) 
\end{multline*}
 whenever 
\begin{equation} \label{eq_zby_condition}
   | \bz^1 - \bz^0 | +  \frac{|b^1  -b^0 + (\bz^1 - \bz^0) \cdot \bq |}{\sqrt{b^0-t}} + |\by^1| \leq C^{*-1} \sqrt{b-t}  \quad \text{and} \quad   C^* |\bz^0|^2 \leq b^0-t + \bz^0 \cdot \bq. 
\end{equation}
\end{Claim}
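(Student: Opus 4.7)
The plan is to normalize via Claim~\ref{Cl_Mzby}\ref{Cl_Mzby_a} and then expand the graph function $u$ at the cross-section directly from the profile formula \eqref{eq_explicit_F_formula}. First I would translate in space by $-\bq$, shift time so that $t$ becomes $0$, and rescale by $\lambda:=(b^0-t+\bz^0\cdot\bq)^{-1/2}$; the second inequality of \eqref{eq_zby_condition} gives $b^0-t+\bz^0\cdot\bq\geq C^*|\bz^0|^2$ and in particular $|\lambda\bz^0|\leq(C^*)^{-1/2}$, while the first inequality (together with comparability of $\sqrt{b^0-t}$ and $1/\lambda$) yields smallness of $|\lambda(\bz^1-\bz^0)|$, $|\lambda^2(b^1-b^0+(\bz^1-\bz^0)\cdot\bq)|$, and $|\lambda\by^1|$ in terms of $(C^*)^{-1}$. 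Since $u$ scales like length, the conclusion reduces (dropping hats) to the normalized statement with $b^0=1$ and $\by^0=\bO$:
\[
\tfrac{1}{C^*}(|b^1-1|+|\by^1|)-C^*|\bz^1-\bz^0|\;\leq\;\sup_{\MM^0_0(\bO)}|u|(\cdot,0)\;\leq\;C^*(|\bz^1-\bz^0|+|b^1-1|+|\by^1|).
\]

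In this normalized setting, \eqref{eq_explicit_F_formula} shows that $\MM^i_0(\bO)$ is a round $(n-k)$-sphere of radius $\rho^i:=g(r^i,b^i)$, with $r^i:=|\bz^i|$ and $g(r,b):=rF(r^{-2}b)$ (interpreted as $\sqrt b$ when $r=0$), centered at $(\bO,\by^i)\in\IR^k\times\IR^{n-k+1}$. Lemma~\ref{Lem_warping} yields $g(r,b)=\sqrt b+O(r)$ and $\partial_b g(r,b)=\tfrac12+O(r)+O(b-1)$, together with the Lipschitz bound $|\partial_r g|\leq C$ coming from $|F(x)-2xF'(x)|\leq C$; hence $\rho^1-\rho^0=\tfrac12(b^1-1)+O(|\bz^1-\bz^0|)+O((b^1-1)^2)$. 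A direct geometric calculation of the signed normal distance from $(\bO,\rho^0\by')$ to $\MM^1_0$---using that the outer normal of $\MM^0_0$ at this point lies in the $\IR^{n-k+1}$-fiber up to an $O(|\bz^0|)$ axial tilt---will then give
\[
u(\bO,\by')=(\rho^1-\rho^0)+\by^1\cdot\by'+O(|\bz^1-\bz^0|)+O\big((|b^1-1|+|\by^1|+|\bz^1-\bz^0|)^2\big).
\]
The crucial observation is that a pure axis tilt contributes only the $O(|\bz^1-\bz^0|)$ additive term at the single cross-section $\bq=\bO$: any $\bq$-dependent tilt effect $O(|\bq|\cdot|\bz^1-\bz^0|)$ vanishes because $\bq=\bO$, and would in any case be absorbed into the $(\bz^1-\bz^0)\cdot\bq$ correction already present in the hypothesis and conclusion.

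The upper bound follows by taking $\sup_{\by'}$ in the displayed expansion. For the lower bound I would use the elementary spherical-harmonic inequality $\sup_{\by'\in\IS^{n-k}}|a+\mathbf{c}\cdot\by'|\geq\tfrac12(|a|+|\mathbf{c}|)$ (by $L^2$-orthogonality of the constant and linear modes on $\IS^{n-k}$) with $a=\rho^1-\rho^0$ and $\mathbf{c}=\by^1$, combined with the lower bound on $|\rho^1-\rho^0|$ from the second step, and finally absorb the quadratic errors by choosing $C^*$ large (justified by the smallness from the normalization). The main obstacle will be the explicit expansion of $u(\bO,\by')$: one must verify rigorously that an axis-tilt variation produces only the claimed $O(|\bz^1-\bz^0|)$ error at the cross-section, independently of $\bz^0$. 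The uniform bound $|F(x)-2xF'(x)|\leq C$ from Lemma~\ref{Lem_warping} is precisely what makes this hold across the full parameter range (including the limit $\bz^0\to\bO$, in which the base flow $\MM^0$ degenerates from a bowl into a cylinder and the parametrization $g(r,b)$ becomes singular in $r$ but the geometry remains smooth).
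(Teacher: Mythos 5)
Your plan follows essentially the same route as the paper's proof: normalize via Claim~\ref{Cl_Mzby}\ref{Cl_Mzby_a} to $(\bq,t)=(\bO,0)$, $b^0=1$, reduce to comparing the cross-section spheres of radii $r_iF(r_i^{-2}b^i)$ offset by $\by^1$, and control the radius difference by differentiating $(r,b)\mapsto rF(r^{-2}b)$ using exactly the bounds $|F-2xF'|\leq C$ and $F'\asymp x^{-1/2}$ from Lemma~\ref{Lem_warping}. The only caveat is that your claimed expansion $\partial_b g=\tfrac12+O(r)+O(b-1)$ is more precise than Lemma~\ref{Lem_warping} actually provides (it gives only two-sided bounds $C^{-1}\leq r^{-1}F'(r^{-2}b)\leq C$), but those two-sided bounds suffice for the claim, so this does not affect correctness.
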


\begin{proof}
By Claim~\ref{Cl_Mzby} we may apply a time-shift by $t$ and spatial translation in the $\IR^k$-direction to both flows by $-\bq$ followed by parabolic rescaling.
This reduces the claim to the case in which $(\bq, t) = (\bO,0)$ and $b^0 = 1$.
By continuity, we may moreover assume that $r_i := |\bz^i| > 0$.
So it suffices to show that
\begin{equation*}
 C^{*-1} \big(  |b^1  -1  | + |\by^1| \big) - C^* | \bz^1 - \bz^0 | \leq  \sup_{\MM_{0}^0 (\bO)} |u|(\cdot, t) 
 \leq C^* \big(  | \bz^1 - \bz^0 | +  |b^1  -1  | + |\by^1| \big) 
\end{equation*}
whenever 
 \[  | \bz^1 - \bz^0 | +   |b^1 - 1  | + |\by^1| \leq C^{*-1}  \qquad \text{and} \qquad   C^*|\bz^0|^2 \leq 1. \]
If we choose $C^*$ large enough, then these conditions guarantee that $r_0, |r_1 - r_0|, |b^1 -1|$ and $|\by^1|$ are sufficiently small.
So by Claim~\ref{Cl_Mzby} we may assume that $\MM^0$  and $\MM^1$ are sufficiently cylindrical at $(\bO,0)$, which allows us to bound
$\sup_{\MM_{0}^0 (\bO)} |u|(\cdot, 0)$ from above and below by a universal constant times the distance between the intersections of $\MM_{0}^i$ with $\bO^k \times \IR^{n-k+1}$.
Due to \eqref{eq_explicit_F_formula} these spheres have radii $r_i F(r_i^{-2} b^i)$ and are offset by $|\by^1|$.
So for some dimensional constant $C > 0$ (recall that $b^0 = 1$)
\begin{equation*}
 C^{-1} \big( |r_1 F(r_1^{-2} b^1) - r_0 F(r_0^{-2} )| + |\by^1| \big) \leq
 \sup_{\MM^{0}_t (\bO)} |u|(\cdot, 0) 
\leq C \big( |r_1 F(r_1^{-2} b^1) - r_0 F(r_0^{-2} )| + |\by^1| \big).
\end{equation*}
Hence the claim follows once we can show that for sufficiently small $r_0, r_1$ and $|b^1-1|$ we have
\begin{equation} \label{eq_Fbr_bound}
 C^{-1}  |b^1-1| - C|r_1 - r_0| \leq |r_1 F(r_1^{-2} b^1) - r_0 F(r_0^{-2} )| \leq C\big( |r_1-r_0| + |b^1-1| \big). 
\end{equation}
To see this let $r_s := (1-s) r_0 + s r_1$ and $b_s := (1-s)  + s b^1$.
Then
\begin{equation} \label{eq_fbr_der}
  \frac{d}{ds} r_s F(r_s^{-2} b_s) 
= (r_1 - r_0) \big( F(r_s^{-2} b_s)  - 2 r_s^{-2}b_s F'(r_2^{-2} b_s ) \big) + (b^1 - b^0) \big( r_s^{-1} F'(r_s^{-2} b_s) \big). 
\end{equation}
By Lemma~\ref{Lem_warping} we have bounds of the form $|F(r_s^{-2} b_s)  - 2 r_s^{-2}b_s F'(r_2^{-2} b_s)| \leq C$ and $C^{-1} \leq r_s^{-1} F'(r_s^{-2} b_s )\leq C$, so \eqref{eq_Fbr_bound} follows by integrating \eqref{eq_fbr_der}.
\end{proof}

We can now establish Property~\ref{Def_lead_mode_1} from Definition~\ref{Def_flow_pair}.

\begin{Claim} \label{Cl_bowl_Prop1}
If $\delta$, $\delta' \leq \ov\delta'(\delta)$, $C_0 \geq \underline C_0 (\delta, \delta')$, $\eps \leq \ov\eps (\delta, \delta')$, $\alpha \leq \ov\alpha(\delta, \delta')$, then Property~\ref{Def_lead_mode_1} from Definition~\ref{Def_flow_pair} holds whenever $\MM^0$ is $\eps$-cylindrical at $(\bq,t)$ and
\[ \Vert u \Vert_{\bq,t;\delta'} \leq a \qquad \text{for some} \quad a \in (0, \alpha). \]
\end{Claim}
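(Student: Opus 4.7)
My plan is to proceed by direct computation in the finite-dimensional parameter space $(\bz^0, b^0, \bz^1, b^1, \by^1)$ provided by Claim~\ref{Cl_Mzby}, combined with the profile asymptotics of Lemma~\ref{Lem_warping}. First I would normalize: by Claim~\ref{Cl_Mzby} one may apply a time-shift, a translation in the $\IR^k$-direction, and a parabolic rescaling to reduce to $(\bq,t)=(\bO,0)$ with $b^0=1$, so $r=\rho(\bO,0)=1$ and $|\bz^0|\le\Psi(\eps)$ by the $\eps$-cylindricality hypothesis. The hypothesis $\Vert u\Vert_{\bO,0;\delta'}\le a\le\alpha$ controls $\Vert\td u_{\bO,0}\Vert_{L^\infty(\IB^k_1\times\IS^{n-k})}$ by ${\delta'}^{-3}\alpha$ via \eqref{eq_q_delta_norm_def}, and combining this with Claim~\ref{Cl_u_in_terms_zby} evaluated at a few test points forces $|\bz^1-\bz^0|+|b^1-b^0|+|\by^1|\le C(\delta')\alpha$. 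For $\eps,\alpha$ sufficiently small in terms of $\delta,\delta'$, Claim~\ref{Cl_Mzby} then guarantees that $\MM^1$ is close enough to a cylinder of scale one that $\td u_{\bO,0}$ is well-defined on all of $\IB^k_{\delta^{-1}}\times\IS^{n-k}$.

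Next I would Taylor-expand \eqref{eq_explicit_F_formula} for $\MM^{1,\reg}_0$ in the parameter differences $(\bz^1-\bz^0,b^1-b^0,\by^1)$ about $(\bz^0,1,\bO)$. The first-order part produces three independent contributions, each lying in $\sV_{\geq 0}$: a constant mode from $b^1-b^0$ (belonging to $\sV_1$), a mode linear in $\bx\in\IR^k$ from $\bz^1-\bz^0$ (in $\sV_{1/2}$), and a first spherical harmonic from $\by^1$ (also in $\sV_{1/2}$); their sum is the candidate for $U_{\bO,0}$. The remainder splits into (i) quadratic-in-parameters error terms, bounded on $\IB^k_{\delta^{-1}}\times\IS^{n-k}$ by $C(\delta,\delta')(|\bz^1-\bz^0|+|b^1-b^0|+|\by^1|)^2\le C(\delta,\delta')\alpha\,a$, and (ii) curvature corrections arising because $F$ is not exactly $\sqrt{x}$: after our rescaling, the relevant arguments of $F$ are of size at least $|\bz^0|^{-2}\gtrsim\Psi(\eps)^{-2}$, so all three bounds of Lemma~\ref{Lem_warping} force these corrections and their derivatives up to order $[\delta^{-1}]$ to be $o_{\eps\to 0}(1)$ uniformly on the compact domain. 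Choosing $\eps\le\ov\eps(\delta,\delta')$ and $\alpha\le\ov\alpha(\delta,\delta')$ then yields \eqref{eq_close_to_leading_mode}, while continuity in the parameters yields the last statement of Property~\ref{Def_lead_mode_1} for $C_0\ge\underline C_0(\delta,\delta')$.

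To verify \eqref{eq_dbounds_norm}, I would use Claim~\ref{Cl_Mzby} to describe a shift of the reference point: replacing $(\bq,t)$ by $(\bq+\mathbf v,t+s)$ leaves $\bz^i,\by^i$ unchanged and sends $b^i$ to $b^i-s+\bz^i\cdot\mathbf v$, and a further rescaling by $(b^0-s+\bz^0\cdot\mathbf v)^{-1/2}$ restores the normalization $b^0=1$. Substituting this into the Taylor expansion gives closed-form expressions for $\PP_{\sV_\lambda}U_{\bq+\mathbf v,t+s}$ and for $\Vert\td u\Vert_{L^\infty}$ as functions of $(\mathbf v,s)$, and hence for $\Vert u\Vert_{\bq+\mathbf v,t+s;\delta'}$ via \eqref{eq_q_delta_norm_def}. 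Differentiating in $\mathbf v$ at $(\mathbf v,s)=(\bO,0)$ produces only contributions carrying a prefactor $|\bz^i|\le\Psi(\eps)+C(\delta')a/\delta'$, which yields $r\,|\partial_\bq\Vert u\Vert|\le(C\Psi(\eps)+C\delta')a\le\delta a$. Differentiating in $s$ reproduces, at leading order, the eigenvalue-weighted identity behind the proof of Lemma~\ref{Lem_Vgeq0_Prop_1}: the $\sV_1$-term contributes its full value, the $\sV_{1/2}$-term contributes half its value, and the $\sV_0$-term contributes zero, so $r^2\partial_t\Vert u\Vert$ is non-negative and bounded above by $\Vert u\Vert+O(\delta a)$, fitting in $[-\delta a,(1+\delta)a]$ provided $\delta'\le\ov\delta'(\delta)$ and $\eps\le\ov\eps(\delta,\delta')$.

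The main obstacle I anticipate is cleanly separating the $\sV_1$ contribution from the two ``dilation-like'' parameters: varying $b$ is a pure rescaling, whereas varying $\bz$ simultaneously translates the bowl's tip and shifts the axis. The identity $|2xF'(x)-F(x)|\le C$ in Lemma~\ref{Lem_warping} is exactly what is needed to disentangle these: it asserts that, at leading order, the ``axial-dilation'' effect of $\bz^1-\bz^0$ combines with $b^1-b^0$ to yield only a genuine $\sV_1$-mode plus a controllable correction, so the three linear modes identified above remain independent modulo the $o_{\eps\to 0}(1)$-error already folded into (ii).
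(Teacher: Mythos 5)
Your route is genuinely different from the paper's: you propose a direct Taylor expansion in the finite-dimensional parameter space $(\bz^i,b^i,\by^i)$ using \eqref{eq_explicit_F_formula}, whereas the paper argues by contradiction, rescales by the parameter difference $a'_j := |\bz^1_j-\bz^0_j|+|b^1_j-1|+|\by^1_j|$, passes to a limit solution of the linearized equation on the cylinder, and then invokes Lemma~\ref{Lem_Vgeq0_Prop_1}. However, there is a genuine gap at the step on which everything else rests: you claim that $\Vert u\Vert_{\bq,t;\delta'}\le a$ together with ``Claim~\ref{Cl_u_in_terms_zby} evaluated at a few test points'' forces $|\bz^1-\bz^0|+|b^1-b^0|+|\by^1|\le C(\delta')\alpha$ (and your paragraph-two estimate $(\text{params})^2\le C\alpha a$ in fact requires the stronger bound $\le C(\delta')a$). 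Claim~\ref{Cl_u_in_terms_zby} cannot deliver this: its lower bound carries the additive error $-C^*|\bz^1-\bz^0|$, which is of exactly the same order as the quantity you are trying to control, because the residual effect of a scale change on the profile, $(r_1-r_0)(F-2xF')$, is $O(|r_1-r_0|)$ with a non-vanishing constant (Lemma~\ref{Lem_warping} only gives $|2xF'-F|\le C$, not $o(1)$). So $C^0$ information about $u$ on a region of diameter $O(r)$ around $(\bq,t)$ does not determine $|\bz^1-\bz^0|$ up to a bounded factor. This is precisely the degenerate regime the paper must treat separately (the case $a_j/a'_j\to 0$ in the proof of Claim~\ref{Cl_bowl_Prop1}), and it does so using global information: after establishing that the normalized limit lies in $\sV_{\ge 0}$ and hence vanishes identically, it evaluates the lower bound of Claim~\ref{Cl_u_in_terms_zby} at $\bq=s\bz'$ with $s\gg 1$ to force $\bz'=\bO$ and contradict the normalization. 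Your proposal never confronts this case; to repair it within your framework you would need to extract $|\bz^1-\bz^0|$ from the $\sV_{\frac12}$-projection of $U_{\bq,t}$ (which sees the gradient $\tfrac12(\bz^1-\bz^0)$ up to $o_{\eps}(1)\cdot(\text{params})$ errors), and that in turn requires an a priori qualitative smallness of the parameters to justify the expansion and absorb the error terms — a bootstrapping or compactness step that is missing.

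Two secondary issues. First, the claimed $C^{[\delta^{-1}]}$-smallness of the ``curvature corrections'' does not follow from the three displayed bounds of Lemma~\ref{Lem_warping}, which control only $F$ and $F'$; you need the smooth convergence of the rescaled bowl to the cylinder (equivalently, higher-derivative asymptotics of $F$), which is true but should be cited rather than attributed to that lemma. Second, your final paragraph leans on $|2xF'-F|\le C$ to argue that the axial-dilation effect contributes ``only a genuine $\sV_1$-mode plus a controllable correction''; boundedness alone does not imply that $(F-2xF')(r^{-2}b+r^{-1}\bq\cdot\mathbf e)$ is within $\delta|r_1-r_0|$ of a constant on $\IB^k_{\delta^{-1}}$ — one again needs convergence of $F-2xF'$ at infinity, which requires more than Lemma~\ref{Lem_warping} states.
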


\begin{proof}
Fix $\delta > 0$ and $\delta' \leq \ov\delta' (\delta)$ according to Lemma~\ref{Lem_Vgeq0_Prop_1} and choose sequences $\eps_j, \alpha_j \to 0$ and $C_{0,j} \to \infty$.
If the statement of the claim was wrong, then we could find a sequence of flow pairs $(\MM^0_j, \MM^1_j)$ corresponding to parameters $(\bz^0_j, b^0_j, \bO)$ and $(\bz^1_j, b^1_j, \by^1_j)$ that violate the claim for $\eps_j, C_{0,j}$ and $\alpha_{0,j}$ at some $(\bq_j, t_j)$ and for some $a_j < \alpha_j$ with $\Vert u_j \Vert_{\bq_j, t_j; \delta'} \leq a_j$.
Without loss of generality, we may assume that $(\bq_j, t_j) = (\bO,0)$ and $b^0_j = 1$.
Since $\MM^0_j$ is $\eps_j$-cylindrical at $(\bO,0)$ and $\eps_j \to 0$, we get that $\bz_j^0, \by_j^0 \to \bO$.
Since the graph functions $u_j$ satisfy $\Vert u_j \Vert_{\bO,0;\delta'} \leq a_j \to 0$, we must also have $\bz_j^1, \by_j^1 \to \bO$ and $b^1_j \to 1$.
So the condition \eqref{eq_zby_condition} from Claim~\ref{Cl_u_in_terms_zby} is satisfied for all $(\bq,t) \in P_i := \IB^k_{R_i} \times [-T_{i},0]$ for $R_i, T_i \to \infty$.
So if we set $a'_j :=  |\bz^1_j - \bz^0_j| + |b^1_j - 1| + |\by^1_j|$, then for $(\bq,t)$ we have, after possibly adjusting $C^*$,
\begin{equation} \label{eq_Csloweranduppersupap}
 C^{*-1} \frac1{a'_j} \bigg(  \frac{|b^1_j  - 1 + (\bz^1_j - \bz^0_j) \cdot \bq |}{\sqrt{1-t}} + |\by^1_j| \bigg) - C^* \frac{|\bz^1_j - \bz^0_j|}{a'_j} \leq  \sup_{\MM_{t}^0 (\bq)} \frac{|u|(\cdot, t)}{a'_j} 
 \leq C^* \bigg(  1 +  \frac{1 + | \bq |}{\sqrt{1-t}}  \bigg) .
\end{equation}
Due to local derivative estimates, we find that $\frac{u_j}{a'_j}$ subsequentially converges locally smoothly to $u'_\infty$ of the linearized mean curvature flow equation on a round shrinking cylinder satisfying the bound
\[ \sup_{\MM_{\cyl,t} (\bq)} |u'_\infty|(\cdot, t) 
 \leq C^* \bigg(  1 +  \frac{1 + | \bq |}{\sqrt{1-t}}  \bigg) \]
We can now argue as in the proof of Lemma~\ref{Lem_cyl_u} that all time-slices of $u'_\infty$ are given by elements of $\sV_{\geq 0}$.
If $\liminf_{j \to \infty} \frac{a_j}{a'_j} > 0$, then for a subsequence we also have smooth convergence $a_j^{-1} u_j = \frac{a_j}{a'_j} \cdot (a_j')^{-1} u_j$ to a multiple of $u_\infty$.
In this case bounds \eqref{eq_close_to_leading_mode} and \eqref{eq_dbounds_norm} from Property~\ref{Def_lead_mode_1} must hold for large $j$ due to Lemma~\ref{Lem_Vgeq0_Prop_1} and the last part of this property must hold since $C_{0,j} \to \infty$.
This contradicts our assumption.

Consider now the case $\frac{a_j}{a'_j} \to 0$, so $\frac{1}{a'_j} \Vert u_j \Vert_{\bO,0;\delta'} \leq \frac{a_j}{a'_j} \to 0$, which implies that $u'_\infty \equiv 0$.
In this case the lower bound in \eqref{eq_Csloweranduppersupap} at $t = 0$ implies that for all $\bq \in \IR^k$
\begin{equation} \label{eq_limsupbzqy}
 \limsup_{j \to \infty} \bigg( C^{*-1} \frac1{a'_j} \big(  |b^1_j  - 1 + (\bz^1_j - \bz^0_j) \cdot \bq | + |\by^1_j| \big) - C^* \frac{|\bz^1_j - \bz^0_j|}{a'_j}  \bigg) \leq 0. 
\end{equation}
After passing to a subsequence, we may assume that we have convergence
\[ \frac{\bz^1_j - \bz^0_j}{a'_j} \to \bz', \quad \frac{b^1_j - 1}{a'_j} \to b', \quad \frac{\by^1_j}{a'_j} \to \by' . \]
Then \eqref{eq_limsupbzqy} implies that for all $\bq \in \IR^k$
\[ C^{*-1} \big(  |b'+ \bz' \cdot \bq | + |\by'| \big) - C^* |\bz'| \leq 0 \]
Choosing $\bq = s \bz'$ for $s \gg 1$ implies $\bz' = \bO$ and thus $b' = 0$ and $\by' = \bO$, which is impossible due to the definition of $a'_j$.
\end{proof}

Next, we establish Property~\ref{Def_lead_mode_2} from Definition~\ref{Def_flow_pair}.

\begin{Claim}
If $\delta \leq \ov\delta$, $\delta' \leq \ov\delta'(\delta)$, $C_0 \geq \underline C_0 (\delta, \delta')$, $\eps \leq \ov\eps (\delta, \delta')$, $D > 0$, $\alpha \leq \ov\alpha(\delta, \delta', D)$, then Property~\ref{Def_lead_mode_2} from Definition~\ref{Def_flow_pair} holds whenever $\MM^0$ is $\eps$-cylindrical at $(\bq,t)$ and
\[ \Vert u \Vert_{\bq,t;\delta'} \leq a \qquad \text{for some} \quad a \in (0, \alpha). \]
\end{Claim}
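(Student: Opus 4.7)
The plan is to argue by contradiction along the lines of Claim~\ref{Cl_bowl_Prop1}, using the finite-dimensional parameterization from Claim~\ref{Cl_Mzby} and the two-sided bound from Claim~\ref{Cl_u_in_terms_zby}. Let $C^* = C^*(n,k)$ be as in Claim~\ref{Cl_u_in_terms_zby}, fix $\delta, \delta' > 0$ with $\delta' \leq \ov\delta'(\delta)$, and suppose the claim fails for a constant $\underline C_0(\delta,\delta')$ to be fixed at the end and some $D > 0$. Choose sequences $\eps_j, \alpha_j \to 0$ and counterexamples $(\MM^0_j, \MM^1_j)$ at $(\bq_j, t_j)$ with $\Vert u_j\Vert_{\bq_j, t_j; \delta'} < a_j < \alpha_j$, such that Property~\ref{Def_lead_mode_2} fails at some $\bq'_j$ with $|\bq'_j - \bq_j| < D r_j$. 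By Claim~\ref{Cl_Mzby}\ref{Cl_Mzby_a}, parabolic rescaling and translation reduce us to $(\bq_j, t_j) = (\bO, 0)$ and $b^0_j = 1$; rotational symmetry gives $\by^0_j = \bO$, and $\eps_j$-cylindricality at $(\bO,0)$ together with Claim~\ref{Cl_Mzby}\ref{Cl_Mzby_c} forces $\bz^0_j \to \bO$. Hence $\MM^0_j \to \MM_{\cyl}$ locally smoothly and $r_j \to 1$.

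Set $a'_j := |\bz^1_j - \bz^0_j| + |b^1_j - 1| + |\by^1_j|$. The bound $\Vert u_j\Vert_{\bO,0;\delta'} < a_j \to 0$ together with the $L^\infty$ term in Definition~\ref{Def_Vertu} and Claim~\ref{Cl_u_in_terms_zby} forces $a'_j \to 0$ and, for $j$ large, the inclusion $\MM^{0,\reg}_{j,0}(\bq'_j) \subset \DD_{j,0}$, so the failure of Property~\ref{Def_lead_mode_2} must come from the pointwise bound. The upper bound in Claim~\ref{Cl_u_in_terms_zby} at $(\bq'_j,0)$ gives
\[ \sup_{\MM^0_{j,0}(\bq'_j)} |u_j| \leq C^*\bigl( |\bz^1_j - \bz^0_j| + |b^1_j - 1 + (\bz^1_j - \bz^0_j) \cdot \bq'_j| + |\by^1_j| \bigr) \leq C^* (2 + |\bq'_j|)\, a'_j, \]
while the assumed failure is $\sup_{\MM^0_{j,0}(\bq'_j)} |u_j| > C_0 e^{|\bq'_j|/r_j} a_j r_j$. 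Combining these (using $r_j \to 1$ and that $(2+x) e^{-x}$ is decreasing on $[0,\infty)$) yields, for large $j$,
\[ \frac{a_j}{a'_j} \;<\; \frac{2C^* (2 + |\bq'_j|) e^{-|\bq'_j|}}{C_0} \;\leq\; \frac{4C^*}{C_0}. \]

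Now blow up by setting $u^*_j := u_j/a'_j$. By Claim~\ref{Cl_u_in_terms_zby} the rescaled graph functions are uniformly bounded on bounded regions, so by parabolic regularity we pass to a subsequence converging smoothly to a linearized mean curvature flow solution $u^*_\infty$ on $\MM_{\cyl}$, and the normalized parameter deviations converge to some $(\bz', b', \by')$ with $|\bz'| + |b'| + |\by'| = 1$. The limit $u^*_\infty$ is the explicit linearized perturbation attached to $(\bz', b', \by')$, namely the sum of a linear-in-$\bx$ mode, a constant mode, and a first spherical harmonic mode. Since $\MM^0_j \to \MM_{\cyl}$ smoothly, the cylindrical models $\td u^*_{j,\bO,0}$ converge smoothly to $\td u^*_{\infty,\bO,0}$ on $\IB^k_1 \times \IS^{n-k}$, so the finite-dimensional projections $\PP_{\sV_0}, \PP_{\sV_{1/2}}, \PP_{\sV_1}$ and the $L^\infty$-norm all pass to the limit continuously, giving
\[ \Vert u^*_\infty \Vert_{\bO,0;\delta'} \;\leq\; \lim_{j \to \infty} \frac{a_j}{a'_j} \;\leq\; \frac{4C^*}{C_0}. \]
On the other hand, the three mode types above span linearly independent subspaces of functions on $\IB^k_1 \times \IS^{n-k}$, so the parameter-to-norm map $(\bz',b',\by') \mapsto \Vert u^*_\infty\Vert_{\bO,0;\delta'}$ is continuous and strictly positive off $\bO$; compactness of the unit ``sphere'' $\{|\bz'|+|b'|+|\by'|=1\}$ therefore yields a uniform lower bound $\Vert u^*_\infty\Vert_{\bO,0;\delta'} \geq c_0(\delta') > 0$. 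Taking $\underline C_0(\delta,\delta') := 4C^*/c_0(\delta')$ produces a contradiction. The main technical point is the continuous passage of the components of $\Vert\cdot\Vert_{\bO,0;\delta'}$ to the limit, which reduces to smooth convergence of the cylindrical models; everything else is a direct consequence of the parameterization and the upper/lower bounds in Claim~\ref{Cl_u_in_terms_zby}.
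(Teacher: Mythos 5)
Your argument has a genuine gap: the contradiction setup takes $\eps_j \to 0$ with $D$ fixed. This only establishes the claim for a threshold $\ov\eps$ that depends on $D$ (the $j_0$ at which your contradiction kicks in depends on the fixed $D$), whereas the claim — and crucially its downstream use — requires $\ov\eps = \ov\eps(\delta,\delta')$ \emph{independent} of $D$. In Lemma~\ref{Lem_cap_control}, Lemma~\ref{Lem_improve_Prop2} and Proposition~\ref{Prop_leading_mode_condition}, the constant $D$ is chosen \emph{after} $\eps$ and must satisfy $D \geq \underline D(\eps)$ precisely so that $D\rho((\bO^{k-1},q_\eps)) > q_\eps$, i.e.\ so that Property~\ref{Def_lead_mode_2} reaches across the cap of the bowl; a $D$-dependent $\ov\eps$ would make this hierarchy circular.

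Concretely, the case your argument never sees is the one that carries the main difficulty. With $\eps_j \to 0$ and $D$ fixed, the entire ball $|\bq'-\bq_j| < D r_j$ converges (after rescaling) to a piece of the round cylinder, so Claim~\ref{Cl_u_in_terms_zby} applies at $\bq'_j$ and the limit is a linearized solution on $\MM_{\cyl}$. But for the fixed $\eps = \ov\eps(\delta,\delta')$ and $\MM^0 = \IR^{k-1}\times\MM_{\bowl}$, the base point can satisfy $x_k(\bq) = q_\eps$ with $\rho(\bq,0) \sim \sqrt{q_\eps}$, and for $D \gg \sqrt{q_\eps}$ the ball $B(\bq, D\rho)$ contains the cap region $\{x_k \leq 0\}$. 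There the hypothesis \eqref{eq_zby_condition} of Claim~\ref{Cl_u_in_terms_zby} fails (the condition $C^*|\bz^0|^2 \leq b^0 - t + \bz^0\cdot\bq$ breaks down near the tip), so your two-sided bound is unavailable, and the blow-up limit of $a_j^{-1}u_j$ is not a linearized solution on the cylinder but the normal component of an affine (Killing plus dilational) vector field on the bowl. Establishing the exponential bound of Property~\ref{Def_lead_mode_2} in that regime requires combining the derivative estimate from Claim~\ref{Cl_bowl_Prop1} on the cylindrical part $\{x_k \geq X\}$ with the explicit profile asymptotics $F(s)\sim\sqrt{s}$ of Lemma~\ref{Lem_warping} on the cap — this is the bulk of the paper's proof and is entirely absent from yours. (Within the restricted regime you do treat, your argument is essentially sound and close to the paper's Case 1, though the positivity of $(\bz',b',\by')\mapsto \Vert u^*_\infty\Vert_{\bO,0;\delta'}$ deserves a computation of $\partial_{\bz}M(\bz,b,\by)$ at the cylinder rather than an appeal to ``linear independence''.)
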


\begin{proof}
Choose and fix $\delta \leq \ov\delta$, $\delta' \leq \ov\delta'(\delta)$ according to Claim~\ref{Cl_bowl_Prop1} and fix $D > 0$.
If the claim was wrong, then we could find flow pairs $(\MM^0_j, \MM^1_j)$ corresponding to parameters $(\bz^0_j, b^0_j, \bO)$ and $(\bz^1_j, b^1_j, \by^1_j)$ that violate the claim for some $(\bq_j, t_j)$ and $a_j < \alpha_j \to 0$ and for constants $\eps_j, C_{0,j} > 0$.
We may assume that $\eps_j \leq \ov\eps(\delta, \delta')$ and $C_{0,j} \geq \underline C_0(\delta, \delta')$ according to Claim~\ref{Cl_bowl_Prop1}.
In the course of the proof we will impose additional bounds of the same type.
This is not an issue, because in principle we could have imposed these requirements in the beginning of the proof.

Consider first the case in which $\MM^0_j$ is $\eps'_j$-cylindrical at $(\bq_j, t_j)$ for some sequence $\eps'_j \to 0$.
In this case, we may apply a translation in space and/or time and parabolic rescaling and assume that $\MM^0_j$ converges locally smoothly to a round cylinder.
Since $a_j \to 0$, the same is true for $\MM^1_j$.
We can now apply the discussion from Claim~\ref{Cl_bowl_Prop1}, so the rescaled graph functions $a_j^{-1} u_j$ converge at time $0$ to an element of $\sV_{\geq 0}$.
Hence Property~\ref{Def_lead_mode_2} follows for large $j$ as long as $C_{0,j} \geq \underline C_0 (\delta, \delta')$.

Let us now assume that for a subsequence $(\bq_j, t_j)$ is \emph{not} $\eps'$-cylindrical for some uniform $\eps' > 0$.
In this case, we may apply a rotation, a translation in space and/or time and parabolic rescaling and assume that $\MM^0_j = \MM^0 = \IR^{k-1} \times \MM_{\bowl}$ and $(\bq_j, t_j) = (x_j \mathbf e_k ,0)$ for some $x_j \geq 0$.
Since $\MM^0$ is not $\eps'$-cylindrical at this point, we obtain that $x_j$ is uniformly bounded from above, so after passing to a subsequence $x_j \to x_\infty$.
Since Claim~\ref{Cl_bowl_Prop1} provides local derivative bounds in terms of $a_j$ on the graph function $u_j$ near $(x_j \mathbf e_k, 0)$, we find that $\MM^1_j = \la_j S_j \MM^0 + \by'_j$ for some $\lambda_j > 0$, $S_j \in O(n+1)$, $\by'_j \in \IR^{n+1}$ with
\[ |\lambda_j-1|, \; |S_j - \id|, \; |\by'_j| \leq C' a_j, \]
where $C'$ is uniform in $j$.
So after passing to a subsequence, we have convergence of the rescaled graph functions $a_j^{-1} u_j \to u_\infty$ to a limit with the following property:
There is an affine linear vector field $\bY$ on $\IR^{n+1}$---a linear combination of a Killing field and a dilational vector field---such that for al $\bp \in \MM^0_0$ (the time-$0$-slice of $\MM^0$) the value $u_\infty(\bp,0)$ is equal to the normal component of $\bY(\bp)$.

We now claim that for all $\bq' \in \IR_+ \times \IR^k$ we have with $\bq_\infty := (e_\infty \mathbf e_k, \bO)$
\begin{equation} \label{eq_supuinftydesiredM0}
 \sup_{\MM^0_0(\bq)} |u_\infty|(\cdot, 0) \leq C(\delta,\delta') \exp \bigg(\frac{|\bq'-\bq_\infty|}{\rho^{\MM^0} (\bq_\infty,0)} \bigg) \Vert u_\infty \Vert_{\bq_\infty, 0; \delta'} \, \rho^{\MM^0} (\bq_\infty,0). 
\end{equation}
Once we can show this, we know that Property~\ref{Def_lead_mode_2} must hold for large $j$ as long as we assumed a bound of the form $C_{0,j} \geq \underline C_0(\delta, \delta')$.

Consider the bound $\ov\eps(\delta, \delta')$ from Claim~\ref{Cl_bowl_Prop1} and choose $X = X(\delta, \delta')$ such that $\MM^0$ is $\ov\eps$-cylindrical at $(\bq,0)$ if and only if $x_k (\bq) \geq  X$.
Since we have assumed that $\eps_j \leq \ov\eps(\delta, \delta')$, we must have $x_\infty \geq  X$.
Passing Claim~\ref{Cl_bowl_Prop1} to the limit implies that for all $\bq \in [X, \infty) \times \IR^{k-1}$ we have
\begin{equation} \label{eq_derboundlargerX}
 \big| \partial_{\bq} \Vert u_\infty \Vert_{\bq, 0; \delta'} \big| \leq \delta \, \rho^{\MM^0} (\bq, 0) \Vert u_\infty \Vert_{\bq, 0; \delta'}. 
\end{equation}
On the other hand, since $u_\infty$ is the normal projection of an ambient affine linear vector field $\bY$, we find that for all $\bq = (x, \bq^*) \in [0,  X] \times \IR^{k-1}$ we have
\[ \sup_{\MM^0_0(\bq)} |u_\infty|(\cdot, 0) \leq C(\delta, \delta') \Vert u_\infty \Vert_{( X, \bq^*), 0; \delta'}. \]
So in order to show \eqref{eq_supuinftydesiredM0} it suffices to prove that for any $\bq' = (x', \bq^*) \in [ X,\infty) \times \IR^{k-1}$
\[ \Vert u_\infty \Vert_{\bq', 0; \delta'} \leq C(\delta,\delta') \exp \bigg(\frac{|\bq'-\bq_\infty|}{\rho^{\MM^0} (\bq_\infty,0)} \bigg) \Vert u_\infty \Vert_{\bq_\infty, 0; \delta'} \, \rho^{\MM^0} (\bq_\infty,0). \]
Integrating \eqref{eq_derboundlargerX} first along the segment $s \mapsto (x_\infty, s \bq^*)$ and then along $s \mapsto (s, \bq^*)$ implies
\[ \Vert u_\infty \Vert_{\bq', 0; \delta'} 
\leq \exp \bigg( \frac{\delta |\bq^*|}{\rho^{\MM^0} (\bq_\infty,0)} + \delta \int_{x_\infty}^{x'}\frac{ds }{\rho^{\MM^0} ((s, \bq^*),0)} \bigg) \Vert u_\infty \Vert_{\bq_\infty, 0; \delta'} . \]
So it remains to show that
\begin{equation*}
 \exp \bigg( \frac{\delta |\bq^*|}{\rho^{\MM^0} (\bq_\infty,0)} + \delta \bigg| \int_{x_\infty}^{x'}\frac{ds }{\rho^{\MM^0} ((s, \bq^*),0)} \bigg| \bigg) 
\leq C(\delta,\delta') \exp \bigg(\frac{|\bq'-\bq_\infty|}{\rho^{\MM^0} (\bq_\infty,0)} \bigg)  \rho^{\MM^0} (\bq_\infty,0). 
\end{equation*}
Since $\frac12 |\bq^*| + \frac12 |x' - x_\infty| \leq |\bq'-\bq_\infty|$  since we may assume $\delta \leq \frac12$ and since $\rho^{\MM^0} (\bq_\infty,0)$ is bounded from below, we may reduce this to showing that
\[  \exp \bigg(  \delta \bigg| \int_{x_\infty}^{x'}\frac{ds }{\rho^{\MM^0} ((s, \bq^*),0)} \bigg| \bigg) 
\leq C(\delta,\delta') \exp \bigg(\frac{|x'-x_\infty|}{2\rho^{\MM^0} (\bq_\infty,0)} \bigg)  .  \]
Since $\rho^{\MM^0} ((s, \bq^*),0) = F(s) \sim \sqrt{s}$ (see Lemma~\ref{Lem_warping}), this can be reduced to the following bound for some universal $C'', c'' > 0$
\[ C'' \delta \big| \sqrt{x'} - \sqrt{x_\infty} \big| \leq c'' \frac{|x'-x_\infty|}{\sqrt{x_\infty}}.  \]
If we divide both sides by $\sqrt{x_\infty}$, then this becomes 
\[ C'' \delta \bigg| \sqrt{\frac{x'}{x_\infty}} - 1 \bigg| \leq c''  \bigg|\frac{x'}{x_\infty} - 1 \bigg|, \]
which is trivially true for $\delta \leq \ov\delta$.
\end{proof}

This concludes the proof of the lemma.
\end{proof}
\bigskip

\subsection{Proof of the start of the induction, Lemma~\ref{Lem_start_induction}} \label{subsec_start_induction}
We will prove Lemma~\ref{Lem_start_induction} by reducing it to Lemma~\ref{Lem_leading_mode_bowl} via a limit argument in which $C_1 \to \infty$.

\begin{proof}[Proof of Lemma~\ref{Lem_start_induction}.]
Fix $\delta > 0$, $\delta' \leq \ov\delta'(\delta)$, $C_0 \geq \underline C_0 (\delta, \delta')$, $\eps \leq \ov\eps(\delta, \delta')$, $D \geq 0$, $\alpha \leq \ov\alpha(\delta, \delta', C_0, D)$ so that strict versions of the bounds required in Lemma~\ref{Lem_leading_mode_bowl} hold and fix also $\beta, C^* > 0$.
If the statement of the lemma was assertion was false, then there are constants $C_{1,j}\to\infty$ and flow pairs $(\MM^0_j,\MM^1_j)$ satisfying \eqref{eq_scale_l_scale_rot} which 
fail the $(\delta, \delta', C_0, \eps, D, \alpha, \beta, C_{1,j})$-leading mode condition. 
After suitable rescaling and translation, we may assume that this condition fails at $(\bq,t)= (\bO,0)$ and that $\rho^{\MM^0_j}(\bO,0)=1$.
So we assume that $\MM_j^0$ is $\eps$-cylindrical at $(\bO,0)$ and there are numbers $a_j$ such that the graph function $u_j$ satisfies 
\begin{equation*}
\Vert u_j \Vert_{\bO,0;\delta'} < a_j \qquad \text{for some} \quad a_j \in \Big( \max \Big\{ \beta, C_{1,j} \, {\Vert \Qu (\MM^0_j)\Vert^{10}}  \Big\} \alpha, \alpha \Big),
\end{equation*}
but Property~\ref{Def_lead_mode_1} or \ref{Def_lead_mode_2} in Definition \ref{Def_lead_mode} fails at $(\mathbf0,0)$.
Since 
$$C_{1,j} \Vert \Qu (\MM^0_j)\Vert^{10} \alpha\le a_j\le \alpha, $$ 
we obtain using \eqref{eq_scale_l_scale_rot} that
\begin{equation} \label{eq_scalestozero}
\Qu(\MM^0_j), \; \Qu(\MM^1_j) \lto 0.
\end{equation}
Moreover, after passing to a subsequence we may assume 
\[
a_j\to a_\infty\in \big[  \beta \alpha, \alpha \big].\]
Note that it is crucial here that $a_\infty > 0$, since $\beta > 0$.
Due to \eqref{eq_scalestozero}, we can use \cite[Proposition~\refx{Prop_Q_continuous}]{Bamler_Lai_PDE_ODI} and obtain that for a subsequence we have local smooth convergence $\MM^i_j \to \MM_\infty^i$, where each limit $\MM_\infty^i$ is either as in Lemma~\ref{Lem_leading_mode_bowl} or an affine plane or empty.
The last two options can be excluded by assuming $\eps \leq \ov\eps$ and $\alpha \leq \ov\alpha$.
However, this Lemma~\ref{Lem_leading_mode_bowl} implies that $(\MM^\infty_0, \MM^\infty_1)$ satisfies the $(\delta, \delta', C_0, \eps, D, \alpha, 0, 0)$-leading mode condition, even after slightly decreasing/increasing the constants.
So Properties~\ref{Def_lead_mode_1} and \ref{Def_lead_mode_2} must hold at $(\bO,0)$ for large enough $j$, which yields the desired contradiction.
\end{proof}
\bigskip

\subsection{Proof of the leading mode condition} \label{subsec_leading_mode_condition_proof}
Now we prove Proposition \ref{Prop_leading_mode_condition} by combining Lemmas~\ref{Lem_start_induction} and \ref{Lem_induction_step}.

\begin{proof}[Proof of Proposition \ref{Prop_leading_mode_condition}.]
The proposition lists dependencies of the constants 
\[ \delta, \; \delta', \; C_0, \; \eps, \; D, \; \alpha \]
from Lemmas~\ref{Lem_start_induction} and \ref{Lem_induction_step}, presented in this order, with redundancies removed.
For example, since we are assuming a bound of the form $\delta' \leq \ov\delta'(\delta)$, we may assume that this bound implies $\delta' \leq \delta$.
So it suffices to require a bound of the form $C_0 \geq \underline{C}_0 (\delta')$ instead of $C_0 \geq \underline{C}_0 (\delta, \delta')$.

Next, choose $\beta_0 \leq \ov\beta(\alpha)$ according to Lemma~\ref{Lem_induction_step} (here and in the following we will again remove redundant dependencies).
Assume moreover that $C_1 \geq \underline{C}_1 (\alpha, \beta_0, C^*)$ according to Lemma~\ref{Lem_start_induction} and also $C_1 \geq \underline{C}_1(\alpha)$ according to Lemma~\ref{Lem_induction_step}.
Note that the second bound does not depend on $\beta$ or $\beta_0$.

Under these choices, Lemma~\ref{Lem_start_induction} establishes  the $(\delta,\lb \delta',\lb C_0,\lb \eps,\lb D,\lb \alpha, \lb \beta_0,\lb C_1)$-leading mode condition for all flow pairs.
On the other hand if $\beta \leq \beta_0$, then Lemma~\ref{Lem_induction_step} shows that the $(\delta,\lb \delta',\lb C_0,\lb \eps,\lb D,\lb \alpha, \lb \beta,\lb C_1)$-leading mode condition implies the $(\delta,\lb \delta',\lb C_0,\lb \eps,\lb D,\lb \alpha, \lb \frac12 \beta,\lb C_1)$-leading mode condition.
Iterating this implication implies the $(\delta,\lb \delta',\lb C_0,\lb \eps,\lb D,\lb \alpha, \lb (\frac12)^i\beta_0,\lb C_1)$-leading mode condition for all $i$, so the proposition follows for $i \to \infty$.
\end{proof}
\bigskip

\subsection{Proof of Proposition~\ref{Prop_diff_properties}} \label{subsec_lead_mode_last}
The statement of Proposition~\ref{Prop_diff_properties} quantifies closeness of the flows $\MM^0$ and $\MM^1$ in terms of the differences $u_1 - u_0$ of the graph functions over the round cylinder $M_{\cyl}$.
By contrast, in our discussion so far we have quantified this difference in terms of the graph function $u$ over $\MM^0$, which represented $\MM^1$ as a graph over $\MM^0$.
The following lemma addresses this technical point.
It allows us to convert bounds on $u_1 - u_0$ into bounds on $u$ and vice versa.

\begin{Lemma} \label{Lem_tech_u_v}
There is a constant $C > 0$ such that the following is true.
Consider two smooth functions $u_1, u_0 : \IB^k_{100} \times \IS^{n-k} \to \IR$ over subsets of the standard cylinder and let $M_i = \Gamma_{\cyl}(u_i) \subset \IR^{n+1}$ be the corresponding graphs over the cylinder $M_{\cyl}$.
We assume $u_0$ is chosen such that $M_0$ is rotationally symmetric, so $u_0 (\bx,\by) = u_0(\bx)$.

Assume that $|\nabla^m u_i| \leq C^{-1}$ for $m=0,\ldots, 100$ and $i=0,1$.

Let $u : D \to \IR^{1}$, $D \subset M_0$, be the graph function of $M_1$ over $M_0$, as defined in Definition~\ref{Def_flow_pair}.
Note that for sufficiently large $C$ the normal injectivity radius of $M_0$ at points $(\bx, \by) \in M_0$ with $|\bx| < 10$ can be assumed to be close enough to the radius of $\IS^{n-k}$.
So the definition of the graph function $u(\bx, \by)$ depends only on the geometry $M_1, M_0$ in a bounded neighborhood of the origin.
Then the following is true for the cylindrical model $\td u : \td D \to \IR$ at $\bO$ (see Definition~\ref{Def_cyl_model}):
\begin{enumerate}[label=(\alph*)]
\item \label{Lem_tech_u_v_a} $\Vert  \td u \Vert_{L^2(\IB^k_{1} \times \IS^{n-k})} \leq C \Vert u_1 - u_0 \Vert_{L^2(\IB^k_{10} \times \IS^{n-k})}$
\item \label{Lem_tech_u_v_b} $\Vert u_1 - u_0 \Vert_{C^{10}(\IB^k_{0.1} \times \IS^{n-k})} \leq C \Vert  \td u \Vert_{C^{10}( \IB^k_{1} \times \IS^{n-k})}$
\end{enumerate}
\end{Lemma}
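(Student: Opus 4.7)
The proof hinges on comparing two parametrizations of a neighborhood of the origin in $M_1$: the cylindrical graph parametrization $F_1(\bx, \by) := (\bx, \by) + u_1(\bx, \by)\nu_{\cyl}(\bx, \by)$, and the normal graph parametrization $\Psi \circ F_0$, where $F_0(\bx, \by) := (\bx, \by) + u_0(\bx, \by)\nu_{\cyl}(\bx, \by)$ identifies $M_{\cyl}$ with $M_0$ and $\Psi(p_0) := p_0 + u(p_0)\nu_{M_0}(p_0)$ is the normal graph map. Setting $r := \rho(\bO) = 1 + u_0(\bO)/\sqrt{2(n-k)}$, which is close to $1$, a direct computation gives $F_0(\bx, \by) = (\bx, \rho(\bx)\by)$ for rotationally symmetric $u_0$, so the cylindrical model is $\td u(\bx, \by) = r^{-1} u(F_0(r\bx, \by))$. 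Under the smallness hypothesis $|\nabla^m u_i| \leq C^{-1}$ (with $C$ large), both $F_0, F_1$ are smooth embeddings of $\IB^k_{100} \times \IS^{n-k}$ with derivatives $C^{99}$-close to the identity, the normal injectivity radius of $M_0$ on $F_0(\IB^k_{10} \times \IS^{n-k})$ exceeds a dimensional constant, and $|u_1 - u_0| \leq 2 C^{-1}$ pointwise; hence $u$ is defined on $F_0(\IB^k_{10} \times \IS^{n-k})$.

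The inverse function theorem then produces a smooth diffeomorphism $\Phi : \IB^k_{10} \times \IS^{n-k} \to \IB^k_{11} \times \IS^{n-k}$ satisfying the matching relation $F_1 \circ \Phi = \Psi \circ F_0$, with uniform $C^{10}$-bounds on $\Phi$ and $\Phi^{-1}$ depending only on the smallness constant $C^{-1}$. Projecting this identity onto $\nu_{M_0}(F_0(\bx, \by))$ and using $\langle \nu_{\cyl}(\bx, \by), \nu_{M_0}(F_0(\bx, \by))\rangle = 1 + O(|\nabla u_0|^2)$ together with the fact that the tangential displacement $\Phi(\bx, \by) - (\bx, \by)$ pairs with $\nu_{M_0}$ only through the small angle $\nu_{M_0} - \nu_{\cyl} = O(|\nabla u_0|)$ yields the key pointwise identity
\[ u(F_0(\bx, \by)) = \alpha(\bx, \by)\big( u_1(\Phi(\bx, \by)) - u_0(\bx) \big) + \beta(\bx, \by), \]
where $|\alpha - 1| \leq C^{-1}/10$ and $|\beta| \leq C^{-1} \big( |u_1(\Phi(\bx, \by)) - u_0(\bx)| + |\Phi(\bx, \by) - (\bx, \by)|\big)$, with analogous $C^{10}$-bounds on all coefficients. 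Since the inverse function theorem also forces $|\Phi - \id|_{C^{10}} \leq C'\Vert u_1 - u_0\Vert_{C^{10}}$, the $\beta$-term is genuinely lower order. The main technical obstacle is the construction of $\Phi$ with these uniform $C^{10}$-bounds, independent of the specific $u_0, u_1$; this is standard but requires careful bookkeeping of the dependencies through the inverse function theorem.

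For Assertion~\ref{Lem_tech_u_v_a}, we square the identity, change variables $\bx \mapsto r\bx$ (Jacobian $r^{-k} \approx 1$) to rewrite $\Vert \td u\Vert_{L^2(\IB^k_1 \times \IS^{n-k})}^2$ as an integral of $|u \circ F_0|^2$ over $\IB^k_r \times \IS^{n-k} \subset \IB^k_2 \times \IS^{n-k}$, then apply the change of variables $\Phi$ (bounded Jacobian, $\Phi(\IB^k_2 \times \IS^{n-k}) \subset \IB^k_3 \times \IS^{n-k}$) to bound the main contribution by $C\Vert u_1 - u_0\Vert_{L^2(\IB^k_{10} \times \IS^{n-k})}^2$; the $\beta$-contribution is absorbed using the small factor $C^{-1}$. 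For Assertion~\ref{Lem_tech_u_v_b}, we invert the matching identity: using the uniform $C^{10}$-bounds on $\Phi^{-1}$, one solves pointwise for $(u_1 - u_0)(\bx, \by)$ as a smooth composition of $\td u$ with a near-identity map (incorporating both the scaling $r$ and $\Phi^{-1}$), and standard $C^{10}$-composition estimates then yield $\Vert u_1 - u_0\Vert_{C^{10}(\IB^k_{0.1} \times \IS^{n-k})} \leq C \Vert \td u\Vert_{C^{10}(\IB^k_1 \times \IS^{n-k})}$, completing the proof.
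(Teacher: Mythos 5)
Your overall strategy is essentially the paper's: both proofs reduce the lemma to a pointwise ``matching identity'' relating $u\circ F_0$ to $u_1-u_0$ evaluated at a tangentially displaced point, and then read off (b) by composition estimates and (a) by absorbing the displacement errors. The main difference is how the identity is obtained. The paper exploits the rotational symmetry of $M_0$: the unit normal at $(\bx,\rho(\bx)\by)$ has the form $(\mathbf v(\bx),(1+a(\bx))\by)$, so the spherical coordinate is \emph{exactly} preserved ($\ov\by=\by$) and the tangential displacement is \emph{explicitly} $\td u(\bx,\by)\,\mathbf v(\bx)$ with $|\mathbf v|\le C^{-1}$. Your inverse-function-theorem construction of $\Phi$ is more general (it would survive without the symmetry assumption on $M_0$), at the cost of having to track the full transition map.

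There is, however, one step that does not close as written, and it matters precisely for Assertion (a). You control the displacement by $|\Phi-\id|_{C^{10}}\le C'\Vert u_1-u_0\Vert_{C^{10}}$ and then claim the $\beta$-contribution is ``absorbed.'' But (a) is an $L^2$-to-$L^2$ inequality: a bound of $\Vert\beta\Vert_{L^2}$ by $C^{-1}\Vert u_1-u_0\Vert_{C^{10}}$ (a sup-norm of derivatives) is not dominated by $\Vert u_1-u_0\Vert_{L^2(\IB^k_{10}\times\IS^{n-k})}$, and the small prefactor $C^{-1}$ does not help since it multiplies the wrong quantity. What you actually need — and what is true — is the \emph{pointwise} bound $|\Phi(\bx,\by)-(\bx,\by)|\le C^{-1}|u(F_0(\bx,\by))|$: the tangential shift is the component of the normal displacement $u\,\nu_{M_0}$ along the cylindrical directions, hence of size $|u|\cdot O(|\nabla u_0|)$. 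With this, $\Vert\beta\Vert_{L^2}\le \tfrac12\Vert\td u\Vert_{L^2}+C^{-1}\Vert u_1-u_0\Vert_{L^2(\text{slightly larger ball})}$ and the first term is absorbed into the left-hand side, which is exactly how the paper argues (its displacement $\ov\bx-\bx=\td u(\bx,\by)\mathbf v(\bx)$ carries this pointwise bound for free). Replace the sup-norm control of $\Phi-\id$ by this pointwise one and the proof is complete.
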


\begin{proof}
After application of a slight rescaling and shrinking/enlarging the domains, and assuming that $C$ is sufficiently large, we may assume without loss of generality that $u_0(\bx,0)=0$, so $\rho^{M_0}(\bO) = 1$ and the cylindrical model $\td u$ has the form
\[ \td u(\bx,\by) = u \big(\bx, (1+u_0(\bx)) \by \big), \qquad (\bx,\by) \in \IB^k_{99} \times \IS^{n-k}. \]
Next express the outward normal vector to $M_0$ at $(\bx, (1+u_0(\bx)) \by )$ as $\nu_{\bx} = (\mathbf v (\bx), (1+a(\bx)) \by)$ (recall that $M_0$ is rotationally symmetric), where $\mathbf v$ and $a$ can be assumed to be small in the $C^{99}$-norm if $C$ is chosen large enough.
By definition of $u$, for any $(\bx,\by) \in \IB^k_{90} \times \IS^{n-k}$ there is a point $(\ov\bx,\ov\by) \in \IB^k_{91} \times \IS^{n-k}$ such that
\[ \big(\ov\bx,(1+u_1(\ov\bx,\ov\by))\ov\by \big) = \big(\bx,(1+u_0(\bx))\by \big) + \td u(\bx,\by) \nu_{\bx}, \]
which implies $\ov\by = \by$ and
\begin{align*}
\ov\bx &= \bx +  \td u(\bx,\by) \mathbf v(\bx) \\
1+u_1(\ov\bx,\by) &= 1+u_0(\bx) + \td u(\bx,\by) (1+a(\bx)),
\end{align*}
so
\begin{equation} \label{eq_u1mu0tdu}
 (u_1-u_0)(\bx,\by) = \td u(\bx,\by) + u_1(\bx +  \td u(\bx,\by) \mathbf v(\bx),\by) - u_1(\bx,\by) +  \td u(\bx,\by) a(\bx). 
\end{equation}
Since $u_1, \ov v$ and $a$ are bounded in the $C^{99}$-norm, this implies Assertion~\ref{Lem_tech_u_v_b}.
On the other hand, we may assume that for sufficiently large $C$ these norms are sufficiently small so that we have
\[ \big\Vert u_1(\bx +  \td u(\bx,\by) \mathbf v(\bx),\by) - u_1(\bx,\by) +  \td u(\bx,\by) a(\bx) \big\Vert_{L^2(\IB^k_{9} \times \IS^{n-k})} \leq \tfrac12 \Vert \td u \Vert_{L^2(\IB^k_{9} \times \IS^{n-k})}. \]
Combining this with \eqref{eq_u1mu0tdu} implies
\[ \Vert u_1 - u_0 \Vert_{L^2(\IB^k_{2} \times \IS^{n-k})} 
\geq \Vert \td u \Vert_{L^2(\IB^k_{9} \times \IS^{n-k})} - \tfrac12 \Vert \td u \Vert_{L^2(\IB^k_{9} \times \IS^{n-k})}
= \tfrac12 \Vert \td u \Vert_{L^2(\IB^k_{9} \times \IS^{n-k})}, \]
proving Assertion~\ref{Lem_tech_u_v_a}.
\end{proof}
\bigskip

\begin{proof}[Proof of Proposition~\ref{Prop_diff_properties}.]
Fix the constant $C^*$ and choose constants $\delta,\lb \delta',\lb C_0,\lb \eps,\lb D,\lb \alpha, \lb C_1$ depending on $C^*$ such that the flow pair $(\MM^{ 0}, \MM^{1})$ satisfies the $(\delta,\lb \delta',\lb C_0,\lb \eps,\lb D,\lb \alpha, \lb 0,\lb C_1)$-leading mode condition due to Proposition~\ref{Prop_leading_mode_condition}.
We may assume in addition that:
\begin{itemize}
\item $\alpha \leq 2^{-5}$ and $\delta \leq 0.01$
\item $\eps \leq \ov\eps$ is chosen small enough such that whenever $\MM^{0}$ is $\eps$-cylindrical at some point $(\bq, t)$, then $\partial_t \rho^2(\bq,t) \leq - 0.99$.
Note that in the case of the round shrinking cylinder this derivative is equal to $-1$, so the existence of $\ov\eps$ follows from a simple limit argument via Lemma~\ref{Lem_eps_cyl}.
\item $D \geq \underline{D}(\eps)$ is chosen large enough such that the following is true. Recall that the set of points $\bq \in \IR^k$ such that $\IR^{k-1} \times M_{\bowl}$ is $\eps$-cylindrical at $(\bq, 0)$ is of the form $\IR^{k-1} \times [q_\eps, \infty)$ for some $q_\eps > 0$.
We then require that $D \rho((\bO^{k-1}, q_\eps)) > q_\eps$.
\end{itemize}
We will henceforth consider the constants $\delta,\lb \delta',\lb C_0,\lb \eps,\lb D,\lb \alpha, \lb C_1$ as fixed and omit dependencies on these constants as well as on the dimension $n$.
We may still freely choose the constant $C$ from the statement of the proposition.

The bound $|\nabla^m u_{0,\tau}| \leq C^{-1}$ on $\IB^k_R \times \IS^{n-k}$ in Assertion~\ref{Prop_diff_properties_a} implies, for sufficiently large $C$, that $\MM^{0}$ is $\eps$-cylindrical at $(\bq, -e^{-\tau})$ for all $\bq \in \IB^k_{R- \frac12 C}$.
Likewise, in Assertion~\ref{Prop_diff_properties_b}, the fact that $\MM^{ 0}$ is asymptotically cylindrical implies that it must be $\eps$-cylindrical at $(\bO,t)$ for $t \ll 0$.
Using Lemma~\ref{Lem_tech_u_v} we can therefore reduce the proposition to a statement in which $v$ is replaced with the graph function $u$ of the flow pair $(\MM^{ 0}, \MM^{1})$ and which does no longer involve the functions $u_0$ and $u_1$.
So we need to show the following statements for a constant $C$, which may depend on $A$ in Part~\ref{item_bp_diff}:
\begin{enumerate}[label=(\alph*$'$)]
\item \label{item_ap_diff} Suppose that for some $t < T$ and $R > 1$ the following is true for all $\bq \in \IB^k_{\sqrt{-t} R}$:
\begin{enumerate}[label=(\roman*)]
\item \label{item_bet_i} $\MM^0$ is $\eps$-cylindrical at $(\bq, t)$
\item \label{item_bet_ii} $|(-t)^{-1/2} \rho(\bq,t) -  1 | \leq C^{-1}$
\item \label{item_bet_iii} $\Vert u \Vert_{\bq,t; \delta'} < \alpha$
\item \label{item_bet_iv} $C_1 \Vert \Qu(\MM^0) \Vert^{10} \rho^{-10} (\bq,t)  < \alpha$
\end{enumerate}
Then for any $\bq \in \IB^k_{\sqrt{-t} (R-C)}$ and $m = 0,\ldots, 10$ the cylindrical models $\td u_{\bO,t}$ and $\td u_{\bq, t}$, taken at $(\bO,t)$ and $(\bq,t)$ satisfy
\begin{equation} \label{eq_exp_spat_bound}
\qquad \sup_{\IB^k_1 \times \IS^{n-k}} |\nabla^m \td u_{\bq,t}| \leq C  \exp\bigg( {\frac{|\bq|}{\sqrt{-t}}} \bigg) \big( \Vert \td u_{\bO,t} \Vert_{L^2(\IB^k_{1} \times \IS^{n-k})} + \Vert \Qu(\MM^0)\Vert^{10} (-t)^{-5} \big). 
\end{equation}
Note that we could assume \ref{item_bet_iv}, because if the reverse bound holds, then \eqref{eq_Harnack_inprop} follows trivially from the derivative bounds on $u_0$ and $u_1$.
\item  \label{item_bp_diff}
The conclusion from Assertion~\ref{Prop_diff_properties_b} holds if
\begin{equation} \label{eq_limit_fast_dec_L2}
 \liminf_{t \to -\infty} (-t)^2 \Vert \td u_{\bO,t} \Vert_{L^2(\IB^k_{1} \times \IS^{n-k})} < \infty. 
\end{equation}
\end{enumerate}

Let us simplify these norms on $u$ even further.
\begin{Claim} \label{Cl_more_norms}
There is a uniform constant $C' > 0$ such that if \ref{item_bet_i}--\ref{item_bet_iv} in Part~\ref{item_ap_diff} hold and if $C \geq \underline C$, then
\begin{align}
 \Vert  \td u_{\bq,t} \Vert_{C^{10} (\IB_1^k \times \IS^{n-k})} &\leq C' \Vert u \Vert_{\bq,t; \delta'} +  C' \Vert \Qu(\MM^0) \Vert^{10} (-t)^{-5},  \label{eq_C10_qt} \\ 
\Vert u \Vert_{\bq,t; \delta'} &\leq C' \Vert \td u_{\bq,t} \Vert_{L^2(\IB^k_{1} \times \IS^{n-k})}+  C' \Vert\Qu(\MM^0)\Vert^{10} (-t)^{-5}. \label{eq_qt_L2}
\end{align}
Moreover, if $\Vert u \Vert_{\bq,t; \delta'} \geq C_1 \Vert \Qu(\MM^0)\Vert^{10} (-t)^{-5}$, then
\begin{equation} \label{eq_partialq_rest}
\big| \partial_{\bq} \Vert u \Vert_{\bq,t; \delta'} \big| < (-t)^{-1/2} \Vert u \Vert_{\bq,t; \delta'}. 
\end{equation}
\end{Claim}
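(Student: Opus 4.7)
The plan is to derive all three estimates from the $(\delta,\delta',C_0,\eps,D,\alpha,0,C_1)$-leading mode condition guaranteed by Proposition~\ref{Prop_leading_mode_condition}. Under hypotheses \ref{item_bet_i}--\ref{item_bet_iv}, for any $\eta>0$ the value
\[ a := \max\bigl\{\Vert u\Vert_{\bq,t;\delta'} + \eta,\; 2C_1(\Vert\Qu(\MM^0)\Vert/r)^{10}\alpha\bigr\}, \qquad r := \rho(\bq,t), \]
lies in the admissible interval $\bigl(C_1(\Vert\Qu(\MM^0)\Vert/r)^{10}\alpha,\,\alpha\bigr)$ required by Definition~\ref{Def_lead_mode}: \ref{item_bet_iii} gives $\Vert u\Vert_{\bq,t;\delta'}<\alpha$, while \ref{item_bet_iv} together with $\alpha\le 2^{-5}$ ensures $2C_1(\Vert\Qu(\MM^0)\Vert/r)^{10}\alpha<\alpha$ for $\eta$ small. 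Since $r\asymp\sqrt{-t}$ by \ref{item_bet_ii}, the threshold $2C_1(\Vert\Qu(\MM^0)\Vert/r)^{10}\alpha$ is of order $\Vert\Qu(\MM^0)\Vert^{10}(-t)^{-5}$ up to an absolute constant.

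For inequality \eqref{eq_C10_qt}, Property~\ref{Def_lead_mode_1} yields
\[ \Vert\td u_{\bq,t}-U_{\bq,t}\Vert_{C^{[\delta^{-1}]}(\IB^k_{\delta^{-1}}\times\IS^{n-k})} \le \delta a, \]
and the $C^{10}(\IB^k_1\times\IS^{n-k})$-norm of $\td u_{\bq,t}-U_{\bq,t}$ is controlled by this. Since $U_{\bq,t}\in\sV_{\geq 0}$ lies in a finite-dimensional space of polynomials of degree at most two, its $C^{10}(\IB^k_1\times\IS^{n-k})$-norm is equivalent to the sum of the $L^2_f$-norms of its projections $\PP_{\sV_j}U_{\bq,t}$, which by the definition of $\Vert\cdot\Vert_{\bq,t;\delta'}$ is at most $\delta'^{-2}\Vert u\Vert_{\bq,t;\delta'}$. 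Adding these two contributions and letting $\eta\to 0$ produces \eqref{eq_C10_qt}.

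For \eqref{eq_qt_L2}, note that $U_{\bq,t}$ is the $L^2(\IB^k_1\times\IS^{n-k})$-projection of $\td u_{\bq,t}$ onto $\sV_{\geq 0}$, so each $\Vert\PP_{\sV_j}U_{\bq,t}\Vert_{L^2_f}$ is bounded by $C\Vert\td u_{\bq,t}\Vert_{L^2(\IB^k_1\times\IS^{n-k})}$. For the remaining $\delta'^3 L^\infty$-term, split $\td u_{\bq,t}=U_{\bq,t}+(\td u_{\bq,t}-U_{\bq,t})$, bounding the first summand's $L^\infty$-norm by its $L^2$-norm (finite dimensions) and the second by $\delta a$ via \eqref{eq_close_to_leading_mode}. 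This produces
\[ \delta'^3\Vert\td u_{\bq,t}\Vert_{L^\infty}\le C\delta'^3\Vert\td u_{\bq,t}\Vert_{L^2}+\delta'^3\delta\Vert u\Vert_{\bq,t;\delta'}+C\Vert\Qu(\MM^0)\Vert^{10}(-t)^{-5}. \]
Absorbing the middle term into the left side of \eqref{eq_qt_L2} is permitted because the hierarchy $\delta\le\ov\delta$, $\delta'\le\ov\delta'(\delta)\le 1$ built into Proposition~\ref{Prop_leading_mode_condition} guarantees $\delta'^3\delta\le 1/2$.

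Finally, for \eqref{eq_partialq_rest} the hypothesis $\Vert u\Vert_{\bq,t;\delta'}\ge C_1\Vert\Qu(\MM^0)\Vert^{10}(-t)^{-5}$, combined with \ref{item_bet_ii} and $\alpha\le 2^{-5}$, lets me take $a=\Vert u\Vert_{\bq,t;\delta'}+\eta$ directly (bypassing the $\Qu$-threshold). The spatial derivative bound in \eqref{eq_dbounds_norm} reads $r\,|\partial_{\bq}\Vert u\Vert_{\bq,t;\delta'}|\le\delta a$; combining with $r\ge(1-C^{-1})\sqrt{-t}$, $\delta\le 0.01$, and $\eta\to 0$ yields the strict inequality. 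The main technical obstacle is calibrating constants so that the admissibility condition $a\in\bigl(C_1(\Vert\Qu\Vert/r)^{10}\alpha,\alpha\bigr)$ and the absorption step in \eqref{eq_qt_L2} are simultaneously valid; this depends essentially on the built-in hierarchy $\delta\ll 1$ and $\delta'\le\ov\delta'(\delta)$ from Proposition~\ref{Prop_leading_mode_condition}.
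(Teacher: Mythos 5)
Your proposal is correct and follows essentially the same route as the paper: fix $a$ just above $\max\{\Vert u\Vert_{\bq,t;\delta'},\,C_1(\Vert\Qu(\MM^0)\Vert/r)^{10}\alpha\}$ (admissible by \ref{item_bet_iii}, \ref{item_bet_iv} and $r\asymp\sqrt{-t}$), invoke Property~\ref{Def_lead_mode_1}, and read off \eqref{eq_C10_qt} from \eqref{eq_close_to_leading_mode} plus finite-dimensionality of $\sV_{\geq0}$, and \eqref{eq_partialq_rest} from \eqref{eq_dbounds_norm} together with \ref{item_bet_ii} and $\delta\leq 0.01$. The only deviation is in \eqref{eq_qt_L2}, where the paper normalizes $u':=a^{-1}u$ and appeals to a compactness/limit argument for a uniform lower bound on $\Vert\td u'\Vert_{L^2(\Omega)}$, whereas you argue directly that $U_{\bq,t}$ is the $L^2(\IB^k_1\times\IS^{n-k})$-orthogonal projection of $\td u_{\bq,t}$ onto the finite-dimensional space $\sV_{\geq0}$ and absorb the $\delta'^3\delta\Vert u\Vert_{\bq,t;\delta'}$ remainder; this is equally valid and somewhat more explicit.
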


\begin{proof}
Write $\td u = \td u_{\bq,t}$, let $U = U_{\bq,t}$ be the leading mode approximation from Definition~\ref{Def_lead_mode_approx} and set $\Omega := \IB^k_1 \times \IS^{n-k}$.
Set
$$ a := \max \big\{ \Vert u \Vert_{\bq,t; \delta'},  C_1 \Vert \Qu(\MM^0)\Vert^{10} \rho^{-10}(\bq,t) \big\}  < \alpha.$$
and apply Property~\ref{Def_lead_mode_1} from Definition~\ref{Def_lead_mode} for some slightly larger number (in order to fulfill the strict inequality in \eqref{eq_lead_mode_conditiona}).
We obtain that $\Vert \td u - U \Vert_{C^{10}(\Omega)} \leq 0.1 a$.
It follows using \eqref{eq_q_delta_norm_def} that for some generic constant $C'$
\begin{equation} \label{eq_C10_derivation}
 \Vert \td u \Vert_{C^{10}(\Omega)}
\leq \Vert \td u - U \Vert_{C^{10}(\Omega)} + \Vert  U \Vert_{C^{10}(\Omega)}
\leq 0.1 a + C' \Vert u \Vert_{\bq,t;\delta'}
\leq C' a.  
\end{equation}
This shows \eqref{eq_C10_qt}.

To see \eqref{eq_qt_L2}, it suffices to consider the case in which $\Vert u \Vert_{\bq,t; \delta'} > C_1 \Vert \Qu(\MM^0)\Vert^{10} \rho^{-10}(\bq,t)$, because otherwise the bound is trivially true.
So $a = \Vert u \Vert_{\bq,t; \delta'}$.
Hence, if we set $u' := a^{-1}  u$ and $\td u' := a^{-1} \td u$, then $\Vert  u' \Vert_{\bq,t; \delta'} = 1$ and by \eqref{eq_C10_derivation} we have $\Vert \td u' \Vert_{C^{10}(\Omega)} \leq C'$.
Our goal is to establish a uniform lower bound on $\Vert \td u' \Vert_{L^2(\Omega)}$ based on these bounds.
This can be accomplished by a basic limit argument.

The bound \eqref{eq_partialq_rest} is a direct consequence of \eqref{eq_dbounds_norm} in Property \ref{Def_lead_mode_1} seeing condition \ref{item_bet_ii}.
\end{proof}

Claim~\ref{Cl_more_norms} allows us to reduce \eqref{eq_exp_spat_bound} to a bound of the form
\[ \Vert u \Vert_{\bq,t; \delta'} \leq C \exp\bigg( {\frac{|\bq|}{\sqrt{-t}}} \bigg) \big(  \Vert u \Vert_{\bO,t; \delta'} +  \Vert \Qu(\MM^0)\Vert^{10} (-t)^{-5} \big). \]
The strict version of this bound clearly holds for $\bq = \bO$, as long as $C > 1$.
So if it was not true for all $\bq \in \IB^k_{\sqrt{-t} R}$, then we could find $\bq \in \IB^k_{\sqrt{-t} R}$ with minimal $|\bq|$ at which equality holds.
If $C \geq C_1$, then this implies $\Vert u \Vert_{\bq,t; \delta'} \geq C_1 \Vert \Qu(\MM^0)\Vert^{10} (-t)^5$, so we can apply \eqref{eq_partialq_rest}, which yields a contradiction to the minimality of $|\bq|$.
This finishes the proof of Assertion~\ref{item_ap_diff}.

To see Assertion~\ref{item_bp_diff} assume that \eqref{eq_limit_fast_dec_L2} holds.
Since both flows $\MM^0$ and $\MM^1$ are asymptotically cylindrical, the condition \ref{item_bet_i}--\ref{item_bet_iv} must apply for $\bq = \bO$ and $t \ll 0$, so by Claim~\ref{Cl_more_norms} we even have
\begin{equation} \label{eq_limit_ot_norm}
 \liminf_{t \to -\infty} (-t)^2 \Vert u \Vert_{\bO,t;\delta'}< \infty. 
\end{equation}

\begin{Claim} \label{Cl_u_bound_everywhere}
For any $(\bq,t) \in \IR^k \times (-\infty,T)$ the following is true.
If $\MM^0$ is $\eps$-cylindrical at $(\bq, t)$ and
\begin{equation} \label{eq_C1scalelessalpha}
  C_1 \Big( \frac{\Vert \Qu(\MM^0) \Vert}{\rho(\bq,t)} \Big)^{10} < \alpha, 
\end{equation}
then
\[ \Vert u \Vert_{\bq,t;\delta'} \leq  C_1 \Big( \frac{\Vert \Qu(\MM^0) \Vert}{\rho(\bq,t)} \Big)^{10}. \]
\end{Claim}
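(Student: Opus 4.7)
I plan to argue by contradiction along the backward-time trajectory $t' \mapsto (\bq, t')$, $t' \leq t$. Write $h(\bq, t') := \Vert u \Vert_{\bq, t'; \delta'}$, $g(\bq, t') := C_1 (\Vert \Qu(\MM^0) \Vert/\rho(\bq, t'))^{10}$, and $g'(\bq, t') := \alpha \cdot g(\bq, t')$. I will in fact prove the strictly stronger bound $h(\bq, t) \leq g'(\bq, t)$, which implies the Claim since $\alpha < 1$. The backward trajectory is the natural setting: Lemma~\ref{Lem_eps_cyl}\ref{Lem_eps_cyl_c} propagates $\eps$-cylindricality backward in time, the standing assumption $\partial_{t'} \rho^2(\bq, t') \leq -0.99$ (second bullet at the start of the proof) forces $\rho(\bq, t') \to \infty$ and $g(\bq, t') \to 0$ as $t' \to -\infty$, and the hypothesis $g(\bq, t) < \alpha$ is preserved in the backward direction since $\rho(\bq, \cdot)$ is non-increasing.

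\textbf{Comparison of evolution rates.} A chain-rule computation gives $\partial_{t'} \log g' = -5(\partial_{t'} \rho^2)/\rho^2 \geq 4.95/\rho^2$. On the set where $h \geq g'$ and $h < \alpha$, Property~\ref{Def_lead_mode_1} of the leading mode condition from Proposition~\ref{Prop_leading_mode_condition}---applied with $a$ slightly larger than $\max(h, g')$ in the admissible interval $(g', \alpha)$---yields $\partial_{t'} \log h \leq (1+\delta)/\rho^2$. For $\delta \leq 0.01$ these combine to give $\partial_{t'} \log(h/g') \leq -c_0/\rho^2$ for some $c_0 > 3$. Moreover, at any point where $h = g'$, the inequality $\partial_{t'}(g' - h) \geq (4.95 - 1 - \delta)\, g'/\rho^2 > 0$ shows, via a standard first-exit-time argument, that the inequality $h \leq g'$ is preserved forward in time, provided the derivative bounds apply.

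\textbf{Integration and lim-inf transfer.} Assuming $h(\bq, t) > g'(\bq, t)$, the preservation above forces $h > g'$ throughout $(-\infty, t]$. Integrating $\partial_{t'} \log h \leq (1+\delta)/\rho^2$ from $t_j$ to $t$, and using $\rho^2(\bq, s) \geq \rho^2(\bq, t) + 0.99(t-s)$ to bound $\int_{t_j}^t ds/\rho^2(\bq, s)$ by $(0.99)^{-1} \log(1 + 0.99(t-t_j)/\rho^2(\bq,t))$, I obtain
\[
h(\bq, t) \leq C\, h(\bq, t_j)\, (-t_j)^{(1+\delta)/0.99}
\]
for $|t_j|$ large, with $C$ depending on $\rho(\bq, t)$. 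To control $h(\bq, t_j)$, I transfer the lim-inf bound \eqref{eq_limit_ot_norm} from $\bO$ to $\bq$ using the spatial-derivative bound $|\partial_{\bq'} \log h| \leq \delta/\rho(\bq', t_j)$ from the same Property~\ref{Def_lead_mode_1}: for $t_j$ very negative $\MM^0$ is nearly cylindrical along the straight segment from $\bO$ to $\bq$ at time $t_j$, with $\rho \sim \sqrt{-t_j}$, so integrating over this bounded segment yields $h(\bq, t_j) \leq (1+o(1))\, h(\bO, t_j) \leq M'(-t_j)^{-2}$ along a suitable subsequence. Combining, $h(\bq, t) \leq C' M' (-t_j)^{(1+\delta)/0.99 - 2}$, whose exponent is negative for $\delta \leq 0.01$; letting $t_j \to -\infty$ forces $h(\bq, t) = 0$, contradicting $h(\bq, t) > g'(\bq, t) > 0$.

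\textbf{Main obstacle.} The delicate point is ensuring that the leading mode condition's derivative bounds apply \emph{throughout} $[t_j, t]$---i.e., that $h(\bq, t') < \alpha$ holds on this entire interval---and similarly along the spatial segment used in the lim-inf transfer. I will handle both via a continuity / first-exit-time bootstrap: for $t_j$ very negative, $h(\bq, t_j) \leq M'(-t_j)^{-2}$ is well below $\alpha$, and any first time $t_* \in (t_j, t]$ at which $h$ would reach $\alpha$ would itself have to satisfy the forward-growth estimate $\alpha \leq C h(\bq, t_j) (-t_j)^{(1+\delta)/0.99}$, which is incompatible with $h(\bq, t_j) = O((-t_j)^{-2})$ once $|t_j|$ is sufficiently large. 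The spatial bootstrap is analogous, using that the multiplicative factor $\exp(\delta|\bq|/\sqrt{-t_j})$ along any bounded segment tends to $1$ as $t_j \to -\infty$.
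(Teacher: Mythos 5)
Your proposal is correct and takes essentially the same route as the paper's proof: both transfer the decay $\liminf_{t'\to-\infty}(-t')^2\Vert u\Vert_{\bO,t';\delta'}<\infty$ from $\bO$ to $\bq$ via Property~\ref{Def_lead_mode_1}, and then play the backward-in-time growth bound $\partial_{t'}\log\Vert u\Vert_{\bq,t';\delta'}\leq(1+\delta)/\rho^2$ (backward decay at most $(-t')^{-(1+\delta)/0.99}$) against the threshold $\sim\rho^{-10}\sim(-t')^{-5}$ and the $(-t')^{-2}$ decay, with the same bootstrap to keep $\Vert u\Vert<\alpha$. The only differences are cosmetic: the paper runs the comparison after the exponential reparametrization $t'=t+r^2-r^2e^{-\tau}$ and transfers the liminf using the ``nearby points'' clause of Property~\ref{Def_lead_mode_1} rather than integrating the spatial derivative bound.
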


\begin{proof}
Let $r := \rho(\bq, t)$. Condition \ref{item_bet_iii}  
allows us to apply  Property~\ref{Def_lead_mode_1} from the leading mode condition, Definition~\ref{Def_lead_mode}, at points $(\bO,t_j)$ for $t_j \to -\infty$ and for $a_j$ such that $\limsup_{j \to \infty} (-t_j)^2 a_j < \infty$.
So for large $j$ we obtain that 
$$\Vert u \Vert_{\bq,t_j;\delta'} \leq C_0\Big(\Vert u \Vert_{\bO,t_j;\delta'}+  C_1 \Big( \frac{\Vert \Qu(\MM^0) \Vert}{\rho(\bO,t_j)} \Big)^{10}\Big),$$ 
which together with \eqref{eq_limit_ot_norm} implies
\[ \liminf_{t'\to -\infty} (-t')^2 \Vert u \Vert_{\bq,t';\delta'}< \infty. \]
So if we set $f(\tau) := \Vert u \Vert_{\bq,t+r^2-r^2 e^{-\tau};\delta'}$, then
\begin{equation} \label{eq_limtauj}
 \liminf_{\tau \to -\infty} e^{-2\tau} f(\tau) < \infty. 
\end{equation}
Recall that by our choice of $\eps$ we have $\partial_t \rho^2(\bq,t) \leq - 0.99$, so for $t' = t+ r^2 - r^2 e^{-\tau} \leq t$
\begin{equation} \label{eq_rhosquaredqtp}
 \rho^2(\bq, t') \geq r^2 + \tfrac12 (t-t') \geq \tfrac12 (r^2 + t - t') \geq 0.99 r^2 e^{-\tau}. 
\end{equation}
It follows that we can apply Property~\ref{Def_lead_mode_1} for any $t' = t+ r^2 - r^2 e^{-\tau} \leq t$ if
\[ \Vert u \Vert_{\bq,t';\delta'} < a \qquad \text{and} \qquad a \in \bigg( 2^{5} C_1  \Big(\frac{\Vert \Qu(\MM^0) \Vert}{r}\Big)^{10}  e^{5\tau} \alpha , \alpha \bigg). \]
So the time derivative bound in \eqref{eq_dbounds_norm} and \eqref{eq_rhosquaredqtp} imply that for $\tau \leq 0$
\begin{equation} \label{eq_fpbound}
  f'(\tau)\leq 0.99^{-1}(1+\delta) f(\tau) \leq 1.1 f(\tau) \qquad \text{if} \quad f(\tau) \in \bigg(  C_1  \Big(\frac{\Vert \Qu(\MM^0) \Vert^{10}}{r} \Big) e^{5\tau}  , \alpha \bigg). 
\end{equation}
Note that the bound \eqref{eq_C1scalelessalpha} guarantees the the endpoints of this interval are in the right order for $\tau \leq 0$.

Due to \eqref{eq_limtauj} we can fix a sequence $\tau_j \to -\infty$ such that $f(\tau_j) \leq C'' e^{2\tau_j}$ for some uniform $C''$, which may depend on the flow $\MM^0, \MM^1$, but not on time.

We first show that $f(\tau) < \alpha$ for $\tau \leq 0$.
Suppose this was false and choose $\tau^*_j \in (\tau_j, 0]$ maximal such that $f < \alpha$ on $[\tau_j, \tau^*_j)$.
So $f(\tau^*_j) = \alpha$ for large $j$.
Next, choose $\tau^{**}_j \in [\tau_j, \tau^{*}_j)$ minimal such that the condition in \eqref{eq_fpbound} holds for all $\tau \in (\tau^{**}_j, \tau^*_j)$.
Integrating the differential inequality in \eqref{eq_fpbound} gives
\[ f(\tau^{**}_j) \geq e^{-1.1 (\tau^*_j - \tau^{**}_j)} \alpha
\geq e^{1.1 \tau^{**}_j} \alpha 
> \Big(\frac{\Vert \Qu(\MM^0) \Vert^{10}}{r} \Big) e^{5\tau^{**}_j} \]
for large $j$.
Therefore, $\tau^{**}_j = \tau_j$ for large $j$ and we obtain using the first and third term in the previous string of inequalities
\[ C'' e^{2\tau_j} \geq f(\tau_j) \geq e^{1.1 \tau_j} \alpha, \]
which is false for large $j$.
Hence we have $f(\tau) < \alpha$ for all $\tau \leq 0$.

Next, suppose that for some $\tau \leq 0$ we have
\begin{equation} \label{eq_f_lower_cont}
 f(\tau) >  C_1  \Big(\frac{\Vert \Qu(\MM^0) \Vert^{10}}{r} \Big) e^{5\tau} .
\end{equation}
Choose $\tau^* \in [-\infty, 0)$ minimal such that the same bound holds on $(\tau^*,\tau]$.
Integrating the differential bound in \eqref{eq_fpbound} over this time-interval implies that for all $\tau' \in (\tau^*,\tau]$
\begin{equation} \label{eq_f_lower_exp}
 f(\tau') > e^{-1.1(\tau-\tau')} C_1  \Big(\frac{\Vert \Qu(\MM^0) \Vert^{10}}{r} \Big) e^{5\tau}
\geq C_1  \Big(\frac{\Vert \Qu(\MM^0) \Vert^{10}}{r} \Big) e^{5\tau'}. 
\end{equation}
If $\tau^* > -\infty$, then setting $\tau' = \tau^*$ yields a contradiction, so the first inequality in \eqref{eq_f_lower_exp} must hold for all $\tau' \leq \tau$, which contradicts the bound $f(\tau_j) \leq C'' e^{2\tau_j}$ for large $j$.
It follows that \eqref{eq_f_lower_cont} is false for all $\tau \leq 0$ and particularly
\[ f(0) \leq  C_1  \Big(\frac{\Vert \Qu (\MM^0) \Vert^{10}}{r} \Big) ,\]
which proves the claim.
\end{proof}

We can now establish the conclusion from Assertion~\ref{Prop_diff_properties_b} via a limit argument.
Let $(\MM^0_j, \MM^1_j)$ be a sequence of flow pairs that satisfy the bounds that we have derived so far for uniform constants; note that this includes the bound $\Vert \Qu (\MM^1_j) \Vert \leq C^* \Vert \Qu (\MM^0_j) \Vert$.
Let $\eps'_j \to 0$ and $C_j \to \infty$ and consider  a sequence of rescalings $r_j^{-1} (\MM^0_{t_j} - \bq_j)$ with $r_j \geq C_j \Vert \Qu(\MM^0_j) \Vert$, which are $\eps'_j$-close to $M_{\cyl}$ or to a rotation of $\IR^{k-1} \times M_{\bowl}$, but assume that conclusion in Assertion~\ref{Prop_diff_properties} is violated for uniform $A$.
Without loss of generality, we may assume that $(\bq_j, t_j) = (\bO, 0)$ and $r_j = 1$, which implies $\Vert \Qu(\MM^0_j) \Vert \to 0$ and hence also $\Vert \Qu (\MM^1_j) \Vert \to 0$.
So by \cite[Proposition~\refx{Prop_Q_continuous}]{Bamler_Lai_PDE_ODI} we can pass to subsequence and assume that $\MM^0_j \to \MM^0_\infty$ and $\MM^1_j \to \MM^1_\infty$ in the Brakke sense.
Since $\eps'_j \to 0$ we know that the limit $\MM^0_\infty$ is a round shrinking cylinder or a bowl soliton times a Euclidean factor.
So the first convergence is local smooth; the local smoothness of the second convergence follows via the next paragraph.

The fact that $\Qu(\MM^0_j) \to 0$ also implies that condition \eqref{eq_C1scalelessalpha} from Claim~\ref{Cl_u_bound_everywhere} holds for any $(\bq,0)$ at which $\MM^0_j$ is $\eps$-cylindrical, as long as $j \geq \underline j(|\bq|)$.
We may also apply Property~\ref{Def_lead_mode_2} from Definition~\ref{Def_lead_mode} for any such $(\bq, 0)$ and $a = 2 C_1 (\Vert \Qu (\MM^0_j) \Vert/\rho(\bq,0))^{10}$ to derive bounds on $\supp_{\MM^{0,\reg}_{j,0} (\bq')} |u|$ whenever $|\bq'-\bq| < D \rho(\bq,0)$.
Due to our choice of $D$ this also implies bounds at points $\bq'$ where $\MM^0_j$ is not $\eps$-cylindrical, for large $j$.
The desired bound on the Hausdorff distance follows from this directly for large enough $j$, in contradiction to our assumptions.
This finishes the proof of the proposition.
\end{proof}

\bigskip

\section{Uniqueness of cylindrical flows}\label{sec_uniqueness}
\subsection{Overview and statement of the main results}
Our main result will be:

\begin{Proposition} \label{Prop_M0isM1}
Let $\MM^0, \MM^1$ be two asymptotically $(n,k)$-cylindrical mean curvature flows defined over the same time-interval.
Assume that $\MM^0$ is smooth, convex, rotationally symmetric  and has uniformly bounded second fundamental form.
If both flows satisfy the conclusion of Assertion~\ref{Prop_diff_properties_b} in Proposition~\ref{Prop_diff_properties} for any $A > 0$ and for uniform constants $\eps(A), C(A)$, then $\MM^0=\MM^1$.
\end{Proposition}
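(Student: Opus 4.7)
The plan is a parabolic backward-uniqueness argument applied to the difference of the two rescaled flows. Using smoothness and uniform curvature bounds of $\MM^0$ together with the Hausdorff-closeness hypothesis, one can express $\MM^1$ as a normal graph over $\MM^0$ on a large subset of $\MM^{0,\reg}$; passing to the rescaled picture and following the notation of Proposition~\ref{Prop_diff_properties}, let $\td u_i$ denote the graph function of $\td \MM^i$ over the round cylinder $M_\cyl$, and set $v := \td u_1 - \td u_0$. Since at the origin of the axis of $\MM^0$ at time $t = -e^{-\tau}$ the cylindrical scale satisfies $r = \rho(\bO,t) \sim e^{-\tau/2}$, the Hausdorff-distance bound $d_H \leq C \Vert \Qu(\MM^0)\Vert^{10} r^{-9}$ from the assumed conclusion of Assertion~\ref{Prop_diff_properties_b} rescales to the \emph{exponential} bound
\[
 |v(\bp,\tau)| \;\leq\; C\, \Vert \Qu(\MM^0) \Vert^{10}\, e^{5\tau}
\]
uniformly on any fixed compact subset of $M_\cyl$ as $\tau \to -\infty$. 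Because the hypothesis gives uniform constants for arbitrarily large $A$, analogous (pointwise and hence $L^2_f$) bounds extend to arbitrarily large balls in $\IR^k$; combined with the Harnack-type estimate of Assertion~\ref{Prop_diff_properties_a} one obtains analogous pointwise exponential bounds on $v$ and its derivatives.

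On cylindrical regions of the rescaled flow, $v$ satisfies an evolution equation of the form $\partial_\tau v = L v + E$, where $L = \triangle_f + \tfrac12$ is the linearized operator at the cylinder and the nonlinear error satisfies $|E| \leq C(|\td u_0|+|\td u_1|)(|v|+|\nabla v|)$. Since both flows are asymptotically cylindrical, the coefficients of $v$ and $\nabla v$ in $E$ vanish smoothly as $\tau \to -\infty$. The operator $L$ is self-adjoint on $L^2_f(M_\cyl)$ with discrete spectrum bounded above by $\tfrac12$, so any nontrivial solution of the pure linear equation must satisfy $\Vert v(\tau)\Vert_{L^2_f} \gtrsim e^{\lambda \tau}$ for some eigenvalue $\lambda \leq \tfrac12$; this is strictly incompatible with the exponentially faster decay $\Vert v\Vert_{L^2_f} \lesssim e^{5\tau}$. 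A log-convexity identity, $(\log\Vert v\Vert_{L^2_f}^2)'' \geq 0$ in the pure linear case, which the nonlinear correction perturbs only by an integrable error as $\tau \to -\infty$, then forces $v \equiv 0$ on the cylindrical part of the rescaled flow. Since $\MM^0$ and $\MM^1$ thus agree on an open region, forward uniqueness and unique continuation for smooth mean curvature flow propagate equality through the remaining cap regions, yielding $\MM^0 = \MM^1$.

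The \textbf{main obstacle} is executing the perturbed backward-uniqueness step rigorously on the non-compact cylinder. One must control $v$ globally in $\IR^k$ (the Gaussian weight on $L^2_f$ takes care of integrability once pointwise bounds are in hand) and show that the cumulative contribution of $E$ to the log-convexity defect is integrable as $\tau \to -\infty$; this relies on the smooth, uniform vanishing of $\td u_0, \td u_1$ guaranteed by the asymptotically cylindrical hypothesis on both flows. The final step of passing from equality on cylindrical regions to equality on the cap regions is then routine, following from the forward uniqueness of smooth mean curvature flow together with the local smoothness of $\MM^1$ supplied by unit-regularity and the quantitative Hausdorff closeness in balls of radius $Ar$ for arbitrarily large $A$.
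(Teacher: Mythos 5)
Your route is genuinely different from the paper's: you attempt a backward-uniqueness (log-convexity) argument for the difference $v$ of the rescaled graph functions, whereas the paper never runs a PDE argument here at all. Instead it uses a comparison/avoidance argument: Lemma~\ref{Lem_pjinintersection} shows that time-shifted copies $\MM^0_{\cdot+\Delta T}$ cannot intersect $\MM^1$ unless $\Delta T$ is small (because the Hausdorff bound $C(\Vert\Qu\Vert/r)^{10}r$ at large scales $r$ is incompatible with the separation $\sim r^{-1}\Delta T$ produced by the flow's motion), Lemma~\ref{Lem_DeltaT_0} pushes the admissible shift to $\Delta T_\pm=0$, and the strong avoidance principle of Choi--Haslhofer--Hershkovits--White then forces the supports to coincide once they touch. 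Your correct observation that the hypothesis rescales to $|v|\lesssim \Vert\Qu(\MM^0)\Vert^{10}e^{5\tau}$ on compact subsets of the cylinder is exactly the input the paper also exploits, but it deploys it through the maximum principle rather than through spectral analysis.

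The genuine gap is in your backward-uniqueness step. The equation $\partial_\tau v = Lv+E$ holds only on the bounded (slowly growing) region where both flows are graphs over the cylinder, so any $L^2_f$ energy argument requires a cutoff, and the resulting error in the evolution of $\Vert v\omega\Vert_{L^2_f}$ is \emph{not} purely relative to $\Vert v\Vert$: by Assertion~\ref{Prop_diff_properties_a} the boundary values of $v$ are controlled by $\Vert v\Vert_{L^2(\IB^k_{10}\times\IS^{n-k})}+\Vert\Qu(\MM^0)\Vert^{10}e^{5\tau}$, and the second, \emph{absolute} term is of exactly the same exponential order as the a priori bound on $v$. A perturbed log-convexity argument with an inhomogeneous error $\delta(\tau)\sim e^{5\tau}$ cannot conclude $v\equiv 0$: already for the scalar model $w'=w+\delta$ with $|\delta|\le\epsilon e^{5\tau}$, the solution $w(\tau)=\int_{-\infty}^{\tau}e^{\tau-s}\delta(s)\,ds$ is compatible with $|w|\le Ce^{5\tau}$ yet nonzero. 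So your argument can only reproduce the decay you started with, not upgrade it to exact vanishing. This is reflected in the paper's own logical structure: the mode analysis you describe is carried out in Section~\ref{sec_diff} (Proposition~\ref{Prop_diff_V}), but its Assertion~\ref{Prop_diff_V_c} — fast decay implies equality — is proved \emph{by invoking} Proposition~\ref{Prop_M0isM1}, i.e.\ the avoidance argument; deriving equality from approximate closeness fundamentally needs a non-perturbative tool. (Two minor points: the top eigenvalue of $L$ is $1$, not $\tfrac12$, though this does not affect your comparison with $e^{5\tau}$; and the final propagation to the cap regions cannot appeal to forward uniqueness of \emph{smooth} MCF, since $\MM^1$ is a priori only a unit-regular integral Brakke flow — the paper uses the strong avoidance principle together with a connectedness and entropy argument there.)
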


The proof of Proposition~\ref{Prop_M0isM1} is based on a comparison principle.
We will first show that there exists a (possibly large) $\Delta T > 0$ such that $\MM^1_t$ lies between the time-shifted flows $\MM^0_{t \pm \Delta T}$ for all $t$.
Then we will use the strong maximum principle to show that the set of such offsets $\Delta T$ is both open and closed, and hence equal to $\IR_+$.
In both steps, we will use Assertion~\ref{Prop_diff_properties_b} of Proposition~\ref{Prop_diff_properties}.
Letting $\Delta T \to 0$ yields $\MM^0 = \MM^1$.

\bigskip

\subsection{Preparation}
The following lemma expresses a consequence of Assertion~\ref{Prop_diff_properties_b} in Proposition~\ref{Prop_diff_properties}.
It roughly states that $\MM^1$ can intersect a time-shift of $\MM^1$ only if the shift is sufficiently small, and any such intersection must occur where $\MM^0$ still exhibits cylindrical regions at controlled  scales.

\begin{Lemma} \label{Lem_pjinintersection}
Suppose that $\MM^0$ and $\MM^1$ are two asymptotically $(n,k)$-cylindrical mean curvature flows in $\IR^{n+1} \times (-\infty,T)$ that satisfy the assumptions of Proposition~\ref{Prop_M0isM1}.
Suppose that there is a sequence $\Delta T_j \in \IR$ with $|\Delta T_j | \geq c > 0$ and points
\begin{equation} \label{eq_pjinintersection}
 \bp_j \in (\spt \MM^0)_{t_j + \Delta T_j} \cap (\spt \MM^1)_{t_j}. 
\end{equation}
Then $|\Delta T_j|$ is uniformly bounded and, after passing to a subsequence, we have local smooth convergence $\MM^0 - (\bp_j, t_j + \Delta T_j) \to \MM_\infty$ over the time-interval $(-\infty, 0]$ and the limit $\MM_\infty$ is asymptotically $(n,k)$-cylindrical.
\end{Lemma}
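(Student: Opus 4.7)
The plan is to combine Assertion~\ref{Prop_diff_properties_b} with the monotonicity of cross-sectional radii of the convex, rotationally symmetric flow $\MM^0$, and then to invoke the compactness of asymptotically cylindrical flows (Theorem~\ref{Thm_compactness_ancient}) to extract the limit.

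First I would set $\bq_j := \proj_{\IR^k}(\bp_j)$ and $r_j := \rho^{\MM^0}(\bq_j, t_j+\Delta T_j)$, so that $|\bp_j - \bq_j| = \sqrt{2(n-k)}\,r_j$. Because $\MM^0$ has uniformly bounded second fundamental form and $|A|\sim\rho^{-1}$ on cylindrical regions, we have $r_j \geq c > 0$. By sliding $\bq_j$ along the axis by an amount $O(r_j)$ to a point $\bq_j^*$, one obtains an axis point at which $r_j^{-1}(\MM^0 - (\bq_j^*, t_j+\Delta T_j))$ is $\eps'$-close at time $0$ to either $M_{\cyl}$ or a rotation of $\IR^{k-1}\times M_{\bowl}$; such a $\bq_j^*$ exists by the asymptotic cylindrical structure combined with convexity. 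This permits invoking Assertion~\ref{Prop_diff_properties_b} at $(\bq_j^*, t_j)$ with scale $r_j$ and a large $A$ chosen so that $\bp_j \in B(\bq_j^*, Ar_j/2)$, yielding
\[
d\bigl(\bp_j, (\spt \MM^0)_{t_j}\bigr) \;\leq\; C\bigl(\Vert \Qu(\MM^0)\Vert/r_j\bigr)^{10}\, r_j,
\]
using $\bp_j \in (\spt \MM^1)_{t_j}$.

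Next I would exploit the monotonicity of $\rho^{\MM^0}(\bq_j, \cdot)$, which is strictly decreasing by convexity and rotational symmetry, and at $\eps$-cylindrical points satisfies $\partial_t \rho^2 \leq -0.99$. This gives $|\rho^{\MM^0}(\bq_j, t_j+\Delta T_j) - \rho^{\MM^0}(\bq_j, t_j)| \gtrsim |\Delta T_j|/r_j$ in the regime $|\Delta T_j| = O(r_j^2)$. Combined with the fact that in the nearly cylindrical regime the distance from the interior point $\bp_j$ to $(\spt\MM^0)_{t_j}$ is comparable to the radial difference at $\bq_j$, one obtains
\[
|\Delta T_j|/r_j \;\lesssim\; \bigl(\Vert \Qu(\MM^0)\Vert/r_j\bigr)^{10}\, r_j,
\]
that is, $|\Delta T_j| \lesssim \Vert \Qu(\MM^0)\Vert^{10}/r_j^{8}$. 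Together with $r_j \geq c$ this bounds $|\Delta T_j|$ uniformly; reciprocally, the hypothesis $|\Delta T_j|\geq c > 0$ forces $r_j$ to be bounded above as well. With $r_j$ and $|\Delta T_j|$ under two-sided control, the translated flows $\MM^0 - (\bp_j, t_j+\Delta T_j)$ have uniformly bounded second fundamental form, pass through $\bO$ at time $0$, and satisfy $\Vert \Qu\Vert = \Vert \Qu(\MM^0)\Vert$. Theorem~\ref{Thm_compactness_ancient} then yields a Brakke-convergent subsequence, which the bounded curvature upgrades to locally smooth convergence on $(-\infty,0]$, to a limit $\MM_\infty$. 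The two-sided bound on $r_j$ (together with $\bO \in (\spt\MM_\infty)_0$ having a cross-section of scale in $(0,\infty)$ at a bounded axis point) rules out the empty and affine-plane cases and prevents a drop of the cylindricity dimension, so $\MM_\infty$ is asymptotically $(n,k)$-cylindrical.

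The hard part will be justifying the monotonicity-based lower bound $d(\bp_j, (\spt\MM^0)_{t_j}) \gtrsim |\Delta T_j|/r_j$: one must ensure that $\bp_j$ is genuinely realized (up to a small error) as a radial image of the axis cross-section at $(\bq_j, t_j+\Delta T_j)$, and that the monotonicity of $\rho$ transfers into a pointwise separation estimate rather than merely an averaged one. When $\bq_j$ sits near the cap of the convex profile, this comparison must be carried out through the cylindrical scale $r_j$ at $\bq_j^*$ rather than $\bq_j$ itself, using the convex rotationally symmetric structure of $\MM^0$ and the $\eps$-cylindrical control provided by the preparatory material of Section~\ref{sec_leading_mode} to relate the two.
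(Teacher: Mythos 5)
Your overall mechanism --- playing the Hausdorff bound from Assertion~\ref{Prop_diff_properties_b} against a lower bound on how far $\MM^0$ moves between times $t_j$ and $t_j+\Delta T_j$ --- is the same as the paper's, but your choice of comparison scale is genuinely different and it is where the argument breaks. The paper does not work at the cross-sectional radius $r_j$ of $\bp_j$; it introduces an adaptively chosen Gaussian-density scale $\theta_j\geq|\Delta T_j|$ (the smallest scale at which the density based at $(\bp_j,t_j+\Delta T_j)$ reaches a fixed $\Theta_0\in(1,\Theta_{\cyl})$), rescales by $\theta_j^{-1/2}$, and derives a contradiction from $\theta_j\to\infty$ using that $\Qu(\theta_j^{-1/2}\MM^i)\to 0$ forces convergence to the model flows. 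Two concrete gaps in your version:

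First, the lower bound $r_j\geq c$ is false. Bounded second fundamental form on a convex surface of revolution controls $F\sqrt{1+F'^2}$, not $F$ itself: at the tip of a cap region (e.g.\ of a bowl factor) one has $F\to 0$ while $F'\to\infty$ and $|A|$ stays bounded, so $\rho^{\MM^0}(\bq_j,t_j+\Delta T_j)$ can be arbitrarily small. Your concluding step ``$|\Delta T_j|\lesssim\Vert\Qu(\MM^0)\Vert^{10}/r_j^{8}$ together with $r_j\geq c$ bounds $|\Delta T_j|$'' therefore does not close; moreover, when $r_j$ is below the curvature scale, no translate $\bq_j^*$ within $O(r_j)$ makes the flow $\eps'$-close to a model \emph{at scale $r_j$}, and the hypothesis $r\geq C\Vert\Qu(\MM^0)\Vert$ of Assertion~\ref{Prop_diff_properties_b} may also fail.

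Second, your key inequality is only derived ``in the regime $|\Delta T_j|=O(r_j^2)$,'' which you cannot assume, since bounding $|\Delta T_j|$ is the goal. If $\Delta T_j\gg r_j^2$, the correct scale at which $\MM^0$ is cylindrical near $\bq_j$ at time $t_j$ is $\sim\sqrt{|\Delta T_j|}$, not $r_j$ (and for $\Delta T_j<0$ the axis point $\bq_j$ need not even be covered by $\MM^0_{t_j}$), so both the application of Assertion~\ref{Prop_diff_properties_b} at scale $r_j$ and the radial-monotonicity comparison fail as stated. This uncontrolled-scale case is precisely what the paper's $\theta_j$-device is built to handle: one rescales to a scale dominating $|\Delta T_j|$, shows (via the limit being a single convex model with disjoint time-slices) that $\theta_j^{-1}\Delta T_j\to 0$ and the limit is smooth at time $0$, and only then runs the quantitative comparison. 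Finally, you should not invoke Theorem~\ref{Thm_compactness_ancient} for the limit extraction --- its proof relies on the classification, which depends on this lemma; the compactness you need is \cite[Proposition~\refx{Prop_Q_continuous}]{Bamler_Lai_PDE_ODI} from the prequel, upgraded to smooth convergence by the curvature bound, with the plane/empty cases excluded by the Gaussian-density (or an equivalent) lower bound rather than by the unavailable two-sided bound on $r_j$.
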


Note that this implies that for large $|\Delta T|$
\begin{equation} \label{eq_disjoint_DeltaT}
(\spt \MM^0)_{t + \Delta T} \cap (\spt \MM^1)_t = \emptyset \qquad \text{for all} \quad t < \min \{ T, T-\Delta T \}.
\end{equation}
\medskip

\begin{proof}
Consider the shifted flows
\[ \MM^{\prime, 0}_j := \MM^0 - (\bp_j, t_j), \qquad
\MM^{\prime, 1}_j := \MM^1 - (\bp_j, t_j), \]
so $(\bO, \Delta T_j) \in \spt \MM^{\prime, 0}_j$ and $(\bO, 0) \in \spt \MM^{\prime, 1}_j$.

Let $\Theta_{\IR^k \times \IS^{n-k}}$ be the entropy of the round shrinking cylinder and fix some $\Theta_0 \in (1 ,\Theta_{\IR^k \times \IS^{n-k}})$.
Recall that the Gaussian area $\Theta^{\MM^{\prime,0}_j}_{(\bO, \Delta T_j)} (\theta)$ of $\MM^{\prime,0}_j$ based at $(\bO, \Delta T_j)$, is continuous and non-decreasing in $\theta$ and it converges to $\Theta_{\IR^k \times \IS^{n-k}}$ as $\theta \to \infty$.
For each $j$ we choose $\theta_j  \geq |\Delta T_j|$ minimal with the property that $\Theta^{\MM^{\prime,0}_j}_{(\bO, \Delta T_j)} (\theta_j) \geq \Theta_0$.
So either $\theta_j = |\Delta T_i|$ or $\theta_j > |\Delta T_i|$ and $\Theta^{\MM^{\prime,0}_j}_{(\bO, \Delta T_j)} (\theta_j) = \Theta_0$.

Suppose first that $\theta_j$ is uniformly bounded.
Then $|\Delta T_j|$ is also uniformly bounded.
By \cite[Propositions~\refx{Prop_Q_basic_properties} and \refx{Prop_Q_continuous}]{Bamler_Lai_PDE_ODI} we have subsequntial convergence $\MM^{\prime,0}_j \to \MM_\infty$ in the Brakke sense, which can be upgraded to local smooth convergence due to the uniform bound on the second fundamental form and the fact that $\Theta^{\MM^{\prime,0}_j}(\infty) < 2$.
The limit is either asymptotically cylindrical, an affine plane or empty.
The last two properties can be ruled out by taking the bound $\Theta^{\MM^{\prime,0}_j}_{(\bO, \Delta T_j)} (\theta_j) \geq \Theta_0 > 1$ to the limit.

So it remains to assume
\[ \theta_j \to \infty \]
and derive a contradiction.
Consider the parabolically rescaled flows $\theta ^{-1/2}_j \MM^{\prime,0}_j$ and $\theta ^{-1/2}_j \MM^{\prime,1}_j$ and observe that, for $i = 0,1$, by \cite[Propositions~\refx{Prop_Q_basic_properties}]{Bamler_Lai_PDE_ODI}
\begin{equation} \label{eq_scales_to_0}
  \Qu ( \theta ^{-1/2}_j \MM^{\prime,i}_j ) = \theta_j^{-1/2} \Qu (\MM^{i}) \lto 0.\end{equation}
So by \cite[Proposition~\refx{Prop_Q_continuous}]{Bamler_Lai_PDE_ODI} we can pass to a subsequence such that the parabolically rescaled flows converge locally smoothly:
\[ \theta_j^{-1/2}\MM^{\prime,i}_j \lto \MM^{\prime,i}_\infty, \]
where each of the limits is either empty or homothetic to one of the following three models: an affine plane, $\MM_{\cyl}$ and $\IR^{k-1} \times \MM_{\bowl}$.
We may furthermore pass to a subsequence such that $\theta_j^{-1} \Delta T_j \to t^*$ with $|t^*| \leq 1$.
Passing the condition on the Gaussian area to the limit implies that
\[ \Theta^{\MM^{\prime,0}_\infty}_{(\bO, t^*)} (\theta) \geq \Theta_0 \qquad \text{for} \quad \theta > 1 \]
This rules out the cases in which $\MM^{\prime,0}_\infty$ is empty or an affine plane.
Hence $\MM^{\prime,0}_\infty$ must be homothetic to $\MM_{\cyl}$ or $\IR^{k-1} \times \MM_{\bowl}$.

Taking the conclusion from Proposition~\ref{Prop_diff_properties}\ref{Prop_diff_properties_b} to the limit, using \eqref{eq_scales_to_0}, we obtain $\MM^{\prime,0}_\infty = \MM^{\prime,1}_\infty$.
Since the flow $\MM^{\prime,0}_\infty = \MM^{\prime,1}_\infty$ is convex, its time-slices must be pairwise disjoint.
Therefore, as $(\bO, 0), (\bO, t^*)$ are both points contained in its support, we obtain that $t^* = 0$, so $\theta_j^{-1} \Delta T_j \to 0$.
It follows that for large $j$ we have $\theta_j > |\Delta T_j|$ and $\Theta^{\MM^{\prime,0}_j}_{(\bO, \Delta T_j)} (\theta_j) = \Theta_0$.
Passing this to the limit yields $\Theta^{\MM^{\prime,0}_\infty}_{(\bO, 0)} (1) = \Theta_0$.
Since $(\bO,0) \in \spt \MM^{\prime,0}_\infty$, this implies that if $\MM^{\prime,0}_\infty$ is a round shrinking cylinder, then it must go extinct at a \emph{positive} time.
So $\MM^{\prime, 0}_\infty$ must be smooth at time $0$.

It follows that for large $j$ we can apply Proposition~\ref{Prop_diff_properties}\ref{Prop_diff_properties_b} to $\theta_j^{-1/2} \MM^{\prime,0}_j$ and $\theta_j^{-1/2} \MM^{\prime,1}_j$ based at time $0$.
This implies that there is a neighborhood $U \subset \IR^{n+1}$ of the origin such that for some uniform constant $C' > 0$
\[ d_H \big( (\spt \theta_j^{-1/2} \MM^{\prime,0}_j)_0  \cap U, (\spt \theta_j^{-1/2} \MM^{\prime, 1}_j)_0 \cap U \big) \leq C'\Vert \Qu(\MM^{\prime,0}_j)\Vert^{10} = C'\theta_j^{-5} \Vert\Qu(\MM^{0})\Vert^{10} . \]
As the origin is contained in the second subset, this implies
\begin{equation} \label{eq_closetoorigin}
  d \big( \bO, (\spt \theta_j^{-1/2} \MM^{\prime,0}_j)_0 \big) \leq C'\theta_j^{-5} \Vert \Qu(\MM^{0})\Vert^{10} . 
\end{equation}
Note that $\Vert \Qu (\MM^0) \Vert$ is independent of $j$.
On the other hand, due to smooth convergence, we have uniform lower bounds on the mean curvature of $\theta_j^{-1/2} \MM^{\prime,0}_j$ near $(\bO,0)$.
This implies that the time-slices of $\theta_j^{-1/2} \MM^{\prime,0}_j$ move at a speed that is uniformly bounded from below and hence $(\spt \MM^{\prime,0}_j)_{\theta_j^{-1} \Delta T_j}$ must have distance at least $c' \theta_j^{-1} |\Delta T_j|$ from $(\spt \theta_j^{-1/2} \MM^{\prime,0}_j)_{0}$ near $(\bO,0)$ for some uniform $c' > 0$.
So since this subset contains the origin, we obtain from \eqref{eq_closetoorigin} that
\[ C'\theta_j^{-5} \Vert \Qu(\MM^{0})\Vert^{10} \geq c' \theta_j^{-1} |\Delta T_j| \geq c' c \theta_j^{-1}, \]
which contradicts the fact that $\theta_j \to \infty$. 
\end{proof}
\bigskip

\subsection{Shrinking \texorpdfstring{$\Delta T$}{Delta T}}
Suppose that $\MM^0$ and $\MM^1$ are two asymptotically $(n,k)$-cylindrical mean curvature flows in $\IR^{n+1} \times (-\infty,T)$ that satisfy the assumptions of Proposition~\ref{Prop_M0isM1} for some constants $\eps, C$.
Consider the set $\mathcal T \subset \IR \setminus \{ 0 \}$ of offsets $\Delta T$ such that \eqref{eq_disjoint_DeltaT} holds.
Lemma~\ref{Lem_pjinintersection} shows that $(-\infty, \Delta T_-) \cup (\Delta T_+, \infty) \subset \mathcal T$ for some $\Delta T_- \leq 0$ and $\Delta T_+ \geq 0$.
Let us now assume that $\Delta T_\pm$ are chosen such that $|\Delta T_\pm|$ is minimal.

\begin{Lemma} \label{Lem_DeltaT_0}
$\Delta T_\pm = 0$.
\end{Lemma}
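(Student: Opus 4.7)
My plan is to argue by contradiction. Assume $\Delta T_+>0$ (the case $\Delta T_-<0$ is symmetric) and show that under this assumption we must have $\MM^1_t = \MM^0_{t+\Delta T_+}$ identically for all $t$ in the common domain. Once that identity is established, the convexity of $\MM^0$---which forces the closed convex regions $C^{\MM^0}_{t+\Delta T}$ bounded by the time slices to be strictly nested in $\Delta T$---gives for every $\Delta T\in(0,\Delta T_+)$ the disjointness
\[(\spt\MM^0)_{t+\Delta T}\cap(\spt\MM^1)_t = (\spt\MM^0)_{t+\Delta T}\cap(\spt\MM^0)_{t+\Delta T_+} = \emptyset,\]
placing the entire interval $(0,\Delta T_+)$ into $\mathcal T$ and contradicting the minimality of $|\Delta T_+|$.

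The first substantive step is to extract a smooth contact point at offset $\Delta T_+$. By the minimality of $|\Delta T_+|$, I will pick a sequence $\Delta T_j\nearrow\Delta T_+$ with $\Delta T_j\notin\mathcal T$ and corresponding intersection points $\bp_j\in(\spt\MM^0)_{t_j+\Delta T_j}\cap(\spt\MM^1)_{t_j}$. Applying Lemma~\ref{Lem_pjinintersection} (after translating by $(\bp_j,t_j)$ and passing to a subsequence) yields local smooth convergence $\MM^0-(\bp_j,t_j+\Delta T_j)\to\MM_\infty$ to an asymptotically $(n,k)$-cylindrical flow over $(-\infty,0]$, which provides a smooth contact point $(\bp_0,t_0)$ of $\MM^0_{t_0+\Delta T_+}$ and $\MM^1_{t_0}$ in the limit frame. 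Choosing $\bq$ on the axis of rotation of $\MM^0$ at distance comparable to the local scale of $\MM^0$ near $\bp_0$ and invoking Proposition~\ref{Prop_diff_properties}\ref{Prop_diff_properties_b} then gives smoothness of $\MM^1$ near $\bp_0$ at time $t_0$.

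The second step is the strong maximum principle. For every $\Delta T>\Delta T_+$ the slices $\MM^0_{t+\Delta T}$ and $\MM^1_t$ are disjoint by the definition of $\Delta T_+$, and the strict nesting $C^{\MM^0}_{t+\Delta T}\subsetneq C^{\MM^0}_{t+\Delta T_+}$ forces $(\spt\MM^1)_{t_0}$ to lie locally on the exterior side of $\MM^0_{t_0+\Delta T_+}$, touching it tangentially at $\bp_0$. The SMP for smooth mean curvature flows, applied to $\MM^1$ and the time-shifted flow $t\mapsto\MM^0_{t+\Delta T_+}$ at the one-sided contact point $(\bp_0,t_0)$, then yields coincidence of the two flows in a spacetime neighborhood of $(\bp_0,t_0)$. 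Finally, I will propagate the local coincidence globally: the set of times $t$ at which $\MM^1_t=\MM^0_{t+\Delta T_+}$ as sets is open (by iterated application of the SMP together with short-time smooth-MCF uniqueness), closed (by continuity of both Brakke flows and the smoothness of $\MM^0$ before extinction), and nonempty. Here the connectedness of each time slice of an asymptotically cylindrical flow, together with the quantitative spatial closeness furnished by Proposition~\ref{Prop_diff_properties}\ref{Prop_diff_properties_b} for arbitrarily large $A$ (which holds under the standing hypotheses of Proposition~\ref{Prop_M0isM1}), forces the global set equality at every time.

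I expect the hardest step to be this global propagation. The SMP is inherently local, and for noncompact asymptotically cylindrical flows a local coincidence could in principle fail to extend far out along a cylindrical direction; the quantitative closeness from Proposition~\ref{Prop_diff_properties}\ref{Prop_diff_properties_b}, valid for arbitrarily large $A$, is precisely what is needed to rule this out and to convert a coincidence in a single neighborhood into a global identity of flows. By contrast, the concluding contradiction via convexity is purely combinatorial once the identity $\MM^1_t=\MM^0_{t+\Delta T_+}$ has been established.
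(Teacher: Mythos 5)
Your endgame contains a genuine gap. Suppose you have established the identity $\MM^1_t=\MM^0_{t+\Delta T_+}$ for all $t$ with $\Delta T_+>0$. Then indeed $(0,\Delta T_+)\subset\mathcal T$ by strict nesting of the convex time-slices, but also $\Delta T_+\notin\mathcal T$ (the two slices coincide, so they certainly intersect). The minimality in the definition of $\Delta T_+$ is minimality of the \emph{endpoint} of an interval $(\Delta T_+,\infty)$ entirely contained in $\mathcal T$; since $\Delta T_+\notin\mathcal T$, no smaller endpoint works, and your conclusion does not contradict anything. In other words, the scenario ``$\MM^1$ is a nontrivial time-translate of $\MM^0$'' is perfectly consistent with the combinatorics of $\mathcal T$, and it cannot be excluded by a purely set-theoretic argument. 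It must instead be excluded quantitatively, which is exactly what the paper does: in a cylindrical region of $\MM^0$ at scale $r$ the hypersurface moves at speed $\gtrsim c'/r$, so a time offset $\Delta T_j\geq c>0$ forces a Hausdorff separation $\gtrsim c'\Delta T_j/r$ between the slices at times $t_r$ and $t_r+\Delta T_j$; on the other hand Proposition~\ref{Prop_diff_properties}\ref{Prop_diff_properties_b} (applied to the \emph{unshifted} pair, together with the fact that the shifted and unshifted blow-downs converge to the same limit) bounds this separation by $C(\Vert\Qu(\MM^0)\Vert/r)^{10}r=C\Vert\Qu(\MM^0)\Vert^{10}r^{-9}$. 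Taking $r$ large gives the contradiction. You invoke Proposition~\ref{Prop_diff_properties}\ref{Prop_diff_properties_b} only to propagate a local coincidence, not to derive the contradiction, so this essential step is missing.

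Two secondary remarks. First, the paper does not actually establish the global identity $\MM^1_t=\MM^0_{t+\Delta T_+}$ for the original flows: it only proves equality of the blow-down limits $\MM^0_\infty=\MM^1_\infty$ on $(-\infty,0)$, which suffices for the quantitative contradiction; your global propagation step is therefore unnecessary work. Second, your appeal to ``the SMP for smooth mean curvature flows'' at a contact point of $\MM^1$ is not directly available, since $\MM^1$ is a priori only a unit-regular integral Brakke flow and the contact point need not be a regular point of $\MM^1$; the paper circumvents this by applying the strong avoidance principle for Brakke flows (\cite[Theorem~3.4]{Choi_Haslhofer_Hershkovits_White_22}) to the limits, and then upgrades containment of a component to equality using connectedness, which follows from the entropy bound $\Theta^{\MM^i_\infty}(\infty)<2$.
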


\begin{proof}
We will explain the proof of $\Delta T_+ = 0$; the proof for $\Delta T_-$ works the same way.
Suppose by contradiction that $\Delta T_+ > 0$.
Then we can find a sequence $\Delta T_j \nearrow \Delta T_+$ and points $(\bp_j, t_j)$ as in \eqref{eq_pjinintersection}.
After passing to a subsequence, we have the following convergence
\[ \MM^0_j := \MM^0 - (\bp_j, t_j + \Delta T_j) \lto \MM_\infty^0, \qquad  
\MM^1_j := \MM^1 - (\bp_j, t_j) \lto \MM_\infty^1, \]
where, by Lemma~\ref{Lem_pjinintersection}, the first convergence is locally smooth and the limit $\MM^0_\infty$ is asymptotically cylindrical and the second convergence is in the Brakke sense.
Then $(\bO,0) \in \spt \MM_\infty^0$ and $\spt \MM^1_\infty$ and by using the convexity of $\MM^0_\infty$ we obtain
\[ (\spt \MM^0_\infty)_{t + \Delta T'} \cap (\spt \MM^1_\infty)_t = \emptyset \qquad \text{for all} \quad t < 0, \quad \Delta T' > 0. \]
Note that this means that $(\spt \MM^1_\infty)_t$ is disjoint from a tubular neighborhood on one side of $(\spt \MM^0_\infty)_t$.
So we can apply the strong avoidance principle \cite[Theorem~3.4]{Choi_Haslhofer_Hershkovits_White_22} (see also \cite[Section~14]{Chodosh_MCF_notes}) to obtain that a component of $\MM_\infty^0 |_{(-\infty,0)}$ is contained in $\MM^1_\infty|_{(-\infty,0)}$.
But both flows are connected, since otherwise we could express them as the disjoint union of a component of $\MM_\infty^0 |_{(-\infty,0)}$ with another ancient flow, which contradicts the fact that $\Theta^{\MM^i_\infty}(\infty) < 2$.
Therefore, $\MM_\infty^0 |_{(-\infty,0)}=\MM^1_\infty|_{(-\infty,0)}$.

Let $r > 0$ be a large constant whose value we will determine in a moment.
Since $\MM_\infty^0$ is asymptotically cylindrical, we can find a point $(\bq_r,t_r) \in \IR^{n+1} \times \IR_{\leq 0}$ such that $r^{-1}(\MM^0_{\infty}-(\bq_r,t_r))$ is $\frac{\eps'}{4}$-close to $M_{\cyl}$ at $t=0$. So $r^{-1}(\MM^0-(\bp_j+\bq_r,t_j+\Delta T_j+t_r))$ is $\frac{\eps'}{2}$-close to $M_{\cyl}$ at $t=0$ for large $j$, and by choosing $r\gg \Delta T_j$, it follows that $r^{-1}(\MM^0-(\bp_j+\bq_r,t_j+t_r))$ is $\eps'$-close to $M_{\cyl}$ at $t=0$ for large $j$. So Proposition~\ref{Prop_diff_properties}\ref{Prop_diff_properties_b} applies to the flows $\MM^0_j - (\bO, \Delta T_j) = \MM^0 - (\bp_j, t_j)$ and $\MM^1_j= \MM^1 - (\bp_j, t_j)$ for sufficiently large $j$, $A = 100 \sqrt{n}$ and for some uniform constant $C$.
We obtain
\[ d_H \big( (\spt \MM^0_j)_{t_r + \Delta T_j} \cap B(\bq_r, Ar), (\spt \MM^1_j)_{t_r } \cap B(\bq_r , Ar) \big) \leq C\Big( \frac{\Vert \Qu (\MM^0) \Vert}{r} \Big)^{10} r. \]
As $\MM^0_j$ and $\MM^1_j$ converge to the same limit, this implies that for some uniform $C' > 0$ and large $j$
\begin{equation} \label{eq_dH_shifted}
 d_H \big( (\spt \MM^0_j)_{t_r + \Delta T_j} \cap B(\bq_r, Ar), (\spt \MM^0_j)_{t_r } \cap B(\bq_r , Ar) \big) \leq C' \Big( \frac{1}{r} \Big)^{10} r + \delta_j,
\end{equation}
for some $\delta_j\to 0$.
We can now argue as in the proof of Lemma~\ref{Lem_pjinintersection}.
Since $(\spt \MM^0)_{t} \cap B(\bq_r + \bp_j, Ar)$ is close to a cylinder at scale $r$, for $t \in [t_r + t_j, t_r + t_j + \Delta T_j]$, it must move at least at a velocity of $c' r^{-1}$ for some uniform $c' > 0$.
Hence both subsets in \eqref{eq_dH_shifted} must have a separation of at least $c' r^{-1} \Delta T_j$, which implies that for large $j$
\[ c' r^{-1} \Delta T_j \leq C\Big( \frac{1}{r} \Big)^{10} r. \]
We hence obtain a contradiction for large enough $r$.
\end{proof}
\medskip

\subsection{Proof of Proposition~\ref{Prop_M0isM1}}

\begin{proof}[Proof of Proposition~\ref{Prop_M0isM1}]
By Lemma~\ref{Lem_DeltaT_0} we know that \eqref{eq_disjoint_DeltaT} is true for all $\Delta T \neq 0$.
Since both flows are asymptotically cylindrical, there is a point $\bp \in \IR^{n+1}$ that must be contained in both $(\spt \MM^i)_{t_i}$, $i=1,2$, for some times $t_i \in \IR$.
So due to \eqref{eq_disjoint_DeltaT} we must have $t_0 = t_1$, which implies $(\bp, t_0) \in \spt \MM^0$ and $\spt \MM^1$.
Hence as in the proof of Lemma~\ref{Lem_DeltaT_0}, we can apply the strong avoidance principle \cite[Theorem~3.4]{Choi_Haslhofer_Hershkovits_White_22} (see also \cite[Section~14]{Chodosh_MCF_notes}) to obtain that a component of $\MM_\infty^0 |_{(-\infty,t_0)}$ is contained in $\MM^1_\infty|_{(-\infty,t_0)}$, which implies that $\MM^0|_{(-\infty,t_0)} = \MM^1|_{(-\infty,t_0)}$.
Since $\spt \MM^1$ is disjoint from a neighborhood of $\spt \MM^0$ due to \eqref{eq_disjoint_DeltaT} and since $\Theta^{\MM^i}(\infty) < 2$ the tangent cones of $\MM^1$ at all points of $(\spt \MM^0) \cap (\spt \MM^1)$ must be multiplicity one planes.
This implies that $(\spt \MM^0) \cap (\spt \MM^1) \subset \MM^{1,\reg}$ and that $(\spt \MM^0) \cap (\spt \MM^1)$ is an open subset of $\spt \MM^0$.
Since the subset is also closed and since $\spt \MM^0$ is connected, as argued in the proof of Lemma~\ref{Lem_DeltaT_0}, we must have $(\spt \MM^0) \cap (\spt \MM^0) = \spt \MM^0$.
Since $\spt \MM^1$ is connected by the same reason, we obtain $\spt \MM^0 = \spt \MM^1$.
Again, since $\Theta^{\MM^i}(\infty) < 2$, both flows must agree.
\end{proof}
\bigskip

\section{Asymptotic bound on the difference of two ancient flows} \label{sec_diff}
\subsection{Statement of the main results}
In this section we consider two asymptotically cylindrical flows $\MM^0$ and $\MM^1$ with the same quadratic mode at $-\infty$, so $\Qu(\MM^0) = \Qu(\MM^1)$, and study the deviation of their rescaled versions $\td\MM^0$ and $\td\MM^1$ as $\tau \to -\infty$.
We will show that this difference is governed by a dominant mode from the space $\sV_{> 0} = \sV_{\frac12} \oplus \sV_{1}$ of (rotationally symmetric and oscillatory) unstable modes, which decays exponentially as $\tau \to -\infty$.
If this dominant mode vanishes, then we must have $\MM^0 = \MM^1$.
We will also study how this dominant mode changes as we adjust $\MM^1$ by translations in space and time.
In Section~\ref{sec_proofs_I} we will use these results to classify asymptotically cylindrical flows and prove our main results.

Again, we will fix dimensions $1 \leq k \leq n-1$ for the remainder of this section and omit dimensional dependencies.
We will first state all our main results in this subsection and then carry out all proofs in the later subsections.
\medskip

Our first proposition forms the foundation of our discussion.
As in \cite{Bamler_Lai_PDE_ODI}, we characterize the rescaled flows $\td\MM^i$, $i=0,1$, by a graph function $u^i \in C^\infty( \DD^i )$ over the round cylinder.
We recall that asymptotics of $u^i_\tau$, as $\tau \to \infty$, has been characterized up to arbitrary polynomial order by our prior work; see \cite[Proposition~\refx{Prop_same_Q_close}]{Bamler_Lai_PDE_ODI}.
Our goal is now to study the difference
\[ v_\tau := u^1_\tau - u^0_\tau \]
through a similar process and to establish \emph{exponential} asymptotics characterizations for $v_\tau$.
As we mentioned in the introduction (see Subsection~\ref{subsec_structure}), the standard PDE-ODI principle is not enough to carry out such a fine analysis as it relies on a relatively coarse pseudolocality property.
However, luckily, we can use the Harnack-type estimate from Proposition~\ref{Prop_diff_properties} instead.
This leads to an ODI of the semi-stable mode $V^+(\tau) \in \sV_{\geq 0}$ of $v_\tau$ with an error term that is small \emph{in comparison with} $\Vert V^+(\tau) \Vert$.
Interestingly, due to this additional ingredient, our proof becomes far more straight-forward than the that of the PDE-ODI principle.

To describe this ODE, consider the second Taylor polynomial $Q_2^+ : \sV_{\geq 0} \to \sV_{\geq 0}$ of the non-linear term in the evolution equation for the mean curvature flow as graph over a cylinder (see \cite[Subsection~\refx{subsec_PDE_ODI_statement}]{Bamler_Lai_PDE_ODI} for further details).
By definition, this polynomial cannot have a constant or linear term, so it must be a quadratic form and we can define the associate bilinear map $Q_2^+ : \sV_{\geq 0} \times \sV_{\geq 0} \to \sV_{\geq 0}$ via polarization
\[ Q^+_2(V_1^+, V_2^+) := \tfrac12 \big(Q^+_2(V_1^+ + V_2^+) - Q^+_2(V_1^+ ) - Q^+_2( V_2^+) \big). \]
The next proposition shows that the projection $V^+_\tau \in \sV_{\geq 0}$ roughly obeys the an evolution equation of the following form:
\[ \partial_\tau V^{+} - L V^{+} - 2 Q_2^{+} (U^0_0,V^{+}) = O(|\tau|^{-2} \Vert V^+ \Vert), \]
where $U^0_0$ is the quadratic mode of $\MM^0$.
\medskip

\begin{Proposition} \label{Prop_diff_V}
There is a constant $C > 0$ such that the following is true.
Let $\MM^0$ and $\MM^1$ be two asymptotically $(n,k)$-cylindrical mean curvature flows, where we assume that $\MM^0$ is convex and rotationally symmetric.
Consider the corresponding rescaled flows $\td\MM_0, \td\MM_1$, so $\td\MM^{i,\reg}_\tau = e^{\tau/2} \MM^{i,\reg}_{-e^{-\tau}}$ and set
\[ R(\tau) := 10 \sqrt{\log|\tau| }. \]
There is a constant $C  > 0$ and a time $\ov\tau \in \IR$ (which may both depend on $\MM^0,\MM^1$ and $A, \eta$) such that for $\tau \leq \ov\tau$ the following is true:
\begin{enumerate}[label=(\alph*)]
\item \label{Prop_diff_V_a} There are smooth functions $u^i_\tau : \DD_\tau^i \to \IR$ with $\IB^k_{2R(\tau) - 1} \times \IS^{n-k} \subset \DD^i_\tau \subset \IB^k_{2R(\tau)} \times \IS^{n-k}$, for $i = 0,1$, such that
\[ \Gamma_{\cyl}(u^i_\tau) = (\spt \td\MM^i)_\tau \cap \IB^{n+1}_{R(\tau)} \subset \td\MM^{i,\reg}_\tau. \]
Set $v_\tau := u^1_\tau - u^0_\tau \in C^\infty (\DD^0_\tau \cap \DD^1_\tau)$ and define
\[ \qquad V^{+} (\tau) := \PP_{\sV_{\geq 0}}(v_\tau \omega_{R(\tau)}), \qquad V^{-} (\tau) := \PP_{\sV_{ < 0}}(v_\tau \omega_{R(\tau)}), \qquad \VV^-(\tau) := \Vert V^-(\tau) \Vert_{L^2_f}. \]
Here we take norms and projections with respect to the weighted $L^2_f$-inner product, as we did in \cite{Bamler_Lai_PDE_ODI}.
\item \label{Prop_diff_V_b}  Let $\ov U^0 : (-\infty, T) \to \IR^{k \times k}_{\geq 0}$ be the solution to the ODE \cite[(\refx{eq_barU_ODE})]{Bamler_Lai_PDE_ODI} corresponding to $\Qu(\MM^0)$ via \cite[Lemma~\refx{Lem_ODE_UQ}]{Bamler_Lai_PDE_ODI}.
Only the following asymptotics will be relevant (see \cite{Bamler_Lai_PDE_ODI}):
\begin{equation} \label{eq_ovU_asymp}
 \ov U^0 (\tau) =  \frac1{\sqrt{2}} \tau^{-1} G_{\Qu(\MM^0)}  + O(|\tau|^{-2} \log |\tau|), 
\end{equation}
where $G_{\Qu(\MM^0)} \in \IR^{k \times k}_{\geq 0}$ is the unique symmetric matrix corresponding to the projection onto the range of $\Qu(\MM^0)$.
The following ODIs hold:
\begin{equation}\label{e:Prop_diff_V_c}
\qquad\quad \Big\| \partial_\tau V^{+}(\tau) - L V^{+}(\tau) - 2 Q_2^{+} (\ov U^0(\tau),V^{+}(\tau))  \Big\|_{L^2_f} \leq C |\tau|^{-2} \Vert V^{+}(\tau) \Vert_{L^2_f} + C |\tau|^{-1} \VV^-(\tau).
\end{equation}
\begin{equation} \label{eq_dt_VVm_prop}
 \partial_\tau \VV^- (\tau) \leq - \tfrac1{2(n-k)} \VV^- (\tau) + C |\tau|^{-1} \Vert V^+ (\tau) \Vert_{L^2_f} 
\end{equation}
\item \label{Prop_diff_V_c}  If 
\[ \liminf_{\tau \to -\infty} e^{-2\tau} (\Vert V^+(\tau) \Vert_{L^2_f} + \VV^-(\tau)) < \infty, \]
then $\MM^0$ and $\MM^1$ agree for all times at which they are defined.
\item \label{Prop_diff_V_d} We have the following pointwise bounds on $\IB^{n-k}_{R(\tau)} \times \IS^{n-k}$
\begin{equation*} \label{eq_v_bound_exp}
 |v_\tau|, \ldots, |\nabla^{10} v_\tau| \leq C   e^r \big( \Vert V^{+}(\tau) \Vert + \VV(\tau)  \big)  . 
\end{equation*}
\end{enumerate}
\end{Proposition}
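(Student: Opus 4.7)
The plan is to handle assertions (a) and (d) via direct application of earlier results, derive the evolution ODIs in (b) through a projection argument crucially refined by the Harnack-type estimate from Section~\ref{sec_leading_mode}, and deduce uniqueness in (c) by combining (d) with Proposition~\ref{Prop_M0isM1}. For (a), the asymptotic cylindricality of $\MM^0, \MM^1$ yields smooth convergence of $\td\MM^i_\tau$ to $M_{\cyl}$ on any fixed compact set as $\tau \to -\infty$; since $R(\tau) = 10\sqrt{\log|\tau|}$ grows very slowly, the graph representations furnished by \cite[Proposition~\refx{Prop_PDE_ODI_MCF}]{Bamler_Lai_PDE_ODI} extend to $\IB^k_{2R(\tau)-1} \times \IS^{n-k}$ for $\tau \leq \ov\tau$. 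For (d), Proposition~\ref{Prop_diff_properties}(a) applies at time $\tau$ with radius $2R(\tau)$, yielding the pointwise bound
\[
 |v_\tau|,\ldots,|\nabla^{10}v_\tau| \leq C e^{r}\bigl(\|v_\tau\|_{L^2(\IB^k_{10}\times \IS^{n-k})} + \|\Qu(\MM^0)\|^{10}e^{5\tau}\bigr)
\]
on $\IB^k_{R(\tau)} \times \IS^{n-k}$. Since $\omega_{R(\tau)}\equiv 1$ on $\IB^k_{10}\times \IS^{n-k}$ for $|\tau|$ large, $v_\tau = V^+(\tau)+V^-(\tau)$ there, so $\|v_\tau\|_{L^2(\IB^k_{10}\times \IS^{n-k})}$ is controlled by $\|V^+\|_{L^2_f} + \VV^-$ (the weighted and unweighted $L^2$-norms being comparable on this fixed compact set), while the residual $\|\Qu(\MM^0)\|^{10}e^{5\tau}$ decays exponentially and is absorbed.

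For (b), each graph function satisfies $\partial_\tau u^i = L u^i + N(u^i,\nabla u^i,\nabla^2 u^i)$, where $L = \triangle_f + \tfrac12$ and $N$ vanishes to second order, so its leading Taylor coefficient is the bilinear form $Q_2$. Subtracting and writing $v = u^1 - u^0$ gives
\[
 \partial_\tau v - Lv = 2\,Q_2(u^0, v) + E_{\mathrm{h.o.}},
\]
with $E_{\mathrm{h.o.}}$ collecting all cubic-or-higher terms in $(u^0, u^1)$. Multiplying by $\omega_{R(\tau)}$, projecting onto $\sV_{\geq 0}$, and replacing $u^0$ by the leading quadratic asymptotic $\ov U^0(\tau)$ produces \eqref{e:Prop_diff_V_c} up to four error contributions: $(i)$ the cutoff commutator $[L,\omega_{R(\tau)}]v - v\,\partial_\tau \omega_{R(\tau)}$, supported near $r = R(\tau)$; $(ii)$ the bilinear residual $2Q_2^+(u^0 - \ov U^0, \omega_R v)$; $(iii)$ the higher-order term $\omega_R E_{\mathrm{h.o.}}$; and $(iv)$ the cross-mode coupling $2\PP_{\sV_{\geq 0}} Q_2^+(\ov U^0, V^-)$, of size $O(|\tau|^{-1} \VV^-)$ by \eqref{eq_ovU_asymp}. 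The residual $(ii)$ is $O(|\tau|^{-2})(\|V^+\| + \VV^-)$ via the polynomial asymptotic $u^0 - \ov U^0 = O(|\tau|^{-2})$ from \cite[Proposition~\refx{Prop_dom_qu_asymp}]{Bamler_Lai_PDE_ODI}; and $(iii)$ is of the same order, using additionally the polynomial decay of $v$ supplied by the same-$\Qu$ hypothesis via \cite[Proposition~\refx{Prop_same_Q_close}]{Bamler_Lai_PDE_ODI}. For $(i)$, assertion (d) is essential: the Gaussian weight $e^{-r^2/4}$ at $r = R(\tau)$ combined with the pointwise bound $e^r(\|V^+\| + \VV^-)$ yields a decay factor $e^{-R(\tau)^2/4 + R(\tau)} = O(|\tau|^{-24})$, far below the $|\tau|^{-2}$ tolerance. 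The stable ODI \eqref{eq_dt_VVm_prop} follows analogously by projecting onto $\sV_{<0}$ and exploiting the spectral gap $L|_{\sV_{<0}} \leq -\tfrac{1}{2(n-k)}$.

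For (c), combining the decay hypothesis $\liminf_{\tau \to -\infty} e^{-2\tau}(\|V^+\| + \VV^-) < \infty$ with (d) yields $\liminf_{\tau \to -\infty} e^{-2\tau}\|v_\tau\|_{L^2(\IB^k_{10}\times \IS^{n-k})} < \infty$, which is precisely the hypothesis of Proposition~\ref{Prop_diff_properties}(b). Its conclusion then verifies the hypothesis of Proposition~\ref{Prop_M0isM1}, yielding $\MM^0 = \MM^1$. The main obstacle is the error bookkeeping in (b): the right-hand side of the ODI must scale with $\|V^+\|$ and $\VV^-$ rather than being absolute, since both quantities may become arbitrarily small. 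This relative smallness is unavailable from the PDE-ODI principle of \cite{Bamler_Lai_PDE_ODI} alone; it is precisely the Harnack-type estimate (d), extracted from the leading mode condition in Section~\ref{sec_leading_mode}, that converts the cutoff commutator into a multiplicative error of the correct size.
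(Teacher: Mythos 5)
Your overall strategy matches the paper's: (a) from the graph representations of \cite{Bamler_Lai_PDE_ODI}, (d) from Proposition~\ref{Prop_diff_properties}\ref{Prop_diff_properties_a}, (b) by projecting the evolution equation of $v\omega_{R(\tau)}$ and bounding the cutoff, Taylor-remainder and cross-mode errors, and (c) by feeding Proposition~\ref{Prop_diff_properties}\ref{Prop_diff_properties_b} into Proposition~\ref{Prop_M0isM1}. There is, however, one step that does not go through as written. In your derivation of (d) you say the residual $\Vert \Qu(\MM^0)\Vert^{10}e^{5\tau}$ from Proposition~\ref{Prop_diff_properties}\ref{Prop_diff_properties_a} ``decays exponentially and is absorbed,'' but the stated conclusion of (d) has only $C e^r(\Vert V^+\Vert + \VV^-)$ on the right-hand side, and an additive term $e^{5\tau}$ cannot be absorbed into a multiple of $\Vert V^+\Vert + \VV^-$ unless you have a \emph{lower} bound on the latter. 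The resolution is a dichotomy: either $\liminf_{\tau\to-\infty} e^{-2\tau}(\Vert V^+\Vert+\VV^-)<\infty$, in which case (c) already gives $\MM^0=\MM^1$ and all remaining assertions are vacuous, or else $\Vert V^+(\tau)\Vert+\VV^-(\tau)\ge e^{2\tau}$ for small $\tau$, which dominates $\Vert\Qu(\MM^0)\Vert^{10}e^{5\tau}$. You prove (c) independently, so the ingredients are in your write-up, but the logical ordering (establish (c) first, then split into cases before proving (d) and (b)) is missing and the absorption step fails without it.

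Two smaller points. First, you invoke ``the polynomial decay of $v$ supplied by the same-$\Qu$ hypothesis via Proposition on same-$\Qu$ closeness'' to control the higher-order term, but Proposition~\ref{Prop_diff_V} does \emph{not} assume $\Qu(\MM^0)=\Qu(\MM^1)$ (that hypothesis first appears in Proposition~\ref{Prop_Vpp_asymp}); fortunately it is not needed, since the bound $|\nabla^j u^i|\le C|\tau|^{-1}$ together with the pointwise bound (d) on $\nabla^j v$ already yields the $O(|\tau|^{-2})\Vert V^+\Vert + O(|\tau|^{-1})\VV^-$ tolerance. Second, your error term $(iv)$ and the stable-mode part of $(iii)$ involve $\nabla V^-$ and $\nabla^2 V^-$, which are not controlled by $\VV^-=\Vert V^-\Vert_{L^2_f}$ alone; one needs to pair against a fixed test function $V'\in\sV_{\ge0}$ of polynomial growth and integrate by parts (moving the derivatives onto $V'$ and the coefficient functions) to land on a bound of the form $C|\tau|^{-1}\VV^-$. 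This is a routine but necessary device that your sketch omits.
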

\medskip

We emphasize that the key property of \eqref{e:Prop_diff_V_c} and \eqref{eq_dt_VVm_prop} is that the right-hand sides is proportional to $\Vert V^+(\tau)\Vert + \VV^-(\tau)$, which is expected to decay exponentially.
This is true despite the fact that $V^+(\tau)$ arises from a difference $v_\tau$ of two functions that only decay to polynomial order.
In addition, the factors $C|\tau|^{-1}$ and $C|\tau|^{-2}$ decay fast enough to allow us to deduce precise asymptotic estimates for $V^+(\tau)$ by integrating both ODIs.

In order to make this asymptotic behavior precise, we need to decompose the space of \emph{unstable} modes $\sV_{> 0}$ into specific subspaces, depending on the null-space $N \subset \IR^k$ of $\Qu(\MM^0) = \Qu(\MM^1)$.
Recall from \cite[Lemma~\refx{Lem_mode_dec}]{Bamler_Lai_PDE_ODI} that
\[ \sV_{>0} = \sV_{\rot, 1} \oplus \sV_{\rot, \frac12} \oplus \sV_{\frac12,\Jac}, \]
where the first space is 1-dimensional and spanned by the constant, zeroth Hermite polynomial $\mathfrak p^{(0)}$, the second space is $k$-dimensional and spanned by the linear first Hermite polynomials $\mathfrak p^{(1)}_i$ and $\sV_{\frac12,\Jac}$ is $(n-k+1)$-dimensional and consists of $\Jac(\bY)$ for all constant Killing fields $\bY$ on $\IR^{n+1}$ that are perpendicular to the axis $\IR^k \times \bO^{n-k+1}$.
We now refine this decomposition as follows.

\begin{Definition}
Let $N \subset \IR^k$ be a (possibly trivial) linear subspace.
Then we define the splitting
\[ \sV_{\rot, \frac12} = \sV_{\rot, \frac12, N} \oplus \sV_{\rot, \frac12, N^\perp}, 
\]
where $\sV_{\rot, \frac12, N}$ (resp. $\sV_{\rot, \frac12, N^\perp}$) consists of all linear functions whose gradients are contained in $N$ (resp. $N^\perp$).
\end{Definition}

These spaces will have the following geometric interpretation for the mode $V^+(\tau)$, assuming that $N \subset \IR^k$ is the nullspace of $\Qu(\MM^0)$:
\begin{itemize}
\item $\sV_{\rot, \frac12, N}$ has the most geometric significance, as it can be used to distinguish different flying wing solitons.
\item $\sV_{\frac12, \Jac}$ is generated by translations perpendicular to the axis $\IR^k \times \bO^{n-k+1}$.
\item $\sV_{\rot, \frac12,N^\perp}$ is generated by translations parallel to $N^\perp$.
\item $\sV_{\rot, 1}$ is generated by time-shifts.
\end{itemize}
Our next result characterizes the asymptotic behavior of $V^{+}(\tau)$ as $\tau \to -\infty$.
Specifically, we show that $V^{+}_\tau$ exhibits one of three  dominant decay behaviors as $\tau \to -\infty$:
\begin{itemize}
\item 
$V^{+}(\tau) \approx e^{\tau/2} V^{+}_{-\infty}$, where $V^{+}_{\infty} \in \sV_{\rot, \frac12, N} \oplus \sV_{\frac12,\Jac}$.
\item 
$V^{+}(\tau) \approx |\tau|^{-1} e^{\tau/2} V^{+}_{-\infty}$, where $V^{+}_{-\infty} \in \sV_{\rot, \frac12,N^\perp}$.
\item 
$V^{+}(\tau) \approx e^{\tau} V^{+}_{-\infty}$, where $V^{+}_{-\infty} \in \sV_{\rot, 1}$.
\end{itemize}
If $V^{+}_{-\infty}$ vanishes in all cases, then we obtain an even faster decay of the form $V^{+}(\tau) \lesssim e^{2\tau}$, which implies $\MM^0 = \MM^1$.
\medskip

\begin{Proposition} \label{Prop_Vpp_asymp}
Consider the setting of Proposition~\ref{Prop_diff_V}, assume that $\Qu(\MM^0) = \Qu(\MM^1)$ and let $N \subset \IR^k$ be the nullspace of this matrix. 
Then there is a vector $V_{-\infty}^{+} \in \sV_{>0}$ that satisfies one of the following properties:
\begin{enumerate}[label=(\alph*)]
\item\label{p:V_1/2_null} $V^{+}_{-\infty} \in   \sV_{\rot, \frac12, N} \oplus \sV_{\frac12,\Jac}$ and $V^{+}_{-\infty} = \lim_{\tau \to -\infty} e^{-\tau/2} V^+(\tau) \neq 0$.
\item\label{p:V_1/2_non_null} $V^{+}_{-\infty} \in \sV_{\rot, \frac12,N^\perp}$ and $V^{+}_{-\infty} = \lim_{\tau \to -\infty} (-\tau) e^{-\tau/2} V^+(\tau) \neq 0$.
\item\label{p:V_1} $V^{+}_{-\infty} \in \sV_{\rot, 1}$ and $V^{+}_{-\infty} = \lim_{\tau \to -\infty} e^{-\tau} V^+(\tau)$.
If $V^{+}_{-\infty} = 0$, then  $\MM^0$ and $\MM^1$ agree at all times for which they are defined.
\end{enumerate}
Moreover for small $\tau$ we have a bound of the form 
\begin{equation} \label{eq_VmVptaum1}
 \VV^-(\tau) \leq C|\tau|^{-1} \Vert V^+(\tau) \Vert 
\end{equation}
and we even have $C^9_{\loc}$-convergence of $e^{-\tau/2} v_\tau$, $(-\tau) e^{-\tau/2} v_\tau$ or $e^{-\tau} v_\tau$ to $V_{-\infty}^+$, depending on the case.
So the vector $V^+_{-\infty}$ is uniquely determined and it does not depend on the precise choice of the cutoff functions $\omega_{R(\tau)}$.
\end{Proposition}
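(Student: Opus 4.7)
The plan is to reduce the problem to a finite-dimensional asymptotic ODE analysis for $V^+(\tau)\in\sV_{\ge 0}$ with a small, integrable error term, driven by the coupled system \eqref{e:Prop_diff_V_c}--\eqref{eq_dt_VVm_prop} supplied by Proposition~\ref{Prop_diff_V}.

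First I would establish the auxiliary Harnack-type bound \eqref{eq_VmVptaum1}, namely $\VV^-(\tau)\le C|\tau|^{-1}\Vert V^+(\tau)\Vert_{L^2_f}$. Multiplying \eqref{eq_dt_VVm_prop} by the integrating factor $e^{\tau/(2(n-k))}$ gives
\begin{equation*}
\partial_\tau\bigl(e^{\tau/(2(n-k))}\VV^-(\tau)\bigr)\le C|\tau|^{-1}e^{\tau/(2(n-k))}\Vert V^+(\tau)\Vert_{L^2_f}.
\end{equation*}
Integrating from $\tau_1\ll\tau$ up to $\tau$ and using the a priori polynomial decay $\VV^-(\tau_1)=O(|\tau_1|^{-J})$, available from $\Qu(\MM^0)=\Qu(\MM^1)$ via \cite[Proposition~\refx{Prop_same_Q_close}]{Bamler_Lai_PDE_ODI}, the boundary contribution at $\tau_1$ vanishes as $\tau_1\to-\infty$. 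Because any exponential-type upper bound on $\Vert V^+(s)\Vert_{L^2_f}$ (which follows from the same polynomial a priori estimate bootstrapped through the ODI) makes the integrand of exponential order, the remaining integral is dominated by its endpoint at $s=\tau$, giving \eqref{eq_VmVptaum1} after multiplying back by $e^{-\tau/(2(n-k))}$.

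Substituting \eqref{eq_VmVptaum1} into \eqref{e:Prop_diff_V_c} collapses it to a closed vector ODE for $V^+$ alone with error of size $O(|\tau|^{-2}\Vert V^+\Vert_{L^2_f})$. I would then decompose $V^+=V^+_0+V^+_{1/2}+V^+_1$ along the $L$-eigenspaces of eigenvalues $0,\tfrac12,1$, with $V^+_{1/2}$ further split according to $\sV_{\rot,1/2,N}\oplus\sV_{\rot,1/2,N^\perp}\oplus\sV_{\frac12,\Jac}$. The principal obstacle is analyzing the bilinear coupling $2Q_2^+(\ov U^0(\tau),V^+)$: by \eqref{eq_ovU_asymp}, $\ov U^0(\tau)=\tfrac{1}{\sqrt{2}}\tau^{-1}G_{\Qu(\MM^0)}+O(|\tau|^{-2}\log|\tau|)$ with $G_{\Qu(\MM^0)}$ the orthogonal projection onto $N^\perp$. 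I would compute the projection of $Q_2^+(G_{\Qu(\MM^0)},\cdot)$ onto each $\sV_\lambda$-piece and show that (i) on $\sV_{\rot,1/2,N}\oplus\sV_{\frac12,\Jac}$ the coupling vanishes, since $G_{\Qu(\MM^0)}$ annihilates directions with gradient in $N$ and the Jacobi fields of rotational Killing vector fields orthogonal to the $\IR^k$-axis decouple from the quadratic mode, (ii) on $\sV_{\rot,1/2,N^\perp}$ the coupling produces an effective eigenvalue shift $\tfrac12+\tau^{-1}\lambda$ with $\lambda=1$, yielding the logarithmic resonance factor $|\tau|^{-1}$, and (iii) the off-diagonal couplings into $\sV_{\rot,1}$ contribute only at order $O(|\tau|^{-1})$ times the $\sV_{1/2}$-components and are absorbed into the error.

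With these computations in hand, standard variation-of-constants gives the asymptotics on each piece: the equation $\partial_\tau V^+_1=V^+_1+O(|\tau|^{-2}\Vert V^+\Vert_{L^2_f})$ on $\sV_{\rot,1}$ forces $e^{-\tau}V^+_1(\tau)$ to converge; the clean equation $\partial_\tau W=\tfrac12 W+O(|\tau|^{-2}\Vert V^+\Vert_{L^2_f})$ on $\sV_{\rot,1/2,N}\oplus\sV_{\frac12,\Jac}$ forces $e^{-\tau/2}W(\tau)$ to converge; and the shifted equation on $\sV_{\rot,1/2,N^\perp}$ forces $|\tau|\,e^{-\tau/2}Y(\tau)$ to converge. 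The $\sV_0$-component $V^+_0$ satisfies $\partial_\tau V^+_0=O(|\tau|^{-1})\Vert V^+\Vert_{L^2_f}$ and so is lower order. The three regimes are mutually exclusive: whichever component is asymptotically largest dictates the case, yielding a unique $V^+_{-\infty}\in\sV_{>0}$. If all three limits vanish, an iteration of the same analysis upgrades the decay to $\Vert V^+(\tau)\Vert_{L^2_f}=O(e^{2\tau})$, which combined with \eqref{eq_VmVptaum1} triggers the hypothesis of Proposition~\ref{Prop_diff_V}\ref{Prop_diff_V_c} and forces $\MM^0=\MM^1$. Finally, the pointwise estimate in Proposition~\ref{Prop_diff_V}\ref{Prop_diff_V_d}, combined with standard parabolic interior regularity applied to the linearized equation satisfied by $v_\tau$, upgrades the $L^2_f$-convergence to $C^9_{\loc}$-convergence of the appropriately rescaled $v_\tau$, giving uniqueness of $V^+_{-\infty}$ and independence of the cutoff choice.
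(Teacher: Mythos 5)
Your overall strategy — collapse the coupled system \eqref{e:Prop_diff_V_c}--\eqref{eq_dt_VVm_prop} to a closed ODI for $V^+$, decompose $\sV_{>0}$ by eigenvalue and by $N$ versus $N^\perp$, compute the resonant coupling $Q_2^+(\ov U^0,\cdot)$ on each piece, and run a Merle--Zaag type trichotomy — is exactly the paper's route. However, your derivation of the key estimate \eqref{eq_VmVptaum1} is circular. After applying the integrating factor you are left with $\VV^-(\tau)\le C\int_{-\infty}^{\tau}|s|^{-1}e^{-(\tau-s)/(2(n-k))}\Vert V^+(s)\Vert\,ds$, and to conclude that this is $\le C|\tau|^{-1}\Vert V^+(\tau)\Vert$ you must control $\Vert V^+(s)\Vert$ for $s<\tau$ \emph{relative to} $\Vert V^+(\tau)\Vert$ (with backward growth strictly slower than $e^{(\tau-s)/(2(n-k))}$). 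The absolute a priori bound $\Vert V^+(s)\Vert=O(|s|^{-J})$ from $\Qu(\MM^0)=\Qu(\MM^1)$ cannot do this, since $\Vert V^+(\tau)\Vert$ is generically exponentially small and the whole content of \eqref{eq_VmVptaum1} is a relative bound. The only available relative control comes from integrating \eqref{e:Prop_diff_V_c} backwards, whose error term contains $C|\tau|^{-1}\VV^-$ — precisely the quantity you are trying to bound. The paper breaks this circle in two steps: it first shows $\VV^-(\tau)\le\Vert V^+(\tau)\Vert$ by a qualitative comparison (the set $\{\VV^-\ge\Vert V^+\Vert\}$ is a union of intervals without left endpoints, hence empty or a half-line $(-\infty,\ov\tau]$, and the latter forces $\VV^-$ to blow up exponentially backwards, contradicting its boundedness), and only then derives and integrates the ODI for the ratio $h=\VV^-/\Vert V^+\Vert$. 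Without some such argument your first step, and hence everything downstream of it, does not close.

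Two smaller points. First, your dismissal of the neutral mode ("$V^+_0$ satisfies $\partial_\tau V^+_0=O(|\tau|^{-1})\Vert V^+\Vert$ and so is lower order") is a non sequitur: if $V_0$ were dominant this ODI only yields $\Vert V_0(\tau)\Vert\gtrsim|\tau|^{-C}$, i.e.\ polynomial decay, which is perfectly consistent with the ODI alone. Excluding this case requires the external input that $\Qu(\MM^0)=\Qu(\MM^1)$ forces $v_\tau$ (hence $V^+$) to decay faster than any polynomial, via \cite[Proposition~\refx{Prop_same_Q_close}]{Bamler_Lai_PDE_ODI}; you should invoke it here, not only for the boundary term at $\tau_1$. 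Second, "whichever component is asymptotically largest dictates the case" needs the small but genuine argument that one component eventually dominates permanently (in the paper: on $I_i\cap I_j$ with $i<j$ one has $\partial_\tau\Vert V_j\Vert<\partial_\tau\Vert V_i\Vert$, so the index of the maximal component is monotone and eventually constant); a priori the dominant index could oscillate.
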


We can hence make the following definition.

\begin{Definition}
If $\MM^0$ and $\MM^1$ are two asymptotically $(n,k)$-cylindrical mean curvature flows with $\Qu(\MM^0) = \Qu(\MM^1)$, then their \textbf{difference at $-\infty$,}  
\[ V^{+}_{-\infty} (\MM_0, \MM_1) \in \sV_{>0}, \] 
is the vector from Proposition~\ref{Prop_Vpp_asymp}.
\end{Definition}

\begin{Remark}
It should be possible to merge Cases~\ref{p:V_1/2_null} and \ref{p:V_1/2_non_null} into a single case, since these differ only by a polynomial factor.
In other words, one may expect an expansion of the form
\[ V^{+}(\tau) =  e^{\tau/2} V^{+,1}_{-\infty} + (  - \tau)^{-1} e^{\tau/2} V^{+,2}_{-\infty} + O\big(  (  - \tau)^{-2} e^\tau \big), \]
where $V^{+,1}_{-\infty} \in  \sV_{\rot, \frac12, N^\perp} \oplus \sV_{\frac12,\Jac}$ and $V^{+,2}_{-\infty}  \in \sV_{\rot, \frac12,N^\perp}$.
Carrying out this analysis, however, would require studying higher order Taylor approximations of the nonlinear term $Q$ in the rescaled mean curvature flow equation, which we do not pursue here since it is unnecessary for our purposes.
\end{Remark}

We record:

\begin{Corollary}\label{cor:unique_V}
If $\Qu(\MM^0) = \Qu( \MM^1)$ and if both flows are defined on the same time-interval and if $\MM^0$ is convex and rotationally symmetric, then
\[ V^{+}_{-\infty} (\MM^0, \MM^1)=0 \qquad \Longleftrightarrow \qquad \MM^0 = \MM^1. \]
\end{Corollary}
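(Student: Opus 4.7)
The plan is to extract the corollary directly from Proposition~\ref{Prop_Vpp_asymp}, since that proposition already performs all of the substantive analysis. The reverse implication is essentially a bookkeeping matter: if $\MM^0=\MM^1$, then on the overlap $\DD^0_\tau\cap \DD^1_\tau$ the graph functions coincide, so $v_\tau\equiv 0$ for every sufficiently negative $\tau$. Hence $V^+(\tau)\equiv 0$ and $\VV^-(\tau)\equiv 0$ as well, and in particular the limit defining $V^+_{-\infty}$ (in whichever of the three cases of Proposition~\ref{Prop_Vpp_asymp} applies) must vanish. This shows $V^+_{-\infty}(\MM^0,\MM^1)=0$.

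For the forward implication, suppose $V^+_{-\infty}(\MM^0,\MM^1)=0$. Proposition~\ref{Prop_Vpp_asymp} asserts that $V^+_{-\infty}$ is produced by exactly one of its three alternatives \ref{p:V_1/2_null}, \ref{p:V_1/2_non_null}, \ref{p:V_1}. Alternatives \ref{p:V_1/2_null} and \ref{p:V_1/2_non_null} both explicitly require $V^+_{-\infty}\neq 0$, so they are incompatible with our hypothesis. The remaining alternative \ref{p:V_1} therefore must hold, and its concluding clause states that $V^+_{-\infty}=0$ implies $\MM^0=\MM^1$ on their common time-interval. This finishes the proof.

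Because all of the work is already done by Proposition~\ref{Prop_Vpp_asymp}, there is no real obstacle in the argument itself; the only point worth double-checking is that the hypotheses of that proposition are satisfied under the assumptions of the corollary. They are: $\MM^0$ is convex and rotationally symmetric by assumption, both flows are asymptotically $(n,k)$-cylindrical on the same time-interval so the graphical description $u^i_\tau$ used in Proposition~\ref{Prop_diff_V}\ref{Prop_diff_V_a} is valid for all $\tau$ sufficiently negative, and the matching condition $\Qu(\MM^0)=\Qu(\MM^1)$ is exactly the hypothesis allowing the three-case asymptotic trichotomy of $V^+(\tau)$ to be established.
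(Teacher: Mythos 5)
Your proposal is correct and matches the paper's (implicit) argument: the corollary is recorded as an immediate consequence of Proposition~\ref{Prop_Vpp_asymp}, with the forward direction following because the nonvanishing requirements in Cases~\ref{p:V_1/2_null} and \ref{p:V_1/2_non_null} force Case~\ref{p:V_1} to apply, whose final clause gives $\MM^0=\MM^1$, and the reverse direction because $v_\tau\equiv 0$ forces all the limits to vanish. Your verification of the hypotheses (same time-interval, $\Qu$-matching, convexity and rotational symmetry of $\MM^0$) is exactly the bookkeeping the paper leaves to the reader.
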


We will see a posteriori that all asymptotically cylindrical flows are rotationally symmetric about an axis parallel to $\IR^k \times \bO^{n-k+1}$, so assumption of the corollary is always fulfilled modulo a translation.

The last result establishes useful identities for the difference at infinity.

\begin{Proposition}\label{prop:actions}
Let $\MM^0$ and $\MM^1$ be two asymptotically $(n,k)$-cylindrical mean curvature flows with $\Qu(\MM^0) = \Qu(\MM^1)$ and assume that $\MM^0$ is convex and rotationally symmetric.
Let $N \subset \IR^k$ be the nullspace of $\Qu(\MM^0) = \Qu(\MM^1)$.
Then the following is true:
\begin{enumerate}[label=(\alph*)]

\item\label{prop:actions_c} If $S \in O(n+1)$ is an orthogonal linear map with $S (M_{\cyl}) = M_{\cyl}$, then the rotated flows $\MM^{i,\prime} := S\MM^i$ satisfy
\[ V^{+}_{-\infty} (\MM^{0,\prime}, \MM^{1,\prime}) = S \big(V^{+}_{-\infty} (\MM^0, \MM^1) \big), \] 
where $S : \sV_{>0} \to \sV_{>0}$ acts in the natural way.
That is, it acts trivially on $\sV_1$, and on $\sV_{\rot, \frac12} \oplus \sV_{\frac12, \Jac} \cong \IR^k \times \IR^{n-k+1}$ via the standard action, which must preserve this splitting whenever $S (M_{\cyl}) = M_{\cyl}$.

\item\label{prop:actions_g} Consider a third asymptotically $(n,k)$-cylindrical mean curvature flow $\MM^2$ with $\Qu(\MM^0) = \Qu(\MM^1) = \Qu(\MM^2)$ and assume that $\MM^2$ is convex and rotationally symmetric.
Then
\[ \qquad\quad \PP_{\sV_{\rot, \frac12, N} \oplus \sV_{\frac12, \Jac}} \big( V^{+}_{-\infty} (\MM^0, \MM^2) + V^{+}_{-\infty} (\MM^2, \MM^1) - V^{+}_{-\infty} (\MM^0, \MM^1) \big) 
= 0.  \]

If $\PP_{\sV_{\rot, \frac12, N} \oplus \sV_{\frac12, \Jac}} ( V^{+}_{-\infty} (\MM^i, \MM^j) ) = 0$ for all $(i,j) \in \{ (0,2),(2,1),(0,1) \}$,  then
\[ \qquad\quad \PP_{\sV_{\rot, \frac12, N} } \big( V^{+}_{-\infty} (\MM^0, \MM^2) + V^{+}_{-\infty} (\MM^2, \MM^1) - V^{+}_{-\infty} (\MM^0, \MM^1) \big) 
= 0,  \]
and if in addition also $\PP_{\sV_{\rot, \frac12, N}} ( V^{+}_{-\infty} (\MM^i, \MM^j) ) = 0$ for all $(i,j) \in \{ (0,2),(2,1),(0,1) \}$, then
\[ \qquad\quad \PP_{\sV_{\rot, 1} } \big( V^{+}_{-\infty} (\MM^0, \MM^2) + V^{+}_{-\infty} (\MM^2, \MM^1) - V^{+}_{-\infty} (\MM^0, \MM^1) \big) 
= 0.  \]
\item\label{prop:actions_d} Let $\mathbf v \in \bO^k \times \IR^{n-k+1}$ be a vector orthogonal to the axis of rotation of $\MM^0$ and set $V' := \Jac (\mathbf v) \in \sV_{\Jac, \frac12}$, where we view $\mathbf v$ as a constant Killing field on $\IR^{n+1}$.
Let $\MM^{1,\prime} := \MM^1 + (\mathbf v,0)$ be the corresponding translated flow.
Then the following is true 
\[ \PP_{\sV_{\rot, \frac12, N} \oplus \sV_{\frac12, \Jac}} \big( V^{+}_{-\infty} (\MM^0, \MM^{1,\prime}) - V^{+}_{-\infty} (\MM^0, \MM^1)\big) = V'. \]
\item \label{prop:actions_e}
Let $\mathbf v \in \IR^{k} \times \bO^{n-k+1}$ and consider the translated flow $\MM^{1,\prime} := \MM_1 + (\mathbf v,0)$.
Then 
\begin{align*}
\PP_{\sV_{\rot, \frac12, N^\perp} \oplus \sV_{\frac12, \Jac}} \big( V^{+}_{-\infty} (\MM^0, \MM^{1,\prime}) \big) &= \PP_{\sV_{\rot, \frac12, N^\perp} \oplus \sV_{\frac12, \Jac}} \big(V^{+}_{-\infty} (\MM^0, \MM^{1}) \big).
\end{align*}
Moreover, if both projections vanish (so if  Case~\ref{p:V_1/2_null} of Proposition~\ref{Prop_Vpp_asymp} does not apply) and if $\proj_{N^\perp} (\mathbf v) = \sum_{i=1}^k v_i^\perp \mathbf e_i$, then
\[ \PP_{\sV_{\rot, \frac12, N^\perp}} \big( V^{+}_{-\infty} (\MM^0, \MM^{1,\prime}) - V^{+}_{-\infty} (\MM^0, \MM^1)\big) =  \frac1{\sqrt 2} \sum_{i=1}^k v_i^\perp \mathfrak p^{(1)}_i. \]
\item\label{f:time-translate} 
Let $\MM^{1,\prime} = \MM^1 + (0, \Delta T)$ for some time-shift  $\Delta T \in \IR$.
Then 
\[ \PP_{\sV_{\frac12}} \big( V^{+}_{-\infty} (\MM^0, \MM^{1,\prime}) \big) = \PP_{\sV_{\frac12}} \big( V^{+}_{-\infty} (\MM^0, \MM^1) \big) , \]
where $\sV_{\frac12}= \sV_{\rot, \frac12, N} \oplus \sV_{\frac12, \Jac} \oplus\sV_{\rot, \frac12, N^\perp}$. 
Moreover, if both projections vanish (so if Case~\ref{p:V_1} of Proposition~\ref{Prop_Vpp_asymp} applies), then 
\[V^{+}_{-\infty} (\MM^0, \MM^{1,\prime}) - V^{+}_{-\infty} (\MM^0, \MM^1) = \frac12 \Delta T \, \mathfrak p^{(0)}. \]

\item\label{prop:actions_gg} Let $\mathbf v = \sum_{i=1}^k v_i \mathbf e_i \in \IR^k \times \bO^{n+1}$ and suppose that $\MM^0 = \MM^0 + (\mathbf v, 0)$  is invariant under translation in the $\mathbf v$-direction.
Suppose also that $\MM^1$ is convex and rotationally symmetric and suppose that
\[ V^+_{-\infty} (\MM^0, \MM^1) =\sum_{i=1}^k b_i \mathfrak p^{(1)}_i \in \sV_{\rot, \frac12, N}. \]
Then for $\MM^{1,\prime} = \MM^1 + (\mathbf v, 0)$ we have
\begin{equation} \label{eq_VpM11pp0}
 V^{+}_{-\infty} (\MM^{1}, \MM^{1,\prime}) =-\frac{1}{\sqrt2} \Big(\sum_{i=1}^k v_i b_i \Big) \mathfrak p^{(0)} \in \sV_1.
\end{equation}
\end{enumerate}
\end{Proposition}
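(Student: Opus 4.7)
The plan is to compute $V^{+}_{-\infty}(\MM^{1}, \MM^{1,\prime})$ by a direct asymptotic calculation that exploits two features: the translation invariance of $\MM^{0}$ in the $\mathbf{v}$-direction, and the fact that $\MM^{1,\prime}$ is an explicit spatial translate of $\MM^{1}$. Since $\Qu$ is translation invariant, $\Qu(\MM^{1,\prime}) = \Qu(\MM^{1}) = \Qu(\MM^{0})$, and $\MM^{1,\prime}$ is convex and rotationally symmetric about the same axis as $\MM^{1}$ (so Propositions~\ref{Prop_diff_V} and \ref{Prop_Vpp_asymp} apply to $(\MM^{1}, \MM^{1,\prime})$). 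We may further assume $\sum_{i} b_i \mathfrak p^{(1)}_i \neq 0$, since otherwise Corollary~\ref{cor:unique_V} forces $\MM^{0} = \MM^{1}$, hence $\MM^{1,\prime} = \MM^{1}$, and both sides of the claim vanish.

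Let $u^{0}_\tau, u^{1}_\tau$ be the graph functions of $\MM^{0}, \MM^{1}$ over $M_{\cyl}$ at rescaled time $\tau$ (as in Proposition~\ref{Prop_diff_V}) and set $v_\tau := u^{1}_\tau - u^{0}_\tau$. Because $\mathbf{v}$ is parallel to the cylinder axis, the identity $\td\MM^{1,\prime}_\tau = \td\MM^{1}_\tau + e^{\tau/2}\mathbf{v}$ gives
\[
u^{1,\prime}_\tau(\bx,\by) \;=\; u^{1}_\tau(\bx - e^{\tau/2}\mathbf{v}, \by),
\]
while the hypothesis $\MM^{0} = \MM^{0} + (\mathbf{v},0)$ yields the analogous axial invariance $u^{0}_\tau(\bx - e^{\tau/2}\mathbf{v}, \by) = u^{0}_\tau(\bx, \by)$ wherever defined. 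Subtracting $u^{1}_\tau$ and using this invariance of $u^{0}_\tau$, the graph function for the pair $(\MM^{1}, \MM^{1,\prime})$ reduces to a spatial translate of $v_\tau$:
\[
v^{\mathrm{new}}_\tau(\bx,\by) \;=\; u^{1,\prime}_\tau - u^{1}_\tau \;=\; v_\tau(\bx - e^{\tau/2}\mathbf{v}, \by) - v_\tau(\bx, \by).
\]

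By hypothesis and Case~(a) of Proposition~\ref{Prop_Vpp_asymp}, we have $e^{-\tau/2} v_\tau \to \sum_i b_i \mathfrak p^{(1)}_i$ in $C^{9}_{\loc}$, so write $v_\tau = e^{\tau/2}\bigl(\sum_i b_i \mathfrak p^{(1)}_i + r_\tau\bigr)$ with $r_\tau \to 0$ in $C^{9}_{\loc}$. Linearity of $\mathfrak p^{(1)}_i$ together with $\mathfrak p^{(1)}_i(\mathbf{v}) = v_i/\sqrt{2}$ gives exactly
\[
\sum_i b_i \mathfrak p^{(1)}_i(\bx - e^{\tau/2}\mathbf{v}) - \sum_i b_i \mathfrak p^{(1)}_i(\bx) \;=\; -\frac{e^{\tau/2}}{\sqrt{2}} \sum_{i=1}^{k} v_i b_i,
\]
while the mean value theorem combined with the $C^{9}_{\loc}$-decay of $\nabla r_\tau$ yields $r_\tau(\bx - e^{\tau/2}\mathbf{v}) - r_\tau(\bx) = o(e^{\tau/2})$ locally uniformly in $\bx$. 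Multiplying through by the outer factor $e^{\tau/2}$,
\[
v^{\mathrm{new}}_\tau(\bx,\by) \;=\; -\frac{e^{\tau}}{\sqrt{2}}\sum_{i=1}^{k} v_i b_i \;+\; o(e^{\tau}) \quad \text{in } C^{8}_{\loc},
\]
so $e^{-\tau} v^{\mathrm{new}}_\tau \to -\tfrac{1}{\sqrt{2}}\bigl(\sum_i v_i b_i\bigr)\mathfrak p^{(0)}$.

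This $e^{\tau}$-decay of $v^{\mathrm{new}}_\tau$ rules out Cases~(a) and (b) of Proposition~\ref{Prop_Vpp_asymp} applied to the pair $(\MM^{1}, \MM^{1,\prime})$, leaving only Case~(c). The uniqueness statement at the end of Proposition~\ref{Prop_Vpp_asymp}---which extracts $V^{+}_{-\infty}$ from the $C^{9}_{\loc}$-limit of the appropriately rescaled graph function, independently of the cutoff $\omega_{R(\tau)}$---then identifies $V^{+}_{-\infty}(\MM^{1}, \MM^{1,\prime}) = -\tfrac{1}{\sqrt{2}}\bigl(\sum_i v_i b_i\bigr)\mathfrak p^{(0)} \in \sV_{1}$, as required. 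The only technical subtlety is that the mean value expansion takes place on the domains $\IB^{k}_{2R(\tau)-1} \times \IS^{n-k}$ with $R(\tau) = 10\sqrt{\log|\tau|}$; since $e^{\tau/2}|\mathbf{v}|$ is exponentially small while $R(\tau)$ grows only logarithmically, all evaluations lie in a uniformly bounded set for $\tau$ sufficiently negative, and $C^{9}_{\loc}$-convergence of $r_\tau$ suffices.
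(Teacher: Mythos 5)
Your argument establishes only Assertion~\ref{prop:actions_gg}, and for that assertion it is correct and essentially identical to the paper's own proof: both exploit the identity $u^{1,\prime}_\tau(\bx,\by)=u^1_\tau(\bx-e^{\tau/2}\mathbf v,\by)$ together with the invariance $u^0_\tau(\bx-e^{\tau/2}\mathbf v,\by)=u^0_\tau(\bx,\by)$ to rewrite $u^{1,\prime}_\tau-u^1_\tau$ as a difference of $v_\tau$ at shifted points, then feed in the $C^9_{\loc}$-asymptotics $e^{-\tau/2}v_\tau\to\sum_i b_i\mathfrak p^{(1)}_i$ supplied by Case~\ref{p:V_1/2_null} of Proposition~\ref{Prop_Vpp_asymp}, and finally use the trichotomy of that proposition to conclude that the pair $(\MM^1,\MM^{1,\prime})$ falls under Case~\ref{p:V_1} with the stated limit. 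Your handling of the degenerate case where all $b_i$ vanish and of the domain issue (the shift $e^{\tau/2}|\mathbf v|$ is exponentially small while $R(\tau)$ grows) is fine, and your normalization $\mathfrak p^{(1)}_i(\mathbf v)=v_i/\sqrt2$ is consistent with the claimed constant.

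The gap is that the statement comprises six assertions and you prove only the last one; the remaining five are not formal consequences of Assertion~\ref{prop:actions_gg}, and each requires its own (short) argument. Concretely: Assertion~\ref{prop:actions_c} follows from equivariance of the whole construction under rotations preserving $M_{\cyl}$; Assertion~\ref{prop:actions_g} follows from the pointwise additivity of the unstable modes $V^+(\tau)$ of the three pairs combined with a case-by-case passage to the limit through the three decay rates of Proposition~\ref{Prop_Vpp_asymp}; Assertion~\ref{prop:actions_d} needs a separate geometric estimate comparing the cylindrical graph of a hypersurface with that of its \emph{transverse} translate (Lemma~\ref{l:implicit} in the paper), since a translation perpendicular to the axis does not act by precomposition on the graph functions; Assertion~\ref{prop:actions_e} uses the same axial-shift identity as your argument but applied to $u^1_\tau$ itself, so it requires the quadratic-mode asymptotics \eqref{eq_u1toU0} of $u^1_\tau$ rather than of the difference $v_\tau$; and Assertion~\ref{f:time-translate} requires the time-reparametrization expansion $\tau'(\tau)=\tau+(\Delta T)e^\tau+o(e^\tau)$. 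Without these, the proposition as stated is unproved.
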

\bigskip

\subsection{Proof of Proposition~\ref{Prop_diff_V}}

\begin{proof}[Proof of Proposition~\ref{Prop_diff_V}.]
We follow the general strategy from the proof of \cite[Lemma~\refx{l:evolution_ODE}]{Bamler_Lai_PDE_ODI}, but the additional control on $u^0$ and $u^1$, together with the Harnack-type estimate for $v$ from Proposition~\ref{Prop_diff_properties}, makes the proof substantially more elementary.

We will denote by $C$ a generic constant and we will indicate dependencies in parentheses.
For convenience we will write $\omega_\tau = \omega_{R(\tau)}$ and we will frequently drop the ``$\tau$''-subscript or ``$(\tau)$''-argument in time-dependent quantities.
When the context is clear, we will also often omit the subscript ``$L^2_{f}$'' on norms and inner products. 

Assertion~\ref{Prop_diff_V_a} is a direct consequence of \cite[Proposition~\refx{Prop_dom_qu_asymp}]{Bamler_Lai_PDE_ODI} applied to $\MM^0$ and $\MM^1$ for $J = 100$, $m = 10$, so the quantities $V^+(\tau)$ and $\VV^-(\tau)$ are well defined for sufficiently small $\tau$.
Let $U^{0,+} (\tau) = U_1^0 (\tau)  + \ldots + U^0_{-100}(\tau) \in \sV_{\geq -100}$ be the function supplied by applying this theorem to $u^0$ and recall that for $\tau \ll 0$
\begin{align}
 \Vert u_\tau^0 \Vert_{C^{10}(\DD_\tau^0)} , \quad \Vert u_\tau^1 \Vert_{C^{10}(\DD_\tau^1)}
 &\leq C |\tau|^{-1},  \label{eq_u0u1tauinv} \\
 \Vert u_\tau^0 - U^{0,+}(\tau) \Vert_{C^{10}(\DD_\tau^0)} 
 &\leq C |\tau|^{-2},  \label{eq_u0U0p} \\
  \Vert U^{0,+}(\tau) - \ov U^{0} (\tau) \Vert_{L^2_f} 
&\leq \Vert U^{0,+}(\tau) - U^{0}_0 (\tau) \Vert_{L^2_f} + \Vert U^{0}_0 (\tau) - \ov U^{0} (\tau) \Vert_{L^2_f} 
\leq C |\tau|^{-2}, 
\end{align}
where $\ov U^0$ solves the ODE \cite[(\refx{eq_barU_ODE})]{Bamler_Lai_PDE_ODI} and has the asymptotics \eqref{eq_ovU_asymp}.

Assertion~\ref{Prop_diff_V_c} follows by combining Proposition~\ref{Prop_diff_properties}\ref{Prop_diff_properties_b} with Proposition~\ref{Prop_M0isM1}.
Note that if this decay condition holds, then $V^+(\tau) = \VV^-(\tau) \equiv 0$, so all other assertions become trivial.
Therefore, we may assume in the following that for small enough $\tau$
\[ \Vert V^+ (\tau) \Vert + \VV^-(\tau) \geq e^{2\tau} \]
and Assertion~\ref{Prop_diff_V_d} follows from Proposition~\ref{Prop_diff_properties}\ref{Prop_diff_properties_a}.

It remains to show Assertion~\ref{Prop_diff_V_b}.
By direct computation, we obtain the following evolution equation for $v\omega$ (see \cite[Lemma~\refx{Lem_structure_MCF_graph_equation}]{Bamler_Lai_PDE_ODI}): 
\begin{equation} \label{eq_evol_eq_withE}
\partial_\tau (v\omega)=L (v\omega) +  Q[u^1\omega] - Q[u^0 \omega]  + E, 
\end{equation}
where
\begin{equation}\label{eq:E_V}
   E= \partial_\tau  \omega \cdot  v - 2\nabla \omega \cdot \nabla v -\Delta_{f} \omega \cdot v  + \big( Q[u^1] - Q[u^0 ] \big) \omega -  \big( Q[u^1\omega] - Q[u^0 \omega] \big) .
\end{equation}
Projecting \eqref{eq_evol_eq_withE} to $\sV_{\geq 0},\sV_{< 0}$, and noting that $L$ commutes with these projections, yields for $\td u^i := u^i \omega$ and $\td v := v \omega$
\begin{align}
 \partial_\tau V^+ &= L V^+ +  \PP_{\sV_{\geq 0}} \big(   Q[\td u^1] - Q[\td u^0 ]  \big)  
+ \PP_{\sV_{\geq 0}}  E ,  \label{eq_dtVp_L_etc} \\
 \partial_\tau V^- &= L V^-   + \PP_{\sV_{< 0}} \big(   Q[\td u^1] - Q[\td u^0 ] \big)
 + \PP_{\sV_{<0}}  E. \label{eq_dtVm_L_etc} 
\end{align}

Let us first bound the term $E$.
Recall that $Q[u] = Q(u,\nabla u, \nabla^2 u)$ is a smooth  function, which depends only on $u$ and its first and second derivative and not on the space or time-parameters \cite[Lemma~\refx{Lem_structure_MCF_graph_equation}]{Bamler_Lai_PDE_ODI}.
As in the proof of \cite[Lemma~\refx{lem:Q_initial}]{Bamler_Lai_PDE_ODI}, we can write
\begin{align} 
 Q[u^1]-Q[u^0] &= Q^*_0 * v + Q^*_1 * \nabla v + Q^*_2 * \nabla^2 v ,  \notag \\
   Q[\td u^1 ]-Q[\td u^0 ] &= \td Q^*_0 * \td v + \td Q^*_1 * \nabla \td v + \td Q^*_2 * \nabla^2 \td v , \label{eq_Qu1Qu0}
\end{align}
where 
\[ Q^*_j = Q^*_j[u^0,u^1] = \int_0^1 \partial_{\nabla^j u} Q\big(su_1+(1-s)u_0, s \nabla u_1+(1-s) \nabla u_0, s \nabla^2 u_1+(1-s) \nabla^2 u_0 \big) \, ds \]
and similar for $\td Q^*_j$.
Since $Q$ vanishes to second order, we get by \eqref{eq_u0u1tauinv} for $m = 0,1,2$
\begin{equation} \label{eq_Qstar_bound}
 |\nabla^m Q^*_j|, \; |\nabla^m \td Q^*_j| \leq C \sum_{i=0}^1 \sum_{j=0}^4 |\nabla^j u_i| \leq C |\tau|^{-1}. 
\end{equation}
Therefore, using Assertion~\ref{eq_v_bound_exp} (note that $|\triangle_f \omega| \leq C r$)
\[ |E| \leq C ( r |v| + |\nabla v| + |\nabla^2 v| )
\leq C r e^r \big( \Vert V^{+} \Vert + \VV^- \big). \]
In addition $E_\tau$ is supported on $(\IB^k_{R(\tau) } \setminus \IB^k_{ R(\tau) -1}) \times \IS^{n-k}$, so we obtain for $R = R(\tau)$
\begin{multline}
\Vert E \Vert_{L^2_f} 
  \leq \Big(  (C R e^R)^2   R^{k-1}  e^{-\frac{( R-1)^2}{4}} \Big)^{1/2}  |\tau|^{-1} (\Vert V^+ \Vert + \VV^-)
   \leq  C e^{-\frac{R^2}{10}} |\tau|^{-1}  (\Vert V^+ \Vert + \VV^-) \\
   \leq C |\tau|^{-2} (\Vert V^+ \Vert + \VV^-). \label{eq_I4}
\end{multline}

Next, we analyze the projections of the term $ Q[\td u^1] - Q[\td u^0 ]$.
Combining~\eqref{eq_Qu1Qu0}, \eqref{eq_Qstar_bound} and
\eqref{eq_v_bound_exp} implies
\[ \big\Vert Q[\td u^1 ] - Q[\td u^0 ] \big\Vert_{L^2_f} \leq \sum_{j=0}^2 \Vert \td Q^*_j \Vert_{L^\infty} \Vert \nabla^j \td v  \Vert_{L^2_f}
\leq C |\tau|^{-1}  \big( \Vert V^+ \Vert + \VV^-). \]
Using this bound, \eqref{eq_dtVm_L_etc}, \eqref{eq_I4} and the fact that $\td v = V^+ + V^-$, we obtain the second evolution inequality \eqref{eq_dt_VVm_prop} for sufficiently small $\tau$ (see \cite[Lemma~\refx{Lem_mode_dec}]{Bamler_Lai_PDE_ODI}):
\begin{align*}
\| V^- \|_{L^2_f} &\cdot \partial_\tau \| V^- \|_{L^2_f} =
 \tfrac12 \partial_\tau \| V^- \|^2_{L^2_f}  \\
&= \big\langle V^-, L V^- \big\rangle 
+ \big\langle V^-, Q[\td u^1 ] - Q[\td u^0 ] \big\rangle + \big\langle V^-, E \big\rangle \\
&\leq -\tfrac1{n-k} \Vert V^- \Vert_{L^2_f}^2 
+ \Vert V^- \Vert_{L^2_f} \cdot \Vert Q[\td u^1 ] - Q[\td u^0 ]\Vert_{L^2_f}  
+ \Vert V^- \Vert_{L^2_f} \cdot \Vert E \Vert_{L^2_f} \\
&\leq -\tfrac1{n-k} \Vert V^- \Vert_{L^2_f}^2 + C|\tau|^{-1} \Vert V^- \Vert_{L^2_f} \big( \Vert V^+ \Vert + \VV^-) \\
&\leq -\tfrac1{2(n-k)} \Vert V^- \Vert_{L^2_f}^2 + C|\tau|^{-1} \Vert V^- \Vert_{L^2_f} \Vert V^+ \Vert .
\end{align*}

To prove the first evolution inequality~\eqref{e:Prop_diff_V_c}, we use \eqref{eq_dtVp_L_etc} and find
\begin{equation*} 
 \big\| \partial_\tau V^{+} - L V^{+} - 2 Q_2^{+} (U^0_{0},V^{+})  \big\|_{L^2_f}
\leq  \big\Vert \PP_{\sV_{\geq 0}}   (Q[\td u^1] - Q[\td u^0 ]) - 2 Q_2^+(\ov U^0, V^+) \big\Vert_{L^2_f} + \Vert E \Vert_{L^2_f}.
\end{equation*}
Since the last term can again be bounded by \eqref{eq_I4}, it remains to establish a bound of the form
\[ \big\Vert \PP_{\sV_{\geq 0}} (Q[\td u^1] - Q[\td u^0 ])  - 2 Q_2^+(\ov U^0, V^+) \big\Vert_{L^2_f}  \leq C |\tau|^{-2} \Vert V^+ \Vert + C |\tau|^{-1} \VV^-. \]
To achieve this, we fix some arbitrary $V' \in \sV_{\geq 0}$ with $\Vert V' \Vert = 1$ and we aim to establish a bound of the form
\begin{equation} \label{eq_Vp_desired_bound}
 \big\langle V',   Q[\td u^1] - Q[\td u^0 ]  - 2 Q_2^+(\ov U^0, V^+) \big\rangle \leq C |\tau|^{-2} \Vert V^+ \Vert + C |\tau|^{-1} \VV^-. 
\end{equation}

We first use integration by parts, Cauchy-Schwarz, \eqref{eq_Qstar_bound} and the fact that $V'$ has polynomial growth to deduce (see again the proof of \cite[Lemma~\refx{lem:Q_initial}]{Bamler_Lai_PDE_ODI})
\begin{align*}
 \langle V', \td Q^*_0 *  V^-  \rangle 
 &\leq \Vert V' * \td Q^*_0  \Vert \cdot \Vert V^- \Vert 
\leq C |\tau|^{-1} \Vert V^- \Vert  \\
 \langle V', \td Q^*_1 *  \nabla V^-  \rangle 
&= \langle V' * \td Q^*_1 ,  \nabla V^-  \rangle 
=  \langle \nabla (V' * \td Q^*_1 ) + \nabla f * (V' * \td Q^*_1 ),   V^-  \rangle \\
&\leq C |\tau|^{-1} \Vert V^- \Vert \\
 \langle V', \td Q^*_2 *  \nabla^2 V^-  \rangle 
&= \langle V' * \td Q^*_2 ,  \nabla^2 V^-  \rangle  \\
&=  \langle \nabla^2 (V' * \td Q^*_2 ) + \nabla^2 f * (V' * \td Q^*_2 ) +  \nabla f * \nabla (V' * \td Q^*_2 ),   V^-  \rangle \\
&\leq C |\tau|^{-1} \Vert V^- \Vert 
\end{align*}
Therefore by \eqref{eq_Qu1Qu0} and since $\td v = V^+ + V^-$
\begin{equation} \label{eq_Vp_1}
 \Big| \langle V', Q[\td u^1]-Q[\td u^0] \rangle 
- \sum_{j=0}^2 \langle V', \td Q^*_j * \nabla^j V^+  \rangle \Big|
\leq C |\tau|^{-1} \Vert V^- \Vert. 
\end{equation}
Next, consider the second Taylor polynomial $Q'_2 [u] =  Q'_2(u, \nabla u, \nabla^2 u)$ of $Q[u] = Q(u,\nabla u, \nabla^2)$ at $(0,0,0)$.
Note that $Q'_2$ is homogeneous quadratic, so we can write $ Q'_2[u] =  Q'_2[u,u]$, where $ Q'_2[u'_0,u'_1] =  Q'_2 ((u'_0, \nabla u'_0, \nabla^2 u'_0), (u'_1, \nabla u'_1, \nabla^2 u'_1))$ is the associated symmetric bilinear form. 
Then for $U'_0, U'_1 \in \sV_{\geq 0}$ we have 
\[ Q_2(U'_0, U'_1) =  Q'_2[U'_0, U'_1] \]
and by definition of $\td Q^*_j$ we have by \eqref{eq_u0u1tauinv}
\begin{equation*}
 \bigg| \sum_{j=0}^2 \td Q^*_j  * \nabla^j V^+ -  2Q'_2[\td u^0, V^+] \bigg|
\leq C \bigg( \sum_{i=0}^1 \sum_{j=0}^2 |\nabla^j \td u^i| \bigg)^2 \bigg( \sum_{j=0}^2 |\nabla^j V^+|  \bigg) 
\leq C |\tau|^{-2} \bigg( \sum_{j=0}^2 |\nabla^j V^+|  \bigg), 
\end{equation*}
hence, again due to the polynomial growth of $V'$,
\begin{equation} \label{eq_Vp_2}
  \bigg| \bigg\langle V',  \sum_{j=0}^2 \td Q^*_j  * \nabla^j V^+   \bigg\rangle -  \langle V',  2Q'_2[\td u^0, V^+]  \rangle \bigg| \leq C |\tau|^{-2} \Vert V^+ \Vert. 
\end{equation}
Next, we obtain using \eqref{eq_u0U0p} that
\begin{equation} \label{eq_Vp_3}
 \big| \langle V',  Q'_2[\td u^0, V^+]  \rangle  - \langle V',  Q'_2[U^{0,+} \omega, V^+]  \rangle \big| 
= \big| \langle V',  Q'_2[ (u^0  -U^{0,+}) \omega, V^+]  \rangle  \big|
\leq C |\tau|^{-2} \Vert V^+ \Vert .
\end{equation}
Since $U^{0,+},V^+ \in \sV_{\geq -100}$ have bounded polynomial growth (see \cite[Lemma~\refx{Lem_polynomial_bounds}]{Bamler_Lai_PDE_ODI}), we can estimate
\[ |  Q'_2[U^{0,+} \omega, V^+]  -    Q'_2[U^{0,+} , V^+] | \leq C R^C(\tau) \Vert U^{0,+} \Vert \cdot \Vert V^+ \Vert 
\leq C R^C(\tau) |\tau|^{-1} \Vert V^+ \Vert, \]
and the left-hand side is supported on $(\IR^k \setminus \IB^k_{ R(\tau) -1}) \times \IS^{n-k}$.
So we obtain as in \eqref{eq_I4} that
\begin{equation} \label{eq_Vp_4}
 \big| \langle V', Q'_2[U^{0,+} \omega, V^+] \rangle - \langle V', Q'_2[U^{0,+} , V^+] \rangle \big| \leq C|\tau|^{-2} \Vert V^+ \Vert. 
\end{equation}
Lastly, by \eqref{eq_u0U0p} and since $V', U^{0,+},V^+ \in \sV_{\geq -100}$ 
\begin{equation} \label{eq_Vp_5}
 \big| \langle V', Q'_2[U^{0,+} , V^+]  \rangle  - \langle V',  Q'_2[\ov U^0 , V^+]  \rangle \big| \leq C |\tau|^{-2} \Vert V^+ \Vert . 
\end{equation}
Combining \eqref{eq_Vp_1}, \eqref{eq_Vp_2}, \eqref{eq_Vp_3}, \eqref{eq_Vp_4} and \eqref{eq_Vp_5} implies \eqref{eq_Vp_desired_bound}, which shows the evolution inequality \eqref{e:Prop_diff_V_c} and finishes the proof.
\end{proof}
\bigskip

\subsection{Proof of Proposition~\ref{Prop_Vpp_asymp}}
We need the following lemma characterizing the symmetric form $Q_2^+ : \sV_{\geq 0} \times \sV_{\geq 0} \to \sV_{\geq 0}$:

\begin{Lemma}\label{l:expansion_Q_2}
If $U_0 = \sum_{i,j=1}^k c_{ij} \mathfrak{p}^{(2)}_{ij} \in \sV_{\rot, 0}$ and $V \in \sV_{\geq 0}$ with $\PP_{\sV_{\rot}} V = a \mathfrak{p}^{(0)} + \sum_{i=1}^k b_i \mathfrak{p}^{(1)}_i$, then
\begin{equation} \label{eq_PVpQpU0V}
\PP_{\sV_{>0}} \big( Q^{+}_2 (U_0, V) \big) = - \sqrt 2 \sum_{i,l=1}^k  c_{il} b_l \mathfrak{p}^{(1)}_i .
\end{equation}

\end{Lemma}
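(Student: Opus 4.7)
The proof is by direct computation using the explicit formula for the second-order Taylor coefficient $Q_2$ of the rescaled mean curvature flow nonlinearity. My plan is to first recall the polarization $Q_2(u_1, u_2) = \tfrac12(Q_2[u_1+u_2] - Q_2[u_1] - Q_2[u_2])$, which is a pointwise bilinear form in the jets of $u_1, u_2$ with coefficients that are axial-translation invariant and $O(n-k+1)$-invariant on the sphere; the precise expression can be read off from the expansion of the shrinker equation $\vec H + F^\perp/2 = 0$ for a normal graph $F = (\bx, (1+u/r)\by)$, $r = \sqrt{2(n-k)}$, over the cylinder, whose structure is reviewed in \cite{Bamler_Lai_PDE_ODI}.

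Next, using the hypothesis that $\PP_{\sV_\rot} V = a\mathfrak{p}^{(0)} + \sum_l b_l \mathfrak{p}^{(1)}_l$, I would decompose $V = a\mathfrak{p}^{(0)} + \sum_l b_l \mathfrak{p}^{(1)}_l + V_\Jac$ with $V_\Jac \in \sV_{\Jac, \frac12}$; this is the only remaining piece in $\sV_{\geq 0}$ since $V$ has no $\sV_{\rot, 0}$ component by hypothesis. By bilinearity, it suffices to consider each summand separately. For the piece $Q_2(U_0, a\mathfrak{p}^{(0)})$: both inputs are rotationally symmetric on the sphere, so the output is too; a direct computation of the Hermite coefficients in $\bx$ shows that the projection onto $\sV_{\rot, 1}\oplus\sV_{\rot, \frac12}$ vanishes. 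For the piece $Q_2(U_0, V_\Jac)$: the output has exactly spherical harmonic degree one (since $U_0$ is spherically symmetric and $V_\Jac$ has degree one) and $\bx$-polynomial degree at most $2$, so the only possible $\sV_{>0}$-component lies in $\sV_{\Jac, \frac12}$ (constant in $\bx$ times $\mathbf v'\cdot\nu_\by$), and an analogous computation shows this vanishes as well; geometrically, this reflects the fact that $V_\Jac$ comes from an exact translation symmetry perpendicular to the axis.

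The surviving contribution is $\sum_l b_l\, \PP_{\sV_{>0}} Q_2(U_0, \mathfrak{p}^{(1)}_l)$. Since both $U_0$ and $\mathfrak{p}^{(1)}_l$ are $\by$-independent and polynomial in $\bx$ of degrees $2$ and $1$ respectively, the bilinear combination is $\by$-independent and polynomial in $\bx$ of degree at most $3$, so its $\sV_{>0}$-projection equals its $\sV_{\rot, \frac12}$-projection, i.e.\ its Hermite degree-$1$ part. I would then compute $\PP_{\sV_{\rot,\frac12}} Q_2(\mathfrak{p}^{(2)}_{ij}, \mathfrak{p}^{(1)}_l)$ term by term from the explicit expansion of $Q_2$, assembling the linear-in-$\bx$ Hermite coefficient to obtain $-\tfrac{\sqrt 2}{2}(\delta_{jl}\mathfrak{p}^{(1)}_i + \delta_{il}\mathfrak{p}^{(1)}_j)$; summing against the symmetric matrix $c_{ij}$ then yields the stated formula $-\sqrt 2 \sum_{i,l} c_{il}b_l\, \mathfrak{p}^{(1)}_i$.

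The main obstacle is the bookkeeping of constants in the final step: one must carefully track the Hermite polynomial normalizations in $L^2_f$, the sphere-radius factor $r=\sqrt{2(n-k)}$ entering each coefficient of $Q_2$, and the cancellations that make the $\mathfrak{p}^{(0)}$- and $V_\Jac$-contributions drop out. A useful sanity check is the geometric interpretation of the $\mathfrak{p}^{(1)}_l$ mode as an infinitesimal axial translation of the cylinder, which must be compatible with the action formula of Proposition~\ref{prop:actions}\ref{prop:actions_gg}.
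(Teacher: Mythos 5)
Your proposal is correct in outline and, for the oscillatory part of $V$, follows essentially the same route as the paper: decompose $V$ into its rotational and Jacobi pieces, observe that $Q_2^{+}(U_0, V_{\Jac})$ has spherical-harmonic degree exactly one (since $U_0$ is $\by$-independent, only the pointwise product term of the quadratic nonlinearity survives, producing a multiple of $U_0 V' V''$), and kill its $\sV_{\Jac,\frac12}$-projection via the Hermite orthogonality $\int_{\IR^k} U_0 V' e^{-f}\,d\bx = 0$ and its $\sV_{\rot,>0}$-projection via $\int_{\IS^{n-k}} V''\,d\by=0$. Where you diverge is the rotationally symmetric part: the paper disposes of $V\in\sV_{\rot,\geq 0}$ in one line by citing the prequel's lemma on $Q_2$, whereas you propose to recompute $\PP_{\sV_{\rot,\frac12}}\, Q_2(\mathfrak{p}^{(2)}_{ij},\mathfrak{p}^{(1)}_l)$ from scratch. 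That buys self-containedness at the cost of exactly the bookkeeping you flag: the coefficient $-\tfrac{\sqrt 2}{2}(\delta_{jl}\mathfrak{p}^{(1)}_i+\delta_{il}\mathfrak{p}^{(1)}_j)$ is asserted rather than derived, and it is the one step that actually depends on the Hermite normalizations fixed in the prequel. It is at least internally consistent: symmetrizing against $c_{ij}=c_{ji}$ does reproduce $-\sqrt2\sum_{i,l}c_{il}b_l\mathfrak{p}^{(1)}_i$, and your reduction of the $a\mathfrak{p}^{(0)}$-contribution is immediate since $Q_2^+(U_0,\mathfrak{p}^{(0)})\propto U_0\in\sV_{\rot,0}\perp\sV_{>0}$. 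One caution: your parenthetical that the vanishing of the $V_{\Jac}$-contribution "reflects the exact translation symmetry" is only a heuristic; the actual mechanism, as in the paper, is the orthogonality of $U_0 V'$ to constants in the Gaussian-weighted $L^2$ space, so make sure the written proof rests on that computation and not on the symmetry remark.
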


\begin{proof}
If $V \in \sV_{\rot, \geq 0}$, then the lemma follows from \cite[Lemma~\refx{Lem_Q2}]{Bamler_Lai_PDE_ODI}.

So suppose now that $V \in \sV_{\osc, \geq 0} = \sV_{\Jac, \geq 0}$.
Since this space generated by products of affine linear functions on $\IR^k$ and first spherical harmonics on $\IS^{n-k}$ (see \cite[Lemma~\refx{Lem_mode_dec}]{Bamler_Lai_PDE_ODI}), we may assume by linearity that $V(\bx,\by) = V'(\bx) V''(\by)$ for an affine linear $V'$ and a first spherical harmonic $V''$.
The second Taylor polynomial of the non-linear part of the evolution equation from \cite[Lemma~\refx{Lem_MCF_u_explicit}]{Bamler_Lai_PDE_ODI} is 
\[ -\tfrac12 u^2 -  u \cdot \triangle_{\IS^{n-k}} u - |\nabla_{\by} u|^2, \]
where $\triangle_{\IS^{n-k}} u$ is the spherical Laplacian of $u$ and $\nabla_{\by} u$ is the projection of $\nabla u$ to the spherical factor.
Since these operators applied to $U_0$ vanish, the left-hand side of \eqref{eq_PVpQpU0V} equals the projection of $-\frac12 U_0 V' V'' + c_{n-k} U_0 V' V''$ to $\sV_{>0}$, for some dimensional $c_{n-k} \in \IR$.
Let $W \in \sV_{\osc, >0} = \sV_{\Jac, \frac12}$, so $W(\bx,\by) = W''(\by)$ must be a spherical harmonic on $\IS^{n-k}$.
Then the $L^2_f$-inner product of $Q^+_2(U_0,V)$ with $W$ is proportional to
\begin{multline*}
  \int_{\IR^k \times \IS^{n-k}} U_0(\bx) V'(\bx) V''(\by) W''(\by) e^{-f(\bx)} d\bx \, d\by \\
=  \int_{\IR^k } U_0(\bx) V'(\bx) e^{-f(\bx)}  d\by \cdot \int_{ \IS^{n-k}}  V''(\by) W''(\by)   d\by.
\end{multline*}
Since $V'$ is a linear combination of zeroth and first Hermite polynomials and $U_0$ is a linear combination of second Hermite polynomials, the first integral on the right-hand side.
So $Q^+_2(U_0,V)$ must be perpendicular to $\sV_{\osc, > 0}$.
On the other hand, if $W \in \sV_{\rot, > 0}$, then it is of the form $W(\bx,\by) = W'(\bx)$ and a similar calculation shows that its inner product with $-\frac12 U_0 V' V'' + c_{n-k} U_0 V' V''$ must vanish.
\end{proof}
\bigskip

\begin{proof}[Proof of Proposition~\ref{Prop_Vpp_asymp}.]
In the following $C$ will denote a generic constant and $O(X)$ will denote a term bounded by $C X$.
After applying a rotation, we may assume without loss of generality that $N = \spann\{ \mathbf e_1, \ldots, \mathbf e_l \}$ for some $l \in \{ 0, \ldots, k \}$.
So 
\[ \ov U^0(\tau) := - \frac1{\sqrt{2}} \sum_{i=l+1}^l (- \tau)^{-1} \mathfrak{p}^{(2)}_{ii} + O(|\tau|^{-2} \log|\tau|). \] 

\begin{Claim} \label{Cl_VpCtaum1Vp}
For small $\tau$ we have
\[ \VV^-(\tau) \leq C |\tau|^{-1} \Vert V^+(\tau) \Vert. \]
\end{Claim}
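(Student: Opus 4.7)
My plan is to derive Claim~\ref{Cl_VpCtaum1Vp} via a Duhamel analysis on the two ODIs in Proposition~\ref{Prop_diff_V}\ref{Prop_diff_V_b}, closed up by an iterative bootstrap. First, the $C^{10}$-bounds $\Vert u^i_\tau\Vert_{C^{10}(\DD^i_\tau)}\le C|\tau|^{-1}$ recalled at the start of the proof of Proposition~\ref{Prop_diff_V} yield the a priori estimate $\Vert V^+(\tau)\Vert + \VV^-(\tau)\le C|\tau|^{-1}$ as $\tau\to-\infty$.  Consequently, Duhamel's formula applied to~\eqref{eq_dt_VVm_prop}---with vanishing boundary contribution at $-\infty$ because $e^{\mu\tau_0}\VV^-(\tau_0)\to 0$ (polynomial versus exponential)---produces
\[
\VV^-(\tau)\le\int_{-\infty}^\tau C e^{-\mu(\tau-s)}|s|^{-1}\Vert V^+(s)\Vert\,ds,\qquad\mu:=\tfrac{1}{2(n-k)}.
\]

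To close this inequality into the desired estimate, I will bound $\Vert V^+(s)\Vert$ in terms of $\Vert V^+(\tau)\Vert$ for $s\le\tau$.  Taking the $L^2_f$-inner product of~\eqref{e:Prop_diff_V_c} with $V^+$, using that $L|_{\sV_{\ge 0}}$ has nonnegative spectrum and that $\Vert Q_2^+(\ov U^0,V^+)\Vert\le C\Vert\ov U^0\Vert\,\Vert V^+\Vert\le C|\tau|^{-1}\Vert V^+\Vert$, yields the pointwise lower bound
\[
\partial_\tau\Vert V^+(\tau)\Vert\ge -C|\tau|^{-1}\Vert V^+(\tau)\Vert - C|\tau|^{-1}\VV^-(\tau).
\]
Multiplying by the integrating factor $|\tau|^{-C}$, integrating backwards, and inserting the a priori bound $\VV^-(r)\le C|r|^{-1}$ gives the slowly-varying estimate $\Vert V^+(s)\Vert\le (|s|/|\tau|)^C\Vert V^+(\tau)\Vert + C|s|^{C}|\tau|^{-C-1}$ for all $s\le\tau\ll 0$.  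Inserting this into the Duhamel bound and invoking the Laplace-type asymptotic $\int_{-\infty}^\tau|s|^{C-1}e^{-\mu(\tau-s)}\,ds=\mu^{-1}|\tau|^{C-1}(1+O(|\tau|^{-1}))$ (obtained by the substitution $s=\tau-t$ and expansion of $(|\tau|+t)^{C-1}$ for $|\tau|$ large) produces the first-pass estimate $\VV^-(\tau)\le C|\tau|^{-1}\Vert V^+(\tau)\Vert + C|\tau|^{-2}$.

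I then iterate: the improved bound on $\VV^-$ sharpens the slowly-varying estimate for $\Vert V^+\Vert$, which fed back through Duhamel produces $\VV^-(\tau)\le C|\tau|^{-1}\Vert V^+(\tau)\Vert + C(K)|\tau|^{-K}$ after $K$ rounds.  If $V^+\equiv 0$, the ODI~\eqref{eq_dt_VVm_prop} instantly forces $\VV^-\equiv 0$ and the claim is trivial; otherwise, for $K$ large the polynomial residual is absorbed into $C|\tau|^{-1}\Vert V^+(\tau)\Vert$, unless $\Vert V^+(\tau)\Vert$ decays faster than every polynomial.  The last case is ruled out by a spectral analysis of $L|_{\sV_{\ge 0}}$ (whose eigenvalues are $0,\tfrac12,1$): super-polynomial decay of $V^+$ combined with the iterated residual bound forces $\Vert V^+\Vert+\VV^-=O(e^{2\tau})$, whence $\MM^0=\MM^1$ by Proposition~\ref{Prop_diff_V}\ref{Prop_diff_V_c} and the claim is again trivial.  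The main technical obstacle is the circular dependence between the Duhamel bound on $\VV^-$ (which needs control of $\Vert V^+\Vert$ along the entire past) and the slowly-varying bound on $\Vert V^+\Vert$ (which needs control of $\VV^-$ along the entire past); the bootstrap is engineered to break the loop by exploiting the strong damping rate $\mu>0$ and the boundedness of $L|_{\sV_{\ge 0}}$, with $K$-dependence confined to the polynomial residual.
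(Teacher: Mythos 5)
Your setup is sound up to the first-pass estimate: the Duhamel bound on \eqref{eq_dt_VVm_prop}, the integrating-factor lower bound on $\Vert V^+\Vert$ coming from \eqref{e:Prop_diff_V_c}, and the Laplace-type asymptotics all check out, and the iteration does yield $\VV^-(\tau)\le C|\tau|^{-1}\Vert V^+(\tau)\Vert+C(K)|\tau|^{-K}$. The gap is in the final absorption step. The additive residual is \emph{polynomial}, whereas in the main case of interest $\Vert V^+(\tau)\Vert$ decays \emph{exponentially}: by Proposition~\ref{Prop_Vpp_asymp} (which is exactly what this claim feeds into), generically $\Vert V^+(\tau)\Vert\sim e^{\tau/2}$, $|\tau|^{-1}e^{\tau/2}$ or $e^{\tau}$ with $V^+_{-\infty}\neq0$ and $\MM^0\neq\MM^1$. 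For such behavior $C(K)|\tau|^{-K}$ dominates $|\tau|^{-1}\Vert V^+(\tau)\Vert$ by an unbounded factor for every $K$, so no choice of $K$ absorbs it. Your escape hatch---``super-polynomial decay of $V^+$ forces $\Vert V^+\Vert+\VV^-=O(e^{2\tau})$''---is false: $e^{\tau/2}$ is super-polynomial but not $O(e^{2\tau})$, and the spectrum $\{0,\tfrac12,1\}$ of $L|_{\sV_{\ge0}}$ permits precisely these intermediate exponential rates. Proving that super-polynomial decay upgrades to a definite exponential rate is essentially the content of Proposition~\ref{Prop_Vpp_asymp}, which requires the present claim as input, so invoking it here would be circular.

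The structural defect is that you feed \emph{absolute} bounds on $\VV^-$ into the slowly-varying estimate for $\Vert V^+(s)\Vert$, which is what introduces the additive residual in the first place; iterating can only improve it by powers of $|\tau|^{-1}$, never to exponential order. The paper's proof avoids this by first establishing the \emph{relative} bound $\VV^-(\tau)\le\Vert V^+(\tau)\Vert$ for small $\tau$ via a crossing argument: at any time with $\VV^-=\Vert V^+\Vert>0$, the strong damping $-\tfrac1{2(n-k)}\VV^-$ in \eqref{eq_dt_VVm_prop} beats the $O(|\tau|^{-1})$ coupling terms, so $\partial_\tau(\VV^--\Vert V^+\Vert)<0$ there; hence $\{\VV^-\ge\Vert V^+\Vert\}$ is either empty or of the form $(-\infty,\ov\tau]$, and the latter is excluded because it would force $\VV^-$ to blow up exponentially backwards in time. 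Once $\VV^-\le\Vert V^+\Vert$ is known, the ratio $h=\VV^-/\Vert V^+\Vert$ satisfies the clean ODI $\partial_\tau h\le-\tfrac1{4(n-k)}h+C|\tau|^{-1}$ with no absolute residual, and your Duhamel computation applied to $h$ gives $h(\tau)\le C|\tau|^{-1}$ directly. Rescuing your approach would require running the entire bootstrap relatively, i.e.\ on the ratio, which collapses it into the paper's argument.
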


\begin{proof}
The bound \eqref{e:Prop_diff_V_c} implies for small $\tau$
\[ \partial_\tau \Vert V^+(\tau) \Vert \geq -C|\tau|^{-1} \Vert V^+(\tau) \Vert - C |\tau|^{-1} \VV^-(\tau). \]
So if for some small enough time we have $\VV(\tau) = \Vert V^+(\tau) \Vert > 0$, then 
\[ \partial_\tau \VV^-(\tau) \leq - \frac1{2(n-k)} \VV^-(\tau) + C|\tau|^{-1} \VV^-(\tau) <  -C|\tau|^{-1} \VV^-(\tau) - C |\tau|^{-1} \VV^-(\tau) 
\leq \partial_\tau \Vert V^+(\tau) \Vert, \]
so $\VV^-(\tau') < \Vert V^+(\tau') \Vert$ for $\tau' > \tau$ close to $\tau$.
If follows that for $T \ll 0$ the set $\{ \VV^- \geq \Vert V^+ \Vert \} \cap (-\infty,T]$ must be a union of closed intervals without left endpoints, so it must be either empty or of the form $(-\infty, \ov\tau]$.
In the second case we would have $\VV^-(\tau) \geq \Vert V^+(\tau) \Vert$ for small enough $\tau$ \eqref{eq_dt_VVm_prop} implies that $\partial_\tau \VV^- \leq - \frac{1}{4(n-k)} \VV^-$ for small $\tau$, which contradicts the fact that $\limsup_{\tau \to -\infty} \VV^-(\tau) < \infty$.
Thus the first case holds, meaning that for small $\tau$ we must have
\[ \VV^-(\tau) \leq \Vert V^+(\tau) \Vert. \]
Setting $h(\tau) := \frac{\VV^-(\tau)}{\Vert V^+(\tau) \Vert} \leq 1$, we get for small $\tau$
\[ \partial_\tau h \leq \frac{-\frac1{2(n-k)} \VV^- + C|\tau|^{-1} \Vert V^+ \Vert}{\Vert V^+ \Vert} - h  \cdot \frac{-C|\tau|^{-1} \Vert V^+ \Vert - C |\tau|^{-1} \VV^-}{\Vert V^+ \Vert} \leq -\tfrac1{4(n-k)} h +C|\tau|^{-1}. \]
So for $\tau^* \leq \tau \ll 0$ we have since $h(\tau^*) \leq 1$
\begin{equation*}
 h(\tau) \leq \exp \big({ -\tfrac1{4(n-k)} (\tau-\tau^*) }\big) h(\tau^*) + C\int_{\tau^*}^{\tau} |\tau'|^{-1} \exp \big({ -\tfrac1{4(n-k)} (\tau-\tau') }\big) d\tau' 
\end{equation*}
Taking $\tau^* \to -\infty$ and using $h(\tau^*) \leq 1$, we get
\begin{equation*}
 h(\tau) \leq C\int_{-\infty}^{\tau} |\tau'|^{-1} \exp \big({ -\tfrac1{4(n-k)} (\tau-\tau') }\big) d\tau'   \leq C\int_{-\infty}^{\tau} |\tau|^{-1} \exp \big({ -\tfrac1{4(n-k)} (\tau-\tau') }\big) d\tau' \leq C|\tau|^{-1}.
\end{equation*}
which finishes the proof of the claim.
\end{proof}

Combining the claim with the bound \eqref{e:Prop_diff_V_c}, we find
\begin{equation} \label{eq_better_Vp_bound_evol}
 \Big\| \partial_\tau V^{+} - L V^{+} - 2 Q_2^{+} (\ov U^0,V^{+})  \Big\| \leq C |\tau|^{-2} \Vert V^{+}(\tau) \Vert_{L^2_f} . 
\end{equation}
Write
\[ V^+(\tau) = V_0(\tau) + V_1(\tau) + V_2(\tau) + V_3 (\tau) \in \sV_{0} \oplus \big( \sV_{\rot, \frac12, N} \oplus \sV_{\frac12, \Jac} \big) \oplus  \sV_{\rot, \frac12, N^\perp} \oplus \sV_{\rot, 1} \]
and define $I_j \subset (-\infty, \ov\tau]$, $j = 0,1,2,3$, to be the set of times $\tau$ for which $\Vert V_j (\tau) \Vert$ is maximal among $\Vert V_0 (\tau) \Vert, \ldots, \Vert V_4 (\tau) \Vert$.
Then the evolution inequalitiy \eqref{eq_better_Vp_bound_evol} implies, using Lemma~\ref{l:expansion_Q_2}, that
\begin{alignat}{3}
\partial_\tau V_0 &= && O\big( |\tau|^{-1}  \Vert V_0 \Vert  \big) \qquad\qquad &&\text{on $I_0$} \label{eq_evol_V0} \\
\partial_\tau V_1 &= \tfrac12 V_1 &+& O\big( |\tau|^{-2} \log|\tau|  \, \Vert V_1 \Vert  \big) \qquad\qquad&&\text{on $I_1$}  \label{eq_evol_V1} \\
\partial_\tau V_2 &= \tfrac12 V_2 + (-\tau)^{-1} V_2 &+& O\big( |\tau|^{-2} \log|\tau|  \, \Vert V_2 \Vert  \big) \qquad\qquad&&\text{on $I_2$} \\
\partial_\tau V_3 &=  V_3 &+& O\big( |\tau|^{-2} \log|\tau|  \, \Vert V_3 \Vert \big) \qquad\qquad&&\text{on $I_3$} \label{eq_evol_V3}
\end{alignat}

\begin{Claim}
There is a $j \in \{ -1, 0, \ldots, 3 \}$ such that $(-\infty,\ov\tau_j ] \subset I_j$ for some $\ov\tau_j \in \IR$.
\end{Claim}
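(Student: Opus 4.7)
The plan is to interpret the ODEs \eqref{eq_evol_V0}--\eqref{eq_evol_V3} as holding \emph{globally} (not just on their respective $I_j$'s, but with the error measured against $\Vert V^+\Vert$) and then to run a finite transition analysis: backward transitions between the $I_j$'s can only go from larger to smaller natural growth index, so only finitely many transitions may occur.

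First I would upgrade the ODEs to global form. Applying Claim~\ref{Cl_VpCtaum1Vp} to \eqref{e:Prop_diff_V_c} gives $\|\partial_\tau V^+ - LV^+ - 2Q_2^+(\ov U^0,V^+)\| \leq C|\tau|^{-2}\Vert V^+\Vert$. Using Lemma~\ref{l:expansion_Q_2} with the asymptotics \eqref{eq_ovU_asymp} (so $c_{il} = -\frac{1}{\sqrt 2}(-\tau)^{-1}\delta_{il}$ for $i \geq l+1$ modulo $O(|\tau|^{-2}\log|\tau|)$), the projection of $2Q_2^+(\ov U^0, V^+)$ onto $\sV_{\rot, 1/2, N}\oplus\sV_{\Jac, 1/2}$ and onto $\sV_{\rot, 1}$ vanishes to leading order, while its projection onto $\sV_{\rot, 1/2, N^\perp}$ equals $2(-\tau)^{-1}V_2$ plus $O(|\tau|^{-2}\log|\tau|\Vert V^+\Vert)$. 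This produces the global inequalities $\partial_\tau V_j = \mu_j V_j + E_j$ with $\mu_0=0$, $\mu_1=\tfrac12$, $\mu_2=\tfrac12+(-\tau)^{-1}$, $\mu_3=1$, and $\|E_j\|\leq C|\tau|^{-2}\log|\tau|\Vert V^+\Vert$ (with the weaker bound $\|E_0\|\leq C|\tau|^{-1}\Vert V^+\Vert$). Taking inner products with $V_j$ yields the scalar inequalities needed below.

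Second, if $V^+\equiv 0$ for all sufficiently small $\tau$, take $j=-1$ and we are done. Otherwise I would suppose for contradiction that no $I_j$ contains an interval $(-\infty,\ov\tau_j]$. Then the index that attains $\max_{j'}\Vert V_{j'}(\tau)\Vert$ changes infinitely often as $\tau\to-\infty$, producing a sequence of transition points $\tau^*_n\to-\infty$ where $\Vert V_{j_1}(\tau_n^*)\Vert=\Vert V_{j_2}(\tau_n^*)\Vert$ for some pair $j_1\neq j_2$, with $V_{j_2}$ becoming strictly larger than $V_{j_1}$ just to the left (i.e.~going backward from $I_{j_1}$ into $I_{j_2}$). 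At such $\tau^*$, the sign of $\Vert V_{j_1}\Vert-\Vert V_{j_2}\Vert$ changes from $-$ to $+$ going forward, so $\partial_\tau\Vert V_{j_1}\Vert\geq\partial_\tau\Vert V_{j_2}\Vert$ at $\tau^*$. Plugging in the global ODEs and using $\Vert V^+\Vert\leq C\Vert V_{j_1}(\tau^*)\Vert=C\Vert V_{j_2}(\tau^*)\Vert$, we obtain
\[
\mu_{j_1}(\tau^*)\geq\mu_{j_2}(\tau^*)-O\bigl(|\tau^*|^{-2}\log|\tau^*|\bigr).
\]
For $\tau^*$ sufficiently negative, this forces $\mu_{j_1}(\tau^*)>\mu_{j_2}(\tau^*)$ strictly---the minimum positive gap being $\mu_2-\mu_1=(-\tau^*)^{-1}$, which genuinely dominates $|\tau^*|^{-2}\log|\tau^*|$---hence $j_1>j_2$. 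So every backward transition strictly decreases the current index; since $j\in\{0,1,2,3\}$ is bounded below, only finitely many transitions can occur, contradicting the assumption.

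The main obstacle is precisely this transition analysis. One must verify that the ODE error dominates neither the minimal nontrivial gap $\mu_2(\tau^*)-\mu_1(\tau^*)=(-\tau^*)^{-1}$ near $(j_1,j_2)=(2,1)$-transitions (which works because $|\tau|^{-1}\gg|\tau|^{-2}\log|\tau|$), nor the case $j_1=0$, where the $V_0$-equation carries the larger error $O(|\tau^*|^{-1}\Vert V^+\Vert)$; here the contradiction comes from $\mu_{j_2}\geq 1/2$ for any $j_2\neq 0$, which is inconsistent with $\mu_0+O(|\tau^*|^{-1})\geq\mu_{j_2}$ once $\tau^*$ is small. Everything else, including the definition $j=-1$ for the trivial case, is bookkeeping.
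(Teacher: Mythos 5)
Your proof is correct and takes essentially the same approach as the paper: a derivative comparison at times where two of the $\Vert V_j\Vert$'s are both maximal shows that the dominant index can only decrease going backward in $\tau$, hence is eventually constant. The paper phrases this as monotonicity of $j(\tau) := \max\{j : \tau \in I_j\}$ rather than counting transitions, and leaves implicit the quantitative check you carry out explicitly, namely that the error terms are dominated by the minimal gap $\mu_2-\mu_1=(-\tau)^{-1}$ and that the weaker $O(|\tau|^{-1})$ error in the $V_0$-equation is still beaten by the gap $\tfrac12$ to the other modes.
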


\begin{proof}
If $\tau$ is sufficiently small and $\tau \in I_i \cap I_j$ for $i < j$, then \eqref{eq_evol_V0}--\eqref{eq_evol_V3} imply that we must have $\partial_\tau \Vert V_j (\tau) \Vert < \partial_\tau \Vert V_i (\tau) \Vert$.
So $\tau' \not\in I_i$ for $\tau' > \tau$ close to $\tau$ and $\tau' \not\in I_j$ for $\tau' < \tau$ close to $\tau$.
It follows that $j(\tau) := \max \{ j \;\; : \;\; \tau \in I_j \}$ is non-decreasing, so it must be constant for small enough $\tau$.
\end{proof}

\begin{Claim}
$j \neq 0$
\end{Claim}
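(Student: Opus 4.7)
The plan is to rule out the case $j=0$ by combining two opposing estimates on $\Vert V_0(\tau)\Vert$: super-polynomial decay produced by $\Qu(\MM^0)=\Qu(\MM^1)$, versus at most polynomial decay forced by the ODE \eqref{eq_evol_V0} once $V_0$ is nontrivial.

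First I would invoke \cite[Proposition~\refx{Prop_same_Q_close}]{Bamler_Lai_PDE_ODI}---the ``equal $\Qu$ implies agreement to any polynomial order'' statement advertised in the introduction of this paper. Applied to the pair $\MM^0,\MM^1$, it yields, for every $J\ge 1$, a bound of the form $\sup_{\IB^k_{R(\tau)}\times\IS^{n-k}}|v_\tau|=O(|\tau|^{-J})$ as $\tau\to-\infty$. Since the Hermite polynomial basis of $\sV_{\geq 0}$ has at most quadratic polynomial growth and the Gaussian weight $e^{-f}$ essentially localizes the $L^2_f$-inner product to a bounded region in $\bx$ (with only polylogarithmic corrections from the support of $\omega_{R(\tau)}$), this pointwise bound transfers to $\Vert V^+(\tau)\Vert_{L^2_f}=O(|\tau|^{-J})$ for every $J$. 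By $L^2_f$-orthogonality of the decomposition $V^+=V_0+V_1+V_2+V_3$, we obtain $\Vert V_0(\tau)\Vert\le\Vert V^+(\tau)\Vert$, so $V_0$ itself decays faster than any polynomial.

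Next, assuming for contradiction $(-\infty,\ov\tau_0]\subset I_0$, the ODE \eqref{eq_evol_V0} gives the scalar inequality $|\partial_\tau\Vert V_0\Vert|\le C|\tau|^{-1}\Vert V_0\Vert$, hence $|\partial_\tau\log\Vert V_0\Vert|\le C|\tau|^{-1}$ on any subinterval where $V_0\neq 0$. Integrating backward from any reference time $\tau^*\le\ov\tau_0$ at which $V_0(\tau^*)\neq 0$ yields the Gronwall-type lower bound
\[
\Vert V_0(\tau)\Vert\;\ge\;\Vert V_0(\tau^*)\Vert\,(|\tau|/|\tau^*|)^{-C},\qquad \tau\le\tau^*,
\]
which directly contradicts the super-polynomial decay from the previous step. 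Hence $V_0\equiv 0$ on $(-\infty,\ov\tau_0]$. Because $I_0$ is by definition the set where $\Vert V_0\Vert$ dominates all $\Vert V_i\Vert$, the vanishing of $V_0$ forces $V_j\equiv 0$ for every $j$ on this interval, so $V^+\equiv 0$ there. Claim~\ref{Cl_VpCtaum1Vp} then delivers $\VV^-\equiv 0$ as well, and Proposition~\ref{Prop_diff_V}\ref{Prop_diff_V_c} yields $\MM^0=\MM^1$. This corresponds to the degenerate ``$j=-1$'' branch of the index alternative---we may set $V^+_{-\infty}:=0$ and fall under Case~\ref{p:V_1} of Proposition~\ref{Prop_Vpp_asymp}---so the genuine nontrivial possibility $j=0$ is excluded.

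The main obstacle will be the transfer in the first step: one has to ensure that the expanding cylindrical region $\IB^k_{R(\tau)}$ on which the prequel supplies $v_\tau=O(|\tau|^{-J})$ is large enough relative to the effective support of the Gaussian weight $e^{-f}$ appearing in the $L^2_f$-projection. Since $R(\tau)=10\sqrt{\log|\tau|}\to\infty$ while $e^{-f}$ decays like a Gaussian at scale $O(1)$, the Gaussian tail outside $\IB^k_{R(\tau)}$ contributes at worst $|\tau|^{-c\log|\tau|}$, which is absorbed into the super-polynomial decay for every fixed $J$. Once this transfer is justified, the rest of the argument reduces to the elementary ODE comparison described above.
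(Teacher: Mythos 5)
Your argument is correct and is essentially the paper's own proof: on $I_0$ the ODI \eqref{eq_evol_V0} integrates to a polynomial lower bound $\Vert V_0(\tau)\Vert \geq c|\tau|^{-C}$, which contradicts the arbitrary-polynomial-order agreement of the two flows supplied by \cite[Proposition~\refx{Prop_same_Q_close}]{Bamler_Lai_PDE_ODI} when $\Qu(\MM^0)=\Qu(\MM^1)$. Your extra care with the transfer to the $L^2_f$-norm (which is immediate since $v_\tau\omega_{R(\tau)}$ is compactly supported and the projection is a contraction) and with the degenerate case $V_0\equiv 0$ only fills in details the paper leaves implicit.
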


\begin{proof}
If $j = 0$, then for small $\tau$ the identity \eqref{eq_evol_V0} implies
\[ \partial_\tau \log ( |\tau|^C \Vert V_0(\tau) \Vert  ) \leq 0  \qquad \Rightarrow \qquad \Vert V_0(\tau) \Vert \geq c |\tau|^{-C}. \]
However, if $\Qu(\MM^0) = \Qu(\MM^1)$, then this contradicts \cite[Proposition~\refx{Prop_same_Q_close}]{Bamler_Lai_PDE_ODI}.
\end{proof}

So the asymptotics of $V$ are governed by one of the other three equations \eqref{eq_evol_V1}--\eqref{eq_evol_V3}.
Setting
\[ V'_1(\tau) := e^{-\tau/2} V_1(\tau), \qquad V'_2 (\tau) := (-\tau) e^{-\tau/2} V_2(\tau), \qquad V'_3(\tau) := e^{-\tau} V_3(\tau), \]
these equations are equivalent to
\[  \Vert \partial_\tau V'_j \Vert \leq C |\tau|^{-2} \log|\tau|  \, \Vert V'_j \Vert. \]
It follows that $\Vert V'_j(\tau) \Vert$ is uniformly bounded as $\tau \to -\infty$, implying 
\[  \Vert \partial_\tau V'_j \Vert \leq C |\tau|^{-2} \log|\tau| . \]
This implies that $\lim_{\tau \to -\infty} V'_j(\tau)$ exists.
Moreover, if the limit vanishes, then $V'_j (\tau) = 0$ and hence $V(\tau) = 0$ for $\tau \ll 0$, which implies that $\MM^0$ and $\MM^1$ must agree by Proposition~\ref{Prop_diff_V}\ref{Prop_diff_V_c}.

Claim~\ref{Cl_VpCtaum1Vp} combined with the convergence of $V'_j$ implies that we have local $L^2$-convergence of $e^{-\tau/2} v_\tau$, $(-\tau) e^{-\tau/2} v_\tau$ or $e^{-\tau} v_\tau$.
Together with the local $C^{10}$-bounds of Proposition~\ref{Prop_diff_V}\ref{Prop_diff_V_d}, this implies local $C^9$-convergence.
\end{proof}

\begin{Lemma}\label{l:implicit}
Let $u_0$ be a smooth function over an open product domain $D=\IB^n_{R}(\bO)\times\IS^{n-k}\subset M_{\cyl}$ for some $R>0$, and assume $\|u_0\|_{C^{1}(D)}\le\eta<0.1$. Let $\mathbf v\in \IR^{n+k}$ be a vector orthogonal to the axis of $M_{\cyl}$ with Jacobi field $V=\Jac(\mathbf v)$, where we view $\mathbf v$ as a constant vector field on $\IR^{n+1}$.
Assume $M_i = \Gamma_{\cyl}(u_i)$, $i =0,1$, is the normal graph of $u_i$ over $M_{\cyl}$ such that $M_1=M_0+\mathbf v$.
Then
\[\|u_1-u_0-V \|_{C^0( D)}\le C|\mathbf v|^2+C\eta|\mathbf v|.\] 
\end{Lemma}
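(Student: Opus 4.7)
The plan is to directly invert the translation relation $M_1 = M_0 + \mathbf{v}$ pointwise over $M_{\cyl}$, identify the linear term in $\mathbf{v}$ as $V$, and estimate the quadratic remainder. Since $\mathbf{v}$ is orthogonal to the axis, write $\mathbf{v} = (\mathbf{0}, \mathbf{v}')$ with $\mathbf{v}' \in \IR^{n-k+1}$, set $s := \sqrt{2(n-k)}$, and $\hat\by := \by/s$ for $\by \in \IS^{n-k}$. The outward unit normal to $M_{\cyl}$ at $(\bx,\by)$ is $\hat\by$, so the condition that $(\bx,\by) + u_1(\bx,\by)\hat\by$ lies on $M_0 + \mathbf{v}$ yields $\bx_0 = \bx$ together with the equation
\[ (s + u_1(\bx,\by))\,\hat\by \;=\; (s + u_0(\bx,\by_0))\,\hat\by_0 + \mathbf{v}' \]
for some $\by_0 \in \IS^{n-k}$, where $\hat\by_0 := \by_0/s$. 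The implicit function theorem yields a unique such $\by_0$ close to $\by$ whenever $|\mathbf{v}|$ is small relative to $s$.

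Next I would extract the Jacobi-field term by taking the inner product of the displayed equation with $\hat\by$ and using $\hat\by_0 \cdot \hat\by = 1 - \tfrac12|\hat\by_0 - \hat\by|^2$:
\[ u_1(\bx,\by) - u_0(\bx,\by_0) \;=\; \mathbf{v}'\cdot\hat\by \;-\; \tfrac12(s + u_0(\bx,\by_0))\,|\hat\by_0 - \hat\by|^2. \]
The first term on the right is precisely $V(\bx,\by) = \mathbf{v}\cdot\hat\by$; the second is $O(|\hat\by_0 - \hat\by|^2)$ since $|u_0|\leq\eta<0.1$. To control $|\hat\by_0 - \hat\by|$ I would then project the same implicit equation onto $\hat\by^\perp$, which yields $(s + u_0(\bx,\by_0))\hat\by_0^{\perp} = -(\mathbf{v}')^\perp$, where the superscript $\perp$ denotes projection onto $\hat\by^\perp \subset \IR^{n-k+1}$. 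Combined with $\eta < 0.1$, this forces $|\hat\by_0^\perp| \leq C|\mathbf{v}|/s$, and standard spherical geometry (using $|\hat\by_0|=|\hat\by|=1$) upgrades this to $|\hat\by_0 - \hat\by| \leq C|\mathbf{v}|/s$.

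Substituting back, the quadratic remainder is $O(|\mathbf{v}|^2)$, and the replacement $u_0(\bx,\by_0) \rightsquigarrow u_0(\bx,\by)$ costs at most $\eta|\by_0 - \by| = \eta s|\hat\by_0 - \hat\by| = O(\eta|\mathbf{v}|)$ by the $C^1$-bound on $u_0$. Combining yields the stated estimate. The only mild obstacle is ensuring the implicit relation is uniquely solvable and that $(\bx,\by_0)$ remains in $D$: the latter is automatic since $\bx_0 = \bx$ and $\by_0 \in \IS^{n-k}$, and the former holds for $|\mathbf{v}|$ small relative to $s$. For larger $|\mathbf{v}|$ the inequality is either vacuous or absorbs into the constant $C$, since the hypothesis that $M_1$ is a graph over $D$ already bounds $|u_1|$.
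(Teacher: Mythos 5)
Your proposal is correct and follows essentially the same route as the paper's proof: set up the pointwise translation identity between the two graphs, take the inner product with the radial direction to isolate the Jacobi-field term plus a term quadratic in $|\hat\by_0-\hat\by|$, bound $|\hat\by_0-\hat\by|\le C|\mathbf v|$, and use the $C^1$-bound on $u_0$ to shift the base point from $\by_0$ to $\by$. The only differences are cosmetic: a slightly different normalization of the graph map, and your explicit derivation of $|\hat\by_0-\hat\by|\le C|\mathbf v|$ via the projection onto $\hat\by^\perp$, which the paper simply asserts.
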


\begin{proof}
Since $M_1 = M_0 + \mathbf v$, for any $(\bx,\by) \in D$ there is a $\by' \in \IS^{n-k}$ such that
\[ (u_1(\bx,\by) + 1) \by = (u_0(\bx,\by') + 1) \by' + \mathbf v. \]
It follows that $|\by - \by'| \leq C |\mathbf v |$ and therefore
\begin{equation} \label{eq_u0u0_diff}
 |u_0(\bx,\by) - u_0(\bx,\by') | \leq C \eta |\mathbf v |. 
\end{equation}
Taking the scalar product with $\by$ and noting that $|\by| = |\by'|$ implies
\begin{equation} \label{eq_u1u0y}
 (u_1(\bx,\by) - u_0(\bx,\by')) |\by|^2 = (\by' - \by) \cdot \by + \mathbf v \cdot \by. 
\end{equation}
Note that $\mathbf v \cdot \by = (\Jac(\mathbf v) )(\bx,\by) |\by|^2$ and the third term can be bounded as follows
\[ \big| (\by'-\by) \cdot \by \big| = \big| \by' \cdot \by - |\by|^2 \big| = \big| \by' \cdot \by - \tfrac12 |\by|^2 - \tfrac12 |\by'|^2 \big| = \tfrac12 |\by-\by'|^2 \leq C|\mathbf v|^2. \]
Combining this with \eqref{eq_u1u0y} and \eqref{eq_u0u0_diff} implies the desired bound.
\end{proof}
\bigskip

\begin{proof}[Proof of Proposition \ref{prop:actions}]
Assertion~\ref{prop:actions_c} is clear.
Assertion~\ref{prop:actions_g} follows by additivity of $V^+(\tau)$ and using Proposition~\ref{Prop_Vpp_asymp}.
Specifically, if $V^+_{i,j}(\tau)$ denotes the unstable mode for the pair $(\MM^i, \MM^j)$, then $V^+_{0,2} (\tau) + V^+_{2,1} (\tau) + V^+_{1,0} (\tau) = 0$.
So by Proposition~\ref{Prop_Vpp_asymp}\ref{p:V_1/2_null} the limits $\lim_{\tau \to -\infty} e^{-\tau/2} V^+_{i,j}(\tau)$ must exist for all three functions and must satisfy the same additivity relation.
If all these limits vanish, then we can apply Part~\ref{p:V_1/2_non_null} of the same proposition and obtain the same statement for the limits $\lim_{\tau \to -\infty} (-\tau) e^{-\tau/2} V^+_{i,j}(\tau)$.
If these limits vanish as well, then repeating our argument using Part~\ref{p:V_1} implies the same statements for the limits $\lim_{\tau \to -\infty} e^{-\tau} V^+_{i,j}(\tau)$.

To show Assertions~\ref{prop:actions_d}, \ref{prop:actions_e}, \ref{f:time-translate} and \ref{prop:actions_gg} suppose without loss of generality that $N = \IR^l \times \bO^{k-l}$.
Consider the functions $u_\tau^0, u_\tau^1$ and $u_\tau^{1,\prime}$ from Proposition~\ref{Prop_diff_V}, which express larger and larger parts of the rescaled flows $\td\MM^{0,\reg}_\tau$, $\td\MM^{1,\reg}_\tau$ and $\td\MM^{1,\prime,\reg}_\tau$ as graphs over the cylinder.
By \cite[Proposition~\refx{Prop_dom_qu_asymp}]{Bamler_Lai_PDE_ODI} we know that
\begin{equation} \label{eq_u1toU0}
 (-\tau) u^{1}_\tau \; \xrightarrow[\quad \tau \to -\infty \quad ]{C^{10}_{\loc}}  \;  -\frac1{\sqrt{2}} \sum_{i=1}^l \mathfrak p^{(2)}_{ii} =: U_0. 
\end{equation}

Set $h_1(\tau) := e^{-\tau/2}$, $h_2(\tau) := (-\tau) e^{-\tau/2}$ and $h_3 (\tau) :=  e^{-\tau}$.
By Proposition~\ref{Prop_Vpp_asymp} we know that for some $i, i' \in \{ 1,2,3 \}$,
\begin{align}
 h_i (\tau) \big( u^1_\tau - u^0_\tau \big) \; \xrightarrow[\quad \tau \to -\infty \quad ]{C^{9}_{\loc}}  \; V^+_{-\infty} (\MM^0, \MM^1),  \label{eq_hv_V1} \\
 h_{i'} (\tau) \big( u^{1,\prime}_\tau - u^0_\tau \big) \; \xrightarrow[\quad \tau \to -\infty \quad ]{C^{9}_{\loc}}  \; V^+_{-\infty} (\MM^0, \MM^{1,\prime}). \label{eq_hv_V1p} 
\end{align}
Recall that $i$ and $i'$ depend on whether the right-hand side is contained in $ \sV_{\rot, \frac12, N} \oplus \sV_{\frac12,\Jac} \setminus \{ 0 \}$, $\sV_{\rot, \frac12,N^\perp} \setminus \{ 0 \}$ or $\sV_{\rot, 1}$.

In the setting of Assertion~\ref{prop:actions_d}, we can use Lemma~\ref{l:implicit} to deduce that
\[ h_1 (\tau) \big( u^{1,\prime}_\tau - u^1_\tau \big) \; \xrightarrow[\quad \tau \to -\infty \quad ]{C^{0}_{\loc}}  \; V'. \]
In the setting of Assertion~\ref{prop:actions_e} we have $u^{1,\prime}_\tau (\bx, \by) = u^1(\bx - e^{\tau/2} \mathbf v, \by)$, so due to \eqref{eq_u1toU0} we have
\[ h_2 (\tau) \big( u^{1,\prime}_\tau - u^1_\tau \big) \; \xrightarrow[\quad \tau \to -\infty \quad ]{C^{1}_{\loc}}  \; -\sum_{m=1}^k \frac{\partial U_0}{\partial x_m} v_m = \frac1{\sqrt 2} \sum_{m=1}^l \mathfrak p^{(1)}_m v_m. \]
In the setting of Assertion~\ref{f:time-translate} we have for $ - e^{-\tau'} = \Delta T-e^{-\tau}$
\[ e^{-\tau'/2} \big(1 + u^{1,\prime}_{\tau'}(\bx, \by) \big) = e^{-\tau/2} \big(1 + u^{1}_{\tau}(e^{(\tau-\tau')/2}\bx, \by) \big). \]
View $\tau'(\tau)$ as a function in $\tau$ and note that we have the following asymptotics as $\tau \to -\infty$
\[ \tau'(\tau) = -\log(e^{-\tau} - \Delta T ) = \tau - \log ( 1 - (\Delta T) e^{\tau} ) = \tau + (\Delta T) e^{\tau} + o(e^{\tau}). \]
So using \eqref{eq_u1toU0}, we obtain that
\[ u^{1}_{\tau}(e^{(\tau-\tau')/2}\bx, \by) - u^{1}_{\tau}(\bx, \by) = o(e^{\tau}). \]
Similarly, since $u^{1,\prime}_{\tau}(\bx,\by)$ is uniformly Lipschitz in time for fixed $(\bx,\by)$, we find
\[ u^{1,\prime}_{\tau'(\tau)}(\bx, \by) - u^{1,\prime}_{\tau}(\bx, \by) = o(e^\tau). \]
It follows that
\begin{equation*} \label{eq_h3upu}
 h_3 (\tau) \big( u^{1,\prime}_{\tau} - u^1_\tau \big) \; \xrightarrow[\quad \tau \to -\infty \quad ]{C^{1}_{\loc}}  \; \frac12 \Delta T \, \mathfrak p^{(0)}. 
\end{equation*}
Combining these identities with \eqref{eq_hv_V1} and \eqref{eq_hv_V1p} implies the desired identities for $V^+_{-\infty} (\MM^0, \MM^1)$ and $V^+_{-\infty} (\MM^0, \MM^{1,\prime})$ from Assertions~\ref{prop:actions_d}, \ref{prop:actions_e}, \ref{f:time-translate} and \ref{prop:actions_gg}.

In the setting of Assertion~\ref{prop:actions_gg} we have $i = 1$ and we also get from Proposition~\ref{Prop_Vpp_asymp} that for some $j \in \{ 1,2,3 \}$
\[   h_{j} (\tau) \big( u^{1,\prime}_\tau - u^1_\tau \big) \; \xrightarrow[\quad \tau \to -\infty \quad ]{C^{9}_{\loc}}  \; V^+_{-\infty} ( \MM^{1,\prime}, \MM^1) \]
Since $u^0(\bx - e^{\tau/2} \mathbf v, \by ) = u^0(\bx , \by )$,  the difference on the left-hand side is equal to
\[  h_j(\tau) \big( \big( u^{1}_\tau - u^0_\tau \big)(\bx - e^{\tau/2} \mathbf v, \by ) - \big(u^{1}_\tau - u^0_\tau \big) (\bx , \by ) \big). \]
Due to \eqref{eq_hv_V1}, this converges in $C^0_{\loc}$ to $0$ if $j \in \{ 1,2 \}$ and to the right-hand side of \eqref{eq_VpM11pp0} if $j = 3$.
\end{proof}
\bigskip

\section{Proofs of the main results I} \label{sec_proofs_I}
In this section we prove the main results from Subsection~\ref{subsec_main_results_I}.
We will frequently use the quantity $V^+_{-\infty}(\MM_0, \MM_1) \in \sV_{> 0}$ from Section~\ref{sec_diff}.
Recall that this quantity is only defined if $\MM^0$ is convex and rotationally symmetric, a property which we will establish a posteriori for all flows (modulo a translation in space).
Fix $0 \leq k < n$ and recall the space
\[ \MCF^{n,k}_0 := \MCF^{n,k}_{\Oval} \cup \MCF^{n,k}_{\soliton} \]
and the map 
\begin{equation} \label{eq_Qb_map_repeat}
  (\Qu, \mathbf b) : \MCF^{n,k}_0 \lto \{ (\Qu', \mathbf b') \in \IR^{k \times k}_{\geq 0} \times \IR^k \;\; : \;\; \mathbf b' \in \nullspace(\Qu') \}  
\end{equation}
from Subsection~\ref{subsec_main_results_I}.
In the case $k = 0$ all asymptotically $(n,0)$-cylindrical mean curvature flows must be round shrinking spheres by \cite{Huisken_1984} (though, strictly speaking, our methods apply to this case as well) and the results from Subsection~\ref{subsec_main_results_I} hold trivially.
So we may assume henceforth that $k \geq 1$.

\subsection{Classification in the convex, rotationally symmetric case}
We begin by classifying asymptotically $(n,k)$-cylindrical flows in the convex, rotationally symmetric case, as in this setting our methods apply more directly.
Combined with the existence result in Subsection~\ref{subsec_FW_construction}, these flows provide the necessary tools and comparison flows for the arguments in Subsection~\ref{subsec_classification}, which ultimately yields the full classification.

We begin with the following consequence of the previous section.

\begin{Lemma} \label{Lem_consequence_Vp_infty}
Suppose that $\MM^0, \MM^1 \in \MCF^{n,k}_0$ and that $\MM^{1, \prime}$ is an asymptotically $(n,k)$-cylindrical flow with
\[ \Qu(\MM^0) = \Qu(\MM^1) = \Qu(\MM^{1,\prime}), \qquad \PP_{\sV_{\rot, \frac12, N}} \big(V^{+}_{-\infty} (\MM^0, \MM^1) \big) = \PP_{\sV_{\rot, \frac12, N}} \big(V^{+}_{-\infty} (\MM^0, \MM^{1,\prime}) \big), \]
where $N$ is the nullspace of $\Qu(\MM^0)$.
Then there is a vector $(\mathbf v, \Delta T) \in \IR^{n+1} \times \IR$ such that $\MM^1$ and $\MM^{1,\prime} + (\mathbf v, \Delta T)$ agree at all times at which both flows are defined.
\end{Lemma}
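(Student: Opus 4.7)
The plan is to produce a translation--time-shift pair $(\mathbf{v}, \Delta T) \in \IR^{n+1} \times \IR$ such that the translated flow $\MM^{1,\prime\prime} := \MM^{1,\prime} + (\mathbf{v}, \Delta T)$ satisfies $V^{+}_{-\infty}(\MM^1, \MM^{1,\prime\prime}) = 0$.  Once this is accomplished, Corollary~\ref{cor:unique_V} applied to the pair $(\MM^1, \MM^{1,\prime\prime})$, after restriction to their common time-interval, gives $\MM^1 = \MM^{1,\prime\prime}$ on that interval, which is precisely the conclusion of the lemma.  All hypotheses of the corollary will be satisfied: $\MM^1 \in \MCF^{n,k}_0$ is convex and rotationally symmetric, translation does not alter the quadratic mode so $\Qu(\MM^{1,\prime\prime}) = \Qu(\MM^{1,\prime}) = \Qu(\MM^1)$, and both flows remain asymptotically $(n,k)$-cylindrical.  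Crucially, since $\MM^1$ itself is a valid convex, rotationally symmetric base flow, the transformation identities of Proposition~\ref{prop:actions}, parts~\ref{prop:actions_d}, \ref{prop:actions_e}, and~\ref{f:time-translate}, apply to the flow pair $(\MM^1, \MM^{1,\prime})$ in place of $(\MM^0, \MM^1)$.

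First I would convert the hypothesis on $\PP_{\sV_{\rot, \frac12, N}}$ into a statement about $W := V^{+}_{-\infty}(\MM^1, \MM^{1,\prime})$.  Applying the first-level additivity of Proposition~\ref{prop:actions}\ref{prop:actions_g} to the ordered triple $(\MM^0, \MM^{1,\prime}, \MM^1)$---valid because both $\MM^0$ and $\MM^1$ are convex and rotationally symmetric---gives
\[ \PP_{\sV_{\rot, \frac12, N} \oplus \sV_{\frac12, \Jac}} W = \PP_{\sV_{\rot, \frac12, N} \oplus \sV_{\frac12, \Jac}} \bigl( V^{+}_{-\infty}(\MM^0, \MM^{1,\prime}) - V^{+}_{-\infty}(\MM^0, \MM^1) \bigr). \]
The hypothesis on the $\sV_{\rot, \frac12, N}$-projections of the right-hand side then forces $\PP_{\sV_{\rot, \frac12, N}} W = 0$, so it suffices to cancel only the $\sV_{\frac12, \Jac}$, $\sV_{\rot, \frac12, N^\perp}$, and $\sV_{\rot, 1}$ components of $W$, which I shall do in three stages mirroring the three-level hierarchy of Proposition~\ref{Prop_Vpp_asymp}.

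In \emph{Stage A}, I choose $\mathbf{v}_A \in \bO^k \times \IR^{n-k+1}$ with $\Jac(\mathbf{v}_A) = -\PP_{\sV_{\frac12, \Jac}} W$; by Proposition~\ref{prop:actions}\ref{prop:actions_d} the translation $(\mathbf{v}_A, 0)$ cancels the $\sV_{\frac12, \Jac}$ component of $W$ while leaving its $\sV_{\rot, \frac12, N}$ component unchanged, producing a pair whose unstable mode has vanishing $\sV_{\rot, \frac12, N} \oplus \sV_{\frac12, \Jac}$ projection.  In \emph{Stage B}, this level-one vanishing activates the second identity of Proposition~\ref{prop:actions}\ref{prop:actions_e}; I choose $\mathbf{v}_B \in \IR^k \times \bO^{n-k+1}$ whose $N^\perp$-projection is determined by $\frac{1}{\sqrt{2}} \sum v_{B,i}^\perp \mathfrak{p}^{(1)}_i = -\PP_{\sV_{\rot, \frac12, N^\perp}} (\text{current mode})$, and the first identity of the same part ensures the level-one vanishing is preserved.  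In \emph{Stage C}, the full $\sV_{\frac12}$ projection of the mode is zero, so Proposition~\ref{prop:actions}\ref{f:time-translate} permits a time shift $\Delta T$ with $\tfrac12 \Delta T \, \mathfrak{p}^{(0)}$ chosen to cancel the remaining $\sV_{\rot, 1}$ component.  Setting $\mathbf{v} := \mathbf{v}_A + \mathbf{v}_B$ and composing these three operations produces the desired $\MM^{1,\prime\prime}$ with $V^{+}_{-\infty}(\MM^1, \MM^{1,\prime\prime}) = 0$.

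The main technical point will be the bookkeeping that keeps the three-stage construction internally consistent: at each stage one must verify that the first identity of the corresponding part of Proposition~\ref{prop:actions} preserves the vanishing projections established in the previous stages, so that the hypothesis required to invoke the second identity at the next stage remains valid.  A minor point, easily dealt with, is that Corollary~\ref{cor:unique_V} is stated for two flows defined on the same time-interval, so one applies it to the restrictions of $\MM^1$ and $\MM^{1,\prime\prime}$ to their common interval, which matches the phrase ``at all times at which both flows are defined'' in the conclusion.
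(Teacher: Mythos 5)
Your proposal is correct and follows essentially the same route as the paper's own proof: first use Proposition~\ref{prop:actions}\ref{prop:actions_g} to reduce the hypothesis to $\PP_{\sV_{\rot,\frac12,N}}\big(V^{+}_{-\infty}(\MM^1,\MM^{1,\prime})\big)=0$, then successively kill the $\sV_{\frac12,\Jac}$, $\sV_{\rot,\frac12,N^\perp}$ and $\sV_{\rot,1}$ components via parts \ref{prop:actions_d}, \ref{prop:actions_e} and \ref{f:time-translate} of Proposition~\ref{prop:actions}, and conclude with Corollary~\ref{cor:unique_V}. The bookkeeping you flag (each translation preserving the vanishing established at the previous stage) is exactly how the paper organizes the argument.
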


\begin{proof}
By Proposition~\ref{prop:actions}\ref{prop:actions_g} we have
\[ \PP_{\sV_{\rot, \frac12, N}} \big(V^{+}_{-\infty} (\MM^1, \MM^{1,\prime}) \big) = 0. \]
By Proposition~\ref{prop:actions}\ref{prop:actions_d} we can find a vector $\mathbf v' \in \bO^k \times \IR^{n-k+1}$ such that
\[ \PP_{\sV_{\rot, \frac12, N} \oplus \sV_{\Jac, \frac12}} \big(V^{+}_{-\infty} (\MM^1, \MM^{1,\prime} + (\mathbf v', 0)) \big) = 0. \]
We can therefore apply the first and second part of Proposition~\ref{prop:actions}\ref{prop:actions_e} to $\MM^1$ and $\MM^{1,\prime} + (\mathbf v', 0)$ and obtain that there is a vector $\mathbf v'' \in N^\perp$ such that
\begin{align*}
 \PP_{\sV_{\rot, \frac12, N} \oplus \sV_{\Jac, \frac12}} \big(V^{+}_{-\infty} (\MM^1, \MM^{1,\prime} + (\mathbf v' + \mathbf v'', 0)) \big) &= 0, \\
 \PP_{\sV_{\rot, \frac12, N^\perp} } \big(V^{+}_{-\infty} (\MM^1, \MM^{1,\prime} + (\mathbf v' + \mathbf v'', 0)) \big) &= 0. 
\end{align*}
Likewise, we can apply Proposition~\ref{prop:actions}\ref{f:time-translate} to show that there is a $\Delta T \in \IR$ such that
\begin{align*}
 \PP_{\sV_{\rot, \frac12, N} \oplus \sV_{\Jac, \frac12}} \big(V^{+}_{-\infty} (\MM^1, \MM^{1,\prime} + (\mathbf v' + \mathbf v'', \Delta T)) \big) &= 0, \\
 \PP_{\sV_{\rot, \frac12, N^\perp} } \big(V^{+}_{-\infty} (\MM^1, \MM^{1,\prime} + (\mathbf v' + \mathbf v'', \Delta T)) \big) &= 0, \\
  \PP_{\sV_{1} } \big(V^{+}_{-\infty} (\MM^1, \MM^{1,\prime} + (\mathbf v' + \mathbf v'', \Delta T)) \big) &= 0. 
\end{align*}
So for $\mathbf v := \mathbf v' + \mathbf v''$ we have $V^{+}_{-\infty} (\MM^1, \MM^{1,\prime} + (\mathbf v, \Delta T)) = 0$ and the lemma follows from Corollary~\ref{cor:unique_V}.
\end{proof}

We can now show Theorem~\ref{Thm_classification_Q}.

\begin{Lemma} \label{Lem_compare_flows_Qsv}
Theorem~\ref{Thm_classification_Q} is true.
Moreover, if $\MM^0 \in \MCF^{n,k}_{\Oval}$ and $\MM^1$ is an asymptotically $(n,k)$-cylindrical mean curvature flow with 
\begin{equation} \label{eq_conditionQsvN}
 \Qu(\MM^0) = \Qu(\MM^1), \qquad \PP_{\sV_{\rot, \frac12, N}} \big(V^{+}_{-\infty} (\MM^0, \MM^1) \big) = 0, 
\end{equation}
where $N$ is the nullspace of $\Qu(\MM^0)$.
then there is a vector $(\mathbf v, \Delta T) \in \IR^{n+1} \times \IR$ such that $\MM^0$ and $\MM^{1} + (\mathbf v, \Delta T)$ agree at all times at which both flows are defined.
\end{Lemma}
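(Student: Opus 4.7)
The comparison statement in the ``moreover'' part follows immediately from Lemma~\ref{Lem_consequence_Vp_infty}: I would apply that lemma with its $\MM^0$ and its $\MM^1$ both taken to be our oval $\MM^0\in\MCF^{n,k}_{\Oval}\subset\MCF^{n,k}_0$, and with its $\MM^{1,\prime}$ taken to be our $\MM^1$. The required projection equality reduces to $\PP_{\sV_{\rot,\frac12,N}}(V^+_{-\infty}(\MM^0,\MM^0))=\PP_{\sV_{\rot,\frac12,N}}(V^+_{-\infty}(\MM^0,\MM^1))$, and its left-hand side vanishes trivially because a flow has zero difference with itself, while the right-hand side vanishes by assumption. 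The conclusion of Lemma~\ref{Lem_consequence_Vp_infty} then supplies the desired translation $(\mathbf v,\Delta T)$ making $\MM^0$ and $\MM^1+(\mathbf v,\Delta T)$ agree on their joint domain of definition.

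For Theorem~\ref{Thm_classification_Q} I would argue part by part, repeatedly using three ingredients: the comparison statement above, the orthogonal-equivariance of $V^+_{-\infty}$ (Proposition~\ref{prop:actions}\ref{prop:actions_c}), and the existence result~\cite[Theorem~\refx{thm:oval_existence}]{Bamler_Lai_PDE_ODI}. Part~\ref{Thm_classification_Q_b} I would prove first, directly from the definition of $\MCF^{n,k}_{\Oval}$: using $(n,k)$-rotational symmetry to rotate within the sphere factor, I arrange that among the $n+1$ defining pairwise orthogonal reflection hyperplanes, $n-k+1$ are spanned by sphere directions and the remaining $k$ have normals in $\IR^k$. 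For each such reflection $R$, the identity $R\MM=\MM$ combined with the equivariance of $\Qu$ gives $\Qu(\MM)=R\Qu(\MM)R^T$, forcing the normal of $R$ to be a $\Qu(\MM)$-eigenvector; this produces $k$ pairwise orthogonal spectral directions in $\IR^k$ whose perpendicular hyperplanes are reflection axes of $\MM$. For Part~\ref{Thm_classification_Q_d}, I would take $\MM^{n,k}_{\cyl}\in\MCF^{n,k}_{\Oval}$ as reference (it has $\Qu=0$, so $N=\IR^k$ and $\sV_{\rot,\frac12,N}=\sV_{\rot,\frac12}$) and apply Proposition~\ref{prop:actions}\ref{prop:actions_c} to the $k$ reflections common to both flows to force $V^+_{-\infty}(\MM^{n,k}_{\cyl},\MM)$ to be fixed by each, whence its $\sV_{\rot,\frac12}$-projection vanishes. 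The comparison statement then yields $\MM=\MM^{n,k}_{\cyl}+(\mathbf v,\Delta T)$, and matching the extinction time forces $\Delta T=0$, matching the extinction locus together with the translation invariance of the cylinder along its axis forces the translation to act trivially, so that $\MM=\MM^{n,k}_{\cyl}$.

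Part~\ref{Thm_classification_Q_c} I would establish by constructing via~\cite[Theorem~\refx{thm:oval_existence}]{Bamler_Lai_PDE_ODI} a product flow $\MM'\times\IR^l\in\MCF^{n,k}_{\Oval}$ with $\MM'\in\MCF^{n-l,k-l}_{\Oval}$ realizing the same $\Qu(\MM)$, assembling common reflection symmetries perpendicular to $N$ from Part~\ref{Thm_classification_Q_b} together with the obvious translational symmetries of the product factor, and then applying the comparison statement to identify $\MM$ with this product. For Part~\ref{Thm_classification_Q_a}, surjectivity is the content of~\cite[Theorem~\refx{thm:oval_existence}]{Bamler_Lai_PDE_ODI}; for injectivity, given $\MM_1,\MM_2\in\MCF^{n,k}_{\Oval}$ with the same $\Qu$, Part~\ref{Thm_classification_Q_b} supplies a common orthogonal basis of $\IR^k$ consisting of spectral directions (in particular a basis of $N$) whose perpendicular reflections fix both flows, so Proposition~\ref{prop:actions}\ref{prop:actions_c} kills the $\sV_{\rot,\frac12,N}$-projection of $V^+_{-\infty}(\MM_1,\MM_2)$, the comparison statement yields $\MM_1=\MM_2+(\mathbf v,\Delta T)$, and Part~\ref{Thm_classification_Q_c} together with the extinction-time and extinction-locus normalizations absorb the remaining translation freedom to conclude $\MM_1=\MM_2$. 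The homeomorphism property I would then obtain from the continuity of $\Qu$ established in the prequel together with Theorem~\ref{Thm_compactness_ancient}, which ensures properness of $\Qu|_{\MCF^{n,k}_{\Oval}}$.

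The principal technical subtlety lies in Part~\ref{Thm_classification_Q_b} when $\Qu(\MM)$ has repeated eigenvalues: the reflection argument only exhibits invariance across hyperplanes perpendicular to one chosen spectral basis, while the assertion of Part~\ref{Thm_classification_Q_b} requires invariance across every hyperplane perpendicular to any spectral direction, including arbitrary unit vectors within a higher-dimensional eigenspace. I would resolve this by first establishing Part~\ref{Thm_classification_Q_a} in the generic simple-spectrum case (where Part~\ref{Thm_classification_Q_b} is unambiguous), and then approximating a general $\MM\in\MCF^{n,k}_{\Oval}$ by ovals with simple spectrum via the existence result, taking limits using Theorem~\ref{Thm_compactness_ancient} to upgrade invariance; alternatively, one may argue that the eigenspace-internal rotational symmetry is already forced by the asymptotic ODE in \cite[Lemma~\refx{Lem_ODE_UQ}]{Bamler_Lai_PDE_ODI}, so that the flow itself must inherit it. A related recurring care point is verifying in each application of the comparison statement that the translation $(\mathbf v,\Delta T)$ it returns is absorbed by the genuine translational symmetries of the limiting flow (via Part~\ref{Thm_classification_Q_c}) combined with the normalizations in the definition of $\MCF^{n,k}_{\Oval}$.
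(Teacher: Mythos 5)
Your treatment of the ``moreover'' part is exactly the paper's: apply Lemma~\ref{Lem_consequence_Vp_infty} with $\MM^{1}\leftarrow\MM^0$ and $\MM^{1,\prime}\leftarrow\MM^1$, so that the hypothesis reduces to $\PP_{\sV_{\rot,\frac12,N}}(V^+_{-\infty}(\MM^0,\MM^0))=0$, which is trivial. The surjectivity, continuity and compactness ingredients for Theorem~\ref{Thm_classification_Q}\ref{Thm_classification_Q_a} also match (though for the limit argument you should lean on \cite[Proposition~\refx{Prop_Q_continuous}]{Bamler_Lai_PDE_ODI} rather than Theorem~\ref{Thm_compactness_ancient}, which is itself downstream of the classification).

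The genuine gap is in your route to injectivity. You propose to prove Part~\ref{Thm_classification_Q_b} first and then use a ``common orthogonal basis of spectral directions'' to kill $\PP_{\sV_{\rot,\frac12,N}}(V^+_{-\infty}(\MM_1,\MM_2))$. This inverts the paper's logic --- Part~\ref{Thm_classification_Q_b} is \emph{deduced from} injectivity there (if $\mathbf n$ is a spectral direction and $R$ the reflection, then $R\MM\in\MCF^{n,k}_{\Oval}$ and $\Qu(R\MM)=\Qu(\MM)$, so injectivity forces $R\MM=\MM$) --- and, as you yourself note, your direct argument for \ref{Thm_classification_Q_b} only produces the flow's \emph{own} $k$ reflection normals as eigenvectors. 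When $\Qu$ has a repeated eigenvalue (which includes the crucial cases $\Qu=0$ and all product flows) the two flows' spectral bases need not coincide, so there is no common reflection to feed into Proposition~\ref{prop:actions}\ref{prop:actions_c}. Your proposed fix is circular: identifying the limit of simple-spectrum ovals with the \emph{given} oval of the same $\Qu$ is precisely the injectivity statement at a non-simple value of $\Qu$. The fix is simple and is what the paper does: the composition of the $n+1$ defining pairwise orthogonal reflections is the central reflection $-\id$, which \emph{every} flow in $\MCF^{n,k}_{\Oval}$ admits regardless of its particular hyperplanes; since $-\id$ acts by $-1$ on all of $\sV_{\frac12}$, equivariance gives $\PP_{\sV_{\rot,\frac12,N}}(V^+_{-\infty}(\MM_1,\MM_2))=0$ at once. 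Finally, to absorb the residual translation $(\mathbf v,0)$ you should not appeal only to the extinction-locus normalization (the extinction locus may be higher-dimensional); the paper's argument is that invariance under reflections about both $\bO$ and $\mathbf v$ together with convexity forces $\MM_1$ to split off a line parallel to $\mathbf v$, so the translation acts trivially.
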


\begin{proof}
The last statement is a direct consequence of Lemma~\ref{Lem_consequence_Vp_infty} if we set $\MM^0 \leftarrow \MM^0$, $\MM^{1} \leftarrow \MM^0$ and $\MM^{1,\prime} \leftarrow \MM^1$.
So it remains to prove Theorem~\ref{Thm_classification_Q}.

We have shown in \cite[Theorem~\refx{Thm_existence_oval}]{Bamler_Lai_PDE_ODI} that $\Qu |_{\MCF^{n,k}_{\Oval}}$ is surjective.
To see injectivity, consider two flows $\MM^0, \MM^1 \in \MCF^{n,k}_{\Oval}$ with $\Qu(\MM^0) = \Qu(\MM^1)$.
Since both flows are invariant under reflection about the origin, we must have 
$$\PP_{\sV_{\rot, \frac12, N}} \big(V^{+}_{-\infty} (\MM^0, \MM^1) \big) = 0.$$ 
So since both flows are defined on a maximal time-interval we obtain from the last statement that $\MM^0 = \MM^{1} + (\mathbf v, \Delta T)$ for some $(\mathbf v, \Delta T) \in \IR^{n+1} \times \IR$.
Since both flows go extinct at time $0$, we must have $\Delta T$.
If $\mathbf v \neq \bO$, then $\MM^0$ is invariant under reflections about both $\bO$ and $\mathbf v$, so by convexity, it must split off a line parallel to $\mathbf v$.
This implies that $\MM^0 = \MM^0 - (\mathbf v, 0) = \MM^1$.

Continuity of $\Qu$ was established in \cite[Proposition~\refx{Prop_Q_continuous}]{Bamler_Lai_PDE_ODI}.
Now suppose that $\MM^i \in \MCF^{n,k}_{\Oval}$, $i \leq \infty$, with $\Qu(\MM^i) \to \Qu(\MM^\infty)$.
We need to show that every subsequence of $\MM^i$ subsequentially converges to $\MM^\infty$.
For any such subsequence, we can extract another subsequence, using \cite[Proposition~\refx{Prop_Q_continuous}]{Bamler_Lai_PDE_ODI}, such that $\MM^i \to \MM^{\infty, \prime}$, where the limit is either asymptotically $(n,k)$-cylindrical, an affine plane or empty.
Since all flows $\MM^i$ go extinct at time $0$, the last two cases cannot occur and it is clear that $\MM^{\infty, \prime} \in \MCF^{n,k}_{\Oval}$ with $\Qu(\MM^{\infty, \prime}) = \Qu(\MM^\infty)$.
So $\MM^{\infty, \prime} = \MM^\infty$ due to injectivity.
This concludes the proof of Assertion~\ref{Thm_classification_Q_a}.

Assertion~\ref{Thm_classification_Q_b} is a direct consequence of injectivity and equivariance of $\Qu$.
Assertion~\ref{Thm_classification_Q_c} follows from injectivity and the fact that we can construct flows isometric to $\IR^l \times \MM'' \in \MCF^{n,k}_{\Oval}$ with for the desired value of $\Qu$.
Assertion~\ref{Thm_classification_Q_d} follows from Assertion~\ref{Thm_classification_Q_c}. 
\end{proof}
\medskip

Next, we consider the more general case and relate the vector $\mathbf b(\MM^1)$ with a component of $V_{-\infty}^+(\MM^0, \MM^1)$.
The following theorem is the precise version of Theorem~\ref{Thm_def_b_vague} from the introduction (stated in a slightly more general form).

\begin{Theorem} \label{Thm_Vp_mode_precise}
Suppose that $\MM^0 \in \MCF^{n,k}_{\Oval}$ and $\MM^1$ is an asymptotically $(n,k)$-cylindrical flow that is convex and rotationally symmetric  and assume
\[ \Qu(\MM^0) = \Qu(\MM^1), \qquad \PP_{\sV_{\rot, \frac12, N}} \big(V^{+}_{-\infty} (\MM^0, \MM^1) \big) = \frac1{\sqrt{2}} \sum_{i=1}^k b'_i \mathfrak p^{(1)}_i , \]
where $N$ is the nullspace of $\Qu(\MM^0)$.
If $\mathbf b' := \sum_{i=1}^k b'_i \mathbf e_i \neq \bO$, then $\MM^1$ is a translating soliton with velocity vector $\mathbf v :=  |\mathbf b'|^{-2} \mathbf b'$. 
Moreover, if $\MM^1 \in \MCF^{n,k}_{0}$, then $\mathbf b(\MM^1) = \mathbf b' \in N$.
So the map \eqref{eq_Qb_map_repeat} is well-defined.
\end{Theorem}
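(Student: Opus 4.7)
The plan is to prove $\MM^1 = \MM^1 + (s\mathbf v, s)$ for all $s\in\IR$ (with $\mathbf v := |\mathbf b'|^{-2}\mathbf b'$), which is equivalent to $\MM^1$ being a translating soliton with velocity $\mathbf v$, by reducing it via Corollary~\ref{cor:unique_V} to showing $V^{+}_{-\infty}(\MM^1, \MM^1 + (s\mathbf v, s)) = 0$. The ``moreover'' claim then falls out immediately from the definition of $\mathbf b$ on $\MCF^{n,k}_{\soliton}$: since $\MM^1 \in \MCF^{n,k}_0$ together with $\mathbf v \neq \bO$ forces $\MM^1 \in \MCF^{n,k}_{\soliton}$, one computes $\mathbf b(\MM^1) = |\mathbf v|^{-2}\mathbf v = \mathbf b'$, and the inclusion $\mathbf b' \in N$ is automatic because $\PP_{\sV_{\rot,\frac12,N}}$ lands in $\sV_{\rot,\frac12,N}$, whose elements have gradients in $N$.

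As preparation I would first upgrade the hypothesis to the exact equality $V^{+}_{-\infty}(\MM^0,\MM^1) = \tfrac{1}{\sqrt 2}\sum b'_i\mathfrak p^{(1)}_i$. Rotational invariance of both $\MM^0$ and $\MM^1$ under the $O(n-k+1)$-action on the second factor, together with Proposition~\ref{prop:actions}\ref{prop:actions_c}, makes $V^{+}_{-\infty}(\MM^0,\MM^1)$ $O(n-k+1)$-invariant; but $\sV_{\Jac,\frac12}$ admits no nonzero invariants (its generators come from translations perpendicular to the axis), so that component vanishes. The assumed nonvanishing of $\PP_{\sV_{\rot,\frac12,N}}$ then places us in Case~\ref{p:V_1/2_null} of Proposition~\ref{Prop_Vpp_asymp}, which confines $V^{+}_{-\infty}(\MM^0,\MM^1)$ to $\sV_{\rot,\frac12,N}\oplus\sV_{\Jac,\frac12}$. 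Combining these two facts produces the exact equality.

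Next I would split the spacetime shift $(s\mathbf v, s)$ into its spatial and temporal pieces and feed them into Proposition~\ref{prop:actions}. By Theorem~\ref{Thm_classification_Q}\ref{Thm_classification_Q_c}, $\MM^0$ splits off the full $N$-direction, so the translation invariance hypothesis of Proposition~\ref{prop:actions}\ref{prop:actions_gg} is satisfied for $\MM^0$; applying the proposition with translation vector $s\mathbf v$ (and the exact equality from the previous step) yields
\[
V^{+}_{-\infty}\big(\MM^1,\,\MM^1+(s\mathbf v,0)\big) = -\tfrac{s}{2}(\mathbf v\cdot\mathbf b')\,\mathfrak p^{(0)} = -\tfrac{s}{2}\,\mathfrak p^{(0)},
\]
where the normalization $\mathbf v = |\mathbf b'|^{-2}\mathbf b'$ enforces $\mathbf v\cdot\mathbf b' = 1$. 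Separately, Proposition~\ref{prop:actions}\ref{f:time-translate} with base $\MM^1+(s\mathbf v,0)$ and time shift $\Delta T = s$ (the vanishing-projection hypotheses are trivial since the comparison is of a flow to itself) yields
\[
V^{+}_{-\infty}\big(\MM^1+(s\mathbf v,0),\,\MM^1+(s\mathbf v,s)\big) = \tfrac{s}{2}\,\mathfrak p^{(0)}.
\]
Both outputs live entirely in $\sV_1$, so for the triple $(\MM^1,\,\MM^1+(s\mathbf v,0),\,\MM^1+(s\mathbf v,s))$ the cascading vanishing-projection hypotheses at every tier of the additivity hierarchy in Proposition~\ref{prop:actions}\ref{prop:actions_g} are satisfied automatically. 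Adding the three additivity identities, first in $\sV_{\rot,\frac12,N}\oplus\sV_{\Jac,\frac12}$, then in $\sV_{\rot,\frac12,N^\perp}$, and finally in $\sV_1$, produces $V^{+}_{-\infty}(\MM^1,\MM^1+(s\mathbf v,s)) = -\tfrac{s}{2}\mathfrak p^{(0)} + \tfrac{s}{2}\mathfrak p^{(0)} = 0$, as required.

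The main subtlety I expect to have to handle carefully is the cascading application of Proposition~\ref{prop:actions}\ref{prop:actions_g}: each successive tier of the additivity hierarchy requires the previous-tier projections to vanish for the triple, and this is precisely why it is essential that both intermediate $V^{+}_{-\infty}$'s land purely in $\sV_1$, so that all lower-order projections are automatically zero. The perfect cancellation of the spatial and temporal contributions $\mp\tfrac{s}{2}\mathfrak p^{(0)}$ encodes geometrically that the translator's velocity must be $\mathbf v$ and not any other scalar multiple of $\mathbf b'$, which is what pins down the normalization $\mathbf v = |\mathbf b'|^{-2}\mathbf b'$.
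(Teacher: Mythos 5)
Your argument for the main claim is essentially identical to the paper's: kill the $\sV_{\Jac,\frac12}$ component by symmetry, use Case~\ref{p:V_1/2_null} of Proposition~\ref{Prop_Vpp_asymp} to conclude $V^+_{-\infty}(\MM^0,\MM^1)\in\sV_{\rot,\frac12,N}$, then combine Proposition~\ref{prop:actions}\ref{prop:actions_gg} (spatial shift by $s\mathbf v$), Proposition~\ref{prop:actions}\ref{f:time-translate} (time shift by $s$), and the additivity of Proposition~\ref{prop:actions}\ref{prop:actions_g} to get $V^+_{-\infty}(\MM^1,\MM^1+s(\mathbf v,1))=0$, and conclude via Corollary~\ref{cor:unique_V}. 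Your bookkeeping of the $\tfrac1{\sqrt2}$ factors and the cancellation $-\tfrac{s}{2}\mathfrak p^{(0)}+\tfrac{s}{2}\mathfrak p^{(0)}=0$ is correct, and your observation that the cascading hypotheses of \ref{prop:actions_g} are automatic because both intermediate differences lie in $\sV_1$ is exactly the point.

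There is, however, a gap in your treatment of the ``moreover'' statement. You write that one ``computes $\mathbf b(\MM^1)=|\mathbf v|^{-2}\mathbf v=\mathbf b'$,'' but $\mathbf b(\MM^1)$ is defined through the \emph{specific} velocity vector $\mathbf H(\bO,0)$, whereas what you have produced is merely \emph{a} velocity vector $\mathbf v=|\mathbf b'|^{-2}\mathbf b'$. Velocity vectors of a translator are only unique modulo directions in which the flow is translation invariant, so a priori $\mathbf H(\bO,0)$ and $\mathbf v$ could differ by such a direction, in which case $\mathbf b(\MM^1)\neq\mathbf b'$. The paper closes this by writing $\MM^1=\IR^l\times\MM'$ with $l$ maximal and showing that both candidate velocity vectors are orthogonal to the $\IR^l$-factor (for $\mathbf H(\bO,0)$ this is because it is normal to the tangent space; for $\mathbf b'$ one applies \ref{prop:actions_gg} with a translation-invariance direction $\mathbf u$ to get $\mathbf u\cdot\mathbf b'=0$), so their difference cannot be a splitting direction unless it vanishes. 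Separately, the ``moreover'' clause as stated is not conditional on $\mathbf b'\neq\bO$: the case $\mathbf b'=\bO$, $\MM^1\in\MCF^{n,k}_0$ still requires an argument that $\mathbf b(\MM^1)=\bO$ (the paper gets this from Lemma~\ref{Lem_compare_flows_Qsv}, which shows $\MM^1$ is then a translate of $\MM^0\in\MCF^{n,k}_{\Oval}$ and hence not a soliton). Both points are short, but they are needed for the map \eqref{eq_Qb_map_repeat} to be well defined.
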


\begin{proof}
If $\mathbf b' = \bO$ and $\MM^1 \in \MCF^{n,k}_0$, then Lemma~\ref{Lem_compare_flows_Qsv} implies that $\MM^1$ is the restriction of a translation of $\MM^0$, so $\MM^1$ cannot be a translating soliton and thus $\mathbf b (\MM^1) = \bO$.
So assume for the remainder of the argument that $\mathbf b' \neq \bO$.

By symmetry we have
\[ \PP_{ \sV_{\Jac, \frac12}} \big(V^{+}_{-\infty} (\MM^0, \MM^{1} ) \big) = 0, \]
so $V^{+}_{-\infty} (\MM^0, \MM^{1})  \in \sV_{\rot, \frac12, N}$.
Let $s \in \IR$ be arbitrary and set $\mathbf v' := s \mathbf v \in N$.
By Theorem~\ref{Thm_classification_Q} we have $\MM^0 = \MM^0 + (\mathbf v, 0)$.
So by Proposition~\ref{prop:actions}\ref{prop:actions_gg}
\[ V_{-\infty}^+ \big(\MM^1 , \MM^1 + (\mathbf v', 0) \big) = \tfrac1{\sqrt 2} (\mathbf v' \cdot \mathbf b') \mathfrak p^{(0)} \]
and by Proposition~\ref{prop:actions}\ref{f:time-translate}
\[ V_{-\infty}^+ \big(\MM^1 + (\mathbf v', 0 ), \MM^1 + (\mathbf v', 0 ) + (\bO,  \mathbf v' \cdot \mathbf b) \big) = - \tfrac1{\sqrt 2}(\mathbf v' \cdot \mathbf b) \mathfrak p^{(0)}. \]
So by Proposition~\ref{prop:actions}\ref{prop:actions_g}
\[ V_{-\infty}^+ \big(\MM^1 , \MM^1 + (\mathbf v', 0 ) + (\bO, \mathbf v' \cdot \mathbf b) \big) = 0 \]
Hence by Corollary~\ref{cor:unique_V} the flows $\MM^1$ and $\MM^1 + (\mathbf v',  \mathbf v' \cdot \mathbf b) = \MM^1 + s(\mathbf v, 1)$   agree for all times at which they are defined.
This shows that $\MM^1$ is a translating soliton with velocity $\mathbf v$.

For the last statement note that after applying a rotation we can write $\MM^1 = \IR^l \times \MM'$ for some maximal $l \in \{ 0, \ldots, k \}$.
By definition $\MM^1$ also has velocity vector $|\mathbf b(\MM^1)|^{-2} \mathbf b(\MM^1)$.
So if $\mathbf b(\MM^1) \neq \mathbf b'$, then $\MM^1$ has two distinct velocity vectors and hence must split off a line parallel to the difference of both vectors.
However, both vectors $\mathbf b(\MM^1)$ and $\mathbf b'$ are orthogonal to the $\IR^l$-factor, in contradiction to the maximal choice of $l$.
\end{proof}
\bigskip

\subsection{Construction of flying wing solitons} \label{subsec_FW_construction}
In this subsection we present an alternative argument to the construction in Hoffman-Ilmanen-Mart{\'\i}n-White \cite{HoffmanIlmanenMartinWhite2019}.

\begin{Proposition}\label{l:FW_construction}
Let $\Qu' \in \IR^{k \times k}_{\geq 0}$ be a diagonal matrix whose first diagonal entry is zero and let $H' \neq 0$.
Then there is an asymptotically $(n,k)$-cylindrical mean curvature flow $\MM$ that is a translating soliton such that
\[ \Qu(\MM) = \Qu', \qquad (\bO,0) \in \spt \MM, \qquad \mathbf H(\bO,0) = H' \mathbf e_1. \]
Moreover, $\MM$ is smooth, non-collapsed, convex, rotationally symmetric, invariant under reflections perpendicular to all coordinate axes except for the first and has uniformly bounded second fundamental form.
Hence, the map \eqref{eq_Qb_map_repeat} is surjective.
\end{Proposition}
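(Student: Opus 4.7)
I plan to construct $\MM$ as a subsequential limit of suitably rescaled and translated ancient ovals from $\MCF^{n,k}_{\Oval}$ whose quadratic modes approach $\Qu'$. By reflection through $\{x_1 = 0\}$ I may assume $H' > 0$. For each $i \geq 1$, I choose a rescaling $\la_i > 0$ and $\eps_i > 0$, and let $\MM_i \in \MCF^{n,k}_{\Oval}$ be the unique oval with $\Qu(\MM_i) = \la_i^{-1}(\Qu' + \eps_i \mathbf e_1 \mathbf e_1^T)$, furnished by Theorem~\ref{Thm_classification_Q}\ref{Thm_classification_Q_a}. Each $\MM_i$ is smooth, non-collapsed, convex, rotationally symmetric about $\IR^k \times \bO^{n-k+1}$, and invariant under reflection through every coordinate hyperplane $\{x_j = 0\}$ for $j = 1, \ldots, k$ by Theorem~\ref{Thm_classification_Q}\ref{Thm_classification_Q_b}; combined with the axial rotational symmetry, this gives reflection invariance through $\{x_j = 0\}$ for every $j = 1, \ldots, n+1$.

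For centering, convexity and the reflection symmetries yield a unique tip $\bp_i(t) = x_i(t)\mathbf e_1$ on the positive $\mathbf e_1$-axis at each $t < 0$, with continuous scalar mean curvature $H_i(t)$ tending to $0$ as $t \to -\infty$ (cylindrical asymptotics) and to $+\infty$ as $t \to 0^-$ (extinction). By a joint intermediate-value argument in the parameters $(\la_i, \eps_i, t_i)$, I can arrange simultaneously $\la_i\Qu(\MM_i) = \Qu' + \eps_i \mathbf e_1\mathbf e_1^T$ with $\eps_i \to 0$, the normalization $\la_i H_i(t_i) = H'$, and the divergence $-\la_i^{-2} t_i \to \infty$; verifying the compatibility of these conditions is a technical point that relies on the bowl-times-$\IR^{k-1}$ tip asymptotics, specifically that the tip scale $H_i(t)^{-1}$ can be matched to any prescribed positive value along an appropriate one-parameter family in $\eps_i$ as $t$ varies through $(-\infty, 0)$. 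Set $\bp_i := \bp_i(t_i)$ and $\MM^*_i := \la_i(\MM_i - (\bp_i, t_i))$. Then $\MM^*_i$ is defined on a time interval expanding to $\IR$, has $(\bO, 0) \in \spt \MM^*_i$ with mean curvature vector $H' \mathbf e_1$, and satisfies $\Qu(\MM^*_i) = \la_i\Qu(\MM_i) \to \Qu'$; it retains convexity, non-collapsing, axial rotational symmetry, and reflection through $\{x_j = 0\}$ for $j = 2, \ldots, n+1$ (the $j = 1$ reflection is broken by the $\mathbf e_1$-translation). Applying the compactness theorem (Theorem~\ref{Thm_compactness_ancient}) together with the curvature lower bound $|\mathbf H|(\bO, 0) = H' > 0$ excludes the planar and empty cases and yields a subsequential local smooth limit $\MM^*_\infty$ that is asymptotically $(n,k)$-cylindrical with $\Qu(\MM^*_\infty) = \Qu'$, defined on all of $\IR$, and inheriting all listed geometric properties; uniform bounds on the second fundamental form on $(-\infty, T]$ then follow from convex MCF regularity.

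The main obstacle is to verify that $\MM^*_\infty$ is a translating soliton; once established, the soliton identity $\mathbf v = \mathbf H(\bO, 0) = H' \mathbf e_1$ identifies the velocity, and $\mathbf b(\MM^*_\infty) = H'^{-1}\mathbf e_1$. I plan to deduce this from Theorem~\ref{Thm_Vp_mode_precise} with comparison flow $\MM^0 \in \MCF^{n,k}_{\Oval}$ the unique oval with $\Qu(\MM^0) = \Qu'$ (Theorem~\ref{Thm_classification_Q}). Let $N := \nullspace(\Qu')$. The common axial rotational symmetry and the reflection symmetries in directions $\mathbf e_2, \ldots, \mathbf e_k$ of $\MM^0$ and $\MM^*_\infty$, combined with Proposition~\ref{prop:actions}\ref{prop:actions_c}, restrict $V^+_{-\infty}(\MM^0, \MM^*_\infty)$ to $\spann\{\mathfrak p^{(0)}, \mathfrak p^{(1)}_1\}$, so its $\sV_{\rot, \frac12, N}$-projection is $\frac{1}{\sqrt 2} b'_1 \mathfrak p^{(1)}_1$ for some $b'_1 \in \IR$. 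The decisive computation is $b'_1 = H'^{-1}$, which I plan to establish by analyzing the leading-order expansion of the graph difference $u^{\MM^*_i} - u^{\MM^0}$ over the round cylinder as $\tau \to -\infty$ and passing to the limit: the spatial translation by $\bp_i = x_i(t_i)\mathbf e_1$, acting on the $\mathbf e_1$-quadratic profile of $\MM_i$, contributes a linear mode proportional to $x_i(t_i)\, e^{\tau/2} x_1$, and the tip-scale relation $x_i(t_i) \to H'^{-1}$---which follows from the bowl-soliton cap geometry at the tip combined with our normalization $\la_i H_i(t_i) = H'$---yields $b'_1 = H'^{-1}$. Theorem~\ref{Thm_Vp_mode_precise} then gives $\MM^*_\infty \in \MCF^{n,k}_{\soliton}$ with the stated velocity. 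Surjectivity of \eqref{eq_Qb_map_repeat} follows: for any $(\Qu_0, \mathbf b_0)$ with $\mathbf b_0 \in \nullspace(\Qu_0)$ and $\mathbf b_0 \neq \bO$, an orthogonal rotation diagonalizes $\Qu_0$ and aligns $\mathbf b_0$ with $\mathbf e_1$, reducing to the case constructed above with $H' = |\mathbf b_0|^{-1}$; the case $\mathbf b_0 = \bO$ is already covered by Theorem~\ref{Thm_classification_Q}.
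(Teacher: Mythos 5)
Your overall architecture matches the paper's: recenter ancient ovals with nearly degenerate quadratic mode at their tips, normalize the tip curvature to $H'$, pass to a limit, and use the mode analysis of Sections~\ref{sec_diff}--\ref{sec_proofs_I} to identify the limit. But the step you yourself flag as decisive --- computing $b'_1 = H'^{-1}$ from a claimed ``tip-scale relation $x_i(t_i) \to H'^{-1}$'' --- is both wrong and unnecessary. It is wrong because for the limit to be a non-compact flying wing the tip must escape to infinity relative to the oval's center, so the recentering translation $\la_i x_i(t_i)$ must diverge rather than converge to $H'^{-1}$; moreover, translating the quadratic profile $\sim |\tau|^{-1}\mathfrak p^{(2)}_{11}$ produces a linear mode carrying an extra $|\tau|^{-1}$ factor, which lives in the regime of Proposition~\ref{Prop_Vpp_asymp}\ref{p:V_1/2_non_null} rather than the $e^{\tau/2}$ regime you need, and in any case an interchange of the limits $i\to\infty$ and $\tau\to-\infty$ in the graph expansion is not justified. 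The computation is unnecessary because a soft dichotomy suffices: if $\PP_{\sV_{\rot,\frac12,N}}\big(V^+_{-\infty}(\MM^0,\MM^\infty)\big)=0$ then Lemma~\ref{Lem_compare_flows_Qsv} forces $\MM^\infty$ to be a space-time translate of $\IR\times\MM'$ with the $\IR$-factor along $\mathbf e_1$, contradicting $\mathbf H(\bO,0)=H'\mathbf e_1\neq 0$; if it is nonzero, Theorem~\ref{Thm_Vp_mode_precise} already says $\MM^\infty$ is a translating soliton, and its velocity is then read off a posteriori from $\mathbf H(\bO,0)$. This is exactly how the paper concludes.

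Two further points. First, your three-parameter matching of $(\la_i,\eps_i,t_i)$ is left as an unverified ``technical point,'' and it is avoidable: with no rescaling at all, the tip curvature $H_{i,t}$ of each oval sweeps all of $(0,\infty)$ as $t$ ranges over $(-\infty,0)$, so one simply picks $t_i$ with $H_{i,t_i}=H'$. The only nontrivial input is $\lim_{t\to-\infty}H_{i,t}=0$, which does not follow from ``cylindrical asymptotics'' alone (the tip escapes the region seen by the rescaled limit); the paper proves it by a compactness-plus-rigidity argument, showing that a nonzero limit of tip curvatures would produce a recentred limit that is simultaneously a translate of the compact oval and non-compact. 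Second, you should not invoke Theorem~\ref{Thm_compactness_ancient} here: its proof rests on Lemma~\ref{Lem_blow_ups} and hence on Theorem~\ref{Thm_classification_precise}, whose proof uses the present proposition. Since your flows are convex and non-collapsed, Lemma~\ref{Lem_convex_compactness} together with \cite[Proposition~\refx{Prop_Q_continuous}]{Bamler_Lai_PDE_ODI} supplies the needed compactness without circularity.
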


We will use the following well-known fact:

\begin{Lemma} \label{Lem_convex_compactness}
Let $\MM^i$ be a sequence of non-collapsed, convex asymptotically $(n,k)$-cylindrical flows in $\IR^{n+1} \times (-\infty, T_i)$, for $T_i \to \infty$.
Suppose that $(\bO,0) \in \MM^{i, \reg}$ and that $|\mathbf H^{\MM^i}|(\bO,0)$ is uniformly bounded.
Then, after passing to a subsequence, we have convergence $\MM^i \to \MM^\infty$ in the Brakke flow, where $\MM^\infty$ is a non-collapsed and convex mean curvature flow that does not go extinct on or before time $0$.
Moreover, if $\Vert \Qu(\MM^i) \Vert$ is uniformly bounded and $|\mathbf H^{\MM^i}|(\bO,0) > c$ for some uniform $c > 0$, then $\MM^\infty$ is also asymptotically $(n,k)$-cylindrical.
\end{Lemma}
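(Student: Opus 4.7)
The plan is to combine standard compactness for non-collapsed convex Brakke flows with the pointwise control at $(\bO, 0)$ to extract a subsequential Brakke limit, and then to upgrade that limit to an asymptotically $(n,k)$-cylindrical flow under the additional hypotheses. First I would use convexity together with non-collapsing to obtain pointwise estimates of the form $|A^{\MM^i}|^2 \leq C |H^{\MM^i}|^2$ on $\MM^{i,\reg}$, as well as a uniform interior/exterior ball condition at every regular point. Combined with $(\bO, 0) \in \MM^{i, \reg}$ and the uniform upper bound on $|\mathbf H^{\MM^i}|(\bO, 0)$, White's local regularity together with parabolic pseudolocality then yield uniform $C^m$-bounds on $\MM^{i, \reg}$ in a fixed spacetime neighborhood of $(\bO, 0)$, independent of $i$.

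Next I would apply the Brakke compactness theorem for non-collapsed convex ancient flows (for instance in the form of Haslhofer--Kleiner) to extract a subsequence converging in the Brakke sense to a limit $\MM^\infty$, which inherits convexity and non-collapsing since both properties are closed under such convergence. The local $C^m$-estimates guarantee that $(\bO, 0) \in \MM^{\infty, \reg}$, so in particular $\MM^\infty$ does not go extinct on or before time $0$.

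For the final clause, I would invoke Theorem~\ref{Thm_compactness_ancient} (whose non-collapsed version was available prior to this paper) applied to the subsequence: under the uniform bound on $\Vert \Qu(\MM^i) \Vert$, after passing to a further subsequence the limit $\MM^\infty$ is either asymptotically $(n, k')$-cylindrical with $k' = k$, a multiplicity-one affine plane, or empty. The last two options are ruled out by passing the lower bound $|\mathbf H^{\MM^i}|(\bO, 0) > c > 0$ to the limit via the uniform $C^m$-estimates of the first step: a multiplicity-one plane has $H \equiv 0$, and the empty flow contains no points. Hence $\MM^\infty$ is asymptotically $(n, k)$-cylindrical.

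The main obstacle is the ``no dimension drop'' conclusion $k' = k$ in the last step; however this is precisely the content of the final clause of Theorem~\ref{Thm_compactness_ancient}, so once the local regularity at $(\bO, 0)$ is in hand the lemma follows directly. Heuristically, the uniform bound on $\Qu$ prevents the natural parabolic scale $\Vert \Qu(\MM^i) \Vert^{-1}$ from degenerating, so no additional parabolic rescaling is needed and the quadratic mode at $\tau \to -\infty$ remains genuinely $k$-dimensional in the limit.
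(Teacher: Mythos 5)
Your overall strategy coincides with the paper's primary route: use non-collapsedness plus the pointwise bound on $|\mathbf H^{\MM^i}|(\bO,0)$ to produce a definite-size regular neighborhood of $(\bO,0)$, extract a subsequential Brakke limit (convexity, non-collapsedness and the entropy bound $\Theta^{\MM^i}<\Theta_{\IR^k\times\IS^{n-k}}<2$ pass to the limit), and obtain the last clause from compactness under bounded $\Qu$, ruling out the affine plane and the empty flow by the lower bound on the mean curvature at the origin. Two steps need repair. First, the mechanism you name for the uniform $C^m$-bounds near $(\bO,0)$ --- White's local regularity together with pseudolocality --- does not deliver this as stated: White's theorem requires Gaussian density ratios close to $1$ on a parabolic ball, and pseudolocality requires graphical control at an initial time; neither is available from a single pointwise bound on $|\mathbf H|(\bO,0)$, and verifying the density hypothesis is precisely the hard part. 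The correct black box is the Haslhofer--Kleiner local curvature estimate for $\alpha$-non-collapsed flows (a bound $H(p,t)\le r^{-1}$ propagates to derivative bounds on $A$ on a backward parabolic ball of radius comparable to $r$), which is exactly what the paper cites; the paper also gives a self-contained alternative via the two-tangent-spheres condition, a rescaling by the optimal tangent-sphere radius $r_i$, Sheng--Wang regularity, and the observation that the contradiction limit would be two parallel planes, violating $\Theta<2$.

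Second, for the final clause you invoke Theorem~\ref{Thm_compactness_ancient}. In this paper that theorem is deduced from Lemma~\ref{Lem_blow_ups}, hence from the classification Theorem~\ref{Thm_classification_precise}, hence from Proposition~\ref{l:FW_construction}, which uses the present lemma --- so citing it without qualification would be circular. Your parenthetical that the non-collapsed version predates the paper rescues the logic, but the clean reference, and the one the paper actually uses, is the compactness result from the prequel, \cite[Proposition~\refx{Prop_Q_continuous}]{Bamler_Lai_PDE_ODI}, which applies directly to a sequence with uniformly bounded $\Vert\Qu(\MM^i)\Vert$ and yields the trichotomy (asymptotically $(n,k)$-cylindrical limit, affine plane, or empty flow) that you then resolve correctly.
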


\begin{proof}
The fact that the limit does not go extinct on or before time $0$ follows from the non-collapsedness and the uniform bound on the mean curvature via \cite{Haslhofer_Kleiner}.
Alternatively, we may argue as follows.
For each $i$ let $r_i$ be the supremum over all radii such that the two spheres of radius $r$ tangent to $(\spt \MM^i)_0$ at $(\bO,0)$ only intersect $(\spt \MM^i)_0$ in the origin.
If $r_i > c' > 0$ for a subsequence, then the non-extinction follows from \cite{Sheng_Wang_09}.
Now suppose by contradiction that $r_i \to 0$.
Then again by \cite{Sheng_Wang_09} the parabolically rescaled flows $r_i^{-1} \MM^i$ subsequentially converge to a convex limit $\MM^{\infty,\prime}$ that does not go extinct at or before time $0$.
Since the convergence is smooth at time $0$, its mean curvature at the origin must vanish, so $\MM^{\infty,\prime}$ must contain a constant plane passing through the origin.
However, by the choice of $r_i$, the limit must contain another component.
So since it is convex, it must be a union of two parallel affine planes, which contradicts the fact that $\Theta^{\MM_i} < \Theta_{\IR^k \times \IS^{n-k}} < 2$.

The last statement of the lemma is a direct consequence of \cite[Proposition~\refx{Prop_Q_continuous}]{Bamler_Lai_PDE_ODI}.
\end{proof}
\medskip

\begin{proof}[Proof of Proposition~\ref{l:FW_construction}.]
Without loss of generality we may assume that $H' > 0$.
Fix a sequence of positive definite diagonal matrices $\Qu'_i \in \IR^{k \times k}_{\geq 0}$ with $\Qu'_i \to \Qu'$.
By Theorem~\ref{Thm_classification_Q} there are $\MM^i \in \MCF^{n,k}_{\Oval}$ such that $\Qu(\MM^i) = \Qu'_i$ and each $\MM^i$ is invariant under reflections across the coordinate hyperplanes.
For each $i$ and time $t < 0$ choose the unique point $\bp_{i,t} = p_{i,t} \mathbf e_1 \in (\spt \MM^{i})_t$ with $p_{i,t} < 0$.
Note that by symmetry the mean curvature at this point must be of the form $\mathbf H^{\MM_i}(\bp_{i,t}, t) = H_{i,t} \mathbf e_1$ for $H_{i,t} > 0$.

\begin{Claim}
For each $i$ we have $\lim_{t \to -\infty} H_{i,t} = 0$ and $\lim_{t \to 0} H_{i,t} = \infty$.
\end{Claim}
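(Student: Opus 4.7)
The plan is to establish the two limits separately via appropriate blowups of $\MM^i$: at the singular point $(\bO,0)$ for $t\to 0^-$ and along the moving tip $(\bp_{i,t},t)$ as $t\to-\infty$. Continuity of $H_{i,t}$ is automatic from smoothness of $\MM^i$ on $(-\infty,0)$; the content of the claim lies in the existence and value of each limit.

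For $\lim_{t\to 0^-}H_{i,t}=\infty$: the positive-definiteness of $\Qu'_i$ rules out any Euclidean splitting of $\MM^i$ by Theorem~\ref{Thm_classification_Q}\ref{Thm_classification_Q_c}. Combined with convexity and the reflection symmetries of $\MM^i$ across the coordinate hyperplanes, this forces each slice $\MM^i_t$ to be compact: a ray in some $\MM^i_t$, together with its images under successive reflections across the coordinate hyperplanes, would generate a line in the convex body bounded by $\MM^i_t$, yielding a splitting. By Huisken's theorem on compact convex mean curvature flows, $\MM^i$ then shrinks to a single point at $t=0$, which must be the origin by symmetry. Hence the tangent flow at $(\bO,0)$ is a compact convex self-shrinker, namely the round shrinking sphere. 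Parabolically rescaling by $\lambda_k=\sqrt{-t_k}$ around $(\bO,0)$ yields local smooth convergence to this round shrinking sphere near $t=-1$; the rescaled tip $\lambda_k^{-1}\bp_{i,t_k}$ converges to $-\sqrt{2n}\,\mathbf e_1$, where the spherical mean curvature equals $\sqrt{n/2}$. Smooth convergence then gives $\sqrt{-t_k}\,H_{i,t_k}\to\sqrt{n/2}$, so $H_{i,t}\to\infty$.

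For $\lim_{t\to-\infty}H_{i,t}=0$: consider the rescaled flow $\td\MM^i_\tau:=e^{\tau/2}\MM^i_{-e^{-\tau}}$, which converges locally smoothly to $M_{\cyl}$ as $\tau\to-\infty$. The tip corresponds to $\td\bp_\tau:=e^{\tau/2}\bp_{i,t}$ in $\td\MM^i_\tau$. The quadratic asymptotic expansion from \cite[Proposition~\refx{Prop_dom_qu_asymp}]{Bamler_Lai_PDE_ODI} shows that the graph function of $\td\MM^i_\tau$ over the cylinder reaches values of order one at radius $\sim\sqrt{|\tau|}$ along the $\mathbf e_1$-axis, so $|\td\bp_\tau|=O(\sqrt{|\tau|})$; moreover the cap of $\td\MM^i_\tau$ near $\td\bp_\tau$ is, after a further bounded rescaling, approximated by a translating bowl soliton. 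This yields a polynomial-in-$|\tau|$ bound $|A|_{\td\MM^i_\tau}(\td\bp_\tau)\leq C|\tau|^{1/2}$. Using the convex-surface inequality $H\leq\sqrt{n}\,|A|$ and rescaling back gives $H_{i,t}=e^{\tau/2}H_{\td\MM^i_\tau}(\td\bp_\tau)\leq C\sqrt{\log(-t)/(-t)}\to 0$.

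The main technical step is the polynomial bound on $|A|$ at the tip of the rescaled flow. The cleanest route is to invoke the asymptotic description of ancient ovals from the prequel, where the cap is identified with a suitably rescaled bowl soliton whose curvature scale is governed by $\sqrt{|\tau|}$. A self-contained alternative is a contradiction argument: if $\sqrt{-t_k}\,H_{i,t_k}\to\infty$ along some sequence $t_k\to-\infty$, rescaling $\MM^i$ around $(\bp_{i,t_k},t_k)$ to normalize the tip curvature to $1$ and extracting a subsequential limit (via convexity, non-collapsedness and the entropy bound $\Theta(\MM^i)\leq\Theta_{\IR^k\times\IS^{n-k}}$) produces an ancient convex flow with unit tip curvature at the origin, whose structure can be shown to be incompatible with the fact that $\MM^i$ is asymptotic at $-\infty$ to the $(n,k)$-cylinder.
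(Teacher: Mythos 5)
Your treatment of the second limit ($t\to 0^-$) is fine and is essentially the paper's (which dismisses it as clear because the flow develops a spherical singularity at $(\bO,0)$); your compactness-via-reflections argument and the parabolic blow-up giving $\sqrt{-t}\,H_{i,t}\to\sqrt{n/2}$ are correct, if more detailed than needed.

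The first limit ($t\to-\infty$) is where there is a genuine gap, in both of your proposed routes. Your primary route rests entirely on the quantitative bound $|A|_{\td\MM^i_\tau}(\td\bp_\tau)\le C|\tau|^{1/2}$ at the tip of the rescaled flow. That bound is the whole content of the claim, and you do not prove it: \cite[Proposition~\refx{Prop_dom_qu_asymp}]{Bamler_Lai_PDE_ODI} controls the graph function in the parabolic region $|\bx|\lesssim\sqrt{|\tau|}$, but it does not by itself identify the cap as a bowl soliton at curvature scale $\sqrt{|\tau|}$; asserting this "from the prequel" without a precise statement leaves the key step unjustified. Your fallback argument is worse: it negates the wrong statement. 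The negation of $H_{i,t}\to 0$ is that $H_{i,t_k}\ge c>0$ along some $t_k\to-\infty$; instead you assume $\sqrt{-t_k}\,H_{i,t_k}\to\infty$. For the actual ancient ovals one expects $H_{i,t}\sim\sqrt{\log(-t)/(-t)}$, so $\sqrt{-t}\,H_{i,t}\to\infty$ is \emph{true}, and no contradiction can follow from assuming it. Concretely, the blow-up you describe (normalize tip curvature to $1$ and pass to a limit) simply produces the bowl soliton times a Euclidean factor, which is entirely compatible with $\MM^i$ being asymptotically $(n,k)$-cylindrical; the claimed "incompatibility" does not exist.

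The paper's argument is softer and avoids any rate: assume $H_{i,t_j}\ge c$ along $t_j\to-\infty$ and consider the \emph{translated} (not rescaled) flows $\MM^{i,j}:=\MM^i-(\bp_{i,t_j},t_j)$. Since flows in $\MCF^{n,k}_{\Oval}$ have uniformly bounded second fundamental form, Lemma~\ref{Lem_convex_compactness} gives a subsequential Brakke limit that is not empty and not a plane (by the lower curvature bound), hence asymptotically $(n,k)$-cylindrical with the same positive definite $\Qu'_i$. The rigidity statement in Lemma~\ref{Lem_compare_flows_Qsv} then forces the limit to be a space-time translate of $\MM^i$ itself --- but the limit has non-compact time-slices (the oval's diameter diverges while the tip is pinned at the origin), whereas $\MM^i$ has compact slices. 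If you want to salvage your approach, you should either supply a proof of the tip-curvature bound or replace the fallback by this translation-plus-uniqueness argument.
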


\begin{proof}
The second limit is clear, because the flow $\MM^i$ develops a spherical singularity at $(\bO,0)$.
To see the first limit, assume by contradiction that $H_{i,t_j} > c > 0$ for some $t_j \to -\infty$.
Consider the flows $\MM^{i,j} := \MM^i - (\bp_{i,t_j}, t_j)$, whose supports contain $(\bO,0)$.
Lemma~\ref{Lem_convex_compactness} implies subsequential convergence $\MM^{i,j} \to \MM^{i,\infty}$.
The limit cannot be empty or an affine plane, so it must be asymptotically $(n,k)$-cylindrical with $\Qu(\MM^{i,\infty}) = \Qu'_i$ by \cite[Proposition~\refx{Prop_Q_continuous}]{Bamler_Lai_PDE_ODI}.
We can apply Lemma~\ref{Lem_compare_flows_Qsv} for $\MM^i$ and $\MM^{i,\infty}$; the second condition in \eqref{eq_conditionQsvN} is vacuously true as $\Qu'_i$ is positive definite.
We obtain that $\MM^{i,\infty}$ must be a translation of $\MM^i$.
However, this is impossible, because by construction, its time-slices must be non-compact.
\end{proof}

By continuity, we can pick a time $t_i < 0$ for each $i$ such that $H_{i,t_i} = H'$.
By Lemma~\ref{Lem_convex_compactness} the flows $\MM^i - (\bp_{i,t_j}, t_i)$ must subsequentially converge in the Brakke sense to a flow $\MM^\infty$ that is smooth at time $0$ and by \cite[Proposition~\refx{Prop_Q_continuous}]{Bamler_Lai_PDE_ODI} we must have $\Qu(\MM^\infty) = \Qu'$ and $\mathbf H^{\MM^\infty} (\bO,0) = H' \mathbf e_1$.
By Lemma~\ref{Lem_compare_flows_Qsv} and Theorem~\ref{Thm_Vp_mode_precise} applied to a flow of the form $\IR \times \MM'$ in $\MCF^{n,k}_{\Oval}$ and $\MM^\infty$, the flow $\MM^\infty$ must be either a translating soliton or a translation of $\IR \times \MM'$, where the $\IR$-factor is in the first coordinate direction.
The condition on the mean curvature vector at $(\bO,0)$ rules out the second possibility.

It remains to show that $\MM^\infty$ has uniformly bounded second fundamental form.
Suppose by contradiction that $|\mathbf H|(\bp_i, 0) \to \infty$ for some sequence $\bp_i \in (\spt \MM^\infty)_0$ and use again \cite[Proposition~\refx{Prop_Q_continuous}]{Bamler_Lai_PDE_ODI} to pass to a subsequence such that we have convergence $\MM^\infty - (\bp_i,0) \to \MM^{\infty,\prime}$ in the Brakke sense.
The limit must be asymptotically $(n,k)$-cylindrical and it must be singular at time $0$.
However, since $\MM^\infty$ is a translating soliton, the limit $\MM^{\infty,\prime}$ must also be a translating soliton, which is impossible.
\end{proof}
\bigskip

\subsection{Proof of the main classification result} \label{subsec_classification}
It suffices to prove Theorem~\ref{Thm_classification_precise} as it implies Theorem~\ref{Thm_classification_simple}.

\begin{proof}[Proof of Theorem~\ref{Thm_classification_precise}.]
Let $\MM$ be an asymptotically $(n,k)$-cylindrical mean curvature flow and choose $\MM^0 \in \MCF^{n,k}_{\Oval}$ such that $\Qu(\MM^0) = \Qu(\MM)$.
Let $N$ be the nullspace of $\Qu(\MM^0)$ and write
\[ \PP_{\sV_{\rot, \frac12, N}} \big(V^{+}_{-\infty} (\MM^0, \MM) \big) = \sum_{i=1}^{k'} b'_i \mathfrak p^{(1)}_i, \qquad \mathbf b' := \frac1{\sqrt 2}\sum_{i=1}^k b'_i \mathbf e_i \in N. \]
By Proposition~\ref{l:FW_construction}, we can find an $\MM^1 \in \MCF^{n,k}_0$ such that
\[ \Qu(\MM^1) = \Qu(\MM^0) = \Qu(\MM), \qquad \mathbf b(\MM^1) = \mathbf b', \]
which implies by Theorem~\ref{Thm_Vp_mode_precise} that
\[ \PP_{\sV_{\rot, \frac12, N}} \big(V^{+}_{-\infty} (\MM^0, \MM^1) \big) = \frac1{\sqrt 2} \sum_{i=1}^{k'} b'_i \mathfrak p^{(1)}_i = \PP_{\sV_{\rot, \frac12, N}} \big(V^{+}_{-\infty} (\MM^0, \MM) \big). \]
So by Lemma~\ref{Lem_consequence_Vp_infty} there is a vector $(\mathbf v, \Delta T) \in \IR^{n+1} \times \IR$ such that $\MM + (\mathbf v, \Delta T)$ is a restriction of $\MM^1$ to a possibly smaller time-interval.
\end{proof}
\bigskip

\subsection{Proof of the remaining results}

\begin{proof}[Proof of Theorem~\ref{Thm_classification_Qb}.]
To see injectivity of the map \eqref{eq_Qb_map}, consider two flows $\MM^1, \MM^{1,\prime} \in \MCF^{n,k}_0$ with $\Qu(\MM^1) = \Qu(\MM^{1,\prime})$ and $\mathbf b(\MM^1) = \mathbf b(\MM^{1,\prime})$.
Let $\MM^0 \in \MCF^{n,k}_{\Oval}$ be the unique flow with $\Qu(\MM^0) = \Qu(\MM^1) = \Qu(\MM^{1,\prime})$.
Theorem~\ref{Thm_Vp_mode_precise} applied to the pairs $(\MM^0,\MM^1)$ and $(\MM^0, \MM^{1,\prime})$ implies \eqref{eq_conditionQsvN}.
So, since both flows are defined on a maximal time-interval, Lemma~\ref{Lem_consequence_Vp_infty} yields that $\MM^1 = \MM^{1,\prime} + (\mathbf v, \Delta T)$ for some $(\mathbf v, \Delta T) \in \IR^{n+1} \times \IR$.
It follows that both flows must be either both be in $\MCF^{n,k}_{\Oval}$ or both be in $\MCF^{n,k}_{\soliton}$.
In the first case, we obtain $\MM^1 = \MM^{1,\prime}$ from Theorem~\ref{Thm_classification_Q}.
In the second case, we may assume that $\Delta T = 0$; so $\MM^1 = \MM^{1,\prime} + (\mathbf v, 0)$.
Suppose that $\mathbf v \neq \bO$, because otherwise we are done.
Since $\mathbf b(\MM^1) = \mathbf b(\MM^{1,\prime})$, the tangent spaces and mean curvature vectors of $\MM^{1,\reg}_0$ and $\MM^{1,\prime, \reg}_0$ at the origin must agree.
So by convexity, $\mathbf v$ must be contained in this tangent space and $\MM^1$ must split off a line parallel to $\mathbf v$.
Therefore, $\MM^1 = \MM^{1} - (\mathbf v, 0) = \MM^{1,\prime}$, as desired.

Continuity of $\Qu$ was established before and continuity of $\mathbf b$ is clear.
Now consider a sequence $\MM^i$, $i \leq \infty$, with $\Qu(\MM^i) \to \Qu(\MM^\infty)$ and $\mathbf b(\MM^i) \to \mathbf b(\MM^\infty)$.
We need to show again that every subsequence of $\MM^i$ subsequentially converges to $\MM^\infty$.
So pick a subsequence and use \cite[Proposition~\refx{Prop_Q_continuous}]{Bamler_Lai_PDE_ODI} to pass to another subsequence such that $\MM^i \to \MM^{\infty,\prime}$, where the limit is either asymptotically $(n,k)$-cylindrical, an affine plane or empty.
The last case cannot occur, because all flows contain $(\bO,0)$ in their support.
The second last case also cannot occur, because otherwise $|\mathbf H^{\MM^i}|(\bO,0) \to 0$, which would imply $|\mathbf b(\MM^i)| \to \infty$.
So $\MM^{\infty, \prime}$ is asymptotically $(n,k)$-cylindrical and by continuity $\Qu(\MM^{\infty, \prime}) = \Qu(\MM^\infty)$.

If $\mathbf b(\MM^\infty) \neq \bO$, then $|\mathbf b(\MM^i)| \not\to 0$, so $|\mathbf H^{\MM^i}|(\bO,0)$ remains bounded and we can use Lemma~\ref{Lem_convex_compactness} to show that we have smooth convergence at time $0$.
Since in this case $\MM^i$ are translating solitons with uniformly controlled speed, for $i$ sufficiently large, we get $\MM^{\infty, \prime} \in \MCF^{n,k}_{\soliton}$ with $\mathbf b(\MM^{\infty,\prime}) = \mathbf b(\MM^\infty)$, so $\MM^{\infty, \prime}=\MM^\infty$ by injectivity.

If $\mathbf b(\MM^\infty) = \bO$, then $|\mathbf H^{\MM^i}|(\bO,0) \to \infty$, so the limit $\MM^{\infty,\prime}$ is not smooth near $(\bO, 0)$.
It follows from Theorem~\ref{Thm_classification_precise} that $\MM^{\infty, \prime} = \MM'' + (\mathbf v , \Delta T)$ for some $\MM'' \in \MCF^{n,k}_0$ and $(\mathbf v , \Delta T) \in \IR^{n+1} \times \IR$.
Since $\MM^{\infty,\prime}$ is singular at $(\bO, 0)$ we must have $\MM'' \in \MCF^{n,k}_{\Oval}$ and $\Delta T = 0$ and $\mathbf v$ must be contained in the nullspace of $\Qu(\MM'')$.
This implies that $\MM'' = \MM'' -  (\mathbf v ,  0)= \MM^{\infty, \prime}$, as desired.
So \eqref{eq_Qb_map} is indeed a homeomorphism.

Assertion~\ref{Thm_classification_Qb_a} is clear and Assertion~\ref{Thm_classification_Qb_b} is a restatement of \cite[Proposition~\refx{Prop_Q_basic_properties}]{Bamler_Lai_PDE_ODI}.
Assertion~\ref{Thm_classification_Qb_c} follows again by injectivity since we can construct a flow of product form with the desired values of $\Qu$ and $\mathbf b$.
\end{proof}
\medskip

\begin{proof}[Proof of Theorem~\ref{Thm_limit_is_flying_wing}.]
This is a direct consequence of the proof of Proposition~\ref{l:FW_construction}.
\end{proof}
\medskip

The proof of Theorem~\ref{Thm_compactness_ancient} is standard.
It also follows from Lemma~\ref{Lem_blow_ups} in the next section.

\bigskip

\section{Proofs of the main results II} \label{sec_proofs_II}
We need the following lemma.

\begin{Lemma} \label{Lem_blow_ups}
Let $\MM_i$ be a sequence of $n$-dimensional, unit-regular integral Brakke flows in $\IR^{n+1} \times I_i$.
Suppose that for some sequence $r_i \geq 1$ we have convergence in the Brakke sense of the parabolic rescalings $r_i^{-1} \MM_i \to \MM_{\cyl}^{n,k}$, for some $k \in \{ 0, \ldots, n-1 \}$.
Then for a subsequence, we have convergence $\MM_i \to \MM_\infty$ in the Brakke sense, where the limit is one of the following:
\begin{itemize}
\item An asymptotically $(n,k')$-cylindrical flow with $k' \in \{ 0, \ldots, k \}$.
\item A constant, affine, multiplicity one plane.
\item An empty flow.
\end{itemize}
Moreover, if $(\bO,0) \in \MM_i^{\sing}$ for all $i$, then for large $i$ its tangent flow must be isometric to $\MM^{n,k'}_{\cyl}$ for some $k' \in \{ 0, \ldots, k \}$.
\end{Lemma}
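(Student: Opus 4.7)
\emph{Proof proposal.} The strategy combines Ilmanen's Brakke compactness with an entropy bound inherited from the hypothesis, followed by the classification of low-entropy self-shrinkers to identify the possible tangent models.

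First, I would establish uniform local mass and density bounds on the sequence $\MM_i$. The hypothesis $r_i^{-1}\MM_i\to \MM_{\cyl}^{n,k}$ implies that every Gaussian density $\Theta^{r_i^{-1}\MM_i}_{(\bx_0,t_0)}(r)$ is bounded, up to an $o(1)$ error as $i\to\infty$, by $\Theta_{\IR^k\times\IS^{n-k}}$, and by scale invariance the same bound holds for $\MM_i$ itself at any base point and scale. In particular $\lambda(\MM_i)\le\Theta_{\IR^k\times\IS^{n-k}}+o(1)$. Combined with Huisken's monotonicity, this provides the uniform local area bound needed to apply Ilmanen's compactness theorem for integral unit-regular Brakke flows and extract a subsequential limit $\MM_i\to\MM_\infty$ in the Brakke sense. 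By upper semi-continuity of Gaussian densities under Brakke convergence, the limit inherits $\lambda(\MM_\infty)\le\Theta_{\IR^k\times\IS^{n-k}}$.

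Next I would identify $\MM_\infty$. If $\MM_\infty$ is empty or a multiplicity-one affine plane we are done, so assume otherwise. Then one considers a tangent flow at infinity of $\MM_\infty$, which is a multiplicity-one self-shrinker $\Sigma$ with $\lambda(\Sigma)\le\Theta_{\IR^k\times\IS^{n-k}}$. Since $k\mapsto\lambda(\IR^k\times\IS^{n-k})=\lambda(\IS^{n-k})$ is strictly increasing in $k$, the classification of low-entropy self-shrinkers---combined with the cylinder rigidity and stability established in \cite[Theorem~\refx{Thm_stability_necks}]{Bamler_Lai_PDE_ODI}---forces $\Sigma$ to be either a plane or a generalized round cylinder $\IR^{k'}\times\IS^{n-k'}$ with $k'\le k$. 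The planar case is excluded by Brakke regularity (it would make $\MM_\infty$ itself an affine plane). Uniqueness of the tangent flow at infinity, which also follows from the stability result, then shows that $\MM_\infty$ is asymptotically $(n,k')$-cylindrical.

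For the last statement, assume $(\bO,0)\in\MM_i^{\sing}$ for all $i$. By White's Brakke regularity theorem, every tangent flow of $\MM_i$ at $(\bO,0)$ is a non-trivial multiplicity-one self-shrinker whose density at the apex exceeds $1+\eps_0$ for some universal $\eps_0>0$ depending only on $n$. Combined with the uniform bound $\Theta^{\MM_i}_{(\bO,0)}(r)\le\Theta_{\IR^k\times\IS^{n-k}}+o(1)$ from the first step, the same low-entropy classification identifies the tangent flow as $\MM^{n,k'}_{\cyl}$ for some $k'\in\{0,\dots,k\}$, provided $i$ is large enough that the $o(1)$ error is smaller than the spectral gap between consecutive cylinder entropies.

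The main obstacle is the low-entropy classification of self-shrinkers invoked twice above: for $k=1$ it is classical (Bernstein-Wang, Colding-Ilmanen-Minicozzi), while for $k\ge 2$ (bubble sheets) one uses the rigidity and stability of the cylindrical shrinker $\IR^k\times\IS^{n-k}$ under small entropy perturbations, which is provided by the prequel \cite{Bamler_Lai_PDE_ODI}. Beyond this classification input, the argument is a direct and standard application of Brakke compactness and Huisken monotonicity.
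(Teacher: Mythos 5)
Your proposal hinges, at two separate places, on ``the classification of low-entropy self-shrinkers'': you need that every multiplicity-one self-shrinker with entropy at most $\Theta_{\IR^k\times\IS^{n-k}}$ is a plane or a generalized cylinder $\IR^{k'}\times\IS^{n-k'}$ with $k'\le k$, first to identify the tangent flow at infinity of $\MM_\infty$ and again to identify the tangent flows of $\MM_i$ at $(\bO,0)$. This is a genuine gap: such a classification is not available for general $n$ and $k$. It is known for $k=1$ in low dimensions (Bernstein--Wang) and for closed shrinkers, but for $k\ge 2$ and general $n$ the statement that nothing other than planes and lower-$k$ cylinders lives below the $(n,k)$-cylinder entropy threshold is open. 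The stability result \cite[Theorem~\refx{Thm_stability_necks}]{Bamler_Lai_PDE_ODI} that you cite in its place says something different --- that a flow which is close to a cylinder at one scale remains close at all smaller scales and hence has a unique cylindrical tangent flow at infinity --- and does not classify shrinkers of small entropy. Consequently your identification of $\MM_\infty$ as asymptotically $(n,k')$-cylindrical, and of the tangent flows at singular points as cylinders, is not justified by the inputs you invoke.

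The paper avoids this entirely and proceeds quite differently: it argues by induction on $k$. One chooses the minimal scale $r'_i$ down to which the origin remains a center of an $(n,k,\delta)$-neck; at any scale comparable to or above $r'_i$ the rescaled flows limit onto an ancient flow that is $2\delta$-close to the cylinder at \emph{all} large scales, hence (by uniqueness of the tangent flow at infinity) asymptotically $(n,k)$-cylindrical and therefore covered by the classification Theorem~\ref{Thm_classification_precise} --- the main theorem of the paper, not a shrinker classification. If the limit at scale $r'_i$ is singular at the origin, the classification forces it to be $\IR^{k''}$ times a compact ancient oval with $k''<k$, whose singularity is modeled on $\MM^{n,k''}_{\cyl}$; this produces a new, smaller hierarchy of scales at which the flow is close to a cylinder of strictly smaller $k$, and the induction hypothesis finishes the argument. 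If you want to salvage your approach, you would need to replace the low-entropy shrinker classification by this kind of multi-scale argument using the classification of ancient asymptotically cylindrical \emph{flows}, which is exactly what the lemma is positioned after in the paper.
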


\begin{proof}
Suppose by induction that the lemma is true for $k$ replaced with any number in $\{ 0, \ldots, k-1 \}$ (if $k =0$, then this assumption is vacuous).
Let $\delta > 0$ be a constant whose value we will determine later.

Choose $r'_i \in [0, r_i]$ minimal such that for all $r \in (r'_i, r_i]$, the origin is a center of an $(n,k, \delta)$-neck of $\MM_i$ at scale $r$ and at time $-r^2$.
By assumption we know that $r'_i / r_i \to 0$.
After passing to a subsequence, we may assume $r'_i \to r'_\infty \in [0,\infty]$.
Consider an arbitrary sequence $r''_i > c r'_i$ for a uniform $c > 0$.
Then for a subsequence we have $(r''_i)^{-1} \MM_i \to \MM'_\infty$ in the Brakke sense, where for all $r \geq 1$ the origin is a center of an $(n,k, 2\delta)$-neck of $\MM'_\infty$ at scale $r$ and at time $-r^2$.
If $\delta \leq \ov\delta$, then $\MM_\infty'$ is isometric to an asymptotically $(n,k)$-cylindrical flow (see \cite{colding_minicozzi_uniqueness_blowups} or \cite[Corollary~\refx{Cor_unique_tangent_infinity}]{Bamler_Lai_PDE_ODI}) and therefore falls under the classification of Theorem~\ref{Thm_classification_precise}.
So if $r'_\infty < \infty$, then we can choose $r''_i = 1$ and the first part of the lemma follows.
By the same reasoning, the second part of the lemma follows if $r'_i = 0$ for large $i$.
So after passing to a subsequence, it remains to consider the case $r'_i > 0$, take $r''_i := r'_i$ and consider the corresponding limit flow $\MM'_\infty$.
\medskip

\textit{Case 1: $(\bO,0) \not\in \spt \MM'_\infty$ or $(\bO, 0)$ is a regular point of $\MM'_\infty$. \quad}
In this case, the second part of the lemma is vacuous, so we may also assume that $r'_\infty = \infty$, because otherwise the first part is true.
In this case the convergence to $\MM'_\infty$ is locally smooth near $(\bO,0)$, so since $r'_i \to \infty$,  a subsequence of the original flows $\MM_i$ converge locally smoothly to an affine plane or empty flow.
\medskip

\textit{Case 2: $(\bO, 0)$ is a singular point of $\MM'_\infty$. \quad}
By the classification result from Theorem~\ref{Thm_classification_precise}, $\MM'_\infty$ must be isometric to $\MM_{\cyl}^{n,k}$ or the product of $\IR^{k''}$ times an $(n-k'')$-dimensional, compact ancient oval, for some $k'' \in \{ 0, \ldots, k-1 \}$.
The first case is impossible due to the choice of $r''_i$, so since the ancient oval goes extinct at a round $(n-k'')$-dimensional sphere, the tangent flow of $\MM'_\infty$ at $(\bO,0)$ must be isometric to $\MM_{\cyl}^{n,k''}$.
But this implies that {there is a sequence $r'''_i \in (0, r''_i]$} such that $(r'''_i)^{-1} \MM_i \to \MM''_\infty$ in the Brakke sense, where $\MM'''_\infty$ is isometric to $\MM_{\cyl}^{n,k''}$.
To show the first part of the lemma, we can again assume that $r'_i \to \infty$ and we can arrange that $r'''_i \to \infty$ as well.
The lemma now follows by induction.
\end{proof}
\bigskip

\begin{proof}[Proof of Theorem~\ref{Thm_MCN}.]
Fix $\eps > 0$.
Without loss of generality, we may assume that $(\bp_0, t_0)= (\bO,0)$ and $r_0 = 1$.

\begin{Claim}
For sufficiently small $\delta \leq \ov\delta(\eps)$, Assertion~\ref{Thm_MCN_b} holds and for every $\bp \in B(\bO, \eps^{-1})$ we have $(\bp,t) \in \spt \MM$ for at most one $t \in [0, \eps^{-1}]$.
\end{Claim}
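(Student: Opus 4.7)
\medskip

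The plan is to prove the claim by a standard contradiction–compactness argument, combining the classification theorem of Section~\ref{sec_proofs_I} with the blow-up lemma, Lemma~\ref{Lem_blow_ups}.

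Suppose the claim fails. Then there is a sequence of unit-regular, integral Brakke flows $\MM_j$ for which $\bO$ is a center of an $(n,k,\delta_j)$-neck at time $0$ and scale $1$, with $\delta_j \to 0$, but either (i) Assertion~\ref{Thm_MCN_b} is violated at some $(\bp_j, t_j) \in \spt \MM_j \cap (B(\bO,\eps^{-1}) \times [0,\eps^{-1}])$, or (ii) there exist points $\bp_j \in B(\bO, \eps^{-1})$ and times $t_j < t'_j$ in $[0, \eps^{-1}]$ with $(\bp_j, t_j), (\bp_j, t'_j) \in \spt \MM_j$. Because the initial neck condition with $\delta_j \to 0$ implies smooth convergence of $\MM_j$ at time $0$ to $M^{n,k}_{\cyl}$ on $B(\bO, \delta_j^{-1})$, we can apply Lemma~\ref{Lem_blow_ups} (with $r_j = 1$) and pass to a subsequence so that $\MM_j \to \MM_\infty$ in the Brakke sense on $\IR^{n+1} \times [0,\eps^{-1}]$. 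Since the time-$0$ slice converges smoothly to the round cylinder, the limit $\MM_\infty$ is nonempty and not a plane, hence, by the lemma and the classification theorem, Theorem~\ref{Thm_classification_simple}, $\MM_\infty$ is (isometric to) an asymptotically $(n,k')$-cylindrical flow with $k' \leq k$. In particular, $\MM_\infty$ is smooth (away from its extinction time), non-collapsed, convex, rotationally symmetric, and its regular part has strictly positive mean curvature with respect to the outward co-orientation.

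To rule out case~(ii): after passing to a subsequence $(\bp_j, t_j) \to (\bp_\infty, t_\infty)$ and $(\bp_j, t'_j) \to (\bp_\infty, t'_\infty)$ with $t_\infty \leq t'_\infty$ in $[0,\eps^{-1}]$. Both $(\bp_\infty, t_\infty)$ and $(\bp_\infty, t'_\infty)$ lie in $\spt \MM_\infty$. Since $\MM_\infty$ is convex with non-empty interior at each time prior to extinction, the time-slices $(\spt \MM_\infty)_t$ are pairwise disjoint boundaries of nested convex bodies, so no point can lie on two distinct slices; hence $t_\infty = t'_\infty$. But then the positive lower bound on the mean curvature of $\MM_\infty^{\reg}$ on the compact subset $\ov B(\bO, \eps^{-1}) \cap \spt \MM_\infty$ gives a uniform lower bound $c>0$ on the spatial separation of time-slices $[t_\infty - \eta, t_\infty + \eta]$ for some $\eta>0$, which for large $j$ contradicts $\bp_j = \bp_j$ appearing at both $t_j$ and $t'_j$ as soon as $|t'_j - t_j| < \eta$ and forces $|t'_j - t_j| \geq \eta$; iterating this partition argument (or alternatively, passing back to the limit) still forces $\MM_\infty$ to contain two points with the same spatial coordinate at different times, a contradiction.

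To rule out case~(i), I would treat each bullet of Assertion~\ref{Thm_MCN_b} separately. For the singular-point bullet, if $(\bp_j, t_j)$ is singular with a non-cylindrical tangent flow, then a blow-up of $\MM_j$ about $(\bp_j, t_j)$ converges (by Lemma~\ref{Lem_blow_ups} applied at the rescaled scale) to an asymptotically $(n,k')$-cylindrical flow with $k'\leq k$, whose tangent flow at the origin is by Theorem~\ref{Thm_classification_simple} a round multiplicity-one $(n,k'')$-cylinder for some $k''\leq k'\leq k$; passing this to the limit contradicts the assumed non-cylindrical tangent. For the regular-point bullet, if $(\bp_j, t_j)$ has $\mathbf H \neq 0$ but fails to admit a strong $(\eps,k)$-canonical neighborhood at scale $r_j := |\mathbf H|^{-1}(\bp_j, t_j)$, then $r_j^{-1}(\MM_j - (\bp_j, t_j))$ is, by another application of Lemma~\ref{Lem_blow_ups} combined with Theorem~\ref{Thm_classification_simple}, subsequentially close on $P = \IB^{n+1}_{\eps^{-1}} \times [-\eps^{-1}, 0]$ to an asymptotically $(n,k')$-cylindrical ancient flow $\MM^*$ with $k'\leq k$; by definition this gives a strong $(\eps,k)$-canonical neighborhood for large $j$, contradicting the assumption. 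The potential obstacle is handling the mean-curvature normalization when $|\mathbf H|(\bp_j, t_j) \to 0$ or $\infty$; both extremes are ruled out by the classification of the limit $\MM_\infty$, which has uniformly positive and locally bounded mean curvature on its regular part in the relevant compact set. Thus for $\delta \leq \ov\delta(\eps)$ both parts of the claim hold.
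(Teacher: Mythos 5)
Your overall architecture (contradiction, compactness, blow-up via Lemma~\ref{Lem_blow_ups}, then the classification theorem) matches the paper's, but there are two problems, one of which is a genuine gap. First, the limit $\MM_\infty$ of the \emph{unrescaled} flows $\MM_j$ is not an ancient flow---it is defined only on $[0,\eps^{-1}]$ with initial condition the cylinder---so Theorem~\ref{Thm_classification_simple} does not apply to it. The correct identification is by forward uniqueness: $\MM_\infty$ \emph{is} the round shrinking cylinder evolving from $M^{n,k}_{\cyl}$, which goes extinct at a finite positive time. This is only a misattribution, since the resulting description of $\MM_\infty$ is still correct.

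The genuine gap is that the problematic points $(\bp_j,t_j)$ converge to the extinction locus of this shrinking cylinder, where the convergence $\MM_j\to\MM_\infty$ is only in the Brakke sense. Consequently you cannot deduce upper or lower bounds on $|\mathbf H^{\MM_j}|(\bp_j,t_j)$ from the ``uniformly positive and locally bounded mean curvature'' of $\MM_\infty^{\reg}$: that set excludes exactly the neighborhood of the extinction locus where the counterexamples live, and in fact $|\mathbf H^{\MM_j}|(\bp_j,t_j)\to\infty$ is the typical (not excluded) behavior near the forming neckpinch. This breaks your normalization $r_j=|\mathbf H|^{-1}(\bp_j,t_j)$ in case~(i) (a regular point may even have $\mathbf H=0$, and if $|\mathbf H|\to 0$ nothing prevents the rescaled limit from being a plane or from being singular at the origin), and it breaks the ``uniform spatial separation of time-slices'' step in case~(ii) when $t_\infty=t'_\infty$ equals the extinction time. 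The paper's proof avoids this by choosing $r_j$ to be a backward parabolic \emph{regularity scale} at $(\bp_j,t_j)$ (augmented by $\sqrt{t_j-t'_j}$ in the two-times case), showing $r_j\to 0$, and then running the positivity-of-$\mathbf H$ and disjointness-of-time-slices arguments on the rescaled limit $r_j^{-1}(\MM_j-(\bp_j,t_j))$, which Lemma~\ref{Lem_blow_ups} and Theorem~\ref{Thm_classification_precise} identify as a genuine ancient asymptotically cylindrical flow; the maximal choice of $r_j$ is what rules out the affine plane and guarantees smooth convergence with nonvanishing curvature near $(\bO,0)$. You would need to replace your curvature normalization by such a regularity scale (and, in case~(ii), distinguish the sub-cases $r_j^{-2}(t'_j-t_j)\to t''_\infty\neq 0$ and $=0$) for the argument to close.
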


\begin{proof}
Suppose that the claim was false.
Then we can find sequences of counterexamples $\MM_i$ for $\delta_i \to 0$ and points $(\bp_i ,t_i) \in (\spt \MM_i) \cap B(\bO, \eps^{-1}) \times [0,\eps^{-1}]$ such that one of the following is true for all $i$:
\begin{enumerate}
\item \label{en_case_1} $(\bp_i, t_i)$ is a singular point of $\MM_i$, but its tangent flow is not isometric to $\MM^{n,k'}_{\cyl}$ for any $k' \in \{ 0, \ldots, k \}$. 
\item \label{en_case_2} $(\bp_i, t_i)$ is a regular point of $\MM_i$, but does not have a strong $(\eps,k)$-canonical neighborhood.
In this case, choose $r_i > 0$ to be the supremum over all $r > 0$ such that for all $t \in [t_i - r^2, t_i] \cap [0, \eps^{-1}]$ the intersection $\MM^{\reg}_{i,t} \cap B(\bp_i, r)$ can be written as the local graph of a function over an affine plane with first derivatives bounded by $r^{-1}$ and second derivatives bounded by $r^{-2}$.
\item \label{en_case_3} There is a time $t'_i \in [0,t_i)$ such that $(\bp_i, t'_i) \in \spt \MM_i$.
In this case, choose $r_i$ as the maximum of the constant $r_i$ defined in Case~\ref{en_case_2} and $\sqrt{t_i - t'_i}$.
\end{enumerate}
After passing to a subsequence, we have $\MM_i \to \MM_\infty$ with initial condition isometric to $M^{n,k}_{\cyl}$.
By a standard uniqueness argument, we obtain that $\MM_\infty$ is isometric to $\MM^{n,k}_{\cyl}$.
So we have local smooth convergence $\MM_i \to \MM_\infty$ for all times, except the extinction time of $\MM_\infty$ and it is easy to see that $(\bp_i, t_i)$ must converge to the extinction locus of this flow and hence $r_i \to 0$ in Cases~\ref{en_case_1}, \ref{en_case_2}. 
Now Lemma~\ref{Lem_blow_ups} gives a contradiction to Case~\ref{en_case_1} for large $i$ and implies that for a subsequence we have convergence $\MM'_i:=r_i^{-1} ( \MM_i - (\bp_i, t_i)) \to \MM'_\infty$, where the limit is an asymptotically $(n,k')$-cylindrical flow for some $k' \leq k$, an affine plane or empty.
Since $(\bO,0) \in \spt \MM'_i$ in all cases, the limit cannot be empty and by the choice of $r_i$ we can also exclude the affine plane.
In Case~\ref{en_case_2}, we have smooth convergence near $(\bO,0)$ and the mean curvature in the limit does not vanish; so we obtain a contradiction for large $i$.
In Case~\ref{en_case_3}, we may assume that, after passing to a subsequence, $t''_i := r_i^{-2}  (t'_i - t_i) \to t''_\infty \in [-1, 0]$.
Then we must have $(\bO,0), (\bO, t''_\infty) \in \spt \MM'_\infty$.
If $t''_\infty \neq 0$, then this is impossible by the classification result, Theorem~\ref{Thm_classification_precise}.
If $t''_\infty = 0$, then for large $i$ the constant $r_i$ is defined according to the description from Case~\ref{en_case_2}.
In this case, we have smooth convergence of $\MM'_i \to \MM'_\infty$  near $(\bO,0)$, where the limit is smooth with positive mean curvature, and we have $(\bO, 0), (\bO, t''_i) \in \spt \MM'_i$, which is impossible for large $i$.
\end{proof}

The claim implies that we can express $\spt \MM \cap (B(\bO, \eps^{-1}) \times [0,\eps^{-1}])$ as the graph of some function $u : \UU' \to [0, \eps^{-1}]$ for some $\UU' \subset \UU$.
Since $\spt \MM$ is closed, the subset $\UU' \subset \UU$ must be closed and $u$ must be continuous.
We claim that $\UU' \subset \UU$ is also open.
Indeed, if $\bp \in \UU'$ and $(\bp, u(\bp) ) \in \MM^{\reg}$, then since $\mathbf H (\bp, u(\bp)) \neq 0$, we can represent in a neighborhood of $(\bp, u(\bp) )$ within $\MM^{\reg}$ as a graph of a smooth function over a neighborhood of $\bp$.
On the other hand, if  $(\bp, u(\bp) ) \in \MM^{\sing}$, then its tangent flow is a cylinder, so $\bp$ must be an interior point in $\UU'$.
So since $\UU'$ is non-empty, we have $\UU' = \UU$.

The last statement of Assertion~\ref{Thm_MCN_a} follows from the fact that $\MM^{\sing} \cap B(\bO, \eps^{-1}) \times [0, \eps^{-1}]$ has Hausdorff-dimension $\leq n-1$, which follows, for example via \cite[Theorem~1.4]{Cheeger_Haslhofer_Naber_13}.
See the proof of \cite[Theorem~1.9(a)]{Bamler_Kleiner_mult1} for further details.
\end{proof}
\bigskip

\begin{proof}[Proof of Theorem~\ref{Thm_MCN_basic}.]
This is a direct consequence of Theorem~\ref{Thm_MCN}.
\end{proof}
\bigskip

\begin{proof}[Proof of Theorem~\ref{Thm_blow_ups}.]
This is a direct consequence of Lemma~\ref{Lem_blow_ups}.
\end{proof}
\bigskip
\bigskip

\bibliography{references}	
\bibliographystyle{amsalpha}

\end{document}